\newenvironment{proof}{{\bf Proof. }}{\par}{\bigskip}
\newtheorem{theo}{Theorem}[section]
\newtheorem{defi}[theo]{Definition}
\newtheorem{lem}[theo]{Lemma}
\newtheorem{prop}[theo]{Proposition}
\newtheorem{rem}[theo]{Remark}
\newtheorem{coro}[theo]{Corollary}
\newtheorem{exam}[theo]{Example}
\newcommand{\agot}{\ensuremath{\mathfrak{a}}}
\newcommand{\ggot}{\ensuremath{\mathfrak{g}}}
\newcommand{\hgot}{\ensuremath{\mathfrak{h}}}
\newcommand{\kgot}{\ensuremath{\mathfrak{k}}}
\newcommand{\rgot}{\ensuremath{\mathfrak{r}}}
\newcommand{\qgot}{\ensuremath{\mathfrak{q}}}
\newcommand{\sgot}{\ensuremath{\mathfrak{s}}}
\newcommand{\tgot}{\ensuremath{\mathfrak{t}}}
\newcommand{\Bcal}{\ensuremath{\mathcal{B}}}
\newcommand{\Ccal}{\ensuremath{\mathcal{C}}}
\newcommand{\Dcal}{\ensuremath{\mathcal{D}}}
\newcommand{\Ecal}{\ensuremath{\mathcal{E}}}
\newcommand{\Fcal}{\ensuremath{\mathcal{F}}}
\newcommand{\Gcal}{\ensuremath{\mathcal{G}}}
\newcommand{\Hcal}{\ensuremath{\mathcal{H}}}
\newcommand{\Lcal}{\ensuremath{\mathcal{L}}}
\newcommand{\Ncal}{\ensuremath{\mathcal{N}}}
\newcommand{\Ocal}{\ensuremath{\mathcal{O}}}
\newcommand{\Pcal}{\ensuremath{\mathcal{P}}}
\newcommand{\Qcal}{\ensuremath{\mathcal{Q}}}
\newcommand{\Rcal}{\ensuremath{\mathcal{R}}}
\newcommand{\Scal}{\ensuremath{\mathcal{S}}}
\newcommand{\Ucal}{\ensuremath{\mathcal{U}}}
\newcommand{\Vcal}{\ensuremath{\mathcal{V}}}
\newcommand{\Wcal}{\ensuremath{\mathcal{W}}}
\newcommand{\Xcal}{\ensuremath{\mathcal{X}}}
\newcommand{\Z}{\ensuremath{\mathbb{Z}}}
\newcommand{\C}{\ensuremath{\mathbb{C}}}
\newcommand{\R}{\ensuremath{\mathbb{R}}}
\newcommand{\N}{\ensuremath{\mathbb{N}}}
\newcommand{\X}{\ensuremath{\mathbb{X}}}
\newcommand{\Lbb}{\ensuremath{\mathbb{L}}}
\newcommand{\Tbb}{\ensuremath{\mathbb{T}}}
\newcommand{\mm}{\ensuremath{\hbox{\rm m}}}
\newcommand{\tr}{\ensuremath{\hbox{\bf Tr}}}
\newcommand{\croc}{\ensuremath{\hookrightarrow}}
\newcommand{\Id}{\ensuremath{\hbox{\rm Id}}}
\newcommand{\indice}{\ensuremath{\hbox{\rm Index}}}
\newcommand{\Rfor}{\ensuremath{\hat R}}
\newcommand{\Char}{\ensuremath{\hbox{\rm Char}}}
\newcommand{\End}{\ensuremath{\hbox{\rm End}}}
\newcommand{\supp}{\operatorname{\hbox{\rm \small supp}}}
\newcommand{\Ko}{\ensuremath{{\mathbf K}^0}}
\newcommand{\KK}{\ensuremath{\mathbf{K}^*}}
\newcommand{\Sym}{\ensuremath{{\rm Sym}}}
\newcommand{\spin}{\ensuremath{\rm Spin}}
\newcommand{\spinc}{\ensuremath{{\rm Spin}^{c}}}
\def \at {{\rm At}}
\def \At {{\rm \bf At}}
\def \K {{\rm \bf K}}
\def \T {{\rm T}}
\def \what {\widehat}
\def \ind {{\rm Ind}}
\def \rang {{\rm rank}}
\def \clif {{\bf c}}
\def \Cl {{\rm Cl}}
\newcommand{\JJbeta}{\ensuremath{{{J_\beta}}}}
\newcommand{\JJa}{\ensuremath{{{J_a}}}}
\newcommand{\rrmu}{\ensuremath{{{\mu}}}}
\newcommand{\rra}{\ensuremath{{{a}}}}
\newcommand{\ddY}{\ensuremath{{{dY}}}}
\begin{document}

\title{Witten non abelian localization  for equivariant $K$-theory, and the $[Q,R]=0$ theorem}

\author{Paul-Emile PARADAN\footnote{Institut Montpelli\'erain Alexander Grothendieck, CNRS UMR 5149,
Universit\'e de Montpellier, \texttt{paul-emile.paradan@umontpellier.fr}}
\hspace{1mm}  and
Mich\`ele VERGNE\footnote{Institut de Math\'ematiques de Jussieu, CNRS UMR 7586,
Universit\'e Paris 7, \texttt{vergne@math.jussieu.fr}}}

\maketitle

{\small
\tableofcontents}

\newpage

%%%%%%%%%%%%%%%%%%%%%%%%%%%%%%%%%%%%%%%%%%%%%%%%%%%%%%
%%%%%%%%%%%%%%%%%%%%%%%%%%%%%%%%%%%%%%%%%%%%%%%%%%%%%%%%%%%%%%
%%%%%%%%%%%%%%%%%%%%%%%%%%%%%%%%%%%%%%%%%%%%%%%%%%%%%%%%%%%%%%
\section{Introduction}
%%%%%%%%%%%%%%%%%%%%%%%%%%%%%%%%%%%%%%%%%%%%%%%%%%%%%%%%%%%%%%
%%%%%%%%%%%%%%%%%%%%%%%%%%%%%%%%%%%%%%%%%%%%%%%%%%%%%%%%%%%%%%
%%%%%%%%%%%%%%%%%%%%%%%%%%%%%%%%%%%%%%%%%%%%%%%%%%%%%%%%%%%%%%

Let  $M$ be a  manifold provided with an action of a compact
Lie group  $K$ with Lie algebra $\kgot$.
Our main additional  data is a $K$-invariant map  $\Phi: M\to \kgot^*.$ This gives rise to  a $K$-invariant
vector field $\kappa_\Phi$, the Kirwan vector field,  on $M$.
In \cite{Witten}, Witten proposed a non abelian localization procedure on the zero set of the Kirwan vector field for
the integration of equivariant classes in the Hamiltonian context.
The $K$-theoretic analogue of the Witten non abelian localization procedure was introduced
by the first author \cite{pep-RR}, still in the Hamiltonian context, in order to give
a topological proof of the $[Q,R]=0$ theorem,  after the first input of the second author in \cite{Vergne96}. The  $[Q,R]=0$ theorem, conjectured by Guillemin-Sternberg and proved by \cite{Meinrenken-Sjamaar}, is a remarkable result on the   geometric quantization of symplectic manifolds, and  we describe it below.
Still with the aim of understanding the $[Q,R]=0$ theorem,  this type of
 non abelian localization procedure   has been transposed recently in the
setting of equivariant $K$-homology by Y. Song \cite{Song}. The analytic approach of the Witten deformation
has been studied by several people, see \cite{Tian-Zhang98,Braverman-02,Mathai-Zhang,Ma-Zhang14,Braverman-14,Mathai-Hochs14}.

The purpose
of the present paper is three-fold.

First, we   develop  our $\K$-theoretical analog of Witten deformation,  when $M$ is any even dimensional oriented compact manifold,
 $\Phi$ a moment map in a weak sense,
 and $\sigma$ a symbol of any  twisted Dirac operator. We  use the vector field $\kappa_\Phi$ to deform  an element
$\sigma$ of the equivariant $\K$-theory of $\T^*M$.
We obtain a general non abelian localization theorem,
see Section \ref{subsec:nonabelianlocalization}, in equivariant $K$-theory.
It localizes the study of the equivariant index of any $\sigma\in \K_K^0(\T^*M)$ to the study of the equivariant
index of  the deformed element on a neighborhood of the zeroes of the map $\Phi$, together with the study
of the same scenario on lower dimensional manifolds. In this deformation, an elliptic symbol is deformed in
a sum of transversally elliptic symbols.

Our second purpose is to illustrate the  power of this tool by reproving the $[Q,R]=0$ theorem of Meinrenken-Sjamaar \cite{Meinrenken-Sjamaar} in a straightforward way.
In other words, we clarify the strategy used by the first author in \cite{pep-RR}.
We  also prove some generalizations, in the context of almost complex manifolds.
In particular, we prove an asymptotic $[Q,R]=0$ theorem for Clifford bundles
(Theorem \ref{eq:QR-Phi-0})
 which  plays an essential role when one studies the asymptotic behavior of branching law coefficients (see \cite{pep-stability}) and the notion of stability.

As another application of our
 $K$-theoretical  non-abelian localization theorem, we have obtained a geometric description of the multiplicities of the index of   general twisted Dirac operators on general $K$-manifolds. This is the object  of a separate article \cite{pep-vergne:spinc}, as the proof requires further delicate information on admissible orbits, see \cite{pep-vergne:magic}.

Finally, our third purpose is to describe in detail some applications of the $[Q,R]=0$ theorem
on multiplicities. If $M$ is a symplectic manifold and $L$ a Kostant line bundle on $M$,
the geometric quantization $RR_K(M,L)$ of the symplectic manifold $M$ is a (virtual) representation space of $K$.
We write $RR_K(M,L)=\sum_{\mu\in\hat{K}} \mm_\mu(L) V_\mu $.
We give the proofs on the  piecewise quasi-polynomial behavior of the multiplicity function $\mm_\mu(L)$   as well as their ``reduction behavior" on faces.
All these results follow from understanding the geometric description
of $\mm_\mu(L)$ given by Meinrenken-Sjamaar, but  this is sometimes not  so easy to see while reading \cite{Meinrenken-Sjamaar}. So we give a complete proof of all this facts here.
 We feel that it is worthwhile to explain a unified proof of these results, via Hamiltonian geometry and the $[Q,R]=0$ theorem.
 It seems that indeed these consequences were unnoticed,
 and  other interesting  proofs were given for interesting  cases by several authors
 \cite{Kumar-Prasad,Ressayre-reduction, Derksen-Weyman}.
 For example, the quasi polynomial behavior in $k$ of stretched Littlewood-Richardson coefficients
 \cite{Derksen-Weyman}, or of Kronecker coefficients \cite{Mulmuley},
  the reduction properties of these coefficients on faces \cite{Ressayre-reduction}, the factorization theorem of LR
  coefficients \cite{King-Tollu-Toumazet}, the locally quasi-polynomial behavior of plethysm coefficients
  \cite{Kahle-Michalek},  are  consequences of the geometric description of multiplicities obtained by Meinrenken-Sjamaar.
Another interesting consequence of the asymptotic $[Q,R]=0$ theorem for Clifford bundles
(Theorem \ref{eq:QR-Phi-0})
  plays an essential role when one studies the asymptotic behavior of branching law coefficients. This is the object of another article by the first author \cite{pep-stability}.

\bigskip

Before describing our localization result, and its consequences,  in more details,
let us present some motivation and previous results.
Let $A$ be a $K$-invariant elliptic operator on $M$ with principal symbol $\sigma$ and let
$\indice_K^M(\sigma)(k)$ its index, a central function of $K$. Write
$\indice_K^M(\sigma)(k)=\sum_{\mu\in \hat K} \mm(\mu,\sigma) \tr_\mu(k)$ its
Fourier expansion in terms of irreducible characters of $K$.
The numbers $\mm(\mu,\sigma)\in \Z$ are the multiplicities of the irreducible representation $V_\mu$
in the virtual space of solutions of $A$.
In \cite{Guillemin-Sternberg82}, Guillemin-Sternberg formulated the $[Q,R]=0$ principle (quantization commutes with reduction),
which gave a conjectural geometric receipe to compute the multiplicities $\mm(\mu,\sigma_L)$, in the case where $M$ is a
Hamiltonian manifold equipped  with a Kostant-Souriau line bundle $L$, in terms of  the  fibers of the moment map
$\Phi: M\to \kgot^*$ given by the Hamiltonian structure.  Here $\sigma_L$ is the symbol of the  Dolbeaut-Dirac
operator with coefficients in $L$.
 Introduce the space $M_{red}=\Phi^{-1}(0)/K$ (and assume to simplify that
$M_{red}$ is smooth). The space $M_{red}$ is the reduced Marsden-Weinstein symplectic space. It plays an important role,
as for example many moduli spaces appear naturally as Marsden-Weinstein quotients. Also  when $M$ is a projective manifold,
the reduced manifold $M_{red}$ is the  GIT quotient $M/\!\!/K_\C$ of $M$, where $K_\C$ is the complexification of $K$.
Guillemin-Sternberg proved that the multiplicity of the trivial representation of $K$
is equal to the Riemann-Roch number of $M_{red}$, in the case where $M$ is a compact complex manifold
and $L$ is ample. They conjectured that this equality is still valid in the Hamiltonian context
\cite{Guillemin-Sternberg82}.
Using symplectic cuts, Meinrenken-Sjamaar \cite{Meinrenken-Sjamaar} gave the first proof of this fundamental theorem.

In view of describing  cohomology rings  of moduli spaces, Witten showed that  integrals of cohomology classes on $M_{red}$ can be computed as the value  at $0$ of the Fourier transform of a function on $\kgot$, obtained by  integrating  an equivariant cohomology class on the manifold $M$, using a deformation argument.
%It reduces the study of the characteristic ring of $M_{red}$ to the study of the equivariant cohomology of a $K$-invariant neighborhood of the zeroes of the moment map.
This result suggested that a similar deformation argument could be used  to study the Fourier expansion of equivariant indices.
Indeed, in \cite{pep-RR}, the first author gave another proof of the $[Q,R]=0$ theorem, using deformation of the symbol $\sigma_L$.
This proof is straightforward, once some functorial properties of the deformation are established. These functorial properties were established in  \cite{pep-RR}, by several ad-hoc arguments.
In this article, we prove these functorial properties in a general framework.
We  believe that this powerful technique can be useful  to study equivariant indices in many different contexts.
We see, for example, that a version of $[Q,R]=0$ theorem is true in this general context, in the asymptotic sense.
Furthermore, we use the results of this article to prove a $[Q,R]=0$ exact principle in the context of $\spinc$-manifolds
\cite{pep-vergne:spinc}.

\medskip

Let us now describe more precisely our context.
Assume $M$ is even dimensional and consider a graded Clifford bundle $\Ecal\to M$ for the tangent bundle $\T M$.
We denote by $\clif_{\Ecal_m}$ the Clifford action of $\T_m M$ on the fiber $\Ecal_m$. The symbol $\sigma(\Ecal)$ attached to the graded Clifford bundle is
$$
\sigma(\Ecal)(m,\nu)
=\clif_{\Ecal_m}(\tilde{\nu}): \Ecal_m^+\longrightarrow \Ecal^-_m,\quad (m,\nu)\in\T^* M,
$$
where $\nu\in\T^* M\mapsto \tilde{\nu}\in\T M$ is an identification given by a $K$-invariant Riemannian structure on $M$.
Note that such a symbol is invertible outside the zero section of $\T^*M$. Hence, when $M$ is compact,
the symbols $\sigma(\Ecal)$ determines a class in the $\K$-group $\Ko_K(\T^*M)$, and we denote
$\Qcal_K(M,\Ecal)$ its equivariant index.
If $K=\{1\}$ is the trivial group, $\Qcal_K(M,\Ecal)\in \Z$ is an integer, denoted simply by  $\Qcal(M,\Ecal).$

We attach the Kirwan vector field $\kappa_\Phi$ to any equivariant map $\Phi:M\to \kgot^*$: for $m\in M$, the vector
$-\kappa_{\Phi}(m)$ is equal to the the tangent vector $\Phi(m)\cdot m$ associated to the infinitesimal action of
$\Phi(m)$ on $m$. Here we have identified $\kgot$ and $\kgot^*$ by choosing a $K$-invariant inner product.
We denote by $Z_\Phi$ the set of zeroes of $\kappa_{\Phi}$. Clearly $Z_\Phi$ contains $\Phi^{-1}(0)$.
The deformed symbol  $\sigma(\Ecal,\Phi)$ is defined by
$$
\sigma(\Ecal,\Phi)(m,\nu)
=\clif_{\Ecal_m}(\tilde{\nu}+\Phi(m)\cdot m): \Ecal_m^+\longrightarrow \Ecal^-_m.
$$
The symbol $\sigma(\Ecal,\Phi)$ can be restricted to a neighborhood of the set $Z_\Phi$ of zeroes of $\kappa_\Phi$
as a transversally elliptic symbol. If $Z$ is a union of connected components of $Z_\Phi$, we denote by  $\sigma(\Ecal,Z,\Phi)$ the restriction of  $\sigma(\Ecal,\Phi)$ to a small neighborhood of $Z$ and by
$\Qcal_K(M,\Ecal, Z,\Phi)$ the equivariant index of the symbol $\sigma(\Ecal,Z,\Phi)$.

\medskip

In this paper, an equivariant map $\Phi:M\to \kgot^*$ is called\footnote{It corresponds to the notion of
``abstract moment map'' of Karshon.} a ``moment map'' if there is a closed two form $\Omega$
(possibly degenerate) such that the
data $(\Omega,\Phi)$ satisfies the Kostant relations (\ref{eq:hamiltonian-action}).

\medskip

In Section \ref{sec:non-abelian-localization}, we  prove the following non-abelian localization theorem
(Theorem  \ref{theo;nonabelian}).
 When $\Phi$ is a moment map, the set  $C_\Phi=\Phi(Z_\Phi)$ is
a finite union of coadjoint orbits, and we define $Z_\Ocal=Z_\Phi\cap \Phi^{-1}(\Ocal)$ for any $\Ocal\subset C_\Phi$.
We then have, for any $K$-equivariant graded Clifford bundle $\Ecal$,
 \begin{equation}
\label{eq:localization-Phi-intro}
\Qcal_K(M,\Ecal)=\sum_{\Ocal\subset C_\Phi} \Qcal_K(M,\Ecal, Z_\Ocal,\Phi).
\end{equation}
When $0$ is a regular value of $\Phi$, the character $\Qcal_K(M,\Ecal, \Phi^{-1}(0),\Phi)$ can
be described explicitly in terms of indices of elliptic symbols on the orbifold $M_{red}=\Phi^{-1}(0)/K$
(see Subsection \ref{subsec:loc at zero}). In particular we have that $[\Qcal_K(M,\Ecal, \Phi^{-1}(0),\Phi)]^K=\Qcal(M_{red},\Ecal_{red})$, where $\Ecal_{red}$ is a graded Clifford bundle on $M_{red}$ induced by $\Ecal$.
In other words, the symbol $\sigma(\Ecal,Z,\Phi)$ satisfies the $[Q,R]=0$ principle, for
any Clifford bundle $\Ecal$.

\medskip

Let us describe the other components associated to $\Ocal=K\beta$, with $\beta\neq 0$.
Let  $M^{\beta}$  be the set of  zeroes of the vector field $\beta$,
and $K_{\beta}$ the centralizer of $\beta$ in $K$ with Lie algebra $\kgot_{\beta}$. The graded Clifford bundle $\Ecal$
on $M$ induces a graded Clifford bundle $\mathbb{d}_\beta(\Ecal)$ on $M^\beta$. The action of $\beta$
provides a complex structure  on the normal bundle to $M^{\beta}$, and we denote by $\Ncal_{\JJbeta}$
this complex vector bundle. Let  $\Sym(\Ncal_\JJbeta)=\oplus_{k=0}^{\infty} \Sym^k(\Ncal_\JJbeta)$.
Then $\Qcal_K(M,\Ecal, Z_\Ocal,\Phi)$  is the induced representation from $K_{\beta}$ to $K$ of the
representation
$$
\Qcal_{K_\beta}\left(M^{\beta},\mathbb{d}_\beta(\Ecal)\otimes \Sym(\Ncal_\JJbeta), \Phi^{-1}(\beta),\Phi|_{M^{\beta}}\right)\otimes \bigwedge (\kgot/\kgot_{\beta})_\C.
$$
Here again, if $\beta$ is a regular value of the map $\Phi:M^{\beta}\to \kgot_{\beta}^*$,
this index can be explicitly described in terms of $(\Phi^{-1}(\beta)\cap M^{\beta})/ K_{\beta}$.

\medskip

Starting in Section \ref{sec:QR-theorem}, we analyze the implications  of the non-abelian localization theorem to the geometric study of the multiplicities.
Let us remark that we do not assume before
 Section \ref{sec:hamiltonian} that nor our group $K$ neither our manifold $M$ are connected. Indeed
the study of the ring of invariants for classical groups such as $O(n)$ is an instance of the $[Q,R]=0$ theorem, and we
want to include these cases.

We say that the $[Q,R]=0$ principle of Guillemin-Sternberg holds for the data $(\Ecal,\Phi)$ if the multiplicity of
the trivial representation in the equivariant index $\Qcal_K(M,\Ecal, Z_\Ocal,\Phi)$ vanishes if $\Ocal\neq \{0\}$. Thanks to
(\ref{eq:localization-Phi-intro}), this implies the identity
$$
[\Qcal_K(M,\Ecal)]^K= [\Qcal_K(M,\Ecal, \Phi^{-1}(0),\Phi)]^K,
$$
and this last space is described explicitly as  $\Qcal(M_{red},\Ecal_{red})$ when $0$ is a regular value.
If $\Phi$ is a moment map associated to  an equivariant line bundle $L$,
 it is a straightforward consequence of our localization theorem that the $[Q,R]=0$ principle holds   for the data
$(\Ecal \otimes L^k, \Phi)$, if $k$ is large enough
(this result would also follow easily from the asymptotic analysis by Meinrenken
in \cite{Meinrenken96}).  Consequences of this asymptotic result \cite{pep-stability} includes many
stability results on asymptotic behavior of branching coefficients.

We introduce a notion of $\Phi$-positivity for equivariant vector bundles and
Clifford bundles which extends a similar notion introduced by Tian-Zhang in the Hamiltonian context \cite{Tian-Zhang98}.

When the Clifford bundle $\Ecal$ is strictly $\Phi$-positive, we show that the $[Q,R]=0$ principle
holds for the data $(\Ecal,\Phi)$: the multiplicity of trivial representation in the characters $\Qcal_K(M,\Ecal)$ and
$\Qcal_K(M,\Ecal, \Phi^{-1}(0),\Phi)$ are equal. If furthermore $0$ is a regular value of $\Phi$, we get finally that
$$
\left[\Qcal_K(M,\Ecal)\right]^K= \Qcal(M_{red},\Ecal_{red}).
$$

\medskip

In Section \ref{sec:almost-complex}, we specialize the study to the case of a $K$-manifold $M$ equipped with
an invariant almost complex structure $J$. Then a natural Clifford bundle on $M$ is $\bigwedge_J \T M$, and we consider the
Riemann-Roch character
$$
RR^J_K(M,\Fcal):=\Qcal_K(M,\bigwedge_J \T M\otimes\Fcal)
$$
that is attached to any equivariant vector bundle $\Fcal$.

When $\Phi$ is a moment map associated to a closed $2$-form
$\Omega$, we show that the Clifford bundle $\bigwedge_J \T M$ is weakly $\Phi$-positive when the quadratic form
$v\mapsto \Omega(v,Jv)$ is non negative (we say that $J$ is adapted to $\Omega$). Hence
the Clifford bundle $\Ecal:=\bigwedge_J \T M\otimes \Fcal$ is strictly $\Phi$-positive when $J$ is adapted to $\Omega$, and the vector bundle $\Fcal$ is strictly $\Phi$-positive.

An interesting example are the {\em semi-ample} line bundles $L$ on $(M,J)$:  we define those as line bundles $L$ for which
there exists a $K$-invariant Hermitian connection $\nabla^L$ with curvature $(\nabla^L)^2=i\Omega_L$ such that $\Omega_L(v,Jv)$ is semi-positive definite. If $\Phi_L$ is the moment map attached to $\nabla^L$, we see then
that the Clifford bundle $\bigwedge_J \T M\otimes L$ is strictly $\Phi_L$-positive. In particular we get that
$\left[RR^J_K(M,L^{\otimes k})\right]^K=0$ for any $k\geq 1$ if $\Phi_L^{-1}(0)=\emptyset$.

\medskip

In Section \ref{sec:hamiltonian}, we return to the case of a $K$-Hamiltonian manifold $(M,\Omega)$ with a
Kostant-Souriau line bundle $L$ and moment map $\Phi$. Here we choose an almost complex structure $J$ that is
compatible with the non degenerate $2$-form $\Omega$: here the quadratic form $v\mapsto \Omega(v,Jv)$
is positive definite. The corresponding quantized space $RR_K(M,L)=\sum_{\mu\in\hat{K}} \mm_\mu(L) V_\mu $
is a representation of $K$. As we have already seen in the case of semi-ample line bundles, the number
$\mm_0(L)=[RR_K(M,L)]^K$ is equal to
\begin{equation}\label{eq:intro-Q-0}
[\Qcal_K(M,\bigwedge_J \T M\otimes L, \Phi^{-1}(0),\Phi)]^K.
\end{equation}

If $0$ is regular, we thus obtain that $[RR_K(M,L)]^K$ is the Riemann-Roch number $RR(M_0,L_0)$ of the orbifold bundle $L_0=L\vert_{\Phi^{-1}(0)}/K$ on
the reduced symplectic orbifold $M_0=\Phi^{-1}(0)/K$.
Recall that this striking result was obtained by  Meinrenken \cite{Meinrenken98} using symplectic cutting. The core of
Meinrenken-Sjamaar fundamental theorem is that (\ref{eq:intro-Q-0}) can be computed geometrically as the Riemann-Roch number of the (possibly singular) reduced space $M_0=\Phi^{-1}(0)/K$:

$\bullet$ Either directly when $0$ is a quasi-regular value of $\Phi$.
The computation\footnote{The quasi-regular case
was not considered in \cite{pep-RR}.}, which is quite involved when $0$ is quasi-regular, but not regular, is
done in Section \ref{subsec:quasi-regular}. In this case, the reduced space $M_0$ is again an orbifold.

$\bullet$ Or by moving the localization from $\Phi^{-1}(0)$ to the fiber of a quasi-regular point close enough to $0$. This deformation argument is explained in Section \ref{subsec:multiRiemannRoch}.

Up to this point, our compact group $K$ could be not connected.
As usual, for $K$ connected, one deals with the other multiplicities $\mm_\mu(L)$ by using the shifting trick.

At the end of the paper, we describe the consequences of the $[Q,R]=0$ theorem  on the piecewise locally polynomial behavior on
the coefficients $\mm_\mu(L)$,  as well as their ``reduction behavior" on faces.

\medskip

\section*{Acknowledgments}
We wish to thank the Research in Pairs program at Mathematisches \break Forschungsinstitut Oberwolfach
(February 2014), where this work was started. The second author wishes to thank Michel Duflo.

%%%%%%%%%%%%%%%%%%%%%%%%%%%%%%%%%%%%%%
\section{Notations}\label{secNotations}
%%%%%%%%%%%%%%%%%%%%%%%%%%%%%%%%%%%%%%

$\bullet$ Let  $K$ be a compact (non necessarily connected) Lie group with Lie algebra $\kgot$.
We denote by  $\hat K$  the set of isomorphism classes of irreducible complex finite dimensional representations of $K$.
If $\mu \in \hat K$, we may also denote $\mu$ by $V_\mu$
(a vector space with an action of $K$).
If $\mu \in \hat K$, we denote  by $\mu^*\in \hat K$  the dual representation of $K$ in $V_{\mu^*}=V_\mu^*$.
%When $K$ is connected we parameterize $\hat{K}$ by the set of dominant weights.

Any finite dimensional representation $E$ of $K$ is  isomorphic to a direct sum $E=\oplus_{\mu\in\what{K}}\mm(\mu) V_\mu$,
and we say that $\mm(\mu)$ is the multiplicity of $\mu$ in $E$.
We denote by $[E]^K$ the subspace of $E$ formed by the $K$-fixed vectors. Thus $\mm(\mu)=\dim [E\otimes V_\mu^*]^K$.

If $H$ is a subgroup of $K$, we denote by $E|_H$ the space $E$   with the action of $H$.

 $\bullet$ We denote by $R(K)$ the representation ring of   $K$ : an element $E\in R(K)$ can be represented
as a finite sum $E=\sum_{\mu\in\what{K}}\mm(\mu) V_\mu$, with $\mm(\mu)\in\Z$. We also say that $E$ is a virtual representation of $K$.
The ring structure on $R(K)$ is induced by the tensor product and we denote by $\chi\otimes \mu$ the product.
If $H$ is a subgroup of $K$, we denote by $\chi\to \chi|_H$ the restriction $R(K)\to R(H)$.

We denote by $\Rfor(K)$ the space of $\Z$-valued functions on $\hat K$. An element $E\in\Rfor(K)$ can be represented
as an infinite sum $E=\sum_{\mu\in\what{K}}\mm(\mu) V_\mu$, with $\mm(\mu)\in\Z$.
We say that $\mm(\mu)$  is the multiplicity of $\mu$ in $E$.
Then $[E]^K\in \Z$ and is the multiplicity of the trivial representation in $E$.

The space $\Rfor(K)$ is a $R(K)$-module.
If $H$ is a closed subgroup of $K$,  the induction map
${\rm Ind}^{K}_{H}: \Rfor(H)\to \Rfor(K)$ is the dual of the restriction morphism $R(K)\to R(H)$.
If $\chi\in \hat R(H)$,
then
${\rm Ind}^{K}_{H}(\chi)=\sum_{\lambda\in \hat K} [\chi\otimes V_\lambda^*|_H]^{H} V_\lambda$.
In particular  $[{\rm Ind}^{K}_{H}(E)]^K= [E]^H$.

$\bullet$
We consider a positive definite  $K$ invariant scalar product on $\kgot$.
This defines an isomorphism between $\kgot$ and its dual vector space $\kgot^*$.

$\bullet$ If $\sgot$ is a subalgebra of $\kgot$, we denote by $(\sgot)$ the conjugacy class of $\sgot$ in the set of subalgebras of $\kgot$.
 When $K$ acts on a set $\Xcal$, the stabilizer subgroup of $x\in \Xcal$ is denoted $K_x:=\{k\in K\ \vert\ k\cdot x=x\}$.
The Lie algebra of $K_x$ is denoted by $\kgot_x$.
We denote by $\Xcal_{\sgot}$ the set of elements $x$ of $\Xcal$ such $\kgot_x=\sgot$,
and by $\Xcal_{(\sgot)}=K\Xcal_{\sgot}$ the set of elements $x$ of $\Xcal$ such $\kgot_x\in (\sgot)$.

When $K$ acts on a manifold $M$, we denote by $X_M(m):=\frac{d}{dt}\vert_{t=0} e^{-tX}\cdot m$ the vector
field generated by $-X\in \kgot$.
We may denote also $X_M(m)$ by $-X\cdot m$.
We denote by $M^X$ the set of zeroes of the vector field $X_M$. This is a smooth manifold.

In particular, if $b\in \kgot$ and $E$ is a representation space of $K$, then $E^b$ is the subspace of vectors in $E$
invariant by the one parameter subgroup $\exp t b$ of $K$.

$\bullet$ When $V$ is a real vector space, we equip the direct sum $V\oplus V^*$ with the symplectic structure
$\Omega$ defined by : $\Omega(v,v')=\Omega(f,f')=0$ and $\Omega(v,f)=\langle f,v\rangle$, for $v,v'\in V$ and $f,f'\in V^*$.
An Euclidean structure on $V$ induces a isomorphism $v\in V\mapsto \tilde{v}\in V^*$, and a complex structure $J_V$
on $V\oplus V^*$ defined by $J_V v=\tilde{v}$. Note  that
$\Omega(-, J_V -)$ is the scalar product on $V\oplus V^*$.

$\bullet$ Let $\Vcal\to M$ be a  vector bundle. An element $n$ of $\Vcal$ may be written as $(m,v)$ with $m\in M, v\in \Vcal\vert_m$.

$\bullet$
The letter $E$ will denote a complex finite dimensional  (eventually Hermitian) vector space, and $\Ecal\to M$ a  complex vector bundle.
The space of smooth sections of $\Ecal$ is denoted by $\Gamma(M,\Ecal)$.

Most of the times, the space $E$ will be $\Z/2\Z$ graded, $E=E^+\oplus E^-$. We may leave the $\Z/2\Z$ grading implicit.
We denote by $E^{op}$ the shift of parity on $E$: we interchange $E^+$ and $E^-$.
Ungraded spaces $N$ will be considered as graded, with $N^+=N$ and $N^-=\{0\}$.  Thus $(-1)^kE$  denotes  the  same graded vector space $E$, but where the change of parity has been performed $k$ times.

If $N$ is a complex (ungraded) vector space, then the exterior space $\bigwedge N$ will be $\Z/2\Z$ graded in even and odd elements.
If $E=E^+\oplus E^-$ and $F=F^+\oplus F^-$ are two
$\Z/2\Z$ graded  Hermitian vector spaces, the tensor product $E\otimes F$ is $\Z/2\Z$-graded with
$(E\otimes F)^+=E^+\otimes F^+\oplus E^-\otimes F^-$ and
$(E\otimes F)^-=E^-\otimes F^+\oplus E^+\otimes F^-$.
If $f\in \hom(E^+,E^-)$, $g\in \hom(F^+,F^-)$, we define
$f\boxtimes g \in \hom((E\otimes F)^+, (E\otimes F)^-)$ by
\begin{equation}\label{eq:product}
f\boxtimes g=\left(
\begin{array}{cc}
f\otimes 1_{F_+} & -1_{E_-}\otimes g^* \\
1_{E_+}\otimes g& f^*\otimes 1_{F_-} \\
\end{array}
\right).
\end{equation}

$\bullet$  If $M$ is a $K$-manifold, and $E$ a representation space for $K$, we denote by $[E]$ the trivial vector bundle $M\times E$, with diagonal action of $K$.

%%%%%%%%%%%%%%%%%%%%%%%%%%%%%%%%%%%%%%%%%%%%%%%%%%%%%%%%%%%%%%%%%%%%%%%%%%%%%%%%%%%%%%%%%%%%%%%%%%%%%%
\section{Elliptic and transversally elliptic symbols}\label{subsec:trans-elliptic}
%%%%%%%%%%%%%%%%%%%%%%%%%%%%%%%%%%%%%%%%%%%%%%%%%%%%%%%%%%%%%%%%%%%%%%%%%%%%%%%%%%%%%%%%%%%%%%%%%%%%%%
In this section,  we recall the basic definitions from the theory of transversally
elliptic symbols (or operators) defined by Atiyah-Singer in
\cite{Atiyah74}.

Let $M$ be a compact $K$-manifold with cotangent bundle $\T^*M$. Let $p:\T^*
M\to M$ be the projection.
If $\Ecal$ is a vector bundle on $M$, we may denote still by $\Ecal$ the vector bundle $p^*\Ecal$ on the cotangent bundle $\T^*M$.
If $\Ecal^{+},\Ecal^{-}$ are
$K$-equivariant  complex vector bundles over $M$, a
$K$-equivariant morphism $\sigma \in \Gamma(\T^*
M,\hom(\Ecal^{+},\Ecal^{-}))$ is called a {\em symbol} on $M$.
 For $x\in  M$, and $\nu\in T_x^*M$,  thus $\sigma(x,\nu):\Ecal\vert_x^{+}\to
\Ecal\vert_x^{-}$
is a linear map.
The
subset of all $(x,\nu)\in \T^* M$ where the map $\sigma(x,\nu)$  is not invertible is called the {\em characteristic set}
of $\sigma$, and is denoted by $\Char(\sigma)$.
A symbol  is elliptic if its characteristic set is compact.

Define the exterior product of symbols.
Let $M_1,M_2$ be two manifolds with action of the  compact groups $K_1,K_2$ and   let $M=M_1\times M_2$.
We write an element of $\T^*M=\T^* M_1\times \T^*M_2$ as $((x_1,\nu_1),(x_2,\nu_2))$.
Let $f\in \Gamma(\T^*
M_1,\hom(\Ecal^{+},\Ecal^{-}))$ and $g
\in \Gamma(\T^*
M_2,\hom(\Fcal^{+},\Fcal^{-}))$ be two symbols.
 Using $K$-invariant  Hermitian metrics on the bundles $\Ecal^\pm,\Fcal^\pm$ we  define the  symbol
$f \boxtimes g\in \Gamma(T^*M,(\Ecal\otimes \Fcal)^+,  (\Ecal\otimes \Fcal)^-)$
by $$(f\boxtimes g)((x_1,\nu_1),(x_2,\nu_2))=f(x_1,\nu_1)\boxtimes g(x_2,\nu_2).$$

The symbol $f\boxtimes g$ is called the exterior product of the symbols $f,g$.
Note that $\Char( f\boxtimes g)=\Char(f)\times \Char(g) $.

Consider now $M_1=M_2=M$ with an action of $K$. Let $f,g$ be two symbols on $M$. We still denote by
$f\boxtimes g$ the restriction of the exterior product  (a symbol  on $M\times M$) to $M$ embedded as the diagonal in $M\times M$.

The product of a symbol  $\sigma$
by a  $K$-equivariant complex vector bundle $\Fcal\to M$ is the symbol
$\sigma\otimes \Fcal$
defined by
$$(\sigma\otimes \Fcal)(x,\nu)=\sigma(x,\nu)\otimes 1_{\Fcal_x}.$$

If $[E]$ is the  vector bundle $M\times E$  associated to a representation space $E$ of $K$, we denote
$\sigma\otimes [E]$ simply by $\sigma\otimes E$.

If $E=E^+\oplus E^-$ is $\Z/2\Z$ graded , then
$$\sigma \otimes E=\left(
                \begin{array}{cc}
                  \sigma\otimes 1_{E^+} &0 \\
                 0 & \sigma^*\otimes 1_{E^-} \\
                \end{array}
              \right).$$

 An elliptic symbol $\sigma$  on $M$ defines an
element $[\sigma]$ in the equivariant $\K$-theory of $\T^*M$ with compact
support, which is denoted by $\Ko_{K}(\T^* M)$.
The class  $[\sigma^*]$ is the inverse of $[\sigma]$ in $\K$-theory: $[\sigma^*]=-[\sigma]$.
The
index of $\sigma$ is a virtual finite dimensional representation of
$K$, that we denote by $\indice_{K}^M(\sigma)\in R(K)$
\cite{Atiyah-Segal68,Atiyah-Singer-1,Atiyah-Singer-2,Atiyah-Singer-3}.
Recall its definition.

Let $A$ be a pseudo-differential operator  $A:\Gamma(M,\Ecal^+)\to \Gamma(M,\Ecal^-)$ from  the space $\Gamma(M,\Ecal^+)$ of
smooth sections of $\Ecal^+$ to  the space $\Gamma(M,\Ecal^-)$ of  smooth sections of $\Ecal^-$, which commutes with the action of $K$.
Here we follow the convention of \cite{Atiyah74}[Lecture 1], so our pseudo-differential operator  $A$ belongs to
$\mathcal{P}^m(M,\Ecal^-,\Ecal^+)$ and we can consider its principal {\it  symbol  $\sigma_p$} which is defined outside
the zero section of $\T^*M$. The operator $A$ is said to be elliptic  if its principal symbol $\sigma_p(x,\nu)$ is
invertible for all $(x,\nu)\in \T^* M$ such that $\nu\neq 0$.
Then the space $\ker(A)$ of smooth solutions of $A$ is a finite dimensional  representation space for $K$.
Using  a $K$-invariant function $\chi$ on $\T^* M$ identically
equal to $1$ in a neighborhood of $M$ and compactly supported,
then $\sigma(x,\nu):=(1-\chi(x,\nu)) \sigma_p(x,\nu)$ is  defined on the whole space $\T^*M$ and is an elliptic symbol.
The class of $\sigma$ in $\Ko_K(\T^*M)$ does not depend of the choice of $\chi$. We still say that this class $\sigma$ is the symbol of $A$.
We choose  $K$-invariant  Hermitian structures on $\Ecal^+,\Ecal^-$. Then
the formal adjoint $A^*: \Gamma(M,\Ecal^-)\to \Gamma(M,\Ecal^+)$ is also an elliptic pseudodifferential operator.
The index of $A$ is the virtual representation space $\ker(A)-\ker(A^*)\in R(K)$.
This virtual representation {\em depends  only of the class of the symbol} $\sigma$ of $A$ in $\Ko_{K}(\T^* M)$, and by definition,  $$\indice_{K}^M(\sigma) =\ker(A)-\ker(A^*).$$

Recall the notion of {\it transversally elliptic symbol}.
Let  $\T^*_K M$ be the following $K$-invariant closed subset of $\T^*M$
$$
   \T^*_{K}M\ = \left\{(x,\nu)\in \T^* M,\ \langle \nu,X\cdot x\rangle=0 \quad {\rm for\ all}\
   X\in\kgot \right\} .
$$
 Its fiber over a point $x\in M$ is  formed by all the cotangent vectors $v\in T^*_xM$  which vanish on the tangent space to the orbit of $x$  under $K$, in the point $x$. Thus   each fiber $(\T^*_K M)_x$ is a linear subspace  of $T_x^* M$. In general the dimension of $(\T^*_KM)_x$  is not constant and this space is not a vector bundle.
A symbol $\sigma$ is  $K$-{\em transversally elliptic} if the
restriction of $\sigma$ to $\T^*_{K} M$ is invertible outside a
compact subset of $\T^*_{K} M$ (i.e. $\Char(\sigma)\cap
\T_{K}^*M$ is compact).

A $K$-{\em transversally elliptic} symbol $\sigma$ defines an
element of $\Ko_{K}(\T^*_{K}M)$, and the index of
$\sigma$
defines an element $\indice_K^M(\sigma)$ of $\hat{R}(K)$.
Recall its definition.
A transversally elliptic  operator
 $A:\Gamma(M,\Ecal^+)\to \Gamma(M,\Ecal^-)$ is a pseudo-differential operator which is  {\it elliptic} in the directions transversal to the orbits  of $K$:  its principal symbol $\sigma_p(x,\nu)$ is invertible for all $(x,\nu)\in \T^*_K M$
such that $\nu\neq 0$.
Thus, by restriction to $\T^*_KM$, the symbol
$\sigma(x,\nu)=(1-\chi(x,\nu))\sigma_p(x,\nu)$ defines a $K$-theory class\footnote{To simplify the notations, we will also denote $\sigma \in\Ko_K(\T^*_K M)$ the
class defined by a transversally elliptic symbol $\sigma$.} $[\sigma]$ in the topological equivariant $K$-theory group  $\Ko_K(\T^*_K M)$ which does not depend
of the choice of $\chi$. The {\it index map} associates to a  transversally elliptic operator  $A$ an element of  $\hat{R}(K)$ constructed as follows.  For every
$\mu\in \hat{K}$, the space $\hom_K(V_\mu,\ker(A))$ is a finite dimensional vector space of dimension $\mm(V_\mu,A)$.  Thus $\mm(V_\mu,A)$ is the multiplicity of $V_\mu$ in  the space
$\ker(A)$ of smooth solutions of $A$.
The number $\mm(V_\mu,A)-\mm(V_\mu,A^*)$ {\em depends  only of the class of the symbol} $\sigma$ in
$\Ko_K(\T_K^*M)$ and by definition
$$
\indice_K^M(\sigma) =\sum_{\mu\in \hat K} (\mm(V_\mu,A)-\mm(V_\mu,A^*)) V_\mu.
$$

In particular two symbols $\sigma_1$ and $\sigma_2$ having the same restriction to $\T^*_K M$ have the same index.

\begin{rem}
By definition,  $\indice_K^M(\sigma)$ is isomorphic to a graded subspace of  smooth (and $K$-finite) sections of  $\Ecal$, and we
will not consider how to complete
$\indice_K^M(\sigma)$ as an Hilbert space.
\end{rem}

\begin{exam}
A trivial, but important, example is the following. Consider
$M=K$, with action of $K\times K$ by left and right translations,
$\Ecal^+=M\times \C$ the trivial vector bundle, $\Ecal^-=M\times \{0\}$.
Then $\Gamma(M,\Ecal^+)=C^{\infty}(K)$,
 $\Gamma(M,\Ecal^-)=0$, and the operator $A=0$ on $C^{\infty}(K)$ is transversally elliptic.
 Its symbol is the symbol $\sigma_0(k,\nu)=0$,
 and Peter-Weyl theorem shows that
$$
\indice_{K\times K}^M(\sigma_0)=\sum_{\mu\in \hat K} V_\mu^*\otimes V_\mu.$$

\end{exam}

The index map $\indice_{K}^M: \Ko_K(\T_K^*M)\to \hat R(K)$ is a morphism of $R(K)$ module
:  for $E$ a representation space of $K$,
\begin{equation}\label{eq:indextimesVmu}
\indice_{K}^M(\sigma\otimes E)= \indice_{K}^M(\sigma)\otimes E.
\end{equation}

Any elliptic symbol  is $K$-transversally
elliptic, hence we have a restriction map
$\K_{K}^0(\T^* M)\to \Ko_{K}(\T_{K}^*M)$, and a commutative
diagram
\begin{equation}\label{indice.generalise}
\xymatrix{ \Ko_{K}(\T^* M) \ar[r]\ar[d]_{\indice_K^M}
&
\Ko_{K}(\T_{K}^*M)\ar[d]^{\indice_K^M}\\
R(K)\ar[r] & \Rfor(K)\ .
   }
\end{equation}

\bigskip

Let us explain how the index map $\indice_K^M$ is defined when $M$ is non-compact. Let $U$ be a non-compact $G$-manifold.
Lemma 3.6 and Theorem 3.7 of \cite{Atiyah74} tell us that for any open $K$-embedding $j:U\croc N$ into a compact manifold
we have a pushforward map $j_*:\K_{K}^0(\T_{K}^*U)\to \K_{K}^0(\T_{K}^*N)$ such that the composition
$$
\K_{K}^0(\T_{K}^*U)\stackrel{j_*}{\longrightarrow} \K_{K}^0(\T_{K}^*N)\stackrel{\mathrm{Index}_K^N}{\longrightarrow}\Rfor(K)
$$
is independant of the choice of $j:U\croc N$: this map is denoted $\indice^U_K$.

When $M$ is a {\em non-compact} manifold, the map $\indice_K^M: \K_{K}^0(\T_{K}^*M)\to \Rfor(K)$ is defined as follows.
Any class $\sigma\in \K_{K}^0(\T_{K}^*M)$ is represented by a symbol on $M$ with a characteristic set $\Char(\sigma)\subset \T^* M$ intersecting
$\T^*_K M$ in a compact set. Let $\Ucal$ be a $K$-invariant relatively compact open subset of $M$ such that
$\Char(\sigma)\cap \T^*_K M\subset \T^*\Ucal$: hence the restriction $\sigma\vert_\Ucal$ defines a class in $\K_{K}^0(\T_{K}^*\Ucal)$. The index map
$\indice_K^\Ucal$ is well-defined since there exists an embedding $\Ucal\croc N$ into a compact $K$-manifold  (see Lemma 3.1 in \cite{pep-RR}).
We take
$$
\indice_K^M(\sigma):=\indice_K^\Ucal(\sigma\vert_\Ucal).
$$

In particular, if $V$ is a vector space with a linear  action of a compact group $K$, we can define the index of any $\sigma\in \K_K^0(\T_K^*V)$.

\medskip

{\bf Remark :} In the following, the manifold $M$ will carry a $K$-invariant Riemannian metric and we will denote by
$\nu\in \T^*M \mapsto \tilde{\nu}\in \T M$ the corresponding identification.

%%%%%%%%%%%%%%%%%%%%%%%%%%%%%%%%%%%%%%%%%%%%%%%%%%%%%%%%%%%%%%%%%%%
%%%%%%%%%%%%%%%%%%%%%%%%%%%%%%%%%%%%%%%%%%%%%%%%%%%%%%%%%%%%%%%%%%%
\section{Functoriality}
%%%%%%%%%%%%%%%%%%%%%%%%%%%%%%%%%%%%%%%%%%%%%%%%%%%%%%%%%%%%%%%%%%%
%%%%%%%%%%%%%%%%%%%%%%%%%%%%%%%%%%%%%%%%%%%%%%%%%%%%%%%%%%%%%%%%%%%

%%%%%%%%%%%%%%%%%%%%%%%%%%%%%%%%%%%%%%%%%%%%%%%%%%%%%%%%%%%%%%%%%%%
\subsection{Invariant part}
%%%%%%%%%%%%%%%%%%%%%%%%%%%%%%%%%%%%%%%%%%%%%%%%%%%%%%%%%%%%%%%%%%%

We will use the following obvious remark. Let $\sigma \in \Gamma(\T^* M,\hom(\Ecal^{+},\Ecal^{-}))$ be a $K$-transversally elliptic
symbol on $M$.  We assume that an invariant element $b\in \kgot$ acts trivially on $M$. We can then define the subbundles
$[\Ecal^{\pm}]^b\subset \Ecal^{\pm}$ and the $K$-transversally elliptic symbol
$\sigma^b \in \Gamma(\T^* M,\hom([\Ecal^{+}]^b,[\Ecal^{-}]^b))$ which is induced by $\sigma$.

\begin{lem}\label{lem:triv}
We have  $[\indice_K^M(\sigma)]^K=[\indice_K^M(\sigma^b)]^K$. In particular \break $[\indice_K^M(\sigma)]^K=0$ if $[\Ecal^{\pm}]^b=0$.
\end{lem}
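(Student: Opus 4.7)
My plan is to decompose $\sigma$ according to the characters of the one-parameter subgroup generated by $b$ and then observe that only the trivial character can contribute to the $K$-invariants of the index. First I would record the elementary fact that, since $b$ is $\Ad(K)$-invariant, the closure $T_b$ of $\{\exp(tb):t\in\R\}$ is a compact abelian subgroup commuting with every element of $K$ (it lies in the center of the identity component of $K$). By hypothesis $T_b$ acts trivially on $M$, hence trivially on $\T^*M$, so $T_b$ acts fiberwise on $\Ecal^{\pm}$, and its fiberwise action commutes with both the $K$-action and the bundle map $\sigma$.

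Next I would split the equivariant bundles into $T_b$-isotypical components,
$$
\Ecal^{\pm}=\bigoplus_{\alpha}\Ecal^{\pm}_{\alpha},\qquad \Ecal^{\pm}_{0}=[\Ecal^{\pm}]^b,
$$
where $\alpha$ runs over the characters of $T_b$. Because $\sigma(m,\nu)$ commutes with $T_b$ at every $(m,\nu)\in \T^*M$, it respects this decomposition, so $\sigma=\bigoplus_{\alpha}\sigma_{\alpha}$ with $\sigma_{\alpha}\in \Gamma(\T^*M,\hom(\Ecal^{+}_{\alpha},\Ecal^{-}_{\alpha}))$; each $\sigma_{\alpha}$ is $K$-transversally elliptic since invertibility passes to a direct summand, and $\sigma_{0}=\sigma^{b}$ by construction. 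Additivity of the index then gives $\indice_K^M(\sigma)=\sum_{\alpha}\indice_K^M(\sigma_{\alpha})$ in $\Rfor(K)$.

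The key step is the observation that $T_b$ acts on $\indice_K^M(\sigma_{\alpha})$ through the single character $\alpha$. Picking a $K$-invariant pseudo-differential operator $A$ whose symbol represents $\sigma$ and which commutes with $T_b$ (which is automatic after averaging over $T_b$, since $T_b$ is central), the $\alpha$-components $A_{\alpha}:\Gamma(M,\Ecal^{+}_{\alpha})\to \Gamma(M,\Ecal^{-}_{\alpha})$ have kernels on which $T_b$ acts by the weight $\alpha$, and the same holds for $A_{\alpha}^{*}$. Hence every copy of the trivial $K$-representation inside $\indice_K^M(\sigma_{\alpha})$ would have trivial $T_b$-action, forcing $\alpha=0$, and so
$$
[\indice_K^M(\sigma)]^K=\sum_{\alpha}[\indice_K^M(\sigma_{\alpha})]^K=[\indice_K^M(\sigma_{0})]^K=[\indice_K^M(\sigma^{b})]^K.
$$
The second claim, that $[\indice_K^M(\sigma)]^K=0$ whenever $[\Ecal^{\pm}]^b=0$, is then immediate since $\sigma^{b}$ is the zero symbol between zero bundles. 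The only minor bookkeeping, when $M$ is non-compact, is to note that everything can be restricted to a $K$-invariant relatively compact open neighborhood of $\Char(\sigma)\cap \T^{*}_{K}M$ as in Section~\ref{subsec:trans-elliptic}; the $T_b$-decomposition is functorial under this restriction, so the conclusion is unaffected. I do not anticipate a serious obstacle, the whole argument being a direct consequence of Schur-type decomposition by $T_b$-weights.
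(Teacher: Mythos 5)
Your argument is correct and is essentially the paper's own proof: the paper splits $\Ecal^{\pm}=[\Ecal^{\pm}]^b\oplus\Fcal^{\pm}$, uses additivity of the index, and kills the $K$-invariants of the complementary piece by noting that $b$ acts trivially on the base but without fixed vectors on the fibers of $\Fcal^{\pm}$, so there are no $b$-invariant sections. Your finer decomposition into all $T_b$-isotypic components $\Ecal^{\pm}_{\alpha}$ is just a refinement of the same idea and adds nothing that needs separate justification.
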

\begin{proof}
We use a $K$-equivariant decomposition of vector bundles $\Ecal^{\pm}=[\Ecal^{\pm}]^b\oplus \Fcal^{\pm}$. The symbol
$\sigma$ is thus equal to $\sigma^b\oplus \sigma'$, where  the symbol $\sigma' \in\Gamma(\T^* M,\hom(\Fcal^{+},\Fcal^{-}))$ is  also $K$-transversally elliptic. The index map is additive so $[\indice_K^M(\sigma)]^K=[\indice_K^M(\sigma^b)]^K+[\indice_K^M(\sigma')]^K$.
The space $[\indice_K^M(\sigma')]^K$ is constructed as the (virtual) subspace of invariant  $C^{\infty}$-sections of the bundle $\Fcal^{\pm}$ which are solutions of a $K$-invariant pseudo-differential operator on $M$ with symbol $\sigma'$.
But, as the action of $b$ is trivial on the basis, and
$[\Fcal^{\pm}]^b=\{0\}$, the space of
$b$-invariant  $C^{\infty}$-sections of the bundle $\Fcal^{\pm}$ is reduced to $0$. Hence $[\indice_K^M(\sigma')]^K=0$.
\end{proof}

%%%%%%%%%%%%%%%%%%%%%%%%%%%%%%%%%%%%%%%%%%%%%%%%%%%%%%%%%%%%%%%%%%%
\subsection{Free action}\label{subsec:freeaction}
%%%%%%%%%%%%%%%%%%%%%%%%%%%%%%%%%%%%%%%%%%%%%%%%%%%%%%%%%%%%%%%%%%%

Let $G$ and $K$ be two compact  Lie groups. Let $P$ be a
manifold provided with an action of $G\times K$. We assume that the action of $K$ is free. Then the manifold $M:=P/K$ is
provided with an action of $G$ and the quotient map $\pi:P\to M$
is a $G$-equivariant fibration.  The action of $K$ on the bundle $\T^*_K P$ is free and the
quotient $(\T^*_K P)/K$ admits a canonical identification with $\T^*M$. Then we still denote by
$$
q:\T^*_K P\to \T^* M
$$
the quotient map by $K$.
The quotient map $q$ induces an isomorphism
$$
q^*: \Ko_G(\T^*_{G} M)\to \Ko_{G\times K}(\T^*_{G\times K} P)
$$
that makes $\Ko_G(\T^*_{G} M)$ into a $R(K)$-module. For $V_\mu$ an irreducible representation of $K$, we denote by $\Vcal_\mu$ the complex
vector bundle $P\times_K V_\mu$ over  $M$.

\begin{theo}[Free action property] \label{th:free-action}
We have the following equality in $\hat{R}(G\times K)$
$$
\indice^P_{G\times K} (q^*\sigma)=\sum_{\mu\in \hat{K}} \indice^M_G(\sigma\otimes \Vcal_\mu^*)\otimes V_\mu.
$$
\end{theo}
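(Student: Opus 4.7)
The plan is to produce, from a $G$-invariant pseudodifferential operator $A$ on $M$ with principal symbol $\sigma$, a $(G\times K)$-invariant operator $\tilde A$ on $P$ whose symbol class in $\Ko_{G\times K}(\T^*_{G\times K} P)$ equals $q^*\sigma$, and then to compute its index directly via the Peter--Weyl decomposition of $\Gamma(P,\pi^*\Ecal^\pm)$.

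The geometry is clean because the action of $K$ on $P$ is free: the subset $\T^*_K P\subset\T^* P$ of covectors annihilating the $K$-orbit directions coincides with $\pi^*\T^* M$, and $q:\T^*_K P\to\T^*M$ is the quotient by $K$. Fixing a principal $K$-connection on $\pi:P\to M$ together with $K$-invariant Riemannian structures compatible with $\pi$, one obtains a horizontal lift of vector fields from $M$ to $P$. Using this lift, I would lift $A$ to a $(G\times K)$-invariant pseudodifferential operator $\tilde A$ on $P$ acting on sections of $\pi^*\Ecal^\pm$. By construction the principal symbol of $\tilde A$, restricted to $\T^*_K P=\pi^*\T^*M$, equals $q^*\sigma_p$, so $\tilde A$ is $(G\times K)$-transversally elliptic and its class in $\Ko_{G\times K}(\T^*_{G\times K} P)$ is $q^*\sigma$.

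Next, since $K$ acts freely on $P$, Peter--Weyl provides a canonical $(G\times K)$-equivariant decomposition
\begin{equation*}
\Gamma(P,\pi^*\Ecal^\pm)\;=\;\bigoplus_{\mu\in\hat K}\Gamma\bigl(M,\Ecal^\pm\otimes\Vcal_\mu^*\bigr)\otimes V_\mu ,
\end{equation*}
where the $\mu$-isotype is identified with the $K$-equivariant sections $P\to V_\mu\otimes\pi^*\Ecal^\pm$, i.e.\ with sections on $M$ of $\Ecal^\pm\otimes\Vcal_\mu^*$. Being $K$-equivariant, $\tilde A$ preserves each summand; by Schur's lemma it acts on the $\mu$-isotype as $A_\mu\otimes\mathrm{Id}_{V_\mu}$, where $A_\mu:\Gamma(M,\Ecal^+\otimes\Vcal_\mu^*)\to\Gamma(M,\Ecal^-\otimes\Vcal_\mu^*)$ is a $G$-invariant elliptic operator with principal symbol $\sigma\otimes\Vcal_\mu^*$. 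Consequently
\begin{equation*}
\indice^P_{G\times K}(q^*\sigma)\;=\;\sum_{\mu\in\hat K}\bigl(\ker A_\mu-\ker A_\mu^*\bigr)\otimes V_\mu\;=\;\sum_{\mu\in\hat K}\indice^M_G\bigl(\sigma\otimes\Vcal_\mu^*\bigr)\otimes V_\mu ,
\end{equation*}
which is the claimed identity, invoking (\ref{eq:indextimesVmu}) for the last equality.

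The main obstacle is the identification of the principal symbol of the isotype operator $A_\mu$ with $\sigma\otimes\Vcal_\mu^*$. This is a local question on $M$: over a trivializing chart where $\pi^{-1}(U)\simeq U\times K$, the horizontal lift of $A$ differs from $A\otimes\mathrm{Id}$ (acting on $C^\infty(U\times K,\Ecal|_U)$) only by operators of order zero along the fibre coming from the connection, which do not affect the principal symbol in directions transverse to the $K$-orbits. Projecting onto the $\mu$-isotype in the fibre direction replaces $C^\infty(K)$ by $V_\mu^*\otimes V_\mu$, and contracting the $V_\mu^*$ factor with the twisting bundle $\Vcal_\mu^*$ yields an operator on $\Gamma(U,\Ecal|_U\otimes\Vcal_\mu^*)$ with the asserted principal symbol. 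Once this local identification is in place, the index computation above is formal.
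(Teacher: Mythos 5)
The paper itself gives no proof of this theorem: it is Atiyah's free-action property, recalled from \cite{Atiyah74}, and the only trace of an argument in the paper is the remark in Section \ref{sec:horizontal} that the special case of the horizontal Dirac symbol ``follows easily from the Peter--Weyl decomposition of $\Gamma(P,\pi^*\Ecal)$''. Your proposal reconstructs exactly that classical argument, and its overall structure is sound: realize $q^*\sigma$ by a $(G\times K)$-invariant pseudodifferential operator $\tilde A$ on $P$, use the free action to identify the $\mu$-isotype of $\Gamma(P,\pi^*\Ecal^\pm)$ with $\Gamma(M,\Ecal^\pm\otimes\Vcal_\mu^*)\otimes V_\mu$, observe that $K$-invariance plus Schur forces $\tilde A$ to act as $A_\mu\otimes \mathrm{Id}_{V_\mu}$, and read off the $G\times K$-multiplicities.

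Two points deserve more care than you give them. First, $\sigma$ is only $G$-transversally elliptic, so $A_\mu$ is \emph{not} elliptic in general but only $G$-transversally elliptic; its ``index'' is not the difference of two finite-dimensional kernels but the element of $\hat R(G)$ defined multiplicity by multiplicity, as in Section \ref{subsec:trans-elliptic}. This does not break your argument (each $G\times K$-isotype of $\ker\tilde A$ and $\ker\tilde A^*$ is finite dimensional and matches the corresponding $G$-isotype of $\ker A_\mu$, $\ker A_\mu^*$), but the statement as written is a slip, and one must also check that $\Char(q^*\sigma)\cap \T^*_{G\times K}P$ is compact (it is, being the preimage under the proper map $q$ of a compact set). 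Second, the step you flag as ``the main obstacle'' is indeed the analytic heart of Atiyah's proof, and your treatment of it is the weakest link: a ``horizontal lift'' of a pseudodifferential (as opposed to differential) operator is not a canonical construction, and the claim that compression of $\tilde A$ onto a $K$-isotype is again a pseudodifferential operator on $M$ with principal symbol $\sigma\otimes 1_{\Vcal_\mu^*}$ modulo transversally irrelevant terms is precisely what needs a genuine argument. A cleaner route is to forget lifting $A$ altogether: choose any $(G\times K)$-invariant operator on $P$ quantizing a representative of $q^*\sigma$ (average an arbitrary quantization over $G\times K$), so that its index is $\indice^P_{G\times K}(q^*\sigma)$ by the symbol-dependence of the index, and then carry out your local-trivialization comparison to identify the isotype operators. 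With these repairs your proof is the standard one; as it stands it is correct in outline but leans on the Atiyah-level technical facts at exactly the point where they are nontrivial.
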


%%%%%%%%%%%%%%%%%%%%%%%%%%%%%%%%%%%%%%%%%%%%%%%%%%%%%%%%%%%%%%%%%%%
\subsection{Multiplicative property}\label{sec:Multiplicative property}
%%%%%%%%%%%%%%%%%%%%%%%%%%%%%%%%%%%%%%%%%%%%%%%%%%%%%%%%%%%%%%%%%%%

Let us recall the multiplicative property of the index for the product
of manifolds. Consider a compact Lie group $G_2$ acting on two manifolds $M_1$ and $M_2$,
and assume that another compact Lie group $G_1$ acts on $M_1$ commuting with the action of $G_2$.
The external product of symbols induces
a multiplication (see (\ref{eq:product})):
$$
\boxtimes:\Ko_{G_1\times G_2}(\T^*_{G_1} M_1)\times \Ko_{G_2}(\T^*_{G_2} M_2)
\longrightarrow\Ko_{G_1\times G_2}(\T^*_{G_1\times G_2} (M_1\times M_2)).
$$

\medskip

Since $\T^*_{G_1\times G_2}(M_1\times M_2)\neq \T^*_{G_1} M_1\times\T^*_{G_2} M_2$ in general, the definition of the  product
$[\sigma_1]\boxtimes[\sigma_2]$ of transversally elliptic symbols needs some care: we have to take a representative
$\sigma_2$ of $[\sigma_2]$ which is almost homogeneous.

\begin{theo}[Multiplicative property] \label{theo:multiplicative-property}
For any $\sigma_1\in \Ko_{G_1\times G_2}(\T^*_{G_1} M_1)$ and
any $\sigma_2\in\Ko_{G_2}(\T^*_{G_2} M_2)$, we have
$$
\indice_{G_1\times G_2}^{M_1\times M_2}(\sigma_1\boxtimes \sigma_2)
=\indice_{G_1\times G_2}^{M_1}(\sigma_1)\indice_{G_2}^{M_2}(\sigma_2).
$$
\end{theo}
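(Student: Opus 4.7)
My proof plan would follow Atiyah's original approach from Lecture 3 of \cite{Atiyah74}, adapted to the non-compact framework set up earlier in this section. The first and most delicate task is to make sense of $\sigma_1\boxtimes\sigma_2$ as an element of $\Ko_{G_1\times G_2}(\T^*_{G_1\times G_2}(M_1\times M_2))$. Since
$$
\T^*_{G_1\times G_2}(M_1\times M_2)\ \supsetneq\ \T^*_{G_1}M_1\times \T^*_{G_2}M_2
$$
in general, the naive product has characteristic set $\Char(\sigma_1)\times\Char(\sigma_2)$ whose intersection with $\T^*_{G_1\times G_2}(M_1\times M_2)$ need not be compact. The fix, as hinted in the paragraph preceding the statement, is to pick a representative $\sigma_2$ of its class that is \emph{almost homogeneous} in the cotangent directions, so that its characteristic set lies in an arbitrarily thin conic neighborhood of $\T^*_{G_2}M_2$. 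I would then check that the class $[\sigma_1\boxtimes\sigma_2]$ obtained this way is independent of the almost-homogeneous representative, yielding a well-defined bilinear pairing of $K$-theory groups.

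Next I would exploit bilinearity and naturality to reduce to local models. Both sides of the asserted identity are bilinear in $(\sigma_1,\sigma_2)$, and the index on non-compact manifolds is defined via open $K$-embeddings into compact ones, so excision plus the Free Action Property (Theorem \ref{th:free-action}) let me work locally near $(G_1\times G_2)$-orbits in $M_1\times M_2$. By the slice theorem each orbit has a tubular neighborhood of the form $G_i\times_{H_i}V_i$ with $V_i$ a linear slice, which by free-action quotient reduces the identity on these neighborhoods to an identity on linear $H_i$-representations $V_i$. I would then handle two model cases: (i) when $\sigma_1,\sigma_2$ are elliptic, the product operator $A_1\boxtimes A_2$ has kernels and cokernels that decompose as tensor products of those of $A_i$, so the identity is just the character formula for tensor products of finite-dimensional representations of $G_1\times G_2$; (ii) on linear slices $V_i$, one uses the explicit generators of $\Ko_{G_i}(\T^*_{G_i}V_i)$ coming from Bott/Thom elements and the known indices of the associated symbols on vector spaces, comparing both sides term by term.

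The main obstacle is \emph{Step 1}: carefully defining the almost-homogeneous representative and verifying that the external product descends to a bilinear pairing at the level of $K$-theory classes, in particular that the resulting class is independent of the choice of representative and the cutoff used in making it almost homogeneous. Once the pairing is well-defined, the reduction to slices and then to linear models is essentially bookkeeping with the Free Action Property and excision, and the final computation on representations and on vector-space models is classical. A secondary technical point I would need to handle is compatibility with the non-compact index map from Section \ref{subsec:trans-elliptic}, namely that the choice of relatively compact open $\Ucal_i\subset M_i$ used to define $\indice_{G_i}^{M_i}$ can be made product-compatible so that $\Ucal_1\times\Ucal_2$ contains the relevant characteristic set intersection on the product.
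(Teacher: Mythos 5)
The paper itself offers no proof of this statement: Theorem \ref{theo:multiplicative-property} is recalled from Atiyah's lectures \cite{Atiyah74}, the text only adding the two remarks that the product $\sigma_1\boxtimes\sigma_2$ requires an almost homogeneous representative of $\sigma_2$, and that the right-hand side makes sense because $\indice_{G_1\times G_2}^{M_1}(\sigma_1)$ is \emph{smooth} relative to $G_2$. So there is nothing in the paper to compare line by line; what can be assessed is whether your plan would actually produce a proof. Your Step 1 (almost homogeneous representative, independence of the choices, well-definedness of the pairing on $K$-theory classes) is exactly the point the paper flags, and it is correctly identified.

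The gap is in your Steps 2--3. First, bilinearity plus excision does not let you decompose an arbitrary class of $\Ko_{G}(\T^*_{G}M)$ into a sum of classes supported near single orbits: excision only restricts a class to an open set containing its characteristic support intersected with $\T^*_G M$, and that support has no canonical splitting into orbit neighborhoods, so the announced reduction to slices is unjustified. Second, even granting it, the linear model step fails as stated: for a representation $V$ of $G$, the group $\Ko_{G}(\T^*_{G}V)$ is \emph{not} generated by Bott/Thom elements --- it contains classes such as the Atiyah symbol, whose index is the infinite-dimensional module $\Sym(N)$ (Proposition \ref{prop-indice-atiyah}), which cannot arise from elliptic Bott classes; computing these groups is precisely the hard part of \cite{Atiyah74}, not an available input. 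Third, your plan never engages the actual analytic crux of the multiplicative property: one represents $\sigma_1,\sigma_2$ by operators $A_1,A_2$ (with $A_2$ almost homogeneous), forms the graded product operator, and identifies its kernel and cokernel with (completed) tensor products of those of $A_1$ and $A_2$; matching the $G_1\times G_2$-multiplicities on both sides is legitimate only because the generalized character $\indice_{G_1\times G_2}^{M_1}(\sigma_1)$ has rapidly decaying $G_2$-multiplicities, which is what makes the product of the two generalized characters well defined. Your case (i) tensor-product-of-kernels argument is in fact the template for the whole proof once this smoothness is established; the detour through slices and linear generators is both unnecessary and, as set up, not viable.
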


In the last theorem, the product of  $\indice_{G_1\times G_2}^{M_1}(\sigma_1)\in \hat{R}(G_1\times G_2)$ with \break
$\indice_{G_2}^{M_2}(\sigma_2)\in \hat{R}(G_2)$ is well defined since $\indice_{G_1\times G_2}^{M_1}(\sigma_1)$
is {\em smooth} relatively to $G_2$.

%%%%%%%%%%%%%%%%%%%%%%%%%%%%%%%%%%%%%%%%%%%%%%%%
\subsection{Fiber product}\label{sec:fiber-product}
%%%%%%%%%%%%%%%%%%%%%%%%%%%%%%%%%%%%%%%%%%%%%%%%

Let $\Vcal\to M$ be a $K$-equivariant Euclidean vector bundle.

Suppose that a  central connected subgroup $\Tbb\subset K$ acts trivially on $M$, so the torus $\Tbb$ acts linearly on
the fibers of $\Vcal$. Let $V:=\Vcal\vert_{m_o}$ be a fiber which is equipped with a linear action of $\Tbb$,
and an $\Tbb$-invariant scalar product. We denote  by $O\subset O(V)$ the group formed by  orthogonal endomorphisms of $V$ commuting with the $\Tbb$-action. Let $P_O\to M$ be the $O$-principal bundle formed by the orthogonal linear maps
$f:V\to \Vcal\vert_m$ commuting with the $\Tbb$-action. Then $P_O$ is a $K\times O$ manifold with free action of $O$.

\begin{rem}
If $\Vcal\to M$ is a $K$-equivariant Hermitian vector bundle,
we  consider the group $U\subset U(V)$ formed by the unitary endomorphisms of $V$ commuting with the $\Tbb$-action,
and the $U$-principal bundle $P_U\to M$ formed by the unitary linear maps
$f:V\to \Vcal\vert_m$ commuting with the $\Tbb$-action.
\end{rem}

We have two principal fibrations : $q_M:P_O\to M:=P_O/O$ and $q_\Vcal:P_O\times V\to \Vcal:= (P_O\times V)/O$.
These quotient maps induce the following  isomorphisms : $q_M^*:  \Ko_K(\T^*_{K} M)\to \Ko_{K\times O}(\T^*_{K\times O} P_O)$
and $q_\Vcal^* : \Ko_K(\T^*_{K} \Vcal)\to \Ko_{K\times O}(\T^*_{K\times O} (P_O\times V))$.

 Let us use the multiplicative property (see Section \ref{sec:Multiplicative property}) with the groups
 $G_2=K\times O, G_1=\Tbb$ acting on the manifolds
 $M_1=V, M_2= P_O$
(the subgroup $K$ of $G_2$ acting trivially on $V$).

 We have a product
 $$
\boxtimes:   \Ko_{K\times O}(\T^*_{K\times O} P_O)\times \Ko_{\Tbb\times O}(\T^*_{\Tbb} V)\longrightarrow
  \Ko_{\Tbb\times K\times O}(\T^*_{\Tbb\times K\times O} (P_O\times V)).
$$
Let us consider the isomorphism
\begin{equation}\label{eq:Q-2}
q^*_\Vcal:\KK_{\Tbb\times K}(\T^*_{\Tbb\times K} \Vcal)\to \KK_{\Tbb\times K\times O}(\T^*_{\Tbb\times K\times O} (P_O\times V)).
\end{equation}

Since the action of $\Tbb$ on $\Vcal$ comes from the $K$-action, we have $\T^*_{\Tbb\times K} \Vcal=\T^*_{K} \Vcal$,
and then we can define a forgetful map $\mathrm{F}:\Ko_{\Tbb\times K}(\T^*_{K} \Vcal)\to \Ko_{K}(\T^*_{K} \Vcal)$.
%%We notice that for any $\sigma \in\Ko_{\Tbb\times K}(\T^*_{K} \Vcal)$, its index
%%$\indice_{\Tbb\times K}^{\Vcal}(\sigma)\in\hat{R}(\Tbb\times K)$
%%admits a restriction to $K$ equal to $\indice_{K}^{\Vcal}(\mathrm{F}(\sigma))$.\marginpar{change}

\begin{defi}
The fiber product
 \begin{equation}\label{produit-fibre-H-2}
\lozenge:   \Ko_{K}(\T^*_{K} M)\times\Ko_{\Tbb\times O}(\T^*_{\Tbb} V) \longrightarrow
  \Ko_{K}(\T^*_{K}\Vcal)
\end{equation}
is defined by the relation
$$
\sigma\,\lozenge\,\tau:=\mathrm{F}\circ (q^*_\Vcal)^{-1}\left(q^*_M(\sigma)\boxtimes \tau   \right).
$$
\end{defi}

A  symbol representing $\sigma\,\lozenge\,\tau$ can be constructed as follows.
 We write an element $n\in  \Vcal$ as $(x,v)$ with $x\in M$ and $v\in \Ncal\vert_x$.
We choose a $K$-invariant connection on the bundle $\Ncal\to M$, which allows us to parameterize
 $\T^* \Ncal=\{(n,(\xi,\nu))\}$, with
$\xi\in \T^*_xM$, $\nu\in \Ncal^*\vert_x$ (here $\xi$ is lifted to an element of $\T_n^* \Ncal$ using the connection).
Let
$\sigma(x,\xi)$
be a symbol on $M$ representing $[\sigma]$ and $\tau(v,\eta)$ an almost homogeneous  symbol on $V$ representing $\tau$.
Then
$$(\sigma\,\lozenge\,\tau )((x,v),(\xi,\nu))=\sigma(x,\xi) \boxtimes \tau(v,\eta).$$

If $E$ is a representation space of  the group $O$, we denote
by $\left[\left[E \right]\right]$  the vector bundle $P_O\times_O E$ over $M$, and by
$\sigma \otimes \left[\left[E \right]\right]$ the symbol $\sigma$ twisted by the vector bundle
$\left[\left[E\right]\right]$.

Consider $\indice^V_{O}(\tau)$ as a representation space for $O$. As $\tau$ is
$\Tbb$-transversally elliptic,
we have $\indice^V_{O}(\tau)=\sum_{\alpha\in \hat \Tbb} E_\alpha$ where each $E_\alpha$ is a
 finite dimensional representation of $O$ where the  action of $\Tbb$  on $E_\alpha$ is scalar and acts via the character $\alpha$.
  We still denote  by
$\left[\left[\indice^V_{O}(\tau)\right]\right]$ the sum
$\sum_{\alpha\in\hat{\Tbb}} \left[\left[E_\alpha\right]\right]$ of finite dimensional $K$-vector bundles on $M$.

If we combine Theorems \ref{th:free-action} and \ref{theo:multiplicative-property}, we get

\begin{lem} \label{lem:indice-lozenge}
 $\indice^\Vcal_K(\sigma \lozenge \tau)$ is equal to
$\indice^M_K(\sigma\otimes \left[\left[\indice^V_{ O}(\tau)\right]\right])$ in  $\hat{R}(K)$.
\end{lem}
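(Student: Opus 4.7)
The plan is to combine the multiplicative property (Theorem~\ref{theo:multiplicative-property}) with two applications of the free action property (Theorem~\ref{th:free-action})---once for the free $O$-action on $P_O$ with quotient $M$, and once for the free $O$-action on $P_O\times V$ with quotient $\Vcal$---and then to apply the forgetful map $\mathrm{F}$ at the end.

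First I would apply the multiplicative property with $G_1=\Tbb$, $G_2=K\times O$, $M_1=V$, $M_2=P_O$, followed by Theorem~\ref{th:free-action} for the free $O$-action on $P_O\to M$, to get
$$
\indice^{V\times P_O}_{\Tbb\times K\times O}(\tau\boxtimes q_M^*\sigma)=\Bigl(\sum_\alpha E_\alpha\Bigr)\cdot\Bigl(\sum_{\beta\in\hat O}\indice^M_K\bigl(\sigma\otimes\left[\left[V_\beta^*\right]\right]\bigr)\otimes V_\beta\Bigr)
$$
in $\Rfor(\Tbb\times K\times O)$, where I used that the subgroup $K\subset G_2$ acts trivially on $V$ to drop the $K$-factor from $\indice^V_{\Tbb\times K\times O}(\tau)$.

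Next, setting $\omega:=(q_\Vcal^*)^{-1}(q_M^*\sigma\boxtimes\tau)$ so that $\sigma\,\lozenge\,\tau=\mathrm{F}(\omega)$, I would apply Theorem~\ref{th:free-action} a second time to the free $O$-action on $P_O\times V$ with quotient $\Vcal$ (ambient group $\Tbb\times K$), and then take $O$-invariants on both sides---only the trivial $O$-representation contributes, since its associated bundle on $\Vcal$ is the trivial one---to obtain
$$
\indice^\Vcal_{\Tbb\times K}(\omega)=\left[\indice^{V\times P_O}_{\Tbb\times K\times O}(q_M^*\sigma\boxtimes\tau)\right]^O.
$$
Substituting the first display and computing this $O$-invariant via the identity $\sum_{\beta}\dim[E_\alpha\otimes V_\beta]^O\cdot\left[\left[V_\beta^*\right]\right]=\left[\left[E_\alpha\right]\right]$ (obtained by substituting $\delta=\beta^*$ in the decomposition of $E_\alpha$ into $O$-irreducibles) together with the linearity of the index in its twisting bundle, the right-hand side becomes
$$
\sum_\alpha\indice^M_K\bigl(\sigma\otimes\left[\left[E_\alpha\right]\right]\bigr)\otimes\C_\alpha,
$$
where $\C_\alpha$ denotes the one-dimensional rep of the $\Tbb$-factor with character $\alpha$.

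Finally, the forgetful map $\mathrm{F}$ restricts $\Tbb\times K$-equivariance to $K$-equivariance via the embedding $k\mapsto(1,k)$, which kills every $\C_\alpha$-factor, so
$$
\indice^\Vcal_K(\sigma\,\lozenge\,\tau)=\sum_\alpha\indice^M_K\bigl(\sigma\otimes\left[\left[E_\alpha\right]\right]\bigr)=\indice^M_K\bigl(\sigma\otimes\left[\left[\indice^V_O(\tau)\right]\right]\bigr),
$$
as claimed. The main obstacle I anticipate is the careful bookkeeping of the three incarnations of $\Tbb$---as the independent first factor of $\Tbb\times K\times O$, as the center of $K$, and as the center of $O$---and ensuring that the identification $\T^*_{\Tbb\times K}\Vcal=\T^*_K\Vcal$ is used consistently when restricting via $\mathrm{F}$. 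The essential compatibility is that the $\Tbb\subset K$-action on $\left[\left[E_\alpha\right]\right]=P_O\times_O E_\alpha$ coincides with the character $\alpha$ under which $\Tbb\subset O$ acts on $E_\alpha$, which follows from the relation $t\cdot f=f\cdot t$ in $P_O$ for $t\in\Tbb$.
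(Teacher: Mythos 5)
Your proof is correct and follows exactly the route the paper intends: the lemma is stated there as a direct combination of the free action property (Theorem \ref{th:free-action}) and the multiplicative property (Theorem \ref{theo:multiplicative-property}), which is precisely what you carry out, including the passage to $O$-invariants and the final restriction along $\mathrm{F}$. Your closing remark on the compatibility of the $\Tbb\subset K$-action on $\left[\left[E_\alpha\right]\right]$ with the character $\alpha$ is also the point the paper uses right after the lemma to justify convergence of the sum in $\hat{R}(K)$.
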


Let us explain briefly why $\indice^M_K(\sigma\otimes \left[\left[\indice^V_{O}(\tau)\right]\right])$
is well defined in $\hat R(K)$. By definition $\indice^M_K(\sigma\otimes \left[\left [\indice^V_{O}(\tau)\right]\right])$
is equal to the infinite sum
\begin{equation}\label{eq:infinite-sum}
\sum_{\alpha\in\hat{\Tbb}} \indice^M_K(\sigma\otimes \left[\left[E_\alpha\right]\right]).
\end{equation}
Since $\Tbb$ acts trivially on $M$, and commutes with $K$, it is not hard to see that for each
$\mu\in\hat{K}$ the multiplicity of
$V_\mu$ in $\indice^M_K(\sigma\otimes\left[\left[E_\alpha\right]\right])$ is zero when
$\|\alpha\|$ is large enough.  This shows that the sum (\ref{eq:infinite-sum}) converges in
$\hat{R}(K)$.

\medskip

An interesting case is already when the group $\Tbb$ is trivial (see Section \ref{sec:directimage}).
 Any elliptic element $\tau\in\Ko_{O(V)}(\T^* V)$ defines a pushforward map
$\Ko_{K}(\T^*_{K} M)\to\Ko_{K}(\T^*_{K}\Vcal), \sigma\mapsto \sigma\lozenge \tau$ such that
$\indice^\Vcal_K(\sigma \lozenge \tau)=\indice^M_K(\sigma\otimes [ [\indice^V_{O(V)}(\tau)] ])$.

%%%%%%%%%%%%%%%%%%%%%%%%%%%%%%%%%%%%%%%%
\subsection{Induction property of the index}
%%%%%%%%%%%%%%%%%%%%%%%%%%%%%%%%%%%%%%%%

Let $Y$ be a $H$-manifold and let $M:=K\times_H Y$. Since, at the level of the tangent spaces, we have the decomposition
$\T M\vert_Y\simeq \T Y\oplus Y\times \kgot/\hgot$,  one sees that $\T^*_K M$ is diffeomorphic to
$K\times_H \T^*_H Y$. Hence we have an isomorphism $\Ko_K(\T_K^* M)\simeq \Ko_H(\T_H^* Y)$ : if $\sigma$
is a $K$-transversally elliptic symbol on $M$, we denoted $\sigma\vert_Y$ the corresponding $H$-transversally elliptic symbol on $Y$.

\begin{prop}\label{Induction formula}For any $K$-transversally elliptic symbol $\sigma$ on $M$
$$
\indice_K^M(\sigma)=\ind_H^K\left(\indice_H^Y(\sigma|_Y)\right).
$$
\end{prop}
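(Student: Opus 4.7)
The plan is to realize $M=K\times_H Y$ as the quotient of $P := K\times Y$ by the free $H$-action $h\cdot(k,y)=(kh^{-1},hy)$, the $K$-action on $P$ being left multiplication on the first factor only. The quotient map $q\colon P\to M$ is then a $K$-equivariant principal $H$-bundle, and I would compute $\indice_{K\times H}^P(q^*\sigma)$ in two different ways and compare. First, the free action property (Theorem~\ref{th:free-action}) applied to $q\colon P\to M$ gives
$$
\indice_{K\times H}^P(q^*\sigma) \;=\; \sum_{\lambda\in\hat H} \indice_K^M(\sigma\otimes\Vcal_\lambda^*)\otimes V_\lambda,\qquad \Vcal_\lambda:=P\times_H V_\lambda.
$$
Since $\Vcal_1=M\times\C$ is trivial and $V_\lambda^H=0$ for non-trivial $\lambda$, taking $H$-invariants leaves only the $\lambda=1$ summand, yielding
$$
\bigl[\indice_{K\times H}^P(q^*\sigma)\bigr]^H \;=\; \indice_K^M(\sigma).
$$

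Second, I would view $P=K\times Y$ as a product, applying the multiplicative property (Theorem~\ref{theo:multiplicative-property}) with $G_1=K$ acting on the first factor only and $G_2=H$ acting on both. After identifying $q^*\sigma$ with the box product $\sigma_0\boxtimes\sigma|_Y$ in $\Ko_{K\times H}(\T^*_{K\times H}P)$, where $\sigma_0$ is the zero symbol on $K$ (which is $(K\times H)$-transversally elliptic since $\T^*_K K$ is the zero section), Peter--Weyl (the example in Section~\ref{subsec:trans-elliptic}, restricted from $K\times K$ to $K\times H$) gives $\indice_{K\times H}^K(\sigma_0)=\sum_{\mu\in\hat K}V_\mu^*\otimes V_\mu|_H$, so that
$$
\indice_{K\times H}^P(q^*\sigma)\;=\;\sum_{\mu\in\hat K}V_\mu^*\otimes\bigl(V_\mu|_H\otimes\indice_H^Y(\sigma|_Y)\bigr),
$$
with $H$ acting diagonally on the tensor factor. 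Combining with the first computation and taking $H$-invariants yields
$$
\indice_K^M(\sigma)\;=\;\sum_{\mu\in\hat K}V_\mu^*\otimes\bigl[V_\mu|_H\otimes\indice_H^Y(\sigma|_Y)\bigr]^H\;=\;\ind_H^K\!\bigl(\indice_H^Y(\sigma|_Y)\bigr),
$$
where the last equality is the very definition of induction recalled in Section~\ref{secNotations} (after relabelling $\mu\leftrightarrow\mu^*$).

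The main technical point is the identification of $q^*\sigma$ with $\sigma_0\boxtimes\sigma|_Y$ in $\Ko_{K\times H}(\T^*_{K\times H}P)$. The $K$-equivariance of $\sigma$ furnishes an isomorphism of $(K\times H)$-equivariant graded bundles $\pi_Y^*(\Ecal|_Y)\simeq q^*\Ecal$ via $((k,y),v)\mapsto((k,y),k\cdot v)$ for $v\in\Ecal|_{[e,y]}$, and under this isomorphism together with the identification $\T^*_K M\simeq K\times_H\T^*_H Y$ stated just before the proposition, a direct check shows that on the transversal set $\T^*_{K\times H}P=K\times\T^*_H Y$ the symbol $q^*\sigma$ coincides with $\sigma_0\boxtimes\sigma|_Y=\pi_Y^*(\sigma|_Y)$.
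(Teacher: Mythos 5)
Your proof is correct and follows essentially the same route as the paper's: lift $\sigma$ to the product $K\times Y$ with the twisted $H$-action, identify the lifted symbol on $\T^*_{K\times H}(K\times Y)=(K\times\{0\})\times\T^*_H Y$ with $\sigma_0\boxtimes\sigma|_Y$, apply the multiplicative property together with Peter--Weyl for $\sigma_0$, and recover $\indice_K^M(\sigma)$ by passing to $H$-invariants. The only difference is cosmetic: you invoke Theorem~\ref{th:free-action} explicitly and spell out the equivariant bundle identification, where the paper simply asserts that the index of $\sigma$ is the $H$-invariant part of the index on $K\times Y$.
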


\begin{proof}
Let us consider $N=K\times Y$ with action of $H$ on $K\times Y$ by $(k,y)\cdot h=(kh^{-1},hy)$.
Then $\T^* N=\T^* K\times \T^* Y$, and $\sigma$ defines a $K\times H$ transversally elliptic symbol $\tilde \sigma$ on $N$.
The restriction of the symbol $\tilde \sigma$  to
$\T^*_{K\times H} N=(K\times\{0\}) \times \T^*_H Y$  is equal to $\sigma_0\boxtimes \sigma_Y$, where $\sigma_0$ is the symbol of the zero operator on $C^{\infty}(K)\simeq\sum_{\lambda\in \hat K}V_\lambda \otimes V^*_\lambda$.
Here we consider $C^{\infty}(K)$ with left action of $K$ and right action of $H$.
Thus in $\hat R(K)\otimes \hat R(H)$, we have
$$
\indice_{K\times H}^N(\tilde \sigma)= \indice_{K\times H}^N(\sigma_0\boxtimes \sigma_Y)=\sum_{\lambda\in  \hat K}
V_\lambda\otimes (V_\lambda^*|_H\otimes \indice_H^Y(\sigma|_Y)).
$$
The index of $\sigma$ is obtained by taking the $H$-invariant part of  $\indice_{K\times H}^N (\tilde \sigma)$.
We thus obtain the proposition.

\end{proof}\bigskip

\medskip

Conversely, a $H$-equivariant symbol $\sigma_Y$  on $Y$ allows to define an induced $K$-equivariant symbol $\sigma$ on $K\times_H Y$ with restriction $\sigma_Y$, via
$\sigma(y,q,\eta)=\sigma(y,\eta)$ on
$\T^* M|Y\simeq\qgot^* \oplus T^*_y Y$.
Then we have
$$\indice_K^M(\sigma)=\ind_H^K\left(\indice_H^Y(\sigma_Y)\right).$$

%%%%%%%%%%%%%%%%%%%%%%%%%%%%%%%%%%%%%%%%%%%%%%%%%%%%%%%%%%%%%%%%%%%%%%%%%%%%%%%%%%
%%%%%%%%%%%%%%%%%%%%%%%%%%%%%%%%%%%%%%%%%%%%%%%%%%%%%%%%%%%%%%%%%%%%%%%%%%%%%%%%%%
%%%%%%%%%%%%%%%%%%%%%%%%%%%%%%%%%%%%%%%%%%%%%%%%%%%%%%%%%%%%%%%%%%%%%%%%%%%%%%%%%%
\section{ Clifford bundles and Dirac operators}\label{sec:spin-c-structure}
%%%%%%%%%%%%%%%%%%%%%%%%%%%%%%%%%%%%%%%%%%%%%%%%%%%%%%%%%%%%%%%%%%%%%%%%%%%%%%%%%%
%%%%%%%%%%%%%%%%%%%%%%%%%%%%%%%%%%%%%%%%%%%%%%%%%%%%%%%%%%%%%%%%%%%%%%%%%%%%%%%%%%
%%%%%%%%%%%%%%%%%%%%%%%%%%%%%%%%%%%%%%%%%%%%%%%%%%%%%%%%%%%%%%%%%%%%%%%%%%%%%%%%%%

%%%%%%%%%%%%%%%%%%%%%%%%%%%%%%%%%%%%%%%%%%
%%%%%%%%%%%%%%%%%%%%%%%%%%%%%%%%%%%%%%%%%%
\subsection{Clifford algebra}
%%%%%%%%%%%%%%%%%%%%%%%%%%%%%%%%%%%%%%%%%%
%%%%%%%%%%%%%%%%%%%%%%%%%%%%%%%%%%%%%%%%%%

Let $V$ be an Euclidean space. We denote by  $\mathrm{Cl}(V)$  its Clifford algebra.
We follow the conventions of \cite{B-G-V}. Denote by $(v,w)$ the scalar product of two vectors $v,w$ of $V$.
Then, in $\mathrm{Cl}(V)$, $vw+wv=-2(v,w)$. In particular $v^2=-\|v\|^2$ for any $v\in V\subset \mathrm{Cl}(V)$.

When $V$ is even dimensional and oriented, we define $\Gamma=i^{n/2}e_1 e_2\cdots e_n$ in $\Cl(V)\otimes \C$
modulo the choice of an oriented orthonormal basis $e_1,\dots, e_n$. We remark that $\Gamma$ depends only of the orientation.
Moreover we have $\Gamma^2=1$ and $\Gamma v=-v \Gamma$ for any $v\in V$.

If $E$ is a $\mathrm{Cl}(V)$-module, the Clifford map is denoted $\clif_E : \mathrm{Cl}(V)\to \End(E)$ : we have
$\clif_E(v)^2=-\|v\|^2 \mathrm{Id}_E$ for any $v\in V$. A $\Z/2\Z$-graduation on $E$ is a vector space decomposition
$E=E^+\oplus E^-$ such that the map $\clif_E(v):E\to E$ are odd maps for any $v\in V$. Equivalently, a grading on $E$ is defined
by an  involution $\theta:E\to E$ such that
$\clif_E(v)\circ \theta=-\clif_E(v)$ for any $v\in V$.

For example, when $V$ is even dimensional and oriented, we may consider the $\Z/2\Z$-graduation on $E$ defined by
the involution $\Gamma_E:=\clif_E(\Gamma)$. This graduation will be called the canonical graduation
of the $\mathrm{Cl}(V)$-module $E$.

Let us give the first example of Clifford module. Consider the real vector space $\bigwedge_\R V$.
We denote by $m(v): \bigwedge_\R V \to \bigwedge_\R V$  the exterior multiplication by an element of $V$ ,
and by  $\iota(v): \bigwedge_\R V \to \bigwedge_\R V$  the contraction: $\iota(v)$ is
the odd derivation of $\bigwedge_\R V$ such that $\iota(v_2)v_1=(v_2,v_1)$ for $v_1\in V\subset \bigwedge_\R V$.
Then $\clif(v)=m(v)-\iota(v)$ satisfies
$\clif(v)^2=-\|v\|^2$.
Thus the complexified space  $(\bigwedge_\R V)\otimes\C\simeq \bigwedge_\C (V\otimes\C) $ is a Clifford module.

If $E$ is a graded Clifford module, and $W$ a graded complex vector space, the space $E\otimes W$
with Clifford action $\clif_E(v)\otimes {\rm Id}_W$ is again a graded Clifford module where
$(E\otimes W)^+=E^+\otimes W^+\bigoplus E^-\otimes W^-$ and $(E\otimes W)^-=E^-\otimes W^+\bigoplus E^+\otimes W^-$.
It will be  called the twisted Clifford module  of $E$ with twisting space $W$.

If $E$ is a graded Clifford module,  it is convenient to denote $-E$ the Clifford module with opposite grading. We can see
$-E$ as the tensor of $E$ with the graded vector space $W$ where $W^+=\{0\}$ and $W^-=\C$.

\begin{prop}
Let $V$ be an Euclidean space of even dimension $n=2\ell$.  There exists a $\mathrm{Cl}(V)$-module
$S$ of such that the map $\clif_S :\mathrm{Cl}(V)\to \End(S)$ extends to an
isomorphism of complex algebras :
$$
\mathrm{Cl}(V)\otimes\C\simeq \End(S).
$$
If an orientation of $V$ is given, we define $S^+=\{s\in S\,\vert\, \Gamma s= s\}$ and
$S^-=\{s\in S\,\vert\, \Gamma s=- s\}$.
\end{prop}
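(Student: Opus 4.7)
The plan is to construct $S$ explicitly via an auxiliary compatible complex structure on $V$, build the Clifford action using creation--annihilation operators on an exterior algebra, and then verify the algebra map is an isomorphism by a dimension count. The grading is then matched to the chirality element $\Gamma$ by a direct computation in an adapted basis.

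First, since $\dim V = 2\ell$, I would fix an orthogonal complex structure $J$ on $V$ (so $J^2 = -\Id_V$ and $(Jv,Jw)=(v,w)$); such a $J$ exists because $V$ is even-dimensional. This turns $V$ into a complex Hermitian space of complex dimension $\ell$. Extending the Euclidean form $\C$-bilinearly to $V\otimes_{\R}\C$, the decomposition into $\pm i$-eigenspaces of $J$ gives a splitting $V\otimes_{\R}\C = V^{1,0}\oplus V^{0,1}$ in which $V^{1,0}$ and $V^{0,1}$ are isotropic and paired non-degenerately by the bilinear form. For each real $v\in V$ one has $v = v^{1,0}+v^{0,1}$ with $v^{1,0}=(v-iJv)/2$. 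The candidate spinor module is $S:=\bigwedge_{\C} V^{1,0}$, of complex dimension $2^{\ell}$, and for $v\in V$ I would set
$$
\clif_S(v) := \sqrt{2}\bigl(m(v^{1,0}) - \iota(v^{0,1})\bigr),
$$
where $m(\cdot)$ is exterior multiplication on $S$ and $\iota(\cdot)$ is contraction via the duality above. The canonical anticommutation relations $m(\alpha)\iota(\beta)+\iota(\beta)m(\alpha) = (\beta,\alpha)\,\Id_S$ and $m(\alpha)^2 = \iota(\beta)^2 = 0$ immediately yield $\clif_S(v)^2 = -\|v\|^2\,\Id_S$, so by the universal property of the Clifford algebra $\clif_S$ extends to a morphism of complex algebras $\Cl(V)\otimes\C\to \End(S)$.

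Both algebras have complex dimension $2^{2\ell}$, so it suffices to prove injectivity, for which I would verify that $S$ is irreducible as a Clifford module. This is direct: by a suitable product of contractions any nonzero $\omega\in S$ can be taken to a nonzero multiple of $1\in\bigwedge^0 V^{1,0}$, and then a product of creations reaches any basis monomial. Hence the image of $\clif_S$ is a nonzero subalgebra of the simple algebra $\End(S)$, and by Jacobson density combined with the dimension count it equals $\End(S)$.

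For the grading, I would pick an oriented orthonormal basis $e_1,Je_1,\ldots,e_\ell,Je_\ell$, set $\epsilon_k=(e_k-iJe_k)/\sqrt{2}$, and compute
$$
\clif_S(e_k) = m(\epsilon_k) - \iota(\bar\epsilon_k),\qquad \clif_S(Je_k) = i\bigl(m(\epsilon_k) + \iota(\bar\epsilon_k)\bigr).
$$
The elements $i\,e_k(Je_k)\in\Cl(V)\otimes\C$ pairwise commute and their product is exactly $\Gamma$. A short calculation shows that each $\clif_S(i\,e_k(Je_k))$ acts as $-1$ on monomials containing $\epsilon_k$ and as $+1$ on monomials that do not. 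Multiplying over $k$, $\clif_S(\Gamma)$ acts on a degree-$d$ monomial as $(-1)^d$, so $S^+ = \bigwedge^{\mathrm{even}} V^{1,0}$ and $S^- = \bigwedge^{\mathrm{odd}} V^{1,0}$. The main obstacle, such as it is, lies in this sign bookkeeping: one must carefully track the normalizations in $\epsilon_k$ together with the $i^\ell$ prefactor in the definition of $\Gamma$ to get the splitting exactly as stated; the remaining constructions are standard Clifford-theoretic material.
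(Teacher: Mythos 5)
Your proposal is correct and follows essentially the same route as the paper: the paper constructs $S=\bigwedge L_1$ from a totally isotropic splitting $V\otimes\C=L_1\oplus L_2$ with the action $\clif(v_1+v_2)=\sqrt{2}(m(v_1)-\iota(v_2))$ and notes irreducibility, and it later specializes exactly to your choice $L_1=V^{1,0}$, $L_2=V^{0,1}$ coming from a compatible complex structure $J$. Your added details (the CAR computation of $\clif_S(v)^2=-\|v\|^2$, the dimension count plus density argument for $\mathrm{Cl}(V)\otimes\C\simeq\End(S)$, and the check that $\clif_S(\Gamma)$ gives $S^{\pm}=\bigwedge^{even/odd}V^{1,0}$) are accurate and consistent with the paper's statements.
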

Then $S$ is unique, up to isomorphism.
We call $S$ the spinor module.
There is, up to a multiplicative constant, a unique Hermitian form on $S$ such that the operators $\clif_S(v)$ are skew-adjoint.

For example if $V=\R^2$, with standard inner product, then $S$ is $\C\oplus \C$, with Clifford action
$$\clif(xe_1+ye_2)= \left(\begin{array}{cc}
                  0 & -(x-iy) \\
                  x+iy& 0 \\
                \end{array}
              \right).$$

Let $\spin(V)$ be the spinor group. We denote by $\eta: \spin(V)\to SO(V)$ the covering map. The kernel of $\tau$ has two elements.
The spinor group $\spin(V)$   acts in $S$, preserving $S^+,S^-$. We denote by $\rho$ the spin representation of
 $\spin(V)$. It satisfie $\rho(g) \clif(v)\rho(g^{-1})=\clif(\eta(g)v)$.

\bigskip

Here is a construction  of the spinor module. Consider $V\otimes\C$ and extend the form $(\,,)$ by complex bilinearity to $V\otimes\C$.
 Choose a decomposition $V\otimes\C=L_1\oplus L_2$ in two totally isotropic subspaces for the form $(\,,)$. Thus $L_1,L_2$ are of dimension $n/2$.
We consider on the $\Z_2$-graded complex vector space $\bigwedge L_1$ the
following odd operators: the exterior multiplication $m(v_1)$ by an element of $L_1$,
and the contraction $\iota(v_2)$ by an element of $L_2$: $\iota(v_2)$ is
the odd derivation of $\bigwedge L_1$ such that $\iota(v_2)v_1=(v_2,v_1)$ for $v_1\in L_1$.
The Clifford action of $V_\C$ on $\bigwedge L_1$ is defined by the
formula
\begin{equation}\label{eq:cliffordaction}
 \clif(v_1+v_2)=\sqrt{2}(m(v_1) - \iota(v_2)).
\end{equation} Then $\clif(v)$ is an odd operator on $\bigwedge L_1$
such that $\clif(v)^2=-(v,v)\Id$.
It is easy to see that $\bigwedge L_1$ is an irreducible Clifford module. Thus $\bigwedge L_1$ with action $\clif(v)$ as above is ``the" spinor module.

Let $J$ be an endomorphism of $V$ such that $J^2=-1$ ( $J$ defines a complex structure on $V$) and such that $(Jv,Jv)=(v,v)$.
Equivalently $h(v,w)=g(v,w)-i g(Jv,w)$ is a Hermitian structure on the complex vector space $(V,J)$.
Then $J$ defines also an orientation $o(J)$ of $V$. If $e_1,e_2,\ldots, e_{n/2}$ is a basis of $V$ as a complex space, then we take as oriented basis of $V$ the basis $e_1,Je_1,e_2,Je_2,\ldots, e_{n/2}, Je_{n/2}$.
We denote by $V_J$ the space $V$ considered as a complex vector space
and we may also denote by $\bigwedge_J V=\bigwedge V_J$ the exterior space of the complex space $V_J$.

We denote by
$$
V^{+}=\{v\in V\otimes\C\,\vert\, Jv=iv\},\qquad V^{-}=\{v\in V\otimes\C\,\vert\, Jv=-iv\}.
$$
These are two totally isotropic spaces for $(\,,)$ and we can take $S=\bigwedge V^{+}$ as the irreducible Clifford module .
In complex geometry, it is more customary to take $S=\bigwedge (V^-)^*$, but the space $(V^-)^*$ is isomorphic to $V^+$ by the duality $g(\,,)$.

The map $v\to v-i Jv$ from $V$ to $V^+$ gives an isomorphism of the complex vector spaces $V_J$ with $V^+$.
Then $\bigwedge V_J$ is isomorphic to $\bigwedge V^+$, and we can realize $S$ as $\bigwedge V_J$.
The grading of $S$ induced by the orientation $o(J)$ of $V_J$ is
then $S^+=\bigwedge^{even} V_J$  and $S^-=\bigwedge^{odd} V_J$.

If $H$ is a Hermitian vector space, in some situation (see Section \ref{sec:atiyah-symbol}), it is convient to choose
as spinor space $\bigwedge\overline H$,
where $\overline{H}$ denote the space $H$, with complex structure $-J$. Then the grading
$\bigwedge^{even}\overline{H}\oplus \bigwedge^{odd}\overline{H}$ corresponds to the orientation $o(-J)=(-1)^{\dim H}o(J)$.

Consider the subgroup $U(V)$ of $SO(V)$ consisting  of complex transformations of $(V,J)$. An element
of $U(V)$ leaves stable the space $V^{+}$, and we obtain a natural action $\lambda$ of $U(V)$ in $\bigwedge V^{+}$.
Let us compare the natural action $\lambda$   and the restriction $\rho$ of the spin representation of $\spin(V)$
to $\tau^{-1}(U(V))$.

\begin{lem}\label{lem: lambdarho}
For $g\in \tau^{-1}(U(V))\subset \spin(V)$,  $\lambda(\tau(g))=\delta(g) \rho(g)$, where $\delta(g)$ is the character of $\tau^{-1}(U(V))$ such that
 $\delta(g)^2=\det_{V^{+}}(g)$.
\end{lem}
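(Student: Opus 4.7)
My plan is to argue first that $g \mapsto \lambda(\tau(g))$ and $g \mapsto \rho(g)$ differ by a scalar character on $\tau^{-1}(U(V))$, and then to identify the square of this character with $\det_{V^+}\circ\tau$ by explicit computation on a maximal torus.

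For the first step, I would observe that for any $u \in U(V)$, since $u$ commutes with $J$ it preserves the decomposition $V\otimes\C = V^+\oplus V^-$, and the natural action satisfies $\lambda(u)\, m(v^+)\, \lambda(u)^{-1} = m(uv^+)$ and $\lambda(u)\, \iota(v^-)\, \lambda(u)^{-1} = \iota(uv^-)$ for $v^\pm\in V^\pm$. Combined with \eqref{eq:cliffordaction}, this gives
\[
\lambda(\tau(g))\, \clif(v)\, \lambda(\tau(g))^{-1} = \clif(\tau(g)\,v), \qquad v\in V,
\]
which is also the defining property of $\rho(g)$. Since $\bigwedge V^+$ is an irreducible $\Cl(V)\otimes\C$-module, Schur's lemma then produces a continuous character $\delta:\tau^{-1}(U(V))\to U(1)$ such that $\lambda(\tau(g)) = \delta(g)\,\rho(g)$.

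For the second step, I would fix a unitary basis $e_1,\dots,e_n$ of $(V,J)$, set $e_j^+ := \tfrac12(e_j - iJ e_j) \in V^+$, and consider the one-parameter subgroup $g_\theta \in U(V)$ acting by $e^{i\theta}$ on $\C e_j$ and trivially on its orthogonal complement, with spin lift $\tilde g_\theta = \cos(\theta/2) + \sin(\theta/2)\, e_j\cdot J e_j$. On the two-dimensional module $\bigwedge(\C e_j^+) \simeq \C\oplus\C e_j^+$, a direct computation from \eqref{eq:cliffordaction} gives $\clif(e_j)\clif(J e_j) = \mathrm{diag}(-i,i)$, hence $\rho(\tilde g_\theta) = \mathrm{diag}(e^{-i\theta/2}, e^{i\theta/2})$, whereas $\lambda(g_\theta) = \mathrm{diag}(1, e^{i\theta})$. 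This yields $\delta(\tilde g_\theta) = e^{i\theta/2}$ and therefore $\delta(\tilde g_\theta)^2 = e^{i\theta} = \det_{V_J}(g_\theta)$. Using the tensor decomposition $\bigwedge V^+ \simeq \bigotimes_j \bigwedge(\C e_j^+)$ and multiplicativity of $\delta$, the identity $\delta^2 = \det_{V^+}\circ\tau$ holds on the preimage of a maximal torus of $U(V)$; since $\tau^{-1}(U(V))$ is a connected compact Lie group, the identity then extends globally. The main obstacle is the two-dimensional Clifford computation, which requires careful bookkeeping of the $\sqrt{2}$-normalization in \eqref{eq:cliffordaction} and of the sign choice in the spin lift; once this is in hand the proof is routine.
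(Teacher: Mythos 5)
Your proof is correct: the intertwining relation $\lambda(\tau(g))\clif(v)\lambda(\tau(g))^{-1}=\clif(\tau(g)v)$ together with irreducibility and Schur's lemma produces the scalar character $\delta$, and your rank-one computation is right (with the normalization of \eqref{eq:cliffordaction} one indeed finds $\clif(e_j)\clif(Je_j)=\mathrm{diag}(-i,i)$ on $\C\oplus\C e_j^+$, hence $\rho(\tilde g_\theta)=\mathrm{diag}(e^{-i\theta/2},e^{i\theta/2})$, $\lambda(g_\theta)=\mathrm{diag}(1,e^{i\theta})$ and $\delta(\tilde g_\theta)=e^{i\theta/2}$), which pins down $\delta^2=\det_{V^+}\circ\tau$ on $\tau^{-1}(T)$ and hence, by conjugacy into a maximal torus of the connected group $\tau^{-1}(U(V))$, everywhere. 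The paper states this lemma without proof, so there is nothing to compare against; your Schur-plus-maximal-torus argument is the standard verification, and the points you leave implicit (connectedness of $\tau^{-1}(U(V))$, and that the lifts $\tilde g^{(j)}_\theta$, which include $-1=\tilde g^{(j)}_{2\pi}$, generate all of $\tau^{-1}(T)$) are routine.
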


\medskip

We often will use complex structures on real vector spaces defined by the following procedure.
\begin{defi}\label{defi:Jbeta}
Let $N$ be a real vector space and $\beta$ a transformation of $N$, such that $-\beta^2$ is diagonalizable
 with non negative eigenvalues. Define

$\bullet$ the transformation $|\beta|$ of $N$ by $|\beta|=\sqrt{-\beta^2}$,

$\bullet$ and the complex structure $J_\beta$ on $N/\ker(\beta)$ by
$J_\beta=\beta |\beta|^{-1}.$
\end{defi}

For example if $V$ is an Euclidean space and $\beta$ a skew-symmetric transformation of $V$, then $-\beta^2$ is diagonalizable with non negative eigenvalues. By definition of $J_\beta$, the transformation $\beta$ induces on $N/\ker(\beta)\simeq \mathrm{Image}(\beta)$ a complex diagonalizable transformation, and  the list of its complex eigenvalues is
$[i a_1,\ldots, i a_\ell]$ where the $a_k$ are strictly positive real numbers.

Similarly if $\Ncal\to B$ is a  Euclidean vector bundle with a fibrewise bijective linear endomorphism $\Lcal(\beta)$ which is anti-symmetric relatively to the metric,
we denote by $\Ncal_{\JJbeta}$ the vector bundle $\Ncal$ over $B$  equipped with the complex structure
$J_\beta$. Then $\Ncal_\JJbeta$ is a Hermitian vector bundle over $B$, and we say that $\Ncal_\JJbeta$ is the complex vector bundle
$\Ncal$ polarized by $\beta$.

\bigskip

The following proposition follows from the Schur lemma.
It is  used  to ``divide"  a Clifford module by the irreducible Clifford module $S$.

\begin{prop}\label{prop: E=SW}
When $\dim V$ is even, any $\mathrm{Cl}(V)$-module $E$ has the following decomposition
\begin{equation}\label{eq:iso-graded-clifford-module}
E\simeq S\otimes \hom_{\mathrm{Cl}(V)}(S,E)
\end{equation}
where $\hom_{\mathrm{Cl}(V)}(S,E)$ is the vector space
of  $\mathrm{Cl}(V)$-complex linear maps from $S$ to $E$.  The grading of $E$, and a grading on $S$ coming
from an orientation on $V$  induces a grading on $\hom_{\mathrm{Cl}(V)}(S,E)$ so that
the isomorphism (\ref{eq:iso-graded-clifford-module}) is an isomorphism of graded Clifford module.
\end{prop}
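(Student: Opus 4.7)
The plan is to derive the decomposition as a purely algebraic consequence of the fact (recalled in the proposition just above) that the Clifford action on $S$ induces an isomorphism $\mathrm{Cl}(V)\otimes\C\simeq \End(S)$. First I would introduce the candidate map
$$
\Psi:S\otimes \hom_{\mathrm{Cl}(V)}(S,E)\longrightarrow E,\qquad s\otimes \phi\longmapsto \phi(s),
$$
which is manifestly $\mathrm{Cl}(V)$-linear for the action $\clif_S\otimes \mathrm{Id}$ on the source.

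To establish that $\Psi$ is a bijection, I would invoke Artin--Wedderburn: the only simple module over the matrix algebra $\End(S)$ is $S$ itself, so any $\End(S)$-module splits as a finite direct sum of copies of $S$. Hence $E\simeq S^{\oplus n}$ for some $n$, the intertwiner space $W:=\hom_{\mathrm{Cl}(V)}(S,E)$ has dimension $n$, and $\Psi$ is bijective, being a nonzero morphism on each irreducible summand between two spaces of equal dimension. Equivalently, this is the double centralizer statement applied to the pair $(\mathrm{Cl}(V)\otimes\C,\,\End_{\mathrm{Cl}(V)}(E))$ acting on $E$.

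For the graded refinement, I would encode the gradings on $S$ and $E$ by the involutions $\Gamma_S:=\clif_S(\Gamma)$ and $\theta_E$, and then define an involution on $W$ by the formula
$$
\phi\longmapsto \theta_E\circ \phi\circ \Gamma_S.
$$
Because $\dim V$ is even, $\Gamma$ is central in $\mathrm{Cl}(V)^{\mathrm{even}}$ and anticommutes with the odd part; combined with the fact that $\theta_E$ anticommutes with $\clif_E(v)$ for $v\in V$, a direct computation shows that $\theta_E\circ \phi\circ \Gamma_S$ is again $\mathrm{Cl}(V)$-linear. This yields a splitting $W=W^+\oplus W^-$ into $\pm 1$ eigenspaces. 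An element $\phi\in W^+$ satisfies $\theta_E\circ \phi=\phi\circ \Gamma_S$ and hence preserves the $\pm$-grading, while $\phi\in W^-$ reverses it; comparison with the standard tensor-product grading on $S\otimes W$ then immediately gives $\Psi((S\otimes W)^\pm)\subset E^\pm$.

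The only point requiring care is this last grading compatibility: one must check that $\phi\mapsto \theta_E\phi\Gamma_S$ really preserves $W$, which is precisely where the evenness of $\dim V$ enters in an essential way. Once this involution is in place, the underlying bijection is formal from the simplicity of $\End(S)$, and the graded statement follows at once.
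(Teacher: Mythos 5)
Your proof is correct and follows exactly the route the paper intends: the paper simply asserts that the proposition ``follows from the Schur lemma'' (using $\mathrm{Cl}(V)\otimes\C\simeq \End(S)$ from the preceding proposition), and your evaluation map $\Psi$ together with the Artin--Wedderburn/double-centralizer argument is the standard fleshing-out of that remark. Your explicit treatment of the grading via the involution $\phi\mapsto \theta_E\circ\phi\circ\Gamma_S$ on $\hom_{\mathrm{Cl}(V)}(S,E)$ is a correct and welcome filling-in of a detail the paper leaves implicit.
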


The space $W=\hom_{\mathrm{Cl}(V)}(S,E)$ is called the twisting space.
We write $E=S\otimes W$
and the Clifford action is just on the spinor module $S$:
$$\clif_E(v)= \clif_S(v)\otimes {\rm Id}_W.$$

\begin{exam}
$\bullet$ If $J_1,J_2$ are two complex structures on $V$, we may compare the spinor bundle $S_k=\bigwedge V_{J_k}$. We have
$S_1\simeq\epsilon \,S_2$ where $\epsilon\in \{\pm 1\}$ is the ratio between the orientations $o(J_1)$ and $o(J_2)$.

$\bullet$ If $V$ is even dimensional, the twisting space $W$ for the Clifford module $\bigwedge_\R V\otimes\C$ is
$-S$.
\end{exam}

We implicitly always choose Hermitian products on $E$ obtained by tensor product of the Hermitian structure on $S$ and a Hermitian structure on $W$. Then the operators $\clif(v)$ are skew-adjoint.

If $V=V_1\oplus V_2$ is an orthogonal decomposition, we have $\mathrm{Cl}(V)\simeq \mathrm{Cl}(V_1)\otimes \mathrm{Cl}(V_2)$.
So, if $E_1,E_2$ are graded  Clifford modules for $V_1,V_2$, then  $E=E_1\otimes E_2$ is a graded Clifford module for $V$,
with Clifford action $\clif(v_1\oplus v_2)=\clif(v_1)\boxtimes \clif(v_2)$.

Reciprocally, if $V_1$ is even dimensional and oriented, any  graded Clifford module $E$ on $V=V_1\oplus V_2$
has the decomposition $E\simeq S_1\otimes E_2$, where $S_1$ is the spinor module for $\mathrm{Cl}(V_1)$ with its
canonical grading, and  $E_2=\hom_{\mathrm{Cl}(V_1)}(S_1,E)$ is a graded $\mathrm{Cl}(V_2)$-module.

%%%%%%%%%%%%%%%%%%%%%%%%%%%%%%%%%%%%%%%%%%%%%%%%%%%%%%%%%%
%%%%%%%%%%%%%%%%%%%%%%%%%%%%%%%%%%%%%%%%%%%%%%%%%%%%%%%%%%
\subsection{Euclidean vector bundles and Clifford bundles}
%%%%%%%%%%%%%%%%%%%%%%%%%%%%%%%%%%%%%%%%%%%%%%%%%%%%%%%%%%
%%%%%%%%%%%%%%%%%%%%%%%%%%%%%%%%%%%%%%%%%%%%%%%%%%%%%%%%%%

Consider now the case of an Euclidean vector bundle $\Vcal\to M$ over a manifold $M$. Let $\mathrm{Cl}(\Vcal)\to M$ be the associated Clifford algebra bundle. A complex $\Z/2\Z$-graded vector bundle $\Ecal\to M$ is a $\mathrm{Cl}(\Vcal)$-bundle if there is a bundle algebra odd morphism
$\mathrm{c}_\Ecal : \mathrm{Cl}(\Vcal)\longrightarrow \End(\Ecal)$.
Thus, for any $x\in M, v\in \Vcal\vert_x$,
$\clif_{\Ecal_x}(v): \Ecal\vert_x^{\pm}\to \Ecal\vert_x^{\mp}$ is
such that $\clif_{\Ecal_x}(v)^2=-\|v\|^2 \mathrm{Id}_{\Ecal\vert_x}$.

Assume that $M$ is a $K$-manifold. Let $\Vcal$  be a $K$-equivariant Euclidean vector bundle on $M$, and $\Ecal$ a $\mathrm{Cl}(\Vcal)$-bundle. Then we will always assume that $\Ecal^{\pm}\to M$ are  $K$-equivariant  vector bundles and that
$\mathrm{c}_\Ecal$ is a morphism of $K$-equivariant vector bundles.
We will say that $\Ecal$ is a $K$-equivariant Clifford bundle.

If $\Wcal$ is any $K$-equivariant  graded complex vector bundle on $M$, and $\Ecal$ a graded Clifford bundle, then
$\Ecal\otimes \Wcal$ with Clifford action
$\clif_\Ecal\otimes {\rm Id}_{\Wcal}$ will be called the twisted Clifford bundle  of $\Ecal$ by $\Wcal$.

\begin{defi}
A $\spinc$ bundle on $\Vcal$ is a $\mathrm{Cl}(\Vcal)$-bundle $\Scal$ such that, for any $x\in M$, $\Scal\vert_x$ is an irreducible Clifford module
for $\Vcal\vert_x$. An orientation on the vector bundle $\Vcal$ induces a (canonical) grading on $\Scal$.
\end{defi}

If $\Scal$ is a $\spinc$-bundle on $\Vcal$ equipped with its canonical grading, we can, as in the linear case, divide  any $\mathrm{Cl}(\Vcal)$-bundle $\Fcal$ by $\Scal$ : we write  $\Fcal=\Scal\otimes \Wcal$, and the twisting bundle $\Wcal=\hom_{\mathrm{Cl}(\Vcal)}(\Scal,\Fcal)$  inherits a graduation.

Let $K$ be a compact Lie group acting on  a  manifold $M$. Consider a $K$-equivariant Riemannian metric on $M$ and let
$\nu\in \T^*M \mapsto \tilde{\nu}\in \T M$ be the corresponding bundle isomorphism.

\begin{defi}
A Clifford bundle on $M$ is a  graded Clifford bundle $\Ecal=\Ecal^+\oplus \Ecal^-$ for the Clifford algebra  $\mathrm{Cl}(\T M)$.

Let $\Ecal$ be  a Clifford bundle on $M$.
We define the symbol
$\sigma(M,\Ecal)\in \Gamma(\T^* M, \hom(\Ecal^+,\Ecal^-))$ by
$$\sigma(M,\Ecal)(x,\nu)=\clif_{\Ecal_x}(\tilde{\nu}): \Ecal\vert_x^+\to \Ecal\vert_x^-.$$
\end{defi}

We always choose $K$-invariant Hermitian structures
on $\Ecal^{\pm}$ so that $c_x(\tilde{\nu})$ is skew-adjoint.
Thus the adjoint of the symbol
$\sigma(M,\Ecal)$ is the symbol $-\sigma(M,\Ecal^{op})$ (in $\Ecal^{op}$,  we have interchanged the parity on $\Ecal^+$ and $\Ecal^-$).
We have
$$\sigma(M,\Ecal)(x,\tilde{\nu}) \sigma(M,\Ecal^{op})(x,\tilde{\nu})=-\|\tilde{\nu}\|^2_x$$ so that,  if $M$  {\bf is compact},
$\sigma(M,\Ecal)$ is {\bf an elliptic symbol}.

\begin{rem}

By Bott periodicity theorem, any $K$-equivariant elliptic symbol on  a compact even dimensional oriented manifold $M$  is equal  to a symbol $\sigma(M,\Ecal)$,  for some equivariant Clifford bundle $\Ecal$, in the equivariant $K$-theory of $\T M$.
We will prove a localization formula for
the symbols $\sigma(M,\Ecal)$.
They could be rephrased for any elliptic symbol, but this would require more notational framework.
\end{rem}

%%%%%%%%%%%%%%%%%%%%%%%%%%%%%%%%%%%%%%%%%%%%%
\subsection{Choice of metrics}\label{sub:choice}
%%%%%%%%%%%%%%%%%%%%%%%%%%%%%%%%%%%%%%%%%%%%%

Let us consider the role of the metric in our setting. Let $(M,\Vcal,\Ecal)$ be a $K$-equivariant Clifford data : $\Vcal\to M$ is an Euclidean
vector bundle and $\Ecal\to M$ is a graded $\mathrm{Cl}(\Vcal)$-bundle. The metric $g$ on the Euclidean vector bundle $\Vcal\to M$
determines a bundle map $g_\sharp:\Vcal\to\Vcal^*$. If we take another metric $g'$ on $\Vcal\to M$ the bundle map
$A:=(g_\sharp)^{-1}\circ (g')_\sharp$ is symmetric positive definite for $g$. Hence, $C=A^{1/2}$ is a well-defined bundle map,
symmetric with respect to $g$. It defines an isometry $C:(\Vcal,g)\to (\Vcal,g')$, and then a canonical isomorphism
$\mathrm{Cl}(\Vcal,g)\simeq \mathrm{Cl}(\Vcal,g')$. So we will often speak of a Clifford bundle $\Ecal$ on $\Vcal$ without specifying the metric on $\Vcal$.

In particular if $\pi:\Hcal\to B$ is a Hermitian vector bundle  on  a Riemannian $K$-manifold $B$, using a $K$-invariant connection, we can choose a metric on $\T\Hcal$ such that $\T\Hcal$ is isomorphic to the orthogonal direct sum  $\pi^*\T B\oplus \pi^*\Hcal$ of Euclidean vector bundles.
Then any Clifford module for $\T\Hcal$ is of the form
$$\pi^*\Ecal\vert_B\otimes \pi^*\bigwedge \overline{\Hcal},$$ where
$\Ecal\vert_B$ is a Clifford module for $\T B$,
 and, for $\pi(n)=x$,  the Clifford map $\clif_n$ is equal to
$\clif_x^1\boxtimes\clif_x^2$, where
$$\clif^1_x : \T_x B\longrightarrow \End(\Ecal\vert_B\vert_x),
$$
$$
\clif^2_x : \Hcal\vert_x\longrightarrow \End(\bigwedge\overline{\Hcal}\vert_x).
$$

%%%%%%%%%%%%%%%%%%%%%%%%%%%%%%%%%%%%%%%%%%%%%%%%%%%%%%%%
\subsection{The Bott symbol and direct image}\label{sec:directimage}
%%%%%%%%%%%%%%%%%%%%%%%%%%%%%%%%%%%%%%%%%%%%%%%%%%%%%%%%

Let $N$ be an Euclidean vector space. The dual space $N^*$ inherits an Euclidean structure through the identification $N^*\to N, \eta\to\tilde{\eta}$ given by the relation :
$\langle \eta,n\rangle = (\tilde{\eta},n)$.
 The vector space $N\oplus N^*$ is an Euclidean space of even dimension that we identify to $N_\C$ by $n\oplus \eta\to n\oplus i \tilde{\eta}$.
 The group $O(N)$ acting diagonally on $N\oplus N^*$   becomes a subgroup of $U(N_\C)$, and acts on $S_N:=\bigwedge N_\C$ via
 the natural representation of $U(N_\C)$ in  $\bigwedge N_\C$.

Recall the definition of the Bott symbol.
We denote by $\clif_{N_\C}$ the Clifford representation
of $N_\C$ in $S_N$. Then $\clif_{N_\C}(n\oplus i n')$  satisfies $\clif_{N_\C}(n\oplus in')^2=-(\|n\|^2+ \|n'\|^2) {\rm Id}$.

 Consider the trivial bundle $\Scal=N\times S_N$.
We define the symbol $\mathrm{Bott}(N_\C)\in \Gamma(\T^* N,\hom(\Scal^+,\Scal^-))$ by
$$
\mathrm{Bott}(N_\C)(n,\eta)=\clif_{N_\C}(n\oplus i\tilde{\eta}): S_N^+\to S^-_N.
$$
 Here $n\in N$, $\eta\in N^*$. The characteristic support of $\mathrm{Bott}(N_\C)$ is $\{0\}$, so  $\mathrm{Bott}(N_\C)$ is an elliptic symbol on $N$ called the Bott symbol. It is equivariant for the action of $O(N)$ and $\indice_{O(N)}^N(\mathrm{Bott}(N_\C))$ is the trivial representation of $O(N)$ \cite{Atiyah-Singer-1}.

Let $\Ncal\to \Xcal$ be an Euclidean vector bundle. The orthogonal frame bundle $P$ of $\Ncal$ is a
$\mathrm{O}(n)$-principal bundle such that $\Ncal=P\times_{\mathrm{O}(n)}N$, with $N=\R^n$.
Using the notion of fiber product  introduced in Section \ref{sec:fiber-product} with the group $S=\{1\}$, we
define the pushforward morphism $i_!:\Ko_K(\T^*_{K} \Xcal)\longrightarrow \Ko_K(\T^*_{K} \Ncal)$ by the relation
$$
i_!(\sigma):=\sigma\lozenge \mathrm{Bott}(N_\C),
$$
if $\sigma$ is a transversally elliptic symbol on $\Xcal$.

The following theorem is a consequence of Lemma  \ref{lem:indice-lozenge}.
\begin{theo}[Invariance of the index by push-forward]\label{theo:direct}
$$
\indice_K^\Xcal(\sigma)=\indice_K^\Ncal(i_!(\sigma)).
$$
\end{theo}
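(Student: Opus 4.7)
The plan is to apply Lemma \ref{lem:indice-lozenge} directly. In the notation of Section \ref{sec:fiber-product}, take $M=\Xcal$, $\Vcal=\Ncal$, $V=N=\R^n$, and the central subgroup $\Tbb$ trivial, so that $O$ is the full orthogonal group $O(N)$. The principal bundle $P_O$ is then the orthogonal frame bundle $P$ appearing in the statement, and the fiber product construction realizes $i_!(\sigma)=\sigma\lozenge \mathrm{Bott}(N_\C)$ as an element of $\Ko_K(\T_K^*\Ncal)$.

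Next, I would invoke Lemma \ref{lem:indice-lozenge} to obtain
$$
\indice_K^\Ncal\bigl(\sigma\lozenge\mathrm{Bott}(N_\C)\bigr)
=\indice_K^\Xcal\bigl(\sigma\otimes \bigl[\bigl[\indice^N_{O(N)}(\mathrm{Bott}(N_\C))\bigr]\bigr]\bigr)
$$
in $\hat R(K)$. The computation then reduces to identifying the $O(N)$-representation $\indice^N_{O(N)}(\mathrm{Bott}(N_\C))$. This is precisely the content of the Bott periodicity statement recalled just after the definition of $\mathrm{Bott}(N_\C)$ (and due to Atiyah--Singer): $\indice^N_{O(N)}(\mathrm{Bott}(N_\C))$ is the trivial one-dimensional representation of $O(N)$.

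Therefore the associated bundle $\bigl[\bigl[\indice^N_{O(N)}(\mathrm{Bott}(N_\C))\bigr]\bigr]=P\times_{O(N)}\C$ is canonically the trivial line bundle on $\Xcal$, and $\sigma\otimes [[\C]]=\sigma$ in $\Ko_K(\T_K^*\Xcal)$. Combining this with the displayed identity yields
$$
\indice_K^\Ncal\bigl(i_!(\sigma)\bigr)=\indice_K^\Xcal(\sigma),
$$
which is the claim. No step is really an obstacle: the only nontrivial ingredient is the computation of the $O(N)$-equivariant index of the Bott symbol, which is cited as a known Atiyah--Singer fact; once that is in hand, the theorem is a one-line consequence of the multiplicative/fiber-product formalism developed in Lemma \ref{lem:indice-lozenge}.
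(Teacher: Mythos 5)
Your proof is correct and is essentially the paper's own argument: the paper also obtains Theorem \ref{theo:direct} as a direct consequence of Lemma \ref{lem:indice-lozenge} applied with trivial central torus $\Tbb$ (so $O=O(N)$ and $P_O$ is the orthogonal frame bundle), together with the Atiyah--Singer fact that $\indice^N_{O(N)}(\mathrm{Bott}(N_\C))$ is the trivial representation, which makes the twisting bundle $[[\,\C\,]]$ trivial. Nothing to add.
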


A  symbol  representing $i_!(\sigma)$ is as follows.
 We write an element $n\in  \Ncal$ as $(x,v)$ with $x\in \Xcal$ and $v\in \Ncal\vert_x$.
We choose a $K$-invariant connection on the bundle $\Ncal\to \Xcal$, which allows us to parameterize
 $\T^* \Ncal=\{(n,(\xi,\eta))\}$, with
$\xi\in \T^*_x \Xcal$, $\eta\in \Ncal^*\vert_x$ : here $\xi$ is lifted to an horizontal  element of $\T_n^* \Ncal$ using the connection, so
$(\xi,\eta)$ parameterize any element in $\T_n^*\Ncal$.
Let
$\sigma \in \Gamma(\T^* \Xcal,\hom(\Ecal^+,\Ecal^-))$
be a symbol on $\Xcal$. Then the symbol
\begin{equation}\label{eq:bott}
((x,v),(\xi,\eta))\mapsto \sigma(x,\xi) \boxtimes \clif_{x}(v\oplus i \tilde{\eta})
\end{equation}
represents  $i_!(\sigma)$.
Here $\clif_x$ is the Clifford representation of $\Ncal\vert_x\otimes \C$ on $\bigwedge (\Ncal\vert_x\otimes\C)$, so
$(\clif_{x}(v\oplus i \tilde{\eta}))^2=-(\|v\|^2+\|\eta\|^2)\mathrm{Id}$
and $\Char(i!(\sigma))=\Char(\sigma)$.

%%%%%%%%%%%%%%%%%%%%%%%%%%%%%%%%%%%%%%%%%%%%%%%%%%%%%%%%%%%%%%%%%%%%%%%%%%%%%%%%%%
\subsection{Dirac operators and equivariant indices}\label{sec:equivariant index}
%%%%%%%%%%%%%%%%%%%%%%%%%%%%%%%%%%%%%%%%%%%%%%%%%%%%%%%%%%%%%%%%%%%%%%%%%%%%%%%%%%

Let $\Ecal\to M$  be a $K$-equivariant graded Clifford bundle on a compact
Riemannian $K$-manifold  $M$ of even dimension.

\begin{defi}\label{defiQK}
We denote by $\Qcal_K(M,\Ecal)\in R(K)$ the  index of the symbol
$\sigma(M,\Ecal)$.
\end{defi}

If $K$ is the trivial group, we denote $\Qcal_K(M,\Ecal)$ simply by
$\Qcal(M,\Ecal).$
This is an integer.

We can realize
 $\Qcal_K(M,\Ecal)$ as the index of a Dirac operator.
We  choose  $K$-invariant connections  on $\Ecal^{\pm}$  such that the Dirac operator
$\Dcal_\Ecal^{\pm}: \Gamma(M,\Ecal^\pm)\to \Gamma(M,\Ecal^{\mp})$ \cite{B-G-V}
verifies
$(\Dcal_\Ecal^{+})^*=\Dcal_\Ecal^{-}$.  The principal symbol of  $\Dcal_\Ecal^{+}$ is $\sigma(M,\Ecal)$.
Thus  $\Qcal_K(M,\Ecal)$ can be realized as the difference of the two finite dimensional  representation spaces  $[\ker \Dcal^+_\Ecal]- [ \ker \Dcal^-_\Ecal]$ and we also say that $\Qcal_K(M,\Ecal)$ is the equivariant index of the Dirac  operator $\Dcal_\Ecal^+.$

For example, if $M$ is any $K$-manifold, and $\Ecal=\bigwedge
(\T M\otimes_\R {\C})$, a Dirac operator  with principal symbol $\sigma(M,\Ecal)$ can be taken as $d+d^*$ where $d$ is the de Rham differential. If $K$ is connected, $\Qcal_K(M,\Ecal)$ is a multiple of the trivial representation of $K$, the multiplicity being the Euler characteristic of $M$.

\begin{exam}\label{exam:complex}
An important example is when $M$ is a complex manifold  with complex structure $J\in \Gamma(M,\T M)$.
Then $\T M\otimes_\R \C$ breaks into two pieces, the holomorphic and anti-holomorphic tangent spaces
$$\T M\otimes_{\R} \C=\T^{1,0}M\oplus \T^{0,1}M$$ on which $J$ acts by $i$, and $-i$, respectively.
 We consider a Riemannian metric on $\T M$  such that $g(Jv,Jv)=g(v,v)$.
Let $\Wcal \to M$ be a holomorphic  vector bundle on $M$.
Then the vector bundle of $\Wcal$-valued anti-holomorphic differential forms $$\Ecal= \bigwedge (\T^{0,1}M)^*\otimes \Wcal$$ is a graded Clifford bundle.
This bundle  is isomorphic to  $\bigwedge_J \T M\otimes \Wcal$, with Clifford action described by the formula (\ref{eq:cliffordaction}).
A Dirac operator with principal symbol $\sigma(M,\Ecal)$ is
$ {\overline \partial}_{\Wcal} +({\overline \partial}_{\Wcal})^*$
where ${\overline \partial}_{\Wcal}$ is the Dolbeaut operator.
Assume that $K$ acts by holomorphic transformations on $M,\Wcal$.
Then $\Qcal_K(M,\Ecal)=\sum_{j} (-1)^j H^j(M,\Ocal(\Wcal))$ is the alternate sum of the cohomology groups of the sheaf of holomorphic sections of $\Wcal$.
\end{exam}

\begin{exam}\label{exam:highest-weight}
Let $K$ be a compact connected Lie group, and $T\subset K$ be a maximal torus. We consider the flag manifold
$\mathbb{F}:=K/T$ with base
point $\overline{e}\in K/T$. The choice of a Weyl chamber $\tgot^*_{\geq 0}\subset \tgot^*$ determines a
$T$-invariant complex structure $J$ on
$\T_{\overline{e}}\mathbb{F}\simeq\kgot/\tgot$ such that the complex $T$-module $\T^{1,0}\mathbb{F}$
is equal to  $\oplus_{\alpha>0}\C_\alpha$,
where $\{\alpha>0\}$ denotes the set of positive roots. The complex structure $J$ defines an integrable complex structure
on the flag manifold $\mathbb{F}$ (still denoted $J$). Let $\Lambda\subset \tgot^*$ be the weight lattice. Any weight
$\mu\in \Lambda$ determines an holomorphic
line bundle $[\C_\mu]:=K\times_T\C_{\mu}$ on $\mathbb{F}$. If $\mu$ is dominant, the Borel-Weil Theorem
tells us that $\Qcal_K(\mathbb{F},\bigwedge_J \T \mathbb{F}\otimes [\C_\mu])$ is equal to   the irreducible
representation of $K$ with highest weight $\mu$.
\end{exam}

%%%%%%%%%%%%%%%%%%%%%%%%%%%%%%%%%%%%%%%%%%%
\subsection{Horizontal Clifford bundle}\label{sec:horizontal}
%%%%%%%%%%%%%%%%%%%%%%%%%%%%%%%%%%%%%%%%%%%

Let $G$ and $K$ be two compact  Lie groups. Let $P$ be a
manifold provided with an action of $G\times K$. Assume that the action of $K$ is free and let $M=P/K$. We are in the situation described in Subsection \ref{subsec:freeaction}, and we use the same notations.

 %Then the manifold $M:=P/K$ is
%provided with an action of $L$ and the quotient map $\pi:P\to M$
%is a $L$-equivariant fibration.  The action of $K$ on the bundle $\T^*_K P$ is free and the
%quotient $(\T^*_K P)/K$ admits a canonical identification with $\T^*M$. The choice of a $K\times L$-invariant connection
%for the principal fibration $P\to M$ induces an invariant decomposition
%$$
%\T^* P=\T^*_K P\oplus P\times\kgot^*.
%$$
%Let  $\bar{\pi}:\T^*P\to \T^*M$ be the composition of the projection $\T^* P\to\T^*_K P$ with the quotient map
%$\T^*_K P \to \T^* M$.
%
For the tangent bundle, the choice of a $G$-invariant connection on the principal bundle $P\to M$ defines an invariant
decomposition $\T P=\pi^*\T M\oplus P\times\kgot$; if a metric is chosen on $M$,
we define the metric on $P$ by declaring that this decomposition is orthogonal.

Let $\Ecal$  be a $G$-equivariant graded Clifford bundle on $M$.
We obtain from $\Ecal$  a symbol
$$\sigma_{hor}(P,\Ecal)\in \Gamma(\T^* P, \hom(\pi^*\Ecal^+,\pi^*\Ecal^-))$$
by defining
$\sigma_{hor}(P,\Ecal)(p,\eta)=\sigma(M,\Ecal)(x,\tilde{\theta})$
where $(x,\theta):=\bar{\pi}(p,\eta)$.

Assume $P$ compact.
It is clear that
$\sigma_{hor}(P,\Ecal)$ is a $K$-transversally elliptic symbol on $P$.
It will be called the horizontal Dirac symbol.
Furthermore, this symbol is $G$-equivariant, and
 for any irreducible
representation $V_\mu$ of $K$, the  finite dimensional space
$$
\left[\indice_{G\times K}^P(\sigma_{hor}(P,\Ecal))\otimes V_\mu^*\right]^K
$$
carries a  representation of $G$.

We form the $G$-equivariant complex vector bundle $\Vcal_\mu:= P\times_K
V_\mu$ on $M$, and the twisted Clifford bundle $\Ecal\otimes \Vcal_\mu$.
 We consider the elliptic symbol
$\sigma(M,\Ecal\otimes  \Vcal_\mu)$ on $M$.

The following result  follows easily from the Peter-Weyl decomposition of $\Gamma(P,\pi^*\Ecal)$ under the free action of $K$. It is a particular case of the free action property of transversally elliptic symbols.
\begin{theo}
In $\hat{R}(G\times K)$ we have
$$
\indice_{G\times K}^P\left(\sigma_{hor}(P,\Ecal)\right)=\sum_{\mu \in \hat K}
\Qcal_G(M,\Ecal\otimes  \Vcal_\mu^*) \otimes  V_\mu.
$$

In particular $[\indice_{G\times K}^P\left(\sigma_{hor}(P,\Ecal)\right)]^K=\Qcal_G(M,\Ecal)$.
\end{theo}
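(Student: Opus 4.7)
The plan is to recognize the horizontal symbol $\sigma_{hor}(P,\Ecal)$ as the pullback of $\sigma(M,\Ecal)$ under the quotient map, and then invoke the free action property of Theorem \ref{th:free-action} directly. The $K$-invariant statement will then be an immediate consequence.

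First, I would verify that $[\sigma_{hor}(P,\Ecal)] = q^*[\sigma(M,\Ecal)]$ in $\Ko_{G\times K}(\T^*_{G\times K}P)$, where $q:\T^*_K P\to \T^* M$ is the quotient map by $K$. This uses the chosen $G$-invariant connection on $P\to M$ to write $\T^* P \simeq \pi^*\T^* M \oplus P\times\kgot^*$ as an orthogonal decomposition. A cotangent vector $\eta\in \T^*_p P$ lies in $\T^*_K P$ precisely when its vertical component vanishes, and in that case the horizontal part $\theta\in \pi^*\T^*_x M$ projects under $q$ to an element of $\T^*_x M$ with the same Euclidean norm (by the choice of metric on $P$). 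By the definition $\sigma_{hor}(P,\Ecal)(p,\eta) = \sigma(M,\Ecal)(x,\tilde\theta)$, the restriction of $\sigma_{hor}$ to $\T^*_K P$ is exactly the pullback of $\sigma(M,\Ecal)$ via $q$. Since classes in $\Ko_{G\times K}(\T^*_{G\times K}P)$ only depend on behavior near $\T^*_K P$, this gives the desired identification of $\K$-theory classes.

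Next, applying Theorem \ref{th:free-action} with the symbol $\sigma=\sigma(M,\Ecal)\in\Ko_G(\T^* M)$ yields
\begin{equation*}
\indice^P_{G\times K}\bigl(q^*\sigma(M,\Ecal)\bigr)
=\sum_{\mu\in\hat K}\indice^M_G\bigl(\sigma(M,\Ecal)\otimes \Vcal_\mu^*\bigr)\otimes V_\mu.
\end{equation*}
The twisted symbol $\sigma(M,\Ecal)\otimes \Vcal_\mu^*$ is the principal symbol of the Dirac operator attached to the twisted Clifford bundle $\Ecal\otimes\Vcal_\mu^*$, so $\indice^M_G(\sigma(M,\Ecal)\otimes\Vcal_\mu^*)=\Qcal_G(M,\Ecal\otimes\Vcal_\mu^*)$ by Definition \ref{defiQK}. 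Combined with the identification from the previous paragraph, this gives the stated formula.

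Finally, for the ``in particular'' statement I would take $K$-invariants on both sides. Only the term with $V_\mu$ the trivial representation contributes to $[\,\cdot\,]^K$, and in that case $\Vcal_\mu^*$ is the trivial line bundle, so $\Qcal_G(M,\Ecal\otimes\Vcal_\mu^*)=\Qcal_G(M,\Ecal)$. The only step requiring real care is the geometric check that $\sigma_{hor}$ agrees with $q^*\sigma(M,\Ecal)$ on $\T^*_K P$ — once this is in place, the theorem is simply a specialization of the general free action property already established.
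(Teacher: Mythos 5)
Your proposal is correct and follows essentially the same route as the paper, which disposes of this theorem by noting it is ``a particular case of the free action property of transversally elliptic symbols'' (Theorem \ref{th:free-action}); your check that $\sigma_{hor}(P,\Ecal)$ agrees on $\T^*_K P$ with the pullback $q^*\sigma(M,\Ecal)$, so that the two classes coincide in $\Ko_{G\times K}(\T^*_{G\times K}P)$, is precisely the detail the paper leaves implicit. The identification $\indice^M_G(\sigma(M,\Ecal)\otimes\Vcal_\mu^*)=\Qcal_G(M,\Ecal\otimes\Vcal_\mu^*)$ and the passage to $K$-invariants are handled correctly.
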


\medskip

Consider now the case of a $G\times K$ Riemannian manifold $P$, but assume
now that $K$ acts only infinitesimally freely: for any $p\in P$, the stabilizer $K_p$  is a finite subgroup.
So $M=P/K$ is an orbifold, but maybe not a manifold. Consider  $\Hcal$, the subbundle of $\T P$ defined by
the orthogonal decomposition $\T P=\Hcal \oplus P\times\kgot$ and call $\Hcal$ the horizontal tangent bundle.

\begin{defi}\label{defi:Qred}
$\bullet$ If $\Ecal$ is a graded Clifford bundle for the horizontal tangent bundle $\Hcal$, equivariant for the
$G\times K$-action, we  define $\sigma_{hor}(P,\Ecal)(p,\nu)=\clif_{\Ecal_p}(\tilde{\nu}_\Hcal)$ where
$\tilde{\nu}_\Hcal$ is the horizontal component of $\tilde{\nu}\in \T_p P$.

$\bullet$ We define
$\Qcal_G(M,\Ecal)$ by the formula
$$\Qcal_G(M,\Ecal):=\left[\indice_{G\times K}^P(\sigma_{hor}(P,\Ecal))\right]^K.$$
\end{defi}

It can be shown, see \cite{Vergne96}, that this is indeed the equivariant index of a Dirac operator on the $G$-orbifold $M$ in the sense of Kawasaki \cite{Kawasaki81}.

%%%%%%%%%%%%%%%%%%%%%%%%%%%%%%%%%%%%%%%%%%%%%%%%%
%%%%%%%%%%%%%%%%%%%%%%%%%%%%%%%%%%%%%%%%%%%%%%%%%
%%%%%%%%%%%%%%%%%%%%%%%%%%%%%%%%%%%%%%%%%%%%%%%%%
\section{Deformation \`a la Witten of Dirac operators}
%%%%%%%%%%%%%%%%%%%%%%%%%%%%%%%%%%%%%%%%%%%%%%%%%
%%%%%%%%%%%%%%%%%%%%%%%%%%%%%%%%%%%%%%%%%%%%%%%%%
%%%%%%%%%%%%%%%%%%%%%%%%%%%%%%%%%%%%%%%%%%%%%%%%%

We start to discuss the real purpose of this paper. We will study the deformation \`a la Witten of elliptic symbols
$\sigma(M,\Ecal)$ by vector fields associated  to $K$-equivariant maps $\Phi: M\to\kgot^*$.

We recall that we  chose  an invariant scalar product in $\kgot$ providing an identification
$\kgot\simeq\kgot^*$.

%%%%%%%%%%%%%%%%%%%%%%%%%%%%%%%%%%%%%%%%%%%%%%%%%
%%%%%%%%%%%%%%%%%%%%%%%%%%%%%%%%%%%%%%%%%%%%%%%%%
\subsection{Kirwan vector field}\label{sec:Kirwan-vector-field}
%%%%%%%%%%%%%%%%%%%%%%%%%%%%%%%%%%%%%%%%%%%%%%%%%
%%%%%%%%%%%%%%%%%%%%%%%%%%%%%%%%%%%%%%%%%%%%%%%%%

Let $M$ be a $K$-manifold and let $\Phi: M\to \kgot^*$ be an equivariant map.

\begin{defi}\label{defi:kir}
$\bullet$ The {\em Kirwan vector field} associated to $\Phi$ is defined by
\begin{equation}\label{eq-kappa}
    \kappa_{\Phi}(m)= -\Phi(m)\cdot m, \quad m\in M.
\end{equation}

$\bullet$ We denote by $Z_\Phi$ the set of zeroes of $\kappa_{\Phi}$. Thus $Z_\Phi$ is a $K$-invariant closed subset of $M$.

\end{defi}

The $Z_\Phi$ is not necessarily smooth.
If $0\in \Phi(M)$, then clearly $Z_\Phi$ contains $\Phi^{-1}(0)$.

In order to describe more precisely $Z_\Phi$, we consider the set $\{ (\kgot_i), i\in I\}$  of conjugacy classes of
subalgebras of $\kgot$ representing the infinitesimal stabilizers of the $K$-action on $M$ : if $M$ is compact, the set $I$ is finite.
We denoted by $M_{\kgot_i}$ the submanifold of points $m\in M$ such that $\kgot_i=\kgot_m$. We see that
$Z_\Phi\cap M_{\kgot_i}=M_{\kgot_i}\cap \Phi^{-1}(\kgot^*_i)$, hence
\begin{equation}\label{eq:Z-Phi}
Z_\Phi=\bigsqcup_{i\in I} K\Big(M_{\kgot_i}\cap \Phi^{-1}(\kgot^*_i)\Big),
\end{equation}
where $\kgot_i^*$ is viewed as a subspace of $\kgot^*$ thanks to the scalar product.

\medskip

\begin{defi}
An equivariant map $\Phi: M\to \kgot^*$ is called a {\em moment map} if there exists an invariant (real) $2$-form $\Omega$ on $M$
 satisfying the relations
\begin{equation}\label{eq:hamiltonian-action}
    \iota(X_M)\Omega+d\langle\Phi,X\rangle=0 \quad \mathrm{and} \quad d\Omega=0
\end{equation}
for $X\in\kgot$. The data $(\Omega,\Phi)$ on $M$ is called a weak Hamiltonian structure.
\end{defi}
Classically, a moment map in the sense of Hamiltonian geometry requires that $\Omega$ is non degenerate.
In this text, we will see that some localization results can be proven without this strong hypothesis.

A particularly important example of moment map is provided by the following construction.
\begin{defi}\label{defi:PhiL}
Let $L$ be a $K$-equivariant Hermitian line bundle with Hermitian connection $\nabla$ with curvature $R_L$.
Let $\Omega_L=\frac{1}{i}R_L$. Then $\Omega_L$ is a closed real two form.
This data determines a  moment map
$$
\Phi_{L} : M\to \kgot^*
$$
by the relation $\Lcal(X)-\nabla_{X_M}= i\langle \Phi_{L},X\rangle$, for  all   $X\in\kgot$.
Here $\Lcal(X)$
is the infinitesimal action of $X\in \kgot$ on the sections of $L$.
 The relation $\iota(X_M)\Omega_L+d\langle\Phi_L,X\rangle=0$ follows.
\end{defi}

\begin{rem}\label{rem:proj}
The subset  $\Phi_L^{-1}(0)$ of the zeroes of the associated Kirwan vector field
has some particular importance when $L$ is an ample bundle on a projective manifold provided with an action of $K$.
In this case, if $\Phi_L$ is the moment map associated to the projective embedding,
 $\Phi_L^{-1}(0)/K$ is isomorphic to the geometric invariant theory (GIT) quotient $M/\!\!/K_\C$ of $M$ by
 $K_\C$, and is provided with  structure of projective variety with corresponding line bundle $L/\!\!/K_\C$.
When $M/\!\!/K_\C$ is smooth, Guillemin and Sternberg \cite{Guillemin-Sternberg82} have proved the
first $[Q,R]=0$ identity :
$$
[H^0(M,L)]^K\simeq H^0(M/\!\!/K_\C;L/\!\!/K_\C).
$$
\end{rem}

\begin{rem}\label{rem;Zphi}
If $\Phi$ is a moment map, by  Equation
(\ref{eq:hamiltonian-action}), we have
$$d\|\Phi\|^2=2\iota(\kappa_\Phi)\Omega.$$
Thus the set $Z_\Phi$ is contained in the critical set of the function $\|\Phi\|^2$ : if $\beta\in \Phi(Z_\Phi)$, then $\|\beta\|^2$ is a critical value of the square of the moment map $\Phi$.
\end{rem}

\bigskip

One of the properties that a moment map shares with  the moment map of a Hamiltonian action is the following.

\begin{prop}\label{prop: O regular}

$\bullet$ For any $X\in\kgot$, the function $\langle\Phi,X\rangle$ is constant on each connected component of the manifold $M^X$.

$\bullet$ If $0$ is a regular value of  a moment map $\Phi$, then the action of $K$ is infinitesimally free on the submanifold $\Phi^{-1}(0)$. The reduced space
$M_{red}= \Phi^{-1}(0)/K$ is an orbifold.

$\bullet$ Conversely, if $\Phi$ is the moment map of a Hamiltonian action, and
the action of $K$ is infinitesimally free on the submanifold $\Phi^{-1}(0)$, then $0$ is a regular value of $\Phi$.
\end{prop}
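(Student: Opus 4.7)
The three statements all follow from careful exploitation of the defining Kostant relation $\iota(X_M)\Omega + d\langle\Phi, X\rangle = 0$, so my plan is to systematically extract its consequences at fixed points of a one-parameter subgroup and on the zero fiber $\Phi^{-1}(0)$.

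For the first bullet, I would simply restrict the Kostant relation to the submanifold $M^X$. At any $m \in M^X$ we have $X_M(m) = 0$, so $\iota(X_M)\Omega$ vanishes at $m$, and therefore $d\langle\Phi, X\rangle$ vanishes at $m$ as a $1$-form on $M$. In particular its pullback to $M^X$ vanishes identically, which shows that $\langle\Phi,X\rangle|_{M^X}$ is locally constant, hence constant on each connected component.

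For the second bullet, suppose $m \in \Phi^{-1}(0)$ and $X \in \kgot_m$, i.e.\ $X_M(m) = 0$. For every $v \in T_m M$, pairing the differential $T_m\Phi(v) \in \kgot^*$ with $X$ and using the Kostant relation at $m$ gives
$$
\langle T_m\Phi(v), X\rangle = (d\langle\Phi,X\rangle)_m(v) = -\Omega_m(X_M(m), v) = 0,
$$
so $X$ annihilates the image of $T_m\Phi$. Since $0$ is a regular value, $T_m\Phi$ is surjective, forcing $X = 0$. Hence $\kgot_m = 0$ for every $m \in \Phi^{-1}(0)$, all stabilizers $K_m$ are finite, and the $K$-action on $\Phi^{-1}(0)$ is locally free. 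Combined with the fact that $\Phi^{-1}(0)$ is a smooth $K$-invariant submanifold (again by regularity), the standard slice theorem produces the orbifold structure on $M_{red}$.

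For the third bullet, I would reverse the flow of information in the same identity. The transpose of $T_m\Phi : T_m M \to \kgot^*$ is the map $\kgot \to T_m^* M$ sending $X$ to $(d\langle\Phi,X\rangle)_m = -\iota(X_M(m))\Omega_m$. Because $\Omega$ is now assumed non-degenerate, this transpose has kernel exactly $\{X \in \kgot : X_M(m) = 0\} = \kgot_m$, so by duality the image of $T_m\Phi$ is the annihilator of $\kgot_m$ in $\kgot^*$. Hence the hypothesis $\kgot_m = 0$ on $\Phi^{-1}(0)$ forces $T_m\Phi$ to be surjective at every such $m$, which is precisely the condition for $0$ to be a regular value of $\Phi$. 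The one genuine subtlety to keep track of is exactly this asymmetry: bullets one and two require only the weak moment map relation, whereas in bullet three one really must use that $\Omega$ is non-degenerate in order to conclude that $\ker(X \mapsto \iota(X_M(m))\Omega_m) = \kgot_m$ rather than something potentially larger.
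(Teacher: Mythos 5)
Your proof is correct and takes essentially the same route as the paper, whose entire argument is the single observation from the Kostant relation that $\langle \T_m\Phi(v),X\rangle=-\Omega_m(X_M(m),v)=0$ whenever $X_M(m)=0$, so that $\T_m\Phi$ maps into $(\R X)^{\perp}$. You merely spell out in detail the linear-algebra duality that the paper leaves implicit, including the correct identification of where the non-degeneracy of $\Omega$ is genuinely needed (only in the third bullet).
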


\begin{proof}
Equation (\ref{eq:hamiltonian-action}) tells us that for a point $m\in M$ where $X_M(m)=0$, the differential $\T_m\Phi$ sends $\T_m M$ to
$(\R X)^{\perp}\subset\kgot^*$. Our three points follow from this simple fact.
\end{proof}\bigskip

We note that any invariant $1$-form $\alpha$ defines a moment map $\varphi_\alpha :M\to \kgot^*$ by the relation :
$ \langle\varphi_\alpha,X\rangle=\iota(X_M)\alpha$, $X\in\kgot$: the corresponding $2$-form is $d\alpha$.

We will use the following equivalence relation between moment maps on $M$ :
\begin{equation}\label{eq:equivalence-Phi}
\Phi\sim \Phi'\ \Longleftrightarrow \exists\ \mathrm{an \ invariant}\ 1\mathrm{-form}\ \alpha\ \mathrm{such\ that}\
\Phi-\Phi'=\varphi_\alpha.
\end{equation}

\medskip

Let us describe $\Phi(Z_\Phi)$ when $\Phi$ is a moment map.
As before, let  $\{ \kgot_i, i\in I\}$  be a set of representatives of the infinitesimal stabilizers of the $K$-action on $M$,
and let $M_{\kgot_i}$ be the submanifold of points $m\in M$ such that $\kgot_i=\kgot_m$.
If $\Xcal$ is a connected component  of $M_{\kgot_i}$, Relations (\ref{eq:hamiltonian-action}) imply that the image $\Phi(\Xcal)$ is
contained in an affine space $A(\Xcal,\Phi)\subset \kgot^*$ with direction $\kgot_i^{\perp}$. The orthogonal projection $\beta(\Xcal,\Phi)$ of
$0$ on the affine space of $A(\Xcal,\Phi)$ belongs to $\kgot^*_i$, viewed as a subspace of $\kgot^*$ thanks to the scalar product.

\begin{lem}\label{lem:PhiZPhi}
The set $\Phi(Z_\Phi)$ is contained in
$$
C_\Phi:=\bigcup_{i\in I} \bigcup_{\Xcal\subset M_{\kgot_i}} K\cdot \beta(\Xcal,\Phi).
$$
More precisely, the orbit $K\cdot \beta(\Xcal,\Phi)$ is contained in $\Phi(Z_\Phi)$ if and only if
$\beta(\Xcal,\Phi)$ belongs to $\Phi(\Xcal)$.
\end{lem}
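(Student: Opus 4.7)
The plan is to analyze, on each connected component $\Xcal$ of $M_{\kgot_i}$, the restriction $\Phi\vert_\Xcal$ via the Kostant relations (\ref{eq:hamiltonian-action}), pin down $\beta(\Xcal,\Phi)$ as the only possible value of $\Phi$ on $\Xcal\cap Z_\Phi$, and then assemble the statement using the stratification (\ref{eq:Z-Phi}).

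First I would exploit the moment map equation to describe $A(\Xcal,\Phi)$ sharply. For $X\in\kgot_i$ and $m\in\Xcal$ the vector field $X_M$ vanishes at $m$ (since $X\in\kgot_i=\kgot_m$), so (\ref{eq:hamiltonian-action}) yields $d\langle\Phi,X\rangle_m=-\iota(X_M)_m\Omega=0$. As $\Xcal$ is connected, $\langle\Phi,X\rangle$ is then constant on $\Xcal$ for every $X\in\kgot_i$, which is exactly the statement that $\Phi(\Xcal)\subset A(\Xcal,\Phi)=\beta_0+\kgot_i^\perp$ for a unique $\beta_0\in\kgot_i$. The orthogonal projection of $0$ onto this affine subspace is its component in $\kgot_i$, namely $\beta_0$, so $\beta(\Xcal,\Phi)=\beta_0$; moreover the direct sum decomposition $\kgot^*=\kgot_i\oplus\kgot_i^\perp$ shows $A(\Xcal,\Phi)\cap\kgot_i=\{\beta(\Xcal,\Phi)\}$.

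Next, from $\kappa_\Phi(m)=-\Phi(m)\cdot m$ one sees $m\in Z_\Phi$ iff $\Phi(m)\in\kgot_m$, and combining with (\ref{eq:Z-Phi}) gives the crucial identity
\[
Z_\Phi\cap\Xcal=\Xcal\cap\Phi^{-1}(\kgot_i)=\Xcal\cap\Phi^{-1}\bigl(A(\Xcal,\Phi)\cap\kgot_i\bigr)=\Xcal\cap\Phi^{-1}(\beta(\Xcal,\Phi)).
\]
Thus $\Phi(Z_\Phi\cap\Xcal)$ is either empty or equal to $\{\beta(\Xcal,\Phi)\}$, the second alternative occurring precisely when $\beta(\Xcal,\Phi)\in\Phi(\Xcal)$. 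Using $K$-equivariance of $\Phi$ and (\ref{eq:Z-Phi}),
\[
\Phi(Z_\Phi)=\bigcup_{i\in I}\bigcup_{\Xcal\subset M_{\kgot_i}}K\cdot\Phi(Z_\Phi\cap\Xcal)\subset\bigcup_{i,\Xcal}K\cdot\beta(\Xcal,\Phi)=C_\Phi,
\]
which gives the first inclusion.

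For the biconditional, the ``if'' direction is immediate from the identity above: if $\beta(\Xcal,\Phi)\in\Phi(\Xcal)$ then any preimage $m\in\Xcal$ lies in $Z_\Phi$, because $\Phi(m)\in\kgot_i=\kgot_m$ forces $\Phi(m)\cdot m=0$, and $K$-equivariance of $\Phi$ then yields $K\cdot\beta(\Xcal,\Phi)\subset\Phi(Z_\Phi)$. The converse is the subtle step: starting from $\beta(\Xcal,\Phi)\in\Phi(Z_\Phi)$ one must produce a preimage actually in $\Xcal$ rather than merely in $K\Xcal$ or in some other stratum that projects to the same value. I would argue that if $m\in Z_\Phi$ satisfies $\Phi(m)=\beta(\Xcal,\Phi)$, then $\kgot_m$ contains $\beta(\Xcal,\Phi)$ and falls into the conjugacy class dictated by the definition of $\beta(\Xcal,\Phi)$, so after a $K$-translation one relocates $m$ inside $\Xcal$ itself. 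This converse is the main obstacle: the identification is delicate whenever several strata yield numerically the same projected value, and one must carefully unwind the stratification (\ref{eq:Z-Phi}) to return to the designated component $\Xcal$.
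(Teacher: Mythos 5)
Your first inclusion and your ``if'' direction are correct, and the computation behind them is exactly the natural one (the paper leaves this proof to the reader): Kostant's relations give $d\langle\Phi,X\rangle=0$ at every point of $\Xcal$ for $X\in\kgot_i$, hence $\Phi(\Xcal)\subset A(\Xcal,\Phi)=\beta(\Xcal,\Phi)+\kgot_i^{\perp}$ and $A(\Xcal,\Phi)\cap\kgot_i^*=\{\beta(\Xcal,\Phi)\}$; combined with $Z_\Phi\cap\Xcal=\Xcal\cap\Phi^{-1}(\kgot_i^*)$ this yields $Z_\Phi\cap\Xcal=\Xcal\cap\Phi^{-1}(\beta(\Xcal,\Phi))$, so that $\Phi(Z_\Phi\cap\Xcal)$ is either empty or $\{\beta(\Xcal,\Phi)\}$, and equivariance gives both $\Phi(Z_\Phi)\subset C_\Phi$ and the ``if'' statement.

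The problem is your last paragraph. The strategy you propose for the converse --- take $m\in Z_\Phi$ with $\Phi(m)=\beta(\Xcal,\Phi)$ and move it into $\Xcal$ by a $K$-translation --- cannot work, because nothing ties such an $m$ to the component $\Xcal$: it may lie in a different stratum, or even in a different connected component of $M$ (recall the paper explicitly does not assume $M$ connected here). In fact the per-component ``only if'' is false as literally stated. Take $K=S^1$ and $M=M_1\sqcup M_2$ two copies of $S^2$ with the rotation action, with moment maps the height function $h\in[-1,1]$ on $M_1$ and $h+3\in[2,4]$ on $M_2$. For the open stratum $\Xcal\subset M_2$ of points with trivial infinitesimal stabilizer one has $A(\Xcal,\Phi)=\kgot^*$, hence $\beta(\Xcal,\Phi)=0\notin\Phi(\Xcal)$; yet $0\in\Phi(Z_\Phi)$ because the equator of $M_1$ lies in $Z_\Phi$, so $K\cdot\beta(\Xcal,\Phi)\subset\Phi(Z_\Phi)$ while $\beta(\Xcal,\Phi)\notin\Phi(\Xcal)$. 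The statement that is actually true, and the one used later (Definition \ref{def:Bphi} and the decomposition (\ref{eq=Z-Phi-beta})), is the set-level identity
$$
\Phi(Z_\Phi)\ =\ \bigcup_{\Xcal\ :\ \beta(\Xcal,\Phi)\in\Phi(\Xcal)} K\cdot\beta(\Xcal,\Phi),
$$
and this requires no extra work: it follows immediately from your identity $Z_\Phi\cap\Xcal=\Xcal\cap\Phi^{-1}(\beta(\Xcal,\Phi))$ together with $Z_\Phi=\bigcup_{i}\bigcup_{\Xcal\subset M_{\kgot_i}}K\cdot(Z_\Phi\cap\Xcal)$ and the equivariance of $\Phi$. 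So the ``delicate unwinding'' you flag is both unnecessary for the true statement and impossible for the literal one; you should replace the attempted relocation argument by this observation.
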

\begin{proof}The proof is left to the reader.
\end{proof}

Concerning the set $C_\Phi$, we have the useful remark.

\begin{prop} Let $\Phi,\Phi'$ be two moment maps on the $K$-manifold $M$.
\begin{itemize}
\item If $\Phi\sim \Phi'$ then $C_\Phi=C_{\Phi'}$.
\item If $M$ is compact,  $C_\Phi$ is  a finite union of coadjoint orbits.
\item If $Z_\Phi$ is compact, then $\Phi(Z_\Phi)$ is  a finite union of coadjoint orbits.
\end{itemize}
\end{prop}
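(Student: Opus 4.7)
The three statements concern the sets
$$
C_\Phi=\bigcup_{i\in I}\bigcup_{\Xcal\subset M_{\kgot_i}}K\cdot\beta(\Xcal,\Phi),
$$
where $\Xcal$ ranges over connected components of $M_{\kgot_i}$ and $\beta(\Xcal,\Phi)\in\kgot_i^*$ is the orthogonal projection of the origin onto the affine space $A(\Xcal,\Phi)\subset\kgot^*$ of direction $\kgot_i^\perp$ that contains $\Phi(\Xcal)$. My plan is to treat each bullet by examining how $A(\Xcal,\Phi)$ and its support point $\beta(\Xcal,\Phi)$ depend on $\Phi$ and on the geometry of $M$.

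For the first bullet, I would exploit the fact that the moment map $\varphi_\alpha$ associated to the invariant $1$-form $\alpha$ vanishes in $\kgot_i$-directions along $M_{\kgot_i}$: for $m\in\Xcal$ and $X\in\kgot_i$ the vector field $X_M$ vanishes at $m$, so
$$
\langle\varphi_\alpha(m),X\rangle=\iota(X_M)\alpha\vert_m=0.
$$
This forces $\varphi_\alpha(\Xcal)\subset\kgot_i^\perp$. Picking any $m_0\in\Xcal$, one finds
$$
A(\Xcal,\Phi')=\Phi'(m_0)+\kgot_i^\perp=\bigl(\Phi(m_0)-\varphi_\alpha(m_0)\bigr)+\kgot_i^\perp=A(\Xcal,\Phi),
$$
so $\beta(\Xcal,\Phi')=\beta(\Xcal,\Phi)$ and $C_{\Phi'}=C_\Phi$.

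For the second bullet, the key input is the finiteness of the (infinitesimal) orbit-type stratification of a compact $K$-manifold: $I$ is finite (as asserted in the text) and each stratum $M_{\kgot_i}$ has only finitely many connected components. The latter is a standard consequence of the slice theorem, since the compact fixed-point manifold $M^{\kgot_i}$ has finitely many components and one recovers $M_{\kgot_i}$ by removing the closed subset $\bigcup_{\kgot_j\supsetneq\kgot_i}M^{\kgot_j}$, a finite union of smooth submanifolds of positive codimension. As each $K\cdot\beta(\Xcal,\Phi)$ is a single coadjoint orbit, $C_\Phi$ is a finite union.

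For the third bullet, the strategy is to localize near the compact set $Z_\Phi$. By the slice theorem applied to the compact invariant set $Z_\Phi\subset M$, only finitely many infinitesimal stabilizer types occur on $Z_\Phi$, so only finitely many classes $(\kgot_i)$ contribute. For each such $\kgot_i$, the set $Z_\Phi\cap M_{\kgot_i}$ is compact (being closed in $Z_\Phi$) and sits inside the manifold $M_{\kgot_i}$, so it meets only finitely many of its connected components. By Lemma~\ref{lem:PhiZPhi}, the coadjoint orbits appearing in $\Phi(Z_\Phi)$ are exactly the $K\cdot\beta(\Xcal,\Phi)$ with $\Xcal\cap Z_\Phi\neq\emptyset$, and there are only finitely many such $\Xcal$. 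The main obstacle, in both (ii) and (iii), lies precisely in these finiteness-of-components arguments; they are standard in the theory of compact group actions but they are the only non-formal ingredient, and the writeup should flag them explicitly.
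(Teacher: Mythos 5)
Your overall strategy is the one the paper follows. For the first bullet the paper likewise reduces everything to the equality $A(\Xcal,\Phi)=A(\Xcal,\Phi+\varphi_\alpha)$, and your observation that $\langle\varphi_\alpha(m),X\rangle=\alpha_m(X_M(m))=0$ for $X\in\kgot_i$, $m\in M_{\kgot_i}$, so that $\varphi_\alpha(\Xcal)\subset\kgot_i^{\perp}$, is exactly the reason behind it. For the second bullet the paper, like you, simply invokes that $I$ is finite and that each $M_{\kgot_i}$ has finitely many connected components; your sketch of the latter is a bit loose (the set you remove from $M^{\kgot_i}$ is $\{m\in M^{\kgot_i}:\kgot_m\supsetneq\kgot_i\}$, which is not literally a finite union of submanifolds $M^{\kgot_j}$, since only the \emph{conjugacy classes} of stabilizer algebras are finite in number), but the underlying finiteness of components of the orbit-type strata of a compact $K$-manifold is classical and is all that either of you uses.

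The genuine problem is in your third bullet: the claim that $Z_\Phi\cap M_{\kgot_i}$ is compact ``being closed in $Z_\Phi$'' is false in general. The stratum $M_{\kgot_i}$ is only locally closed: if $m_n\to m$ with $\kgot_{m_n}=\kgot_i$, then $\kgot_m\supseteq\kgot_i$ but possibly strictly, so $Z_\Phi\cap M_{\kgot_i}$ need not be closed in $Z_\Phi$, and the argument ``a compact subset of the manifold $M_{\kgot_i}$ meets only finitely many of its components'' does not apply as stated. What rescues the conclusion is the local finiteness of the family of connected components of the strata (a consequence of the slice theorem): every point of $M$ has a neighborhood meeting only finitely many components of the various $M_{\kgot_i}$, so the compact set $Z_\Phi$ meets only finitely many of them. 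Equivalently, one can argue as the paper does: choose a $K$-invariant neighborhood $\Ucal$ of $Z_\Phi$ with $\overline{\Ucal}$ compact; only finitely many classes $(\kgot_i)$ and finitely many components $\Xcal$ of the $M_{\kgot_i}$ meet $\overline{\Ucal}$, so $\Phi(Z_\Phi)$ is contained in the finite union of the corresponding orbits $K\cdot\beta(\Xcal,\Phi)$, and being $K$-invariant it is itself a finite union of coadjoint orbits. Your identification of which orbits occur (those with $\Xcal\cap Z_\Phi\neq\emptyset$, using that $\Phi(m)=\beta(\Xcal,\Phi)$ for $m\in\Xcal\cap Z_\Phi$) is correct once this finiteness is secured.
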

\begin{proof}
The first point is due to the fact that $A(\Xcal,\Phi)=A(\Xcal,\Phi+\varphi_\alpha)$ for any invariant
$1$-form $\alpha$ and any connected component  $\Xcal$ of $M_{\kgot_i}$. If $M$ is compact, the
set $I$ is finite and each submanifold $M_{\kgot_i}$
has a finite number of connected components, so there is a finite number of terms $\beta(\Xcal,\Phi)$.
If $Z_\Phi$ is compact, we consider an invariant neighborhood $\Ucal$ of $Z_\Phi$ such that $\overline{\Ucal}$
is compact. The subset $I_\Ucal\subset I$ formed by the element $i\in I$ such that
$M_{\kgot_i}\cap \Ucal\neq\emptyset$ is finite, and we see that $\Phi(Z_\Phi)$ is contained in
$$
C_\Phi^\Ucal:=\bigcup_{i\in I_\Ucal} \bigcup_{\Xcal\subset M_{\kgot_i}\cap\Ucal} K\cdot \beta(\Xcal,\Phi).
$$
Since $C_\Phi^\Ucal$ is  a finite union of coadjoint orbits, the last assertion is proved.
\end{proof}

\medskip

For any moment map $\Phi$, if $Z_\Phi$ is compact, it admits a finite decomposition
$Z_\Phi=\coprod_{\Ocal} Z_\Ocal$ where $Z_\Ocal= Z_\Phi\cap \Phi^{-1}(\Ocal)$.

\begin{defi}\label{def:Bphi}
We denote by $\Bcal(\Phi)$ the set
of coadjoint orbits contained in $\Phi(Z_\Phi)$.
 \end{defi}

 In practice, we
parameterize the set $\Bcal(\Phi)$ by a finite set $\Bcal$  contained in $\kgot^*$:
$\Phi(Z_\Phi)=\coprod_{\beta\in\Bcal} K\beta$ and then
\begin{equation}\label{eq=Z-Phi-beta}
Z_\Phi=\coprod_{\beta\in\Bcal} Z_\beta
\end{equation}
where $Z_\beta= K(M^\beta\cap\Phi^{-1}(\beta))$.

%%%%%%%%%%%%%%%%%%%%%%%%%%%%%%%%%%%%%%%%%%%%%%%%%
%%%%%%%%%%%%%%%%%%%%%%%%%%%%%%%%%%%%%%%%%%%%%%%%%
\subsection{Deforming a  symbol}
%%%%%%%%%%%%%%%%%%%%%%%%%%%%%%%%%%%%%%%%%%%%%%%%%
%%%%%%%%%%%%%%%%%%%%%%%%%%%%%%%%%%%%%%%%%%%%%%%%%

Let $M$ be a $K$-manifold. We do not necessarily assume  $M$ to be compact, and the compact Lie group $K$  nor $M$ are
 necessarily connected. Let $\Phi: M\to \kgot^*$ be
an equivariant map, and let $\Ecal$ be a graded Clifford bundle on $M$.

\begin{defi}\label{def:pushed-sigma}
The symbol  $\sigma(M,\Ecal,\Phi)$ pushed by the vector field $\kappa_{\Phi}$ is the
symbol on $M$ defined by
$$
\sigma(M,\Ecal,\Phi)(m,\nu)=\sigma(M,\Ecal)(m,\tilde{\nu}-\kappa_\Phi(m))
=\clif_{\Ecal_m}(\tilde{\nu}-\kappa_\Phi(m))$$
for any $(m,\nu)\in\T^* M$.
\end{defi}

Note that $\sigma(M,\Ecal,\Phi)(m,\nu)$ is invertible except if
$\tilde{\nu}=\kappa_\Phi(m)$. If furthermore $(m,\nu)$ belongs to the subset $\T_K^* M$
of cotangent vectors orthogonal to the $K$-orbits, then $\nu=0$ and
$\kappa_\Phi(m)=0$.  Indeed, as
$\kappa_\Phi(m)$ is tangent to $K\cdot m$,
the two equations
$\tilde{\nu}=\kappa_\Phi(m)$ and $\langle \nu,\kappa_\Phi(m)\rangle=0$ imply
$\tilde{\nu}=\kappa_\Phi(m)=0$.
So we note that $(m,\nu)\in \Char(\sigma(M,\Ecal,\Phi))\cap \T_K^* M$ if and only if $\nu=0$ and $\kappa_\Phi(m)=0$.
Hence $\sigma(M,\Ecal,\Phi)$ is transversally elliptic whenever $Z_\Phi$ is compact.

\begin{defi}
If $Z_\Phi$ is compact, we define $\Qcal_K(M,\Ecal,\Phi)\ \in\ \hat{R}(K)$ as the equivariant index of the transversally elliptic symbol
$\sigma(M,\Ecal,\Phi)\in \Ko_K(\T_K^* M)$.
\end{defi}

When $M$ is compact, it is clear that the classes of the symbols $\sigma(M,\Ecal,\Phi)$ and
$\sigma(M,\Ecal)$ are equal in $\K_{K}^0(\T_{K}^*M)$, hence the equivariant indices $\Qcal_K(M,\Ecal)$ and $\Qcal_K(M,\Ecal,\Phi)$
are equal.

For any $K$-invariant, {\em relatively compact}, open subset $\Ucal\subset M$ such that $\Ucal\cap Z_\Phi$ is closed\footnote{$\Ucal\cap Z_\Phi$ is then compact.} in $M$, we see that the restriction
$\sigma(M,\Ecal,\Phi)\vert_\Ucal$ is a transversally elliptic symbol on $\Ucal$, and so its equivariant index is a well defined element in
$\hat{R}(K)$, independent of the choice of such a $\Ucal$.

\begin{defi}\label{def:indice-localise}
$\bullet$ A closed $K$ invariant subset $Z\subset Z_\Phi$ is called a {\em\bf component} if it is a union of connected components of $Z_\Phi$.

$\bullet$ For a compact component $Z$ of $Z_\Phi$, we denote by
$$
\sigma(M,\Ecal,Z,\Phi)\in \Ko_K(\T_K^* M)
$$
the image of $\sigma(M,\Ecal,\Phi)\vert_\Ucal\in \Ko_K(\T_K^* \Ucal)$ by the pushforward morphism \break
$\Ko_K(\T_K^* \Ucal)\to \Ko_K(\T_K^* M)$ attached to a relatively compact invariant neighborhood $\Ucal$ of $Z$ satisfying
$\Ucal\cap Z_\Phi=Z$.

$\bullet$ For a compact component $Z$ of $Z_\Phi$, we denote by
$$
\Qcal_K(M,\Ecal,Z,\Phi)\ \in\ \hat{R}(K)
$$
the equivariant index of $\sigma(M,\Ecal,Z,\Phi)$. By definition, if $Z=\emptyset$, then \break $\Qcal_K(M,\Ecal,\emptyset,\Phi)=0$.
\end{defi}

In particular, if $\Phi: M\to \kgot^*$ is a proper moment map, we can define
$$
\Qcal_K(M,\Ecal,\Phi^{-1}(0),\Phi)\in \hat{R}(K).
$$
By taking its invariant part $[\Qcal_K(M,\Ecal,\Phi^{-1}(0),\Phi)]^K\in\Z$, we will  define in a natural way invariants on the reduced ``manifold''
$M_{red}=\Phi^{-1}(0)/K$.

\medskip

The simplest example of a pushed symbol $\sigma(M,\Ecal,\Phi)$ is when the map $\Phi$ is constant, equal to a $K$-invariant element $\beta\in\kgot^*\simeq\kgot$.
The vector field $m\mapsto \beta\cdot m$ is then $K$-equivariant, and we define
$$
\sigma(M,\Ecal,\beta)(m,\nu)=\sigma(M,\Ecal)(m,\tilde{\nu}+\beta\cdot m).
$$

The first example of pushed symbol $\sigma(M,\Ecal,\beta)$ is the Atiyah symbol that we will study in the next section.

\medskip

We end this section by considering the case where $\Phi$ is a moment map such that $Z_\Phi$ is compact. For any
$\Ocal\subset C_\Phi$, the set $Z_\Ocal=Z_\Phi\cap \Phi^{-1}(\Ocal)$ is a compact component of $Z_\Phi$,
hence we may consider the localized index $\Qcal_K(M,\Ecal,Z_\Ocal,\Phi)$. This generalized character is non-zero only if
$Z_\Ocal\neq\emptyset$, and this happens only for a finite number of orbits (those included in $\Phi(Z_\Phi)$).

Thanks to the excision property, we
get the first form of the localization theorem :
\begin{equation}
\label{eq:localization-Phi}
\Qcal_K(M,\Ecal,\Phi)=\sum_{\Ocal\subset C_\Phi} \Qcal_K(M,\Ecal, Z_\Ocal,\Phi).
\end{equation}

\medskip

The aim of Section \ref{sec:ASS-deformed} is  the  description by induction of the representations
$\Qcal_K(M,\Ecal, Z_\Ocal,\Phi)$.

\medskip

Let us show that, when $M$ is compact, the character $\Qcal_K(M,\Ecal, Z_\Ocal,\Phi)$ depends only of
the class of $\Phi$ relative to the equivalence relation (\ref{eq:equivalence-Phi}).

\begin{prop}
Let $\Phi\sim \Phi'$ be two moment maps on a  compact $K$-manifold. Let $Z_\Ocal\subset Z_\Phi$ and $Z'_\Ocal=Z_{\Phi'}\cap (\Phi')^{-1}(\Ocal)$
be the components attached to $\Ocal\in C_{\Phi}=C_{\Phi'}$. Then we have
$$
\Qcal_K(M,\Ecal, Z_\Ocal,\Phi)=\Qcal_K(M,\Ecal, Z'_\Ocal,\Phi').
$$
\end{prop}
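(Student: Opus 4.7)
The plan is to build a continuous homotopy between the two localized transversally elliptic symbol classes and apply homotopy invariance of the index in $\Ko_K(\T^*_K M)$.

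Writing $\Phi - \Phi' = \varphi_\alpha$ for a $K$-invariant $1$-form $\alpha$, I consider the affine family $\Phi_t := \Phi - t\varphi_\alpha$ for $t\in[0,1]$, with $\Phi_0 = \Phi$ and $\Phi_1 = \Phi'$. A direct computation using Cartan's formula and the invariance of $\alpha$ gives $\iota(X_M)d\alpha = -d\langle\varphi_\alpha, X\rangle$, so the pair $(\Omega_t, \Phi_t)$ with $\Omega_t := \Omega - t\,d\alpha$ satisfies the Kostant relations (\ref{eq:hamiltonian-action}); hence each $\Phi_t$ is a moment map equivalent to $\Phi$, and by the preceding proposition $C_{\Phi_t} = C_\Phi$. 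Put $Z^\Ocal_t := Z_{\Phi_t}\cap\Phi_t^{-1}(\Ocal)$, so $Z^\Ocal_0 = Z_\Ocal$ and $Z^\Ocal_1 = Z'_\Ocal$.

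The central step is the following \emph{uniform neighborhood lemma}: for every $t_0 \in [0,1]$ there exist a relatively compact $K$-invariant open neighborhood $\Ucal \subset M$ of $Z^\Ocal_{t_0}$ and $\epsilon > 0$ such that
\begin{equation*}
Z^\Ocal_t \subset \Ucal \quad \text{and} \quad \Ucal \cap Z_{\Phi_t} = Z^\Ocal_t \qquad \text{for all } t \in (t_0 - \epsilon, t_0 + \epsilon) \cap [0,1].
\end{equation*}
At each fixed $t$ the sets $\{Z^{\Ocal'}_t\}_{\Ocal' \in C_\Phi}$ are pairwise disjoint compact subsets of $M$, separated by the distinct coadjoint-orbit values of $\Phi_t$. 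Moreover, each assignment $t \mapsto Z^{\Ocal'}_t$ is upper semi-continuous: if $m_n \in Z^{\Ocal'}_{t_n}$ with $(m_n, t_n) \to (m, t)$, continuity of $(m,t) \mapsto \Phi_t(m)\cdot m$ and closedness of $\Ocal'$ in $\kgot^*$ force $m \in Z^{\Ocal'}_t$. Combined with the finiteness of $C_\Phi$ and the compactness of $M$, these facts yield $(\Ucal, \epsilon)$ by a standard separation argument applied to the disjoint closed subsets $\Gamma_{\Ocal'} := \{(m,t) : m \in Z^{\Ocal'}_t\}$ of $M \times [0,1]$.

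With the lemma in hand, on each sub-interval $I = (t_0 - \epsilon, t_0 + \epsilon) \cap [0,1]$ the family $\{\sigma(M, \Ecal, \Phi_t)\vert_\Ucal\}_{t \in I}$ is a continuous path of $K$-transversally elliptic symbols on $\Ucal$ whose characteristic set in $\T^*_K \Ucal$ reduces to $\{(m, 0) : m \in Z^\Ocal_t\} \subset \overline{\Ucal}$, which is compact. The family therefore defines a single class in $\Ko_K(\T^*_K \Ucal)$, whose pushforward to $\Ko_K(\T^*_K M)$ is $\sigma(M,\Ecal, Z^\Ocal_t, \Phi_t)$ for each $t \in I$. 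Covering the compact interval $[0,1]$ by finitely many such sub-intervals and comparing the two admissible neighborhoods at each overlap via excision, $\sigma(M,\Ecal, Z^\Ocal_t, \Phi_t)$ is constant in $\Ko_K(\T^*_K M)$; taking equivariant indices at $t = 0$ and $t = 1$ yields the claim. The main obstacle is the uniform neighborhood lemma itself: upper semi-continuity alone does not provide a neighborhood valid on an open interval of parameter values, and one must combine it carefully with the finiteness of $C_\Phi$ and the compactness of $M$ and of $[0,1]$ to extract uniform control of the critical stratification.
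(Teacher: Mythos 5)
Your proof is correct and follows essentially the same route as the paper: deform along the affine family $\Phi_t$ of equivalent moment maps, find (for $t$ near any $t_0$) a single invariant open set $\Ucal$ with $\Ucal\cap Z_{\Phi_t}=Z^\Ocal_t$, invoke homotopy invariance of the index of the transversally elliptic symbols $\sigma(M,\Ecal,\Phi_t)\vert_\Ucal$, and conclude by compactness of $[0,1]$. Your ``uniform neighborhood lemma'' via the closed graphs $\Gamma_{\Ocal'}\subset M\times[0,1]$ is simply a careful spelling-out of what the paper dismisses as ``an obvious continuity argument,'' so there is nothing essentially new, only added rigor.
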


\begin{proof}
Let $\alpha$ be the invariant $1$-form such that $\Phi'-\Phi=\varphi_\alpha$. We consider the family of moment maps
$\Phi^t=\Phi+t\varphi_\alpha,\ t\in \R$. We denote simply by $\sigma^t$ the symbol $\sigma(M,\Ecal, \Phi^t)$. For any $t$, consider the component $Z^t_\Ocal=Z_{\Phi^t}\cap (\Phi^t)^{-1}(\Ocal)$ and the generalized character
$$
\Qcal^t:=\Qcal_K(M,\Ecal, Z^t_\Ocal,\Phi^t)
$$
which is the equivariant index of $\sigma^t\vert_{U_t}$ where $U_t$ is a neighborhood of $(\Phi^t)^{-1}(\Ocal)$
such that $U_t\cap Z_{\Phi^t}= Z^t_\Ocal$.

Let us prove that the multiplicity $\Qcal^t$ is independent of $t$. It is sufficient to prove that $t\to \Qcal^t$ is
locally constant : let us show that it is constant in a neighborhood of $0$.

Note that the set $C_{\Phi^t}$ does not depend of $t$: let us simply denote it by $C$. We remark that for any invariant subset $U\subset M$, we have
$$
U\cap Z_{\Phi^t}\subset \bigcup_{\Pcal\subset \Phi^t(U)\cap C} Z^t_\Pcal
$$

By an obvious continuity argument, we see that if $U_0$ is a neighborhood of
$(\Phi^0)^{-1}(\Ocal)$ such that $\Phi^0(U_0)\cap C=\Ocal$, then we have
$$
\Phi^t(U_0)\cap C=\Ocal\quad\mathrm{and}\quad U_0\cap Z_{\Phi^t}=Z^t_\Ocal
$$
for small $t$ (says $|t|\leq \epsilon$). As the family $\sigma^t\vert_{U_0}$, $t\in [0,\epsilon]$ defines an homotopy of transversally elliptic symbols,
they have the same equivariant index.
Finally, for small $t$, we have
\begin{eqnarray*}
\Qcal^0=\indice_{K}(\sigma^0\vert_{U_0}) &=&\indice_{K}(\sigma^t\vert_{U_0})\\
&=& \Qcal^t.
\end{eqnarray*}
\end{proof}

%%%%%%%%%%%%%%%%%%%%%%%%%%%%%%%%%%%%%%%%%%%%%%%%%
\subsection{Atiyah symbol}\label{sec:atiyah-symbol}
%%%%%%%%%%%%%%%%%%%%%%%%%%%%%%%%%%%%%%%%%%%%%%%%%

Let $N$ be a {\bf  Hermitian  vector space}. We denote by $\mathrm{U}$ the unitary group of transformations of $N$, and by $U(1)\subset \mathrm{U}$ the subgroup  formed the homotheties $x\mapsto e^{i\theta} x$.   Consider $S=\bigwedge \overline{N}$,
the irreducible  Clifford module for $N$ with Clifford action $\clif(n)$.
If we identify $\T^* N=N\oplus N^*$ with $N\oplus N$, the $\mathrm{U}$-equivariant symbol $\sigma(N,S)$  is defined by
$$
\sigma(N,S)(x,\nu)=\clif(\tilde{\nu}): \bigwedge^{even} \overline{N}\longrightarrow \bigwedge^{odd} \overline{N}, \qquad (x,\nu)\in\T^*N.
$$

The matrix $\beta=i \mathrm{Id}$ is an invariant element of $\mathrm{Lie}(\mathrm{U})$. We have $\beta\cdot x=Jx$ where
$J$ is the multiplication by $i$ on $N$. We can then define the pushed symbol $\sigma(N,S,\beta)$.
\begin{defi}
The Atiyah symbol $\at(N) \in \Gamma(\T^* N, \hom(\Scal^+,\Scal^-))$  is defined by
$$
\at(N)(x,\nu)=\clif(\tilde{\nu}+Jx): \bigwedge^{even} \overline{N}\longrightarrow \bigwedge^{odd} \overline{N}, \qquad (x,\nu)\in\T^*N.
$$
\end{defi}

The symbol $\at(N)$ is not elliptic since
$\supp(\at(N))=\{(x,\nu), \tilde{\nu}+Jx=0\}$ is not
compact.
But remark that $x\to Jx$ is the tangent vector  produced by  the action of $U(1)$ on $N$.
Thus $\at$ is a $U(1)$-transversally elliptic symbol.
Indeed the two equations $\tilde{\nu}+Jx=0$ and $\langle \nu,Jx\rangle =0$ imply $\tilde{\nu}=x=0$.
So a fortiori the symbol $\at(N)$ is $\mathrm{U}$-transversally elliptic.
Let us consider $\Sym(N)=\oplus_{k=0}^{\infty} {\rm Sym}^k(N)$
where ${\rm Sym}^k(N)$ is the subspace of $\otimes^k N$ formed of symmetric tensors.

The following proposition is proved in \cite{Atiyah74} (see also \cite{B-V.inventiones.96.1}).

\begin{prop} \label{prop-indice-atiyah}
We have $\indice_{\mathrm{U}}^N(\at(N))=\Sym (N)$ in $\hat{R}(\mathrm{U})$.
\end{prop}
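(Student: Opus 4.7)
The plan is to reduce to the one-dimensional case $N=\C$ via the multiplicativity of external products of symbols, and then to compute the $U(1)$-equivariant index of $\at(\C)$ by hand.

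\medskip

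For the reduction, I would first verify that for any orthogonal decomposition $N=N_1\oplus N_2$ of Hermitian spaces, the canonical identification $S_N\simeq S_{N_1}\otimes S_{N_2}$ intertwines $\clif_N(v_1\oplus v_2)$ with $\clif_{N_1}(v_1)\boxtimes\clif_{N_2}(v_2)$. Applying this pointwise to $\tilde\nu+Jx=(\tilde\nu_1+Jx_1)\oplus(\tilde\nu_2+Jx_2)$ yields an equality of transversally elliptic symbols
\[
\at(N_1\oplus N_2)=\at(N_1)\boxtimes\at(N_2)
\]
on $N=N_1\times N_2$. Iterating this over a decomposition of $N$ into weight lines for a maximal torus $T\subset \mathrm{U}$, and invoking Theorem \ref{theo:multiplicative-property}, gives
\[
\indice^N_T(\at(N))=\prod_{i=1}^n \indice^{N_i}_{T}(\at(N_i)) \quad\text{in } \hat R(T).
\]
Since $\Sym(N)=\bigotimes_i\Sym(N_i)$ as $T$-modules, and virtual $\mathrm U$-characters are determined by their $T$-restriction (and both sides are $\mathrm U$-equivariant), this reduces the proof to establishing $\indice^{\C}_{U(1)}(\at(\C))=\Sym(\C)$.

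\medskip

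For the one-dimensional computation, the spinor module is $S=\C\oplus\overline{\C}$ with each summand of complex dimension one, and (after fixing unit vectors) a representative of $[\at(\C)]$ can be taken as the principal symbol of a concrete first-order differential operator $A$ of the form $\alpha\,\partial_z+\beta\,M_{\bar z}$, with constants $\alpha,\beta\in\C^*$ forced by the Clifford action and the sign conventions for principal symbols. I would decompose $C^\infty(\C)$ into $U(1)$-isotypic components $f_m(z,\bar z)=g(r)e^{-im\theta}$, $m\in\Z$, and solve the resulting first-order radial ODE $Af_m=0$. A direct computation shows that, up to scalars, the unique solution is $g(r)=r^m e^{-r^2}$, so $f_m=\bar z^m e^{-|z|^2}$, which is smooth (and $U(1)$-finite in an appropriate sense) precisely when $m\ge 0$. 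A parallel analysis for the formal adjoint $A^*$ shows that its smooth $U(1)$-finite kernel is trivial on every isotypic component. Since $\bar z^m e^{-|z|^2}$ carries the $U(1)$-weight $+m$, one obtains
\[
\indice^{\C}_{U(1)}(\at(\C))=\sum_{m\ge 0}[\C_m]=\Sym(\C)\ \in\ \hat R(U(1)).
\]

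\medskip

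The reduction step is a formal consequence of the multiplicative property, so the main obstacle is the one-dimensional computation, which requires two delicate checks. First, one must carefully match the Clifford-algebraic description of $\at(\C)$ with the principal symbol of the explicit differential operator $A$; this involves the sign conventions for the Clifford action of $\C$ on $\bigwedge\overline{\C}$ and the identification of the cotangent direction $\nu$ with a complex number. Second, one must verify analytically that the transversally elliptic index is indeed captured by counting these smooth $U(1)$-finite solutions weight by weight, ruling out the exponentially growing solutions of the radial ODE that are discarded by the function space underlying the definition of the transversally elliptic index.
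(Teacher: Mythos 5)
The paper does not actually prove this proposition: it quotes it from \cite{Atiyah74} (see also \cite{B-V.inventiones.96.1}), and your plan is essentially Atiyah's classical route — split $N$ into weight lines, use multiplicativity, and do a rank-one computation. The reduction itself is sound: $\at(N_1\oplus N_2)=\at(N_1)\boxtimes \at(N_2)$ under $\bigwedge\overline{N}\simeq\bigwedge\overline{N_1}\otimes\bigwedge\overline{N_2}$, each factor is transversally elliptic for its own circle, so Theorem \ref{theo:multiplicative-property} applies factor by factor and the product converges in $\hat R(T)$. But two of your steps are genuinely incomplete as written.

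First, the passage from the $T$-index back to the $\mathrm U$-index. For infinite virtual characters the restriction $\hat R(\mathrm U)\to\hat R(T)$ is not even defined in general (a fixed $T$-weight occurs in infinitely many $V_\mu$), so ``$\mathrm U$-characters are determined by their $T$-restriction'' is not available off the shelf. What you need is the compatibility of the transversally elliptic index with restriction to a subgroup for which the symbol remains transversally elliptic — here $\at(N)$ is transversally elliptic already for the central circle, hence for $T$ — which is a theorem of Atiyah \cite{Atiyah74}, and is not among the functorial properties established in this paper; combined with Weyl invariance and injectivity of character restriction it closes the step, but it has to be invoked or proved, not asserted.

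Second, the rank-one computation. The symbol $\at(\C)(x,\nu)=\clif(\tilde{\nu}+Jx)$ is \emph{not} the principal symbol of $\alpha\,\partial_z+\beta\,M_{\bar z}$: the multiplication term has order zero and drops out of the principal symbol, and no homogeneous principal symbol equals $\clif(\tilde{\nu}+Jx)$. Moreover, on the non-compact manifold $\C$ the index of the class $\at(\C)\in\Ko_{U(1)}(\T^*_{U(1)}\C)$ is defined via an open embedding into a compact manifold, not as a count of smooth decaying solutions of a chosen model operator. So the identification of your harmonic-oscillator kernel $\{\bar z^m e^{-|z|^2},\,m\geq 0\}$ (with trivial cokernel) with $\indice^{\C}_{U(1)}(\at(\C))$ is precisely the missing content rather than a routine convention check: Atiyah obtains the rank-one index by compactifying to $\mathbb{P}^1$ and using excision together with the known equivariant indices there (and the index of the zero operator on the orbit circle), while justifying the model-operator count requires a separate argument identifying that analytic index with the topological index of the class. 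With these two points supplied, your conventions do give $\sum_{m\geq 0}\C_m=\Sym(\C)$ and hence the proposition.
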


\medskip

More generally, consider the case of a real vector space $N$ equipped with a linear action of a compact Lie group $K$.
Let $\beta\in\kgot$ be a $K$-invariant element such that $\Lcal(\beta):N\to N$ is invertible. We denote $\Tbb_\beta\subset K$ the closed torus contained in the center equal to the closure of $\exp(\R\beta)$.

We denote by $N_{J_\beta}$ the Hermitian vector space $N$ equipped with the invariant complex structure $J_\beta$.
We consider the $K$-equivariant $\Tbb_\beta$-transversally elliptic symbol $\at_\beta(N)$ defined by
$$
\at_\beta(N)(v,\xi)=\clif(\tilde{\xi}+\beta\cdot v) : \bigwedge^{even} \overline{N_{J_\beta}}\longrightarrow\bigwedge^{odd} \overline{N_{J_\beta}}.
$$
Its equivariant index satisfies
$$
\indice_{K}^N(\at_\beta(N))=\Sym (N_{J_\beta}).
$$

The meaning of this equation is as follows.
We write  $\Sym(N_{J_\beta})$ as the sum of the   finite dimensional spaces ${\rm Sym}^k(N_{J_\beta})$.
We consider ${\rm Sym}^k(N_{J_\beta})$ as a representation space for $K$, write
${\rm Sym}^k(N_{J_\beta})=\sum_{\mu \in \hat{K}}\mm(\mu,k)V_\mu$.
 For each irreducible representation  $V_\mu\in \hat{K}$,
  the infinitesimal action of $\beta$ on $V_\mu$ is by a scalar $i z_\mu {\rm Id}_{V_\mu}$, as $\beta$ is fixed by $K$.
On the other hand, the action of $\beta$ in ${\rm Sym}^k(N_{J_\beta})$ is diagonalizable with eigenvalues $\sum_{J} i a_j$ with $J$ a subset of cardinal $k$ of   $[1,2,\ldots, \dim_\C N]$ : here $a_j>0$ are the eigenvalue of $\frac{\Lcal(\beta)}{i}$ on $N_{J_\beta}$.
As $\sum_{J}  a_j$ is larger than $k \min(a_j)$,
  we see that   $k$ has to be small enough in order that $\mm(\mu,k)\neq 0$.
Thus the sum $\sum_k \sum_{\mu \in \hat K}m(\mu,k)V_\mu$ is a well defined element of $\hat R(K)$.

\bigskip

We now consider a $K$-equivariant Euclidean  vector bundle $\Ncal\to \Xcal$ on a connected $K$-manifold $\Xcal$.
Let us assume that $\beta\in \kgot$  is an element fixed by $K$ and acting fiberwise  on $\Ncal$ by  an invertible
transformation : in other words $\Ncal^\beta=\Xcal$. The eigenvalues of $\beta^2$ on the fibers are strictly negative and
constant over each connected component of $\Xcal$.
Using Definition \ref{defi:Jbeta} of $J_\beta$, we define  on $\Ncal$ a Hermitian structure invariant by $K$.
We denote by $\Ncal_\JJbeta$ the vector bundle $\Ncal$ considered as a complex vector bundle with complex structure $J_\beta$.
By definition of $J_\beta$, the list of eigenvalues of $\beta$ on $\Ncal_m$ for the complex structure $J_\beta$ are
$[ia_1,ia_2,\ldots, ia_R]$ where the $a_j$ are strictly positive.

We denote by $\bigwedge\Ncal_\JJbeta$ the  complex  exterior bundle of the complex vector bundle $\Ncal_\JJbeta$. We denote by
$\Sym(\Ncal_\JJbeta)$ the $\Z$-graded symmetric vector bundle of $\Ncal_\JJbeta$.

Let $N$ be the fiber $\Ncal\vert_{x_o}$ at some point $x_o\in \Xcal$, equipped with the linear action of $\beta$, the complex structure $J_\beta$ and its Euclidean structure. Let $\mathrm{U}$ be the subgroup of $\mathrm{U}(N)$ of elements that commutes with $\Lcal(\beta)$. Let $P_U\to \Xcal$ be the $\mathrm{U}$-unitary framed bundle of $\Ncal$ : for $x\in \Xcal$, we take
$$
P_U\vert_x:=\left\{ f :N\to \Ncal\vert_x, \ \mathrm{unitary\ and}\ \Lcal(\beta)-\mathrm{equivariant}\right\}.
$$
Hence the vector bundle $\Ncal$ is equal to $P\times_{\mathrm{U}} N$. The Atiyah symbol on $N$ defines a class
$$
\at_\beta(N)\in \Ko_{\mathrm{U}\times \Tbb_\beta}(\T^*_{\Tbb_{\beta}} N).
$$

Let us use the notion of fiber product  introduced in Section \ref{sec:fiber-product}: here we work with the group $S:=\Tbb_\beta$.
We can define the morphism $\at_\beta:\Ko_K(\T^*_{K} \Xcal)\longrightarrow \Ko_K(\T^*_{K} \Ncal)$ by the relation
\begin{equation}\label{eq:morphismAtbeta}
\at_\beta(\sigma):=\sigma\lozenge \at_\beta(N)
\end{equation}
where $\sigma$ is a $K$-transversally elliptic symbol on $\Xcal$.

Lemma \ref{lem:indice-lozenge} gives
\begin{eqnarray}\label{eq:indice-Atbeta}
\indice_K^\Ncal\left(\at_\beta(\sigma)\right)&=&\indice_K^\Xcal\left( \sigma \otimes\Sym (\Ncal_\JJbeta)\right)\\
&=& \sum_{k=0}^{\infty} \indice_K^\Xcal\left(\sigma \otimes {\rm Sym}^k(\Ncal_\JJbeta)\right).
\end{eqnarray}

A  symbol representing $\at_\beta(\sigma)$ is as follows.
 We write an element $n\in  \Ncal$ as $(x,v)$ with $x\in \Xcal$ and $v\in \Ncal\vert_x$.
We choose a $K$-invariant connection on the bundle $\Ncal\to \Xcal$, which allows us to parameterize
 $\T^* \Ncal=\{(n,(\xi,\nu))\}$, with
$\xi\in \T^*_x\Xcal$, $\nu\in \Ncal^*\vert_x$ (here $\xi$ is lifted to an element of $\T_n^* \Ncal$ using the connection).
Let
$\sigma(x,\xi) \in \Gamma(\T^* \Xcal,\hom(\Ecal^+,\Ecal^-))$
be a symbol on $\Xcal$.
Then
\begin{equation}
\at_\beta(\sigma)((x,v),(\xi,\nu))=\sigma(x,\xi) \boxtimes \clif_{x}(\tilde{\nu}+\beta v).
\end{equation}
Here $\clif_{x}$ is the Clifford action on $\bigwedge\overline \Ncal_\JJbeta\vert_x$, so
$\clif_{x}(\tilde{\nu}+\beta v)^2=-\|\tilde{\nu}+\beta v\|^2\mathrm{Id}$.
We see that
 $\Char(\at_\beta(\sigma))\cap \T^*_K\Ncal=\Char(\sigma)\cap \T^*_K \Xcal$.

\section{Abelian Localization formula}

%%%%%%%%%%%%%%%%%%%%%%%%%%%%%%%%%%%%%%%%%%%%%%%%%%%%%%%%%%%%%%%%
\subsection{Atiyah-Segal-Singer localization formula}\label{sec:loc-Atiyah-Segal}
%%%%%%%%%%%%%%%%%%%%%%%%%%%%%%%%%%%%%%%%%%%%%%%%%%%%%%%%%%%%%%%%

Let $M$ be a compact $K$-manifold. Let $\beta\in \kgot$ be a central element, and let $M^\beta$ be the submanifold of points fixed by
the infinitesimal action of $\beta$. Let $\Ncal$ be the normal bundle of $M^\beta$ in $M$. Then $\beta$ acts by a fiberwise invertible
transformation on $\Ncal$. Denote by  $\Ncal_\JJbeta$ the (complex) vector bundle $\Ncal$ polarized by $\beta$.

Let $\Ucal$ be a tubular neighborhood of $M^\beta$ in $M$ that is diffeomorphic with the normal bundle $\Ncal$. We denote by
$\varphi: \Ucal\to \Ncal$ the $K$-diffeomorphism and by $i:\Ucal\croc M$ the inclusion.  Let $\varphi^* : \Ko_K(\T^*_{K} \Ncal)\to \Ko_K(\T^*_{K} \Ucal)$ and $i_*:\Ko_K(\T^*_{K} \Ucal)\to \Ko_K(\T^*_{K} M)$ the corresponding maps in $\K$-theory.

We denote by
$$
\At_\beta:\Ko_K(\T^*_{K} M^\beta)\longrightarrow \Ko_K(\T^*_{K} M)
$$
the map $i_*\circ \varphi^*\circ\at_\beta$. For the equivariant index, we have the equality
$$
\indice_K^M\left(\At_\beta(\sigma)\right)=\indice_K^{M^\beta}\left(\sigma\otimes \Sym (\Ncal_\JJbeta)\right)
$$
for any $\sigma\in \Ko_K(\T^*_{K} M^\beta)$.

\medskip

Let $\Ecal\to M$ be a (graded) Clifford module on $M$. The bundle $\bigwedge \overline{\Ncal_\JJbeta}$ is a $\spinc$ bundle  for the Euclidean bundle $\Ncal$.

\begin{defi}\label{defi:q-beta}
There exists a unique graded Clifford bundle $\mathbb{d}_\beta(\Ecal)$ over the tangent bundle $\T M^\beta$, such that, when we restrict $\Ecal$ to $M^\beta$, we get the isomorphism
\begin{equation}\label{eq:Ebeta}
\Ecal\vert_{M^\beta}\simeq\mathbb{d}_\beta(\Ecal)\otimes  \bigwedge \overline{\Ncal_\JJbeta}
\end{equation}
of graded Clifford bundles relative to the decomposition $\T M\vert_{M^\beta}:= \T M^\beta \oplus \Ncal$.
\end{defi}

We have two elliptic symbols : $\sigma(M,\Ecal)\in \Ko_K(\T^*M)$ and $\sigma(M^\beta,\mathbb{d}_\beta(\Ecal))\in \Ko_K(\T^*M^\beta)$.

\begin{prop}[Atiyah-Segal-Singer localization formula]\label{prop:atiyah-segal}
The following relation holds in $\Ko_K(\T^*_{K} M)$ :
$$
\At_\beta\left(\sigma(M^\beta,\mathbb{d}_\beta(\Ecal))\right)=\sigma(M,\Ecal).
$$
In particular $\Qcal_K(M,\Ecal)=\Qcal_K\left(M^\beta,\mathbb{d}_\beta(\Ecal)\otimes \Sym(\Ncal_\JJbeta)\right)$.
\end{prop}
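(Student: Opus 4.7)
The plan is to combine a Witten-type deformation of the elliptic symbol $\sigma(M,\Ecal)$ along the vector field produced by $\beta$ with the multiplicative structure of Clifford modules for the orthogonal splitting $\T M\vert_{M^\beta}=\T M^\beta\oplus\Ncal$, and then to recognize the localized object as the Atiyah pushforward $\at_\beta$ constructed in Section~\ref{sec:atiyah-symbol}. Since $\beta$ is central, the constant equivariant map $\Phi\equiv\beta$ has Kirwan vector field $\kappa_\beta(m)=-\beta\cdot m$ with zero set $M^\beta$, and the associated pushed symbol
$$
\sigma(M,\Ecal,\beta)(m,\nu)=\clif_{\Ecal_m}(\tilde{\nu}+\beta\cdot m)
$$
is $K$-transversally elliptic. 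The linear homotopy $t\in[0,1]\mapsto \sigma(M,\Ecal,t\beta)$ has characteristic set on $\T^*_K M$ contained in the compact zero section $M\times\{0\}$ (the transversality condition $\langle\nu,\beta\cdot m\rangle=0$ combined with $\tilde{\nu}=-t\beta\cdot m$ forces $\tilde{\nu}=0$), so it provides a homotopy of $K$-transversally elliptic symbols; consequently $[\sigma(M,\Ecal)]=[\sigma(M,\Ecal,\beta)]$ in $\Ko_K(\T^*_K M)$. By excision, this class is the image under $i_*$ of its restriction to an invariant tubular neighborhood $\Ucal\simeq\Ncal$ of $M^\beta$.

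Next, I would identify $\sigma(M,\Ecal,\beta)\vert_\Ucal$, transported to $\Ncal$ via $\varphi$, with a representative of $\at_\beta(\sigma(M^\beta,\mathbb{d}_\beta(\Ecal)))$. Fix a $K$-invariant connection on $\Ncal\to M^\beta$ and choose a metric on $\T\Ncal$ adapted to the splitting $\T\Ncal\vert_{M^\beta}\simeq \pi^*\T M^\beta\oplus\pi^*\Ncal$, as explained in Subsection~\ref{sub:choice}. The isomorphism (\ref{eq:Ebeta}), pulled back from $M^\beta$ to $\Ncal$, realizes $\pi^*(\Ecal\vert_{M^\beta})$ as a graded $\mathrm{Cl}(\T\Ncal)$-bundle $\pi^*\mathbb{d}_\beta(\Ecal)\otimes \pi^*\bigwedge\overline{\Ncal_\JJbeta}$ whose Clifford action splits as $\clif^1\boxtimes\clif^2$. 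Parameterizing $\T^*\Ncal$ as $\{((x,v),(\xi,\nu)):\xi\in\T^*_xM^\beta,\ \nu\in\Ncal^*\vert_x\}$ and noting that the infinitesimal vector field $(x,v)\mapsto \beta\cdot(x,v)$ is homotopic, through invariant vector fields vanishing exactly on $M^\beta$, to its fiberwise linearization $(0,\beta\cdot v)$, one obtains
$$
\sigma(M,\Ecal,\beta)((x,v),(\xi,\nu))\ \sim\ \clif^1(\tilde{\xi})\,\boxtimes\,\clif^2(\tilde{\nu}+\beta\cdot v),
$$
which is precisely the representative of $\at_\beta(\sigma(M^\beta,\mathbb{d}_\beta(\Ecal)))\in\Ko_K(\T^*_K\Ncal)$ displayed at the end of Section~\ref{sec:atiyah-symbol}.

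The main obstacle will be verifying that every deformation used in the argument keeps its characteristic set on $\T^*_K$ compactly confined inside a fixed neighborhood of $M^\beta\times\{0\}$, so that one remains inside $\Ko_K(\T^*_K\,\cdot\,)$ throughout. For the last homotopy this reduces to the fact that both $m\mapsto \beta\cdot m$ and its normal linearization vanish exactly on $M^\beta$ and are tangent to $K$-orbits, so the transversality condition again forces $\tilde{\nu}=0$ on the characteristic set. Granting this, applying $i_*\circ\varphi^*$ to the identified symbol yields the first assertion $\At_\beta(\sigma(M^\beta,\mathbb{d}_\beta(\Ecal)))=\sigma(M,\Ecal)$ in $\Ko_K(\T^*_K M)$. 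The index formula then follows by applying $\indice_K^M$ to both sides, using the identity $\indice_K^M(\At_\beta(\sigma))=\indice_K^{M^\beta}(\sigma\otimes\Sym(\Ncal_\JJbeta))$ recorded just before Definition~\ref{defi:q-beta} with $\sigma=\sigma(M^\beta,\mathbb{d}_\beta(\Ecal))$, and invoking Definition~\ref{defiQK} to identify both sides as equivariant Dirac indices.
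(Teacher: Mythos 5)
Your proposal is correct and follows essentially the same route as the paper's proof: deform $\sigma(M,\Ecal)$ by the constant map $\Phi=\beta$, use excision to restrict to an equivariant tubular neighborhood identified with $\Ncal$, and recognize the transported symbol as the representative $\clif^1_x(\tilde{\xi})\boxtimes\clif^2_x(\tilde{\nu}+\beta\cdot v)$ of $\at_\beta\left(\sigma(M^\beta,\mathbb{d}_\beta(\Ecal))\right)$, then apply the index identity for $\At_\beta$. The only superfluous step is your homotopy to the ``fiberwise linearization'': since the tubular neighborhood is chosen $K$-equivariantly, the vector field generated by $\beta$ on $\Ncal$ is already vertical and equal to $\Lcal(\beta)v$ at $(x,v)$, so no further deformation of the vector field is needed there.
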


\begin{rem}
In Atiyah-Segal, this equation is proved in the tensor product of  $\Ko_K(\T^* M)$  with the field of fractions of $R(K)$.
In  \cite{Atiyah74}, this more precise relation is proved in
$\Ko_K(\T^*_{K} M)$.
\end{rem}

\medskip

\begin{proof} We deform the symbol $\sigma(M,\Ecal)$ by the vector field $\beta_M$. So we apply the deformation introduced in Definition \ref{def:pushed-sigma} with the constant map $\Phi=\beta$. The pushed-symbol $\sigma(M,\Ecal,\beta)$ is
$$
\sigma(M,\Ecal,\beta)(m,\nu):= \clif_{\Ecal_m}(\tilde{\nu} +\beta\cdot m) : \Ecal_m^+\to\Ecal_m^-.
$$
The excision property tells us that $\sigma(M,\Ecal)= i_*(\sigma(M,\Ecal,\beta)\vert_\Ucal)$ in $\Ko_K(\T^*_K M)$. We now analyze the symbol
$\tilde{\sigma}:=(\varphi^{-1})^*(\sigma(M,\Ecal,\beta)\vert_\Ucal)$ on $\Ncal$. Let $\pi :\Ncal\to M^\beta$ be the projection. Over $\Ncal$, the Clifford bundle
$\tilde{\Ecal}:=(\varphi^{-1})^*(\Ecal\vert_\Ucal)$ is isomorphic to $\pi^*(\mathbb{d}_\beta(\Ecal))\otimes  \pi^*(\bigwedge \overline{\Ncal_\JJbeta})$.
With notations of Subsection \ref{sub:choice},   we may assume that
 for   $(n,\nu)\in\T^*\Ncal$,  and $n=(x,v)$, the map $\tilde{\sigma}(n,(\xi, \nu))$ is the exterior product of the map
$$
\clif^1_{x} (\tilde{\xi}):\mathbb{d}_\beta(\Ecal)\vert_x^+\longmapsto \mathbb{d}_\beta(\Ecal)\vert_x^-
$$
with the map
$$
\clif^2_x(\tilde{\nu}+\beta\cdot v) :\bigwedge^{even} \overline{\Ncal_\JJbeta}\vert_x\longmapsto\bigwedge^{odd} \overline{\Ncal_\JJbeta}\vert_x.
$$
Here $\xi$ is viewed as a horizontal vector in $\T_n^* \Ncal$ and $\tilde{\nu},\beta\cdot v$ are vertical vectors in  $\T_n\Ncal$.
 By definition, this means that $\tilde{\sigma}$ is equal to $\sigma(M^\beta,\mathbb{d}_\beta(\Ecal))\lozenge \at_\beta(N)$.

\end{proof}

%%%%%%%%%%%%%%%%%%%%%%%%%%%%%%%%%%%%%%%%%%%%%%%%%%%%%%%%%%%%%%%%%%%%%%%%%%%%
\subsection{Deformed Atiyah-Segal-Singer localization formula}\label{sec:ASS-deformed}
%%%%%%%%%%%%%%%%%%%%%%%%%%%%%%%%%%%%%%%%%%%%%%%%%%%%%%%%%%%%%%%%%%%%%%%%%%%%
We now prove a slightly more general version of the preceding Atiyah-Segal-Singer localization formula.

 Let $\Ncal\to \Xcal$ be a $K$-equivariant vector bundle over $\Xcal$, and let $\beta$ be a central element in
 $\kgot$ such that $\Xcal=\Ncal^{\beta}$. Thus $\beta$ acts invertibly on the vector bundle $\Ncal$.
Let $\Phi: \Ncal \to \kgot^*$ be a $K$-invariant map and $Z_\Phi\subset \Ncal$ be the set of zeroes of the associated
Kirwan vector field $\kappa_\Phi$. We do not assume the basis $\Xcal$ compact, but we assume that
$\Phi_\Xcal:=\Phi\vert_\Xcal: \Xcal\to \kgot^*$ is a proper map.

The intersection of the set $Z_\Phi$ with $\Xcal$ is equal to $Z_{\Phi_\Xcal}$.

\begin{lem}
Assume that $Z_\Phi\cap \Xcal=\Phi^{-1}_\Xcal(\beta)$. Then $\Phi^{-1}_\Xcal(\beta)$ is a compact component of $Z_\Phi$: there exists a
$K$-invariant neighborhood $\Ucal$ of
$\Phi^{-1}_\Xcal(\beta)$ in $\Ncal$ such that $Z_\Phi\cap \Ucal=\Phi^{-1}_\Xcal(\beta)$.
\end{lem}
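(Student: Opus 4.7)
The approach is to deduce compactness from properness, then construct the tubular neighborhood $\Ucal$ by exploiting the invertibility of the infinitesimal action of $\beta$ on the fibres of $\Ncal$, upgraded to a uniform estimate via the compactness of $\Phi_\Xcal^{-1}(\beta)$.

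\textbf{Compactness.} Since $\Phi_\Xcal$ is proper and $\{\beta\}$ is compact and $K$-invariant (as $\beta$ is central), $\Phi_\Xcal^{-1}(\beta)$ is a compact $K$-invariant subset of $\Xcal$.

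\textbf{Construction of $\Ucal$.} Fix a $K$-invariant connection on $\Ncal\to\Xcal$, giving at $n=(x,v)$ a decomposition $\T_n\Ncal\simeq\T_x\Xcal\oplus\Ncal_x$. For $X\in\kgot$, the vertical component of the infinitesimal action $X\cdot(x,v)$ has the form $L_x^X(v)+O(\|v\|^2)$, where $L_x^X:\Ncal_x\to\Ncal_x$ is the linear action on the fibre. The assumption $\Xcal=\Ncal^\beta$ says $L_x^\beta$ is invertible for every $x\in\Xcal$; by continuity of $x\mapsto L_x^\beta$ and compactness of $\Phi_\Xcal^{-1}(\beta)$, one can choose a relatively compact $K$-invariant open neighborhood $\Wcal\subset\Xcal$ of $\Phi_\Xcal^{-1}(\beta)$ and a constant $c>0$ such that $\|L_x^\beta(v)\|\geq c\,\|v\|$ for all $x\in\overline{\Wcal}$ and $v\in\Ncal_x$. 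Shrinking $\Wcal$ if necessary, continuity of $\Phi_\Xcal$ also lets us make $\|\Phi_\Xcal(x)-\beta\|$ arbitrarily small on $\overline{\Wcal}$. Then for $n=(x,v)$ near the zero section of $\Ncal\vert_\Wcal$, continuity of $\Phi$ on $\Ncal$ gives $\Phi(n)=\beta+\eta(n)$ with $\|\eta(n)\|$ small, so the vertical component of $\kappa_\Phi(n)=-\Phi(n)\cdot n$ equals $-L_x^\beta(v)+O(\|\eta\|\cdot\|v\|)+O(\|v\|^2)$, and therefore has norm at least $(c/2)\|v\|$ once $\|v\|<\epsilon$, for some $\epsilon>0$ uniform on $\Wcal$. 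Setting $\Ucal:=\{(x,v)\in\Ncal\vert_\Wcal:\|v\|<\epsilon\}$ yields a $K$-invariant open neighborhood of $\Phi_\Xcal^{-1}(\beta)$ on which $\kappa_\Phi$ does not vanish off the zero section. Consequently $Z_\Phi\cap\Ucal\subset\Xcal\cap\Ucal=\Wcal$, and the hypothesis $Z_\Phi\cap\Xcal=\Phi_\Xcal^{-1}(\beta)$ (together with $\Phi_\Xcal^{-1}(\beta)\subset\Wcal$) forces $Z_\Phi\cap\Ucal=\Phi_\Xcal^{-1}(\beta)$.

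\textbf{Main obstacle.} The delicate point is promoting the pointwise invertibility of $L_x^\beta$ into a uniform fibrewise lower bound that survives replacing $\beta$ by $\Phi(n)$; both steps rely crucially on the compactness of $\Phi_\Xcal^{-1}(\beta)$ (hence on properness of $\Phi_\Xcal$). Once this uniform estimate is in hand, the tubular neighborhood $\Ucal$ is obtained by controlling the horizontal direction via $\Wcal$ and the vertical direction via $\epsilon$.
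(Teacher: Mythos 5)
Your argument is correct and follows essentially the same route as the paper's proof: fix a $K$-invariant connection, note that near the zero section the vertical part of $\kappa_\Phi$ is the invertible operator coming from the $\beta$-action on the fibres up to an error controlled by $\|\Phi(n)-\beta\|$, and use compactness of $\Phi^{-1}_\Xcal(\beta)$ to make this uniform so that zeros of $\kappa_\Phi$ in the neighborhood must lie on the zero section. The paper differs only cosmetically: its vertical component $\mu(\Phi(n))_x\cdot v$ is exactly linear in $v$ (so no $O(\|v\|^2)$ term appears), its neighborhood is $\{n\in\pi^{-1}(\Vcal):\ \|\Phi(n)-\beta\|<\epsilon\}$ rather than a disk bundle, and it concludes by pairing the vertical component with $\beta\cdot v$ instead of a norm lower bound.
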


\begin{proof}
Consider a $K$-invariant decomposition of the tangent bundle $\T\Ncal=H\oplus V$ in vertical and horizontal tangent bundles. Here $V=\pi^*\Ncal$.
As $\Ncal$ is a $K$-equivariant vector bundle,
for $X\in \kgot$, and $n=(x,v)\in \Ncal$, the vertical projection
$[X_\Ncal]^V(x,v)\in \Ncal\vert_x$  of the vector field $X_\Ncal$ depends linearly of $v$.
In other words, there exists $\mu(X)\in \Gamma(\Xcal,\End(\Ncal))$ such that
 $[X_\Ncal]^V(x,v)= \mu(X)_x \cdot v\in \Ncal\vert_x$,
where $x\in \Xcal$, $v\in \Ncal_x$.

For $X=\beta$, $-\mu(\beta)$ is equal to the linear action $\Lcal(\beta)$ of $\beta$ on $\Ncal_x$. If $\mathcal{K}\subset \Xcal$ is a compact subset, there exists
$c_\mathcal{K}>0$ such that $\|[X_\Ncal]^V(x,v)\|\leq c_\mathcal{K}\|X\|\,\|v\|$ for all $(x,v)\in \pi^{-1}(\mathcal{K})$.

The Kirwan vector field $\kappa_{\Phi}(n)=-\Phi(n)\cdot n$ admits then the decomposition $\kappa_{\Phi}=[\kappa_{\Phi}]^V\oplus [\kappa_{\Phi}]^H$ with
 $[\kappa_{\Phi}]^V(n)= - \mu(\Phi(n))_x \cdot v\in \Ncal\vert_x$.

Choose a relatively compact open neighborhood $\Vcal$ of $\Phi_\Xcal^{-1}(\beta)$ in $\Xcal$, and consider
$\Ucal_\epsilon:=\{n\in \pi^{-1}(\Vcal)\,;\, \|\Phi(n)-\beta\|<\epsilon\}$ which is an open neighborhood of $\Phi_\Xcal^{-1}(\beta)$ in $\Ncal$.

  If $R(n)=\Phi(n)-\beta$,  we have
\begin{equation}\label{eq:kappa-vertical}
\mu(\Phi(n))_x \cdot v= \beta\cdot v + \mu(R(n))_x\cdot v.
\end{equation}
  We choose $\epsilon$ sufficiently small so that
  $\|\mu(R(n))_x\cdot v\|< \|\beta\cdot v\|$ for any $n=(x,v)\in \Ucal_\epsilon$.
  If  $\kappa_\Phi(n)=0$, taking the scalar product with the vertical vector $\beta\cdot v$, we obtain
$$
0=-\langle \kappa_{\Phi}(n),\beta \cdot v\rangle=
-\langle [\kappa_{\Phi}]^V(n),\beta \cdot v\rangle=\|\beta\cdot v\|^2+ \langle \mu(R(n))_x\cdot v,\beta \cdot v\rangle
$$
and this implies $v=0$. Thus the set of zeroes of $\kappa_\Phi$ on $\Ucal_\epsilon$ is the set of zeroes of $\kappa_\Phi$ on $\Xcal$.
\end{proof}\bigskip

Let $\Ecal\to \Ncal$ be a graded Clifford bundle on the manifold $\Ncal$. We can then define the $K$-transversally elliptic class
$\sigma(\Ncal,\Ecal,Z,\Phi)\in\Ko_K(\T^*_K\Ncal)$ attached to the compact component $Z=\Phi^{-1}_\Xcal(\beta)$.

Let $\mathbb{d}_\beta(\Ecal)$ be the graded Clifford bundle on $\Xcal$ induced by $\Ecal$
(see Definition \ref{defi:q-beta}). We can also define the $K$-transversally elliptic  class \break
$\sigma(\Xcal,\mathbb{d}_\beta(\Ecal),Z,\Phi_\Xcal)\in\Ko_K(\T^*_K \Xcal)$.

\begin{prop}[Variation on Atiyah-Segal localization formula]

\label{prop:variation}

The following relation holds in $\Ko_K(\T^*_{K} \Ncal)$ :
\begin{equation}
\label{eq:localization}
\At_\beta\left(\sigma(\Xcal,\mathbb{d}_\beta(\Ecal),Z,\Phi_\Xcal)\right)=\sigma(\Ncal,\Ecal,Z,\Phi).
\end{equation}
In particular $\Qcal_K(\Ncal,\Ecal,Z,\Phi)=\Qcal_K\left(\Xcal,\mathbb{d}_\beta(\Ecal)\otimes \Sym(\Ncal_\JJbeta),Z,\Phi_\Xcal\right)$.
\end{prop}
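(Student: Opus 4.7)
The plan is to extend the argument of Proposition~\ref{prop:atiyah-segal} by replacing the single deformation step with a one-parameter homotopy that rectifies the Kirwan field of $\Phi$ into a product form on a tubular neighborhood of $Z$. By the lemma just proved, I first choose a $K$-invariant relatively compact open neighborhood $\Ucal$ of $Z=\Phi_\Xcal^{-1}(\beta)$ in $\Ncal$ satisfying $Z_\Phi\cap\Ucal=Z$, and small enough that the ``error'' term satisfies $\|\mu(R(x,v))\cdot v\|<\|\beta\cdot v\|$ on $\Ucal$, where $R=\Phi-\beta$; this is possible since $R$ vanishes on $\Xcal\cap Z$ and $\beta$ acts invertibly on the fibers. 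Fix a $K$-invariant connection on $\Ncal\to\Xcal$ and an adapted metric giving an orthogonal splitting $\T\Ncal\simeq\pi^*\T\Xcal\oplus\pi^*\Ncal$; following Subsection~\ref{sub:choice} together with Definition~\ref{defi:q-beta}, the Clifford bundle decomposes as $\Ecal\vert_\Ucal\simeq\pi^*\mathbb{d}_\beta(\Ecal)\otimes\pi^*\bigwedge\overline{\Ncal_\JJbeta}$ with Clifford action $\clif_{\Ecal_n}(\tilde\xi+\tilde\eta)=\clif^1_x(\tilde\xi)\boxtimes\clif^2_x(\tilde\eta)$ for horizontal $\xi$ and vertical $\eta$. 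By excision, the identity (\ref{eq:localization}) reduces to proving it in $\Ko_K(\T^*_K\Ucal)$.

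I then introduce the family of symbols on $\Ucal$ for $t\in[0,1]$,
$$\sigma_t((x,v),(\xi,\eta))=\clif_{\Ecal_n}\Big(\tilde\xi+\tilde\eta-(1-t)[\kappa_\Phi]^H(x,v)-t\,\kappa_{\Phi_\Xcal}(x)-\beta\cdot v-(1-t)\mu(R(x,v))\cdot v\Big).$$
Using the decomposition $[\kappa_\Phi]^V(x,v)=\beta\cdot v+\mu(R(x,v))\cdot v$ from equation (\ref{eq:kappa-vertical}), the sum of the last four terms at $t=0$ collapses to $\kappa_\Phi(x,v)$, so $\sigma_0=\sigma(\Ncal,\Ecal,\Phi)\vert_\Ucal$. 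At $t=1$ the Clifford argument splits into a horizontal part $\tilde\xi-\kappa_{\Phi_\Xcal}(x)$ and a vertical part $\tilde\eta-\beta\cdot v$, and the $\boxtimes$-decomposition of the Clifford action identifies $\sigma_1$ with the local form of $\At_\beta(\sigma(\Xcal,\mathbb{d}_\beta(\Ecal),\Phi_\Xcal))\vert_\Ucal$ given by the fiber product formula (\ref{eq:morphismAtbeta}).

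The essential point is that $(\sigma_t)_{t\in[0,1]}$ is a homotopy of $K$-transversally elliptic symbols on $\Ucal$. If $((x,v),(\xi,\eta))$ lies in $\Char(\sigma_t)\cap\T^*_K\Ncal\vert_\Ucal$, pairing the Riemannian dual of the argument of $\clif_{\Ecal_n}$ with the purely vertical fundamental vector field of $\beta$ on $\Ucal\subset\Ncal$ yields, as in the proof of the lemma, an inequality of the form $\|\beta\cdot v\|^2\leq(1-t)\|\mu(R)\cdot v\|\,\|\beta\cdot v\|$, which by the shrinking condition on $\Ucal$ forces $\beta\cdot v=0$, hence $v=0$. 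On $\Xcal$ the symbol reduces to $\clif^1_x(\tilde\xi-\kappa_{\Phi_\Xcal}(x))$, and pairing with the vector field generated by $\Phi_\Xcal(x)$ gives $\|\kappa_{\Phi_\Xcal}(x)\|^2=0$, so $x\in Z_{\Phi_\Xcal}\cap\Ucal=Z$. The characteristic sets are uniformly contained in $Z\times\{0\}$, proving $\sigma_0=\sigma_1$ in $\Ko_K(\T^*_K\Ucal)$. Pushing forward to $\Ncal$ gives the $\K$-theoretic identity (\ref{eq:localization}); applying (\ref{eq:indice-Atbeta}) to the fiber product $\at_\beta$ yields the stated formula for $\Qcal_K(\Ncal,\Ecal,Z,\Phi)$. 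The principal obstacle is the specific design of the homotopy: a naive linear interpolation between $\kappa_\Phi$ and $\kappa_{\Phi_\Xcal}(x)+\beta\cdot v$ would produce a vertical component that vanishes for some intermediate $t$ with $v\neq 0$, destroying transverse ellipticity. The form above preserves the dominant $-\beta\cdot v$ throughout and only scales the subordinate error $\mu(R)\cdot v$ to zero, allowing the uniform bound from the shrinking condition on $\Ucal$ to control the characteristic set.
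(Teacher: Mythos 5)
Your proof is correct and is essentially the paper's own argument: you work on the neighborhood $\Ucal$ furnished by the preceding lemma, split $\Ecal\vert_\Ucal\simeq\pi^*\mathbb{d}_\beta(\Ecal)\otimes\pi^*\bigwedge\overline{\Ncal_\JJbeta}$ via a connection, deform by the same one-parameter family of vector fields, and control the characteristic set by pairing with the vertical field $\beta\cdot v$ exactly as in the lemma, concluding by homotopy invariance and (\ref{eq:indice-Atbeta}). One small correction to your closing remark: your $\sigma_t$ \emph{is} the naive linear interpolation $\kappa^t=t\,\kappa^0+(1-t)\kappa_\Phi$ between $\kappa_\Phi$ and the product field $\kappa^0$ (which is precisely the homotopy used in the paper), since the $\beta\cdot v$ term occurs in both endpoints and is therefore automatically preserved at full strength, with only the error term $\mu(R)\cdot v$ and the horizontal parts being interpolated — so no special design beyond the linear one is needed.
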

\begin{proof}
The proof is a variation of the proof of Proposition \ref{prop:atiyah-segal}.
We consider  the vector field $\kappa^0(n)=-\beta \cdot n \oplus \kappa_\Xcal(x)$
where $\kappa_\Xcal(x)\in T_x \Xcal$ is lifted horizontally in $\T \Ncal=\pi^* \T \Xcal\oplus \pi^*\Ncal$.
Then $\kappa^t=t \kappa^0+(1-t) \kappa_\Phi$
has vertical component
$-\beta\cdot v+(1-t) \mu(R(n))_x\cdot v$.
The restriction to $\Xcal$ of $\kappa^t$ is the vector field $\kappa_{\Phi_\Xcal}$ for any $t\in [0,1]$.
We then consider
$$\sigma^t(n,\nu)=\clif_{n} (\tilde{\nu}-\kappa^{t}(n))
$$
for  $t\in [0,1]$.
We see, as before,  that  $\Char(\sigma^t)$ intersected with
$\T^*_K(\Ncal)$ remains equal to $Z$.
Thus the class of $\sigma^t$ is constant in $\Ko_K(\T^*_K \Ncal)$, and $\sigma^0$ is equal to
$\At_\beta\left(\sigma(\Xcal,\mathbb{d}_\beta(\Ecal),Z,\Phi_\Xcal)\right)$.
\end{proof}

%%%%%%%%%%%%%%%%%%%%%%%%%%%%%%%%%%%%%%%%%%%%%%%%%%%
%%%%%%%%%%%%%%%%%%%%%%%%%%%%%%%%%%%%%%%%%%%%%%%%%%%
\section{Non abelian localization formula}\label{sec:non-abelian-localization}
%%%%%%%%%%%%%%%%%%%%%%%%%%%%%%%%%%%%%%%%%%%%%%%%%%%%
%%%%%%%%%%%%%%%%%%%%%%%%%%%%%%%%%%%%%%%%%%%%%%%%%%%%

In this section, given an equivariant map $\Phi: M\to \kgot^*$,  we establish a general localization formula for the equivariant index of a Dirac operator.

%%%%%%%%%%%%%%%%%%%%%%%%%%%%%%%%%%%%%%%%%%%%%%
\subsection{Localization on  $\Phi^{-1}(0)$}\label{subsec:loc at zero}
%%%%%%%%%%%%%%%%%%%%%%%%%%%%%%%%%%%%%%%%%%%%%%

Let $\Phi: M\to \kgot^*$ be an equivariant proper map. We describe what is the representation
$\Qcal_K(M,\Ecal,\Phi^{-1}(0),\Phi)$, in the special case where $0$ is a regular value of $\Phi$, and
where $K$ acts infinitesimally freely on the compact submanifold $P=\Phi^{-1}(0)$.
Let $M_{red} :=P/K$ be the corresponding ``reduced'' space, and let $\pi: P\to M_{red}$ be the projection map.

\begin{rem}
By Proposition \ref{prop: O regular}, the hypothesis that $0$  is a regular value implies that $K$ acts infinitesimally freely on $P$ in the case where $\Phi$ is a moment map.
\end{rem}

%%We postpone the general case to  Section \ref{sec:moreproperties}.

On $P$, we obtain an exact sequence
$0\longrightarrow \T P\longrightarrow \T M\vert_P \stackrel{\T\Phi}{\longrightarrow} [\kgot^*]\to 0$,
where $[\kgot^*]$ is the trivial bundle $P\times\kgot^*$. We have also an orthogonal decomposition
$\T P= \T_{K} P \oplus [\kgot]$ where $[\kgot]$ is the subbundle identified to $P\times\kgot$ through the map
$(p,X)\mapsto X\cdot p$. So $\T M\vert_P$ admits the orthogonal decomposition
$\T M\vert_P \simeq \T_{K} P\oplus [\kgot] \oplus  [\kgot^*]$. We rewrite this as
\begin{equation}\label{eq:tangent-P}
\T M\vert_P \simeq \T_{K} P\oplus [\kgot_\C]
\end{equation}
with the convention $[\kgot]=P\times(\kgot\otimes\R) $ and $[\kgot^*] = P\times (\kgot\otimes i\R)$.
Note that the bundle $\T_{K} P$ is naturally identified with $\pi^*(\T M_{red})$.

Following Definition \ref{defi:q-beta}, we can divide the graded Clifford bundle $\Ecal\vert_P$ by the $\spinc$-bundle
$\bigwedge \kgot_\C$ for the vector space $\kgot_\C$.

\begin{defi}\label{def:E-red}
Let $\Ecal_{red}$ be the graded Clifford  bundle on the vector bundle $\T_{K} P\to P$ such that
$$
\Ecal\vert_P \simeq\Ecal_{red}\otimes [\bigwedge\kgot_\C]
$$
is an isomorphism of graded Clifford  bundles on $\T M\vert_P$. We still denote by $\Ecal_{red}$ the induced
Clifford (orbi-)bundle on $\T M_{red}$.
\end{defi}

For any representation space $V_\mu$ of $K$, we can form the twisted Clifford  bundle $\Ecal_{red}\otimes \Vcal^*_\mu$ of the Clifford bundle
$\Ecal_{red}$ by the equivariant (orbi)-bundle $\Vcal^*_\mu=P\times_K V^*_\mu$.

Consider  the horizontal Dirac symbol, $\sigma_{hor}(P,\Ecal_{red})$ on $P$  defined in Section \ref{sec:horizontal}.
This is a $K$ transversally elliptic symbol on $P$. We now prove
\begin{theo}\label{theo:localisation-Phi-0}
If $0$ is a regular value of $\Phi$ and $K$ acts locally freely on $\Phi^{-1}(0)$, then $\Phi^{-1}(0)$ is a component of $Z_\Phi$, and  we have
\begin{eqnarray*}
\Qcal_K(M,\Ecal,\Phi^{-1}(0),\Phi)&=&\indice_K^P(\sigma_{hor}(P,\Ecal_{red}))\\
&=&\sum_{\mu\in \hat K} \Qcal\left(M_{red},\Ecal_{red}\otimes \Vcal_{\mu}^*\right) V_\mu.
\end{eqnarray*}
In particular we have $\left[\Qcal_K(M,\Ecal,\Phi^{-1}(0),\Phi)\right]^K=\Qcal\left(M_{red},\Ecal_{red}\right)$.
\end{theo}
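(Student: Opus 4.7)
The plan is to reduce the computation to the normal bundle of $P:=\Phi^{-1}(0)$, deform the moment map to a linear one in the normal direction, and then recognize the pushed symbol as a Bott-type push-forward of the horizontal Dirac symbol on $P$. I proceed in four stages.

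\medskip

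\noindent\textit{Step 1: $P$ is a component of $Z_\Phi$, and tubular neighborhood.} Since $0$ is regular and $K$ acts locally freely on $P$, the infinitesimal stabilizer $\kgot_p$ vanishes for every $p\in P$; by upper semi-continuity of $m\mapsto\dim\kgot_m$, there is a $K$-invariant neighborhood $\Vcal$ of $P$ on which $\kgot_m=0$. Using (\ref{eq:Z-Phi}), any $m\in Z_\Phi\cap\Vcal$ satisfies $\Phi(m)\in\kgot_m^*=\{0\}$, whence $Z_\Phi\cap\Vcal=P$, so $P$ is a union of connected components of $Z_\Phi$. The equivariant tubular neighborhood theorem identifies a $K$-invariant open neighborhood $\Ucal$ of $P$ with an open neighborhood of the zero section of the normal bundle $\Ncal\to P$, and $\T\Phi$ provides a $K$-equivariant vector bundle isomorphism $\Ncal\simeq P\times\kgot^*$.

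\medskip

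\noindent\textit{Step 2: Normalization of $\Phi$.} Under the previous identifications, $\Phi\vert_\Ucal$ becomes a map $\tilde\Phi:\Ucal\subset\Ncal\to\kgot^*$ with $\tilde\Phi\vert_P=0$ and vertical derivative along $P$ equal to the identity of $\kgot^*$. Introduce the linear interpolation $\Phi_s:=(1-s)\tilde\Phi+s\,\Phi_0$, where $\Phi_0(p,v):=v$. The same ``small-$v$'' argument used in the proof of Proposition~\ref{prop:variation} shows that, on a small $K$-invariant neighborhood of $P$, $Z_{\Phi_s}=P$ for every $s\in[0,1]$, so the pushed symbols form a homotopy of $K$-transversally elliptic classes and
$$\Qcal_K(M,\Ecal,P,\Phi)=\Qcal_K(\Ncal,\Ecal,P,\Phi_0).$$

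\medskip

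\noindent\textit{Step 3: Identification with a Bott push-forward.} Using the decomposition $\T M\vert_P\simeq \T_K P\oplus[\kgot]\oplus[\kgot^*]$ of (\ref{eq:tangent-P}), the Clifford bundle splitting $\Ecal\vert_P\simeq\Ecal_{red}\otimes[\bigwedge\kgot_\C]$ of Definition~\ref{def:E-red}, the identification $[\kgot]\vert_p\simeq\kgot$ given by the infinitesimal action (bijective by local freeness), and $\Ncal_p\simeq\kgot^*$ given by $\T\Phi$, one writes, for a cotangent vector decomposed as $\nu=(\xi_h,\xi_v,\eta)\in\T_K^*P\oplus[\kgot]^*\oplus\Ncal^*$ at $(p,v)\in\Ncal$,
$$\sigma(\Ncal,\Ecal,\Phi_0)((p,v),\nu)=\clif_{\Ecal_{red}}(\tilde\xi_h)\boxtimes\clif_{\bigwedge\kgot_\C}(\tilde\xi_v+v+i\tilde\eta).$$
Consider the family $\sigma^s$ obtained by replacing $\tilde\xi_v$ by $(1-s)\tilde\xi_v$. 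At $s=1$, $\sigma^1$ coincides, via formula (\ref{eq:bott}), with the Bott push-forward $\sigma_{hor}(P,\Ecal_{red})\,\lozenge\,\mathrm{Bott}(N_\C)$ of Section~\ref{sec:directimage}. A direct computation combining the $K$-transversality condition $\langle\xi,X\cdot p\rangle+\langle\eta,\ad^*(X)v\rangle=0$ with the local freeness of $K$ on $P$ shows that, for every $s\in[0,1]$, $\Char(\sigma^s)\cap\T^*_K\Ncal$ reduces to the zero section of $\T^*\Ncal\vert_P$, hence remains compact in any small neighborhood of $P$.

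\medskip

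\noindent\textit{Step 4: Conclusion.} Combining Step 3 with the invariance of the index by push-forward (Theorem~\ref{theo:direct}) yields
$$\Qcal_K(\Ncal,\Ecal,P,\Phi_0)=\indice_K^\Ncal\bigl(\sigma_{hor}(P,\Ecal_{red})\,\lozenge\,\mathrm{Bott}(N_\C)\bigr)=\indice_K^P\bigl(\sigma_{hor}(P,\Ecal_{red})\bigr),$$
which proves the first equality. The second equality is precisely the statement at the end of Section~\ref{sec:horizontal}, applied to $G=\{1\}$ and to the (possibly orbifold) principal $K$-fibration $P\to M_{red}$. The main difficulty lies in the transversal-ellipticity check of Step 3: the coupling between the base variable $\xi_v$ along $K$-orbits in $P$ and the normal variable $v$ in $\Ncal$ forces one to exploit the local freeness on $P$ in order to convert the $K$-orbit condition on $\Ncal$ into the vertical condition $v=0$, and so close up the characteristic set compactly.
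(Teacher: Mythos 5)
Your proposal is correct and follows essentially the same route as the paper: after identifying a neighborhood of $P=\Phi^{-1}(0)$ with $P\times\kgot^*$, you write the deformed symbol as $\clif_{\Ecal_{red}}\boxtimes\clif_{\bigwedge\kgot_\C}$, deform away the cotangent component along the $K$-orbit directions (using local freeness to keep the characteristic set over $P$ inside $\T^*_K$), recognize $i_!\sigma_{hor}(P,\Ecal_{red})$, and conclude via Theorem~\ref{theo:direct} and the horizontal/free-action index property. The only cosmetic difference is your explicit Step~2 homotopy straightening $\Phi$ to the fiber projection, which the paper achieves implicitly by choosing the model $M=P\times\kgot^*$ in which $\Phi(x,\xi)=\xi$.
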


\begin{proof}  Let us study the pushed symbol
$\sigma(M,\Ecal, \Phi)(m,\nu)=\sigma_{\Ecal_m}(\tilde{\nu}-\kappa_\Phi(m))$ when $m$ is in a neighborhood
of $P=\Phi^{-1}(0)$. So we can assume that  $M=P\times\kgot^*$. For a point $m=(x,\xi)\in P\times \kgot^*$,
the tangent vector $\tilde{\nu}\in \T_m M$ is represented by $\eta\oplus a \oplus b$ : here  $\eta\in \T_K P$, $a\in\kgot$
represents the tangent vector $a\cdot x\in \T_x P$, and $b=\T_m\Phi(\tilde{\nu})\in\kgot^*$.
For $X\in \kgot$, the vector field $X_M(x,\xi)$
associated to the action of $K$ on $P\times \kgot^*$ is represented by $0\oplus -X \oplus [\xi,X]$:
in particular the Kirwan vector field $\kappa_\Phi(x,\xi)$ is equal to
$0\oplus -\xi\oplus 0$. For the pushed symbol we obtain
$$
\sigma(M,\Ecal, \Phi)(m,\nu)=
\clif_{\Ecal_{red}}(\eta)\boxtimes \clif_{\wedge \kgot_\C} ((a +\xi)\oplus ib).
$$

Let us see that the support of the pushed symbol intersected with $\T^*_K M$  is just equal to $P$.
Indeed if  $(x,\xi;\eta\oplus a \oplus b)\in \T^*_K M$, we have $([\xi,X], b)-(X,a)=0$ for all $X\in\kgot$, e.g. $a=[b,\xi]$.
Then, if $\clif_{\Ecal_x}(\eta)\boxtimes \clif_{\wedge \kgot_\C} ((a +\xi)\oplus ib)$ is not invertible, we obtain
$\eta=0$, $b=0$ and $a+\xi=0$. If furthermore  $(x,\xi;\eta\oplus a \oplus b)$ is in $\T^*_K M$, we obtain $a=\xi=0$.
 Thus we see that $\Phi^{-1}(0)=P$ is a component of $Z_\Phi$, and
$\sigma(M,\Ecal, \Phi)$  restricted to a neighborhood of $P$ defines a transversally elliptic symbol on $M$. We can then define its index
$\Qcal_K(M,\Ecal,\Phi^{-1}(0),\Phi).$

Consider now the injection $i:P\to P\times \kgot^*=M$ and the horizontal symbol
$\sigma_{hor}(P,\Ecal_{red})$ on $P$.
By definition (see Section \ref{sec:directimage}), the direct image
$i_!\sigma_{hor}(P,\Ecal_{red})$ is defined to be the restriction to $\T_K^* M$ of the symbol
$$
\clif_{\Ecal_{red}}(\eta)\boxtimes \clif_{\wedge \kgot_\C} (\xi \oplus ib).
$$

We see that for $t\in [0,1]$, the support of the symbol
$$\sigma^t(x,\xi,\nu)=\clif_{\Ecal_{red}}(\eta)\boxtimes \clif_{\wedge \kgot_\C} ((ta +\xi)\oplus ib)
$$
intersected with $\T^*_KM$ remains equal to $P$.

Thus  $\sigma(M,\Ecal,\Phi)$  defines the same class in $K$-theory than the symbol
$i_!\sigma_{hor}(P,\Ecal_{red})$. Thus by the direct image property of the symbol, we obtain our result.

\end{proof}

\bigskip

If furthermore we had an action of $G\times K$ on $M$,
we would obtain an action of $G$ in each space
$\Qcal(M_{red},\Ecal_{red}\otimes \Vcal^*_\mu)$
and the $G\times K$ decomposition
\begin{equation}\label{eq:localisation-0-K-L}
\Qcal_{G\times K}(M,\Ecal,\Phi^{-1}(0),\Phi)=
\sum_{\mu\in \hat K}\Qcal_G(M_{red},\Ecal_{red}\otimes \Vcal_\mu^*)\otimes V_\mu.
\end{equation}

%%%%%%%%%%%%%%%%%%%%%%%%%%%%%%%%%%%%%%%%%%%%%%%%%%%
\subsection{Coadjoint orbits and slices}
%%%%%%%%%%%%%%%%%%%%%%%%%%%%%%%%%%%%%%%%%%%%%%%%%%%

Let $\Phi: M\to \kgot^*$ be our $K$-equivariant map (with $K$ not necessarily connected).

Consider the coadjoint action of  $K$ on $\kgot^*$.
Let $\xi\in \kgot^*$.
Let us consider the $K_\xi$-invariant decomposition $\kgot=\kgot_\xi\oplus \qgot$, with dual decomposition
$\kgot^*=\kgot_\xi^*\oplus \qgot^*$.

\begin{prop}\label{prop:slice}
Let $\xi\in \kgot^*$, and let $B$ be a sufficiently small $K_\xi$-invariant
neighborhood of $\xi$ in $\kgot_\xi^*$.
Then

\begin{itemize}
\item  $Y=\Phi^{-1}(B)$ is a $K_\xi$-invariant submanifold of $M$ (perhaps empty).
\item  $\Phi^{-1}(KB)$ is an open neighborhood of $\Phi^{-1}(K\xi)$, and is  diffeomorphic to $K\times_{K_\xi} Y$.
\end{itemize}
\end{prop}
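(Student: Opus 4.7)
The plan is to deduce the proposition from the standard slice theorem for the coadjoint action combined with a transversality argument applied to $\Phi$. The entire statement is a ``parameter version'' of the linear slice $\kgot_\xi^* \hookrightarrow \kgot^*$ at the point $\xi$.

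First I would observe that $\kgot_\xi^*$ is a transverse slice to the coadjoint orbit $K\xi$ at $\xi$. Indeed, the infinitesimal orbit map $X \mapsto X\cdot \xi$ has kernel $\kgot_\xi$, so under the identification $\kgot \simeq \kgot^*$ coming from the invariant scalar product, the tangent space $T_\xi(K\xi)$ is exactly the direct summand $\qgot^*$ opposite to $\kgot_\xi^*$. The classical slice theorem then yields that, for any sufficiently small $K_\xi$-invariant neighborhood $B$ of $\xi$ in $\kgot_\xi^*$, the map $K\times_{K_\xi} B\to \kgot^*, [k,b]\mapsto k\cdot b,$ is a $K$-equivariant diffeomorphism onto a $K$-invariant open neighborhood $KB$ of $K\xi$.

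Next I would establish transversality of $\Phi$ to the submanifold $\kgot_\xi^*\subset \kgot^*$ at every $m\in \Phi^{-1}(\xi)$. The $K$-equivariance $\Phi(\exp(tX)\cdot m)=\exp(tX)\cdot \Phi(m)$, differentiated at $t=0$, gives $T_m\Phi(X_M(m))=X\cdot \xi$ for every $X\in\kgot$. Hence the image of $T_m\Phi$ contains the orbit tangent space $T_\xi(K\xi)=\qgot^*$, and the splitting $\kgot^*=\kgot_\xi^*\oplus\qgot^*$ forces $\mathrm{image}(T_m\Phi)+T_\xi\kgot_\xi^*=\kgot^*$. Since transversality is open, there is an open neighborhood $V$ of $\Phi^{-1}(\xi)$ on which $\Phi$ is transverse to $\kgot_\xi^*$; shrinking $B$ if necessary so that $\Phi^{-1}(B)\subset V$ (this is the one place where ``sufficiently small'' is really used, and it amounts to the closedness of $\Phi^{-1}(K\xi)$ in $M$), the set $Y=\Phi^{-1}(B)$ is the transverse preimage of an open submanifold of $\kgot^*$, hence a submanifold of $M$. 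Its $K_\xi$-invariance is immediate from the $K_\xi$-invariance of $B$ and the equivariance of $\Phi$.

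Finally I would construct the diffeomorphism $\Psi:K\times_{K_\xi}Y\to \Phi^{-1}(KB)$. By equivariance, the action map $(k,y)\mapsto k\cdot y$ factors to a smooth $K$-equivariant map $\Psi$, with image $K\cdot Y=\Phi^{-1}(KB)$. Injectivity: if $k\cdot y=k'\cdot y'$, then $k\cdot \Phi(y)=k'\cdot \Phi(y')$ with $\Phi(y),\Phi(y')\in B$, and the slice theorem of the first step forces $\Phi(y)=\Phi(y')$ and $k^{-1}k'\in K_\xi$, so the classes agree in $K\times_{K_\xi}Y$. Smoothness of $\Psi^{-1}$ follows by composing a local smooth section of $K\to K/K_\xi$ (translating points of $KB$ back into $B$ via the inverse of the slice diffeomorphism) with $\Phi$.

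The only real obstacle is the sentence ``shrink $B$ so that $\Phi^{-1}(B)\subset V$''; this is harmless near $\Phi^{-1}(K\xi)$ because $K\xi$ is compact and $V$ contains the closed set $\Phi^{-1}(\xi)$, but otherwise the proposition should be read as describing the structure of $\Phi$ on a neighborhood of $\Phi^{-1}(K\xi)$, exactly as in the ordinary symplectic slice lemma.
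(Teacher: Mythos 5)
Your first and last steps (the tubular neighborhood identification $K\times_{K_\xi}B\simeq KB$, and the construction of the induced map $K\times_{K_\xi}Y\to\Phi^{-1}(KB)$ with its inverse) are exactly the paper's route, but your middle step has a genuine gap. You establish transversality of $\Phi$ to $\kgot_\xi^*$ only at points of $\Phi^{-1}(\xi)$ and then propagate it by openness of transversality in $M$, which forces the requirement ``shrink $B$ so that $\Phi^{-1}(B)\subset V$''. No such shrinking is available in general: the proposition is stated for an arbitrary equivariant map $\Phi$ on a possibly non-compact $M$, with no properness hypothesis, so $\Phi^{-1}(B)$ may contain points arbitrarily far from $\Phi^{-1}(\xi)$ however small $B$ is. Your parenthetical claim that this ``amounts to the closedness of $\Phi^{-1}(K\xi)$ in $M$'' is not correct (closedness of one fiber does not confine nearby fibers), and your fallback of reading the proposition only on a neighborhood of $\Phi^{-1}(K\xi)$ weakens the statement, which the paper uses for the full preimage $Y=\Phi^{-1}(B)$.

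The missing idea --- and the way the paper argues --- is that equivariance gives the needed surjectivity at \emph{every} point of $Y$ directly, the smallness of $B$ being a condition inside $\kgot_\xi^*$ alone and not a condition on where $\Phi^{-1}(B)$ sits in $M$. Write $\Phi_\qgot$ for the $\qgot^*$-component of $\Phi$. For $y\in \Phi^{-1}(B)$ with $b=\Phi(y)$, equivariance gives $\T_y\Phi(X_M(y))=X\cdot b$ for all $X\in\kgot$, so the image of $\T_y\Phi_\qgot$ contains the projection of $\kgot\cdot b$ to $\qgot^*$. The linear map $X\in\qgot\mapsto \mathrm{pr}_{\qgot^*}(X\cdot b)$ is bijective at $b=\xi$ and depends continuously on $b$, hence stays bijective for all $b$ in a small enough neighborhood $B$ of $\xi$ in $\kgot_\xi^*$ (the same smallness that makes $K\times_{K_\xi}B\to KB$ a diffeomorphism). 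Therefore $\T_y\Phi_\qgot$ is onto at every $y\in Y$, wherever $y$ lies in $M$, and $Y$, being the zero level of $\Phi_\qgot$ inside the open set where the $\kgot_\xi^*$-component of $\Phi$ lies in $B$, is a submanifold of codimension $\dim\qgot$. With this replacement the rest of your proof goes through and coincides with the ``standard'' ending the paper alludes to. (A small slip in your injectivity check: the slice theorem gives $k^{-1}k'\in K_\xi$ and $\Phi(y)=k^{-1}k'\cdot\Phi(y')$ rather than $\Phi(y)=\Phi(y')$; the classes in $K\times_{K_\xi}Y$ still agree.)
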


We will call the manifold $Y$ a slice of $M$ at $\xi$.

\begin{proof}
Consider the coadjoint orbit $K\xi$.
The map $X\to X\cdot \xi$ is  a surjective map from $\kgot$ to  $\qgot^*$ and $\kgot_\xi^*$
identifies to the normal space to the coadjoint orbit $K\xi$ at $\xi$.
The tubular neighborhood theorem  implies that if $B$ is sufficiently small, then  $KB$ is a neighborhood of $K\xi$ isomorphic to $K\times_{K_\xi}B$.

Write $\Phi_\qgot$ for the component of $\Phi$ in $\qgot^*$.
The differential map  $T_y \Phi_\qgot: \T_y M  \to \qgot^*$ is surjective, if  $\Phi(y)=\xi$.
Thus it is surjective at any point   $y\in Y$, if $B$ is sufficiently small.
The rest of the proof is standard.
\end{proof}

\bigskip

We also consider $\xi$ as an element of $\kgot_\xi$, using our $K$-invariant identification $\kgot=\kgot^*$.
So $\xi$ induces an invertible skew-symmetric transformation on $\qgot$.
Denote by
$\qgot_{J_\xi}$  the vector space $\qgot=\kgot/\kgot_{\xi}$ equipped with the complex structure $J_\xi$,

Suppose now that $M$ carries a graded Clifford  bundle $\Ecal$. The tangent bundle $\T M$, when restricted to the slice $Y$, decomposes as $\T M\vert_{Y}\simeq  \T Y \oplus [\kgot/\kgot_\xi]$, where $[\kgot/\kgot_\xi]$ denotes the trivial bundle $\kgot/\kgot_\xi\times Y$.

As in Definition \ref{defi:q-beta},
we can divide the graded Clifford bundle $\Ecal\vert_{Y}$ by the $\spinc$-bundle
$\bigwedge \qgot_{J_\xi}$  for the Euclidean vector space $\qgot$.
  There exists a unique graded Clifford bundle
$\Ecal_{\ddY}$ over the tangent bundle $\T Y$, such that, when we restrict $\Ecal$ to $Y$, we get the isomorphism
\begin{equation}\label{eq:E-slice-xi}
\Ecal\vert_{Y}\simeq\Ecal_{\ddY}\otimes  \left[\bigwedge \qgot_{J_\xi}\right]
\end{equation}
of graded Clifford bundles relative to the decomposition $\T M\vert_{Y}\simeq  \T Y \oplus [\kgot/\kgot_\xi]$.

%%%%%%%%%%%%%%%%%%%%%%%%%%%%%%%%%%%%%%%%%%%%%%%%%%%
\subsection{Localization on $Z_\beta$}
%%%%%%%%%%%%%%%%%%%%%%%%%%%%%%%%%%%%%%%%%%%%%%%%%%%

We consider our equivariant map $\Phi:M\to \kgot^*$, and we assume that
$$
Z_\beta= K(M^\beta\cap \Phi^{-1}(\beta))
$$
is a compact component of $Z_\Phi$.
Here $\beta$ is a non zero element in $\kgot^*$.

Let $\Qcal_K(M,\Ecal,Z_\beta,\Phi)\in \hat{R}(K)$ be the localized equivariant index attached to an equivariant graded
Clifford bundle $\Ecal$. Let $\mathbb{d}_\beta(\Ecal)$ be the induced graded Clifford bundle on $M^\beta$ (see Section \ref{sec:loc-Atiyah-Segal}).
The restriction of $\Phi$ to $M^\beta$ is a map that takes value in $\kgot^*_\beta$ : for simplicity we still denote
$\Phi: M^\beta\to \kgot^*_\beta$.  The set $M^\beta\cap \Phi^{-1}(\beta)$ is a compact component of
$Z_{\Phi}\cap M^\beta$.
Let $\Ncal_\JJbeta$ be the (complex) normal bundle of $M^\beta$ in $M$ polarized by $\beta$. The aim of this subsection is to prove the following localization formula.

\begin{theo}\label{theo:localization-Z-beta}
We have the following relation in $\hat{R}(K)$:
\begin{eqnarray*}
\lefteqn{\Qcal_K(M,\Ecal,Z_\beta,\Phi)=}\\
& & \mathrm{Ind}^K_{K_{\beta}}
\left(\Qcal_{K_{\beta}}(M^\beta,\mathbb{d}_\beta(\Ecal)\otimes \Sym(\Ncal_\JJbeta),
M^\beta\cap \Phi^{-1}(\beta),\Phi)\otimes \bigwedge (\kgot/\kgot_{\beta}\otimes\C)\right).
\end{eqnarray*}
\end{theo}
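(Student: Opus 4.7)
The plan is to combine four ingredients: the slice theorem near $K\beta$, the induction formula, the variation of the Atiyah--Segal localization (Proposition \ref{prop:variation}), and a Koszul-type identity between $\Sym$ and $\bigwedge$ in $\hat R(K_\beta)$.

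First, I would invoke Proposition \ref{prop:slice} to produce a sufficiently small $K_\beta$-invariant neighborhood $B$ of $\beta$ in $\kgot_\beta^*$. The slice $Y:=\Phi^{-1}(B)$ is a $K_\beta$-submanifold, and $\Phi^{-1}(KB)\simeq K\times_{K_\beta}Y$ is an open $K$-neighborhood of $Z_\beta$, under which $Z_\beta$ corresponds to $K\times_{K_\beta}(M^\beta\cap\Phi^{-1}(\beta))$. Since $\Phi(Y)\subset\kgot_\beta^*$, the Kirwan vector field $\kappa_\Phi$ is tangent to $Y$ along $Y$; consequently the restriction of $\sigma(M,\Ecal,\Phi)$ to $\T^*_K M\vert_Y\simeq\T^*_{K_\beta}Y$ is exactly $\sigma(Y,\Ecal\vert_Y,\Phi\vert_Y)$. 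Combining excision with the induction formula (Proposition \ref{Induction formula}) gives
\[
\Qcal_K(M,\Ecal,Z_\beta,\Phi)=\mathrm{Ind}_{K_\beta}^K\bigl(\Qcal_{K_\beta}(Y,\Ecal\vert_Y,M^\beta\cap\Phi^{-1}(\beta),\Phi\vert_Y)\bigr).
\]

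Next I would apply the Clifford decomposition (\ref{eq:E-slice-xi}), $\Ecal\vert_Y\simeq\Ecal_{\ddY}\otimes[\bigwedge\qgot_{J_\beta}]$, which allows the constant $K_\beta$-representation $\bigwedge\qgot_{J_\beta}$ to be extracted from the index. Then Proposition \ref{prop:variation}, applied to the $K_\beta$-manifold $Y$ (identified near $Y^\beta=M^\beta\cap Y$ with the normal bundle $\Ncal_Y\to M^\beta\cap Y$) and the central element $\beta\in\kgot_\beta$, yields
\[
\Qcal_{K_\beta}(Y,\Ecal_{\ddY},M^\beta\cap\Phi^{-1}(\beta),\Phi\vert_Y)
=\Qcal_{K_\beta}\bigl(M^\beta\cap Y,\mathbb{d}_\beta(\Ecal_{\ddY})\otimes\Sym((\Ncal_Y)_\JJbeta),M^\beta\cap\Phi^{-1}(\beta),\Phi\bigr);
\]
the hypothesis of that proposition holds near the compact set $M^\beta\cap\Phi^{-1}(\beta)$ after shrinking $B$ if needed.

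The final step is to identify the bundle $\mathbb{d}_\beta(\Ecal_{\ddY})\otimes\Sym((\Ncal_Y)_\JJbeta)\otimes\bigwedge\qgot_{J_\beta}$ on $M^\beta\cap Y$ with the restriction of $\mathbb{d}_\beta(\Ecal)\otimes\Sym(\Ncal_\JJbeta)\otimes\bigwedge(\kgot/\kgot_\beta\otimes\C)$ defined on $M^\beta$, so that excision delivers the theorem. On $M^\beta\cap Y$, the splitting $\Ncal\vert_{M^\beta\cap Y}=\Ncal_Y\oplus[\kgot/\kgot_\beta]$ gives $\Sym(\Ncal_\JJbeta)=\Sym((\Ncal_Y)_\JJbeta)\otimes\Sym(\qgot_{J_\beta})$; and equating the two Clifford factorizations of $\Ecal\vert_{M^\beta\cap Y}$ coming from (\ref{eq:E-slice-xi}) and (\ref{eq:Ebeta}) yields the identity $\mathbb{d}_\beta(\Ecal_{\ddY})\otimes\bigwedge\qgot_{J_\beta}\simeq\mathbb{d}_\beta(\Ecal)\otimes\bigwedge\overline{\qgot_{J_\beta}}$. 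The resulting discrepancy between the two sides is the factor $\Sym(\qgot_{J_\beta})\otimes\bigwedge\qgot_{J_\beta}$, which equals the trivial $K_\beta$-representation in $\hat R(K_\beta)$ by the Koszul identity $[\Sym V]\cdot[\bigwedge V]=[\C]$ (with the canonical $\Z/2\Z$-grading on $\bigwedge$); together with $\bigwedge\qgot_{J_\beta}\otimes\bigwedge\overline{\qgot_{J_\beta}}=\bigwedge(\kgot/\kgot_\beta\otimes\C)$ this produces precisely the factor required by the theorem.

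The hard part is the $\Z/2\Z$-graded Clifford bookkeeping in this last step: one must reconcile the sign conventions of (\ref{eq:E-slice-xi}), which uses the spinor $\bigwedge\qgot_{J_\beta}$, with those of (\ref{eq:Ebeta}), which uses $\bigwedge\overline{\Ncal_\JJbeta}$, so that the Koszul cancellation produces exactly the factor $\bigwedge(\kgot/\kgot_\beta\otimes\C)$ prescribed by the theorem.
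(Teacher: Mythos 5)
Your proposal is correct and follows essentially the same route as the paper's own proof: a slice $Y=\Phi^{-1}(B)$ at $\beta$ combined with the induction formula of Proposition \ref{Induction formula}, then Proposition \ref{prop:variation} applied to the normal bundle $\Ncal_Y\to Y^\beta$, and finally the Koszul identity $\Sym(\qgot_\JJbeta)\otimes\bigwedge\qgot_\JJbeta=1$ together with the comparison of the two Clifford factorizations of $\Ecal\vert_{Y^\beta}$ to produce the factor $\bigwedge(\kgot/\kgot_\beta\otimes\C)$. The graded sign bookkeeping you single out as the delicate point is exactly what the paper settles via the identities (\ref{eq:Ecal-beta-1}) and (\ref{eq:Ecal-beta-2}).
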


\begin{proof} The proof will have two steps.

{\bf First Step : } Let $B\subset \kgot^*$ be a small $K_\beta$-invariant ball around $\beta$  in $\kgot_{\beta}^*$ and let $Y=\Phi^{-1}(B)$ be
the corresponding slice. A neighborhood of $Z_\beta$ in $M$ is then diffeomorphic with $K\times_{K_{\beta}}Y$. We denote by
$\Phi_Y: Y\to \kgot^*_\beta$ the restriction of $\Phi$  to the submanifold $Y$. Let $\Ecal_{\ddY}$ be the induced Clifford bundle on
$\T Y$ (see (\ref{eq:E-slice-xi})).

The restriction of the symbol $\sigma(M,\Ecal,\Phi)$ to $\T^* Y$ is equal to
$\sigma(Y,\Ecal_{\ddY},\Phi_Y)\otimes \bigwedge \qgot_\JJbeta$, hence the induction formula of Proposition \ref{Induction formula} gives that
$$
\Qcal_K(M,\Ecal,Z_\beta,\Phi)=
\mathrm{Ind}^K_{K_{\beta}}
\left(\Qcal_{K_{\beta}}(Y,\Ecal_{\ddY}, M^\beta\cap \Phi^{-1}(\beta),\Phi_Y)\otimes \bigwedge \qgot_\JJbeta\right)
$$

(as $\Phi^{-1}(\beta)$ is contained in $Y$,  $M^\beta\cap \Phi^{-1}(\beta)=Y^\beta\cap \Phi_Y^{-1}(\beta)$).
\medskip

{\bf Second Step : } Now we use the variation of the Atiyah-Segal-Singer localization formula for computing
$\Qcal_{K_{\beta}}(Y,\Ecal_{\ddY}, M^\beta\cap \Phi^{-1}(\beta),\Phi_Y)$.
As we are working on a neighborhood of $Y^\beta\cap \Phi_Y^{-1}(\beta)$, we may assume that $Y$ is the
normal bundle $\Ncal_Y$ of $Y^{\beta}$ in $Y$.

Let $\mathbb{d}_\beta(\Ecal_{\ddY})$ be the Clifford bundle on the submanifold
$Y^\beta$ determined by $\Ecal_{\ddY}$ (see Section \ref{sec:loc-Atiyah-Segal}). Let $(\Ncal_Y)_{J_\beta}$ be the normal bundle of $Y^\beta$ in $Y$ polarized by $\beta$.
We  apply the variation of Atiyah-Segal-Singer localisation formula to $\Ncal_Y\to Y^{\beta}$ (the group $K$ being now $K_{\beta}$) and we obtain the following formula:
\begin{eqnarray*}
\lefteqn{\Qcal_{K_{\beta}}(Y,\Ecal_{\ddY}, M^\beta\cap \Phi^{-1}(\beta),\Phi_Y)=}\\
&&\Qcal_{K_{\beta}}(Y^\beta,\mathbb{d}_\beta(\Ecal_{\ddY})\otimes \Sym((\Ncal_Y)_{J_\beta}), M^\beta\cap \Phi^{-1}(\beta),\Phi_Y).
\end{eqnarray*}

The manifold $Y^\beta$ is an open subset of $M^\beta$, and for the normal bundles $\Ncal$ of $M^{\beta}$ in $M$ versus $\Ncal_Y$ (the normal bundle of $Y^{\beta}$ in $Y$), we have
$\Ncal=\Ncal_Y\oplus [\kgot/\kgot_{\beta}]$. For the polarized ones, we get $\Ncal_\JJbeta=(\Ncal_Y)_\JJbeta\oplus [\qgot_\JJbeta]$.
If we use the relation $\Sym(\qgot_\JJbeta)\otimes \bigwedge \qgot_\JJbeta=1$ in $\hat{R}(K_{\beta})$, we see that
\begin{eqnarray*}
\Sym(\Ncal_\JJbeta)\otimes \left[\bigwedge (\kgot/\kgot_{\beta}\otimes\C)\right]&=&
\Sym((\Ncal_Y)_\JJbeta)\otimes\Sym(\qgot_\JJbeta)\otimes \left[\bigwedge \qgot_\JJbeta\right]
\otimes \left[\bigwedge \overline{\qgot_\JJbeta}\right]\\
&=& \Sym((\Ncal_Y)_\JJbeta)\otimes\left[\bigwedge \overline{\qgot_\JJbeta}\right].
\end{eqnarray*}
So we are left to check that $\mathbb{d}_\beta(\Ecal_{\ddY})\otimes \left[\bigwedge \qgot_\JJbeta\right]$ and
$\mathbb{d}_\beta(\Ecal)\otimes \left[\bigwedge \overline{\qgot_\JJbeta}\right]$ are
equal\footnote{In fact we can check that $\mathbb{d}_\beta(\Ecal_{\ddY})=(-1)^{\dim \qgot_\JJbeta}\det(\qgot_\JJbeta)^{-1}\mathbb{d}_\beta(\Ecal)$.} as Clifford module over $\T Y^\beta$.

We have the identity $\Ecal\vert_Y= [\bigwedge \qgot_\JJbeta]\otimes \Ecal_{\ddY}$ as Clifford modules over
$ \kgot/\kgot_{\beta} \oplus \T Y$. We have also
$(\Ecal_{\ddY})\vert_{Y^\beta}=\bigwedge \overline{(\Ncal_Y)_\JJbeta} \otimes \mathbb{d}_\beta(\Ecal_{\ddY})$ as an equality of
Clifford modules over $ \Ncal_Y\oplus\T Y^\beta $. Finally we get
\begin{equation}\label{eq:Ecal-beta-1}
\Ecal\vert_{Y^\beta}= \bigwedge \overline{(\Ncal_Y)_\JJbeta} \otimes\left[\bigwedge \qgot_\JJbeta\right] \otimes \mathbb{d}_\beta(\Ecal_{\ddY})
\end{equation}
as an equality of Clifford modules over $\Ncal_Y \oplus  [\kgot/\kgot_{\beta}]\oplus \T Y^\beta$. By definition, we have also
$\Ecal\vert_{M^\beta}= \bigwedge \overline{\Ncal_\JJbeta} \otimes \mathbb{d}_\beta(\Ecal)$ as an equality of modules over
$ \Ncal\oplus\T M^\beta $. If we restrict the previous identity to the open subset $Y^\beta$, we get
\begin{equation}\label{eq:Ecal-beta-2}
\Ecal\vert_{Y^\beta}= \bigwedge \overline{(\Ncal_Y)_\JJbeta} \otimes \left[\bigwedge \overline{\qgot_\JJbeta}\right]\otimes\mathbb{d}_\beta(\Ecal).
\end{equation}
If we use (\ref{eq:Ecal-beta-1}) and (\ref{eq:Ecal-beta-2}), we get the desired identity.

\end{proof}

%%%%%%%%%%%%%%%%%%%%%%%%%%%%%%%%%%%%%%%%%%%%%%%%%%%%%%%%%%%%%%%%%
\subsection{The non abelian localization theorem}\label{subsec:nonabelianlocalization}
%%%%%%%%%%%%%%%%%%%%%%%%%%%%%%%%%%%%%%%%%%%%%%%%%%%%%%%%%%%%%%%%%

Let $\Ecal\to M$ be a equivariant graded Clifford module over an even dimensional  $K$-manifold $M$ ($M$ is not necessarily compact, the group $K$ is compact but not necessarily connected).
Let $\Phi$ be a moment map, such that $Z_\Phi$ is compact.
We can then construct $\Qcal_K(M,\Ecal, \Phi)\in \hat R(K)$.
If $M$ is compact, $\Qcal_K(M,\Ecal,\Phi)$ is just equal to $\Qcal_K(M,\Ecal)$.

The set of zeroes $Z_\Phi$ is a disjoint union of its components $Z_\beta,\beta\in\Bcal$, with $\Bcal$ finite (see (\ref{eq=Z-Phi-beta})).
The following decomposition of  $\Qcal_K(M,\Ecal,\Phi)$ is the $\K$-theoretical analogue of Witten non abelian
localization theorem in equivariant cohomology.
This reduces the study of $\Qcal_K(M,\Ecal,\Phi)$ by induction to the neighborhood of the zeroes of the moment map.

\begin{theo}[Non abelian localization theorem]
\label{theo;nonabelian}
We have
$$
\Qcal_K(M,\Ecal,\Phi)=\sum_{\beta\in\Bcal}
\Qcal_K(M,\Ecal,Z_\beta,\Phi).$$

Furthermore, for $\beta\neq 0$, the generalized character $\Qcal_K(M,\Ecal,Z_\beta,\Phi)$ is equal to
$$
\mathrm{Ind}^K_{K_{\beta}}
\left(\Qcal_{K_{\beta}}(M^\beta,\mathbb{d}_\beta(\Ecal)\otimes \Sym(\Ncal_\JJbeta),\Phi^{-1}(\beta)\cap M^\beta,\Phi)\otimes
\bigwedge \qgot_\JJbeta\otimes \bigwedge  {\overline\qgot_\JJbeta}\right).
$$
\end{theo}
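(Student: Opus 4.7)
The plan is to recognize that this statement is an assembly of two ingredients already essentially established in the excerpt, together with one algebraic identification of exterior algebras.

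First, I would deduce the decomposition $\Qcal_K(M,\Ecal,\Phi)=\sum_{\beta\in\Bcal}\Qcal_K(M,\Ecal,Z_\beta,\Phi)$ from the excision property of the equivariant index. Since $\Phi$ is a moment map and $Z_\Phi$ is compact, the proposition characterizing $\Phi(Z_\Phi)$ as a finite union of coadjoint orbits gives us the finite decomposition $Z_\Phi=\coprod_{\beta\in\Bcal}Z_\beta$ with $Z_\beta=K(M^\beta\cap\Phi^{-1}(\beta))$. Because each $Z_\beta$ is a compact component of $Z_\Phi$, I can choose pairwise disjoint relatively compact $K$-invariant neighborhoods $\Ucal_\beta$ such that $\Ucal_\beta\cap Z_\Phi=Z_\beta$, and the restriction $\sigma(M,\Ecal,\Phi)\vert_{\sqcup_\beta \Ucal_\beta}$ is transversally elliptic. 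Additivity of the index under this disjoint union then yields the claimed decomposition, which is just the formula labelled (\ref{eq:localization-Phi}) rewritten using the parametrization by $\Bcal$.

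Second, for the component $Z_\beta$ with $\beta\neq 0$, I would invoke Theorem \ref{theo:localization-Z-beta} directly, which gives
\[
\Qcal_K(M,\Ecal,Z_\beta,\Phi)= \mathrm{Ind}^K_{K_\beta}\!\left(\Qcal_{K_\beta}(M^\beta,\mathbb{d}_\beta(\Ecal)\otimes\Sym(\Ncal_\JJbeta), M^\beta\cap\Phi^{-1}(\beta),\Phi)\otimes\bigwedge(\kgot/\kgot_\beta\otimes\C)\right).
\]
To match the stated form, I then rewrite the twisting factor. Since $\beta\in\kgot_\beta$ acts on $\qgot=\kgot/\kgot_\beta$ as an invertible skew-symmetric endomorphism (by the slice description of $\kgot_\beta$), Definition \ref{defi:Jbeta} provides a $K_\beta$-invariant complex structure $J_\beta$ on $\qgot$. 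The $\pm i$ eigenspace decomposition gives a $K_\beta$-equivariant isomorphism $\qgot\otimes\C\simeq \qgot_\JJbeta\oplus\overline{\qgot_\JJbeta}$ and hence $\bigwedge(\kgot/\kgot_\beta\otimes\C)\simeq \bigwedge\qgot_\JJbeta\otimes\bigwedge\overline{\qgot_\JJbeta}$ as $K_\beta$-modules. Substituting this identification yields precisely the formula in the theorem.

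The proof is thus essentially bookkeeping, the substantive work having been done previously: the compactness of $Z_\beta$ (which makes $\Qcal_K(M,\Ecal,Z_\beta,\Phi)$ well-defined), the slice reduction to $Y=\Phi^{-1}(B)$ together with the induction formula (Proposition \ref{Induction formula}) transforming the problem on $M$ near $Z_\beta$ into a $K_\beta$-equivariant problem on $Y$, and the variation of the Atiyah-Segal-Singer localization (Proposition \ref{prop:variation}) pushing the index from $Y$ to $Y^\beta\subset M^\beta$. The only point requiring modest care is the comparison of Clifford modules $\mathbb{d}_\beta(\Ecal_{\ddY})$ on $Y^\beta$ with $\mathbb{d}_\beta(\Ecal)$ on $M^\beta$, which is handled inside the proof of Theorem \ref{theo:localization-Z-beta} and absorbs the discrepancy between $\bigwedge\qgot_\JJbeta$ and $\bigwedge\overline{\qgot_\JJbeta}$ factors. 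No genuinely new obstacle arises at this final assembly stage.
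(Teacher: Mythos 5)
Your proposal is correct and follows essentially the same route as the paper, which states Theorem \ref{theo;nonabelian} precisely as the assembly of the excision decomposition (\ref{eq:localization-Phi}) with Theorem \ref{theo:localization-Z-beta}, rewritten via the $K_\beta$-equivariant identification $\bigwedge(\kgot/\kgot_\beta\otimes\C)\simeq \bigwedge\qgot_\JJbeta\otimes\bigwedge\overline{\qgot_\JJbeta}$. Your attribution of the substantive work to the slice/induction step and the deformed Atiyah-Segal-Singer localization matches the paper's own proof of Theorem \ref{theo:localization-Z-beta}.
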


Here, $\Bcal$ is a set of representatives of the orbits of $K$ in $\Phi(Z_\Phi)$.
For $\beta \in \Bcal$, $M^\beta$ is the submanifold of $M$ stable by $\beta\in \kgot$, $Z_\beta$ is $K(\Phi^{-1}(\beta)\cap M^{\beta})$,
  $\Ncal_\JJbeta$ is the normal bundle on $M^{\beta}$ in $M$ polarized by $\beta$,  $\mathbb{d}_\beta(\Ecal)$ is the $K_{\beta}$ equivariant Clifford bundle on $M^{\beta}$ defined by
$$\Ecal|_{M^{\beta}}=\mathbb{d}_\beta(\Ecal)\otimes \bigwedge {\overline \Ncal_\JJbeta},$$ and
$\qgot_\JJbeta$ denotes the vector space $\qgot=\kgot/\kgot_{\beta}$ equipped with the complex structure $J_\beta$.

As $\beta$ is a central element in $\kgot_{\beta}$,
we are reduced by induction to the study of a localized character at a central element
for the lower dimensional manifold $M^{\beta}$ acted on by the subgroup $K_{\beta}$.

When $0$ is a regular value of the moment map,
the term
$\Qcal_K(M,\Ecal,\Phi^{-1}(0),\Phi)$ is given explicitly:
$$
\Qcal_K(M,\Ecal,\Phi^{-1}(0),\Phi)=\sum_{\lambda\in \hat K} \Qcal\left(M_{red},\Ecal_{red}
\otimes \Vcal_{\lambda}^*\right) V_\lambda.
$$

It follows that  the multiplicities
$\mm(\lambda)$ of the irreducible representations $V_\lambda$
in $\Qcal_K(M,\Ecal,\Phi^{-1}(0),\Phi)$ are given explicitly as the equivariant index of a orbi-bundle over the orbifold
$M_{red}=\Phi^{-1}(0)/K$.

When $K$ is connected, we may  parameterize $\hat K$ by the lattice of weights intersected with a positive Weyl chamber. In this case Kawasaki formula implies that $\lambda \to \mm(\lambda)$ is a quasi-polynomial function of $\lambda$.
We will return to this theme in Subsection \ref{subsec:quasipol}.

%%%%%%%%%%%%%%%%%%%%%%%%%%%%%%%%%%%%%%%%%%%%%%%%%%%%%%%%%%%%%%%%%
\subsection{Example of $\T^*\tilde{K}$}\label{subsec:cotangent}
%%%%%%%%%%%%%%%%%%%%%%%%%%%%%%%%%%%%%%%%%%%%%%%%%%%%%%%%%%%%%%%%%

We  consider in this subsection the paradigmatic  example of the cotangent space
$\T^*\tilde{K}$ where $\tilde{K}$ is a compact Lie group. This is a
 $\tilde{K}$-Hamiltonian manifold with moment map $\Phi$,
and in the philosophy of quantization, its quantization is
$L^2(\tilde K)$.
We here study the space
$\Qcal_{\tilde{K}}(\T^*\tilde{K},\Ecal,\Phi)$,
and verify that indeed
this is the space of  $\tilde K$-finite functions on $\tilde K$ for a particular Clifford module $\Ecal$.
This example will not be used in the rest of this article.

Let $\tilde{K}$ be a compact Lie group and let $K$  be a closed subgroup.
Let $i$ denote the inclusion of $K$ into $\tilde{K}$, $i : \kgot \to \tilde{\kgot}$ the induced embedding of Lie algebras,
and $\pi : \tilde{\kgot}^* \to \kgot^*$ the dual projection.

We consider the following action of $\tilde{K}\times K$ on $\tilde{K}$ : $(\tilde{k},k)\cdot a= \tilde{k} a k^{-1}$.

The tangent bundle $\T \tilde{K}$ is identified with $\tilde{K}\times\tilde{\kgot}$ through the left translations: to
$(a,X)\in \tilde{K}\times\tilde{\kgot}$ we associate $\frac{d}{dt} e^{tX}a\vert_{t=0}\in \T_a \tilde{K}$.
The action of  $\tilde{K}\times K$ on the cotangent bundle $\T^*\tilde{K}\simeq \tilde{K}\times \tilde{\kgot}^*$ is then
$$
(\tilde{k},k)\cdot( a,\xi)= (\tilde{k}ak^{-1},\tilde{k}\xi).
$$

The Liouville form  $\lambda$ of  $\T^*\tilde{K}$ is  thus given by
$\lambda_{a,\xi}(X,\eta)=\langle\xi,X\rangle$ for $(a,\xi)\in \T^*\tilde{K}\simeq \tilde{K}\times\tilde{\kgot}$ and
$(X,\eta)\in\T_a\tilde{K}\times\T_\xi\tilde{\kgot}$.
The symplectic form on $\T^*\tilde{K}$ is $\Omega:=d\lambda$.

The corresponding moment map relative to the $\tilde{K}\times K$-action is the
map $\Phi=\Phi_{\tilde{K}} \oplus \Phi_K : \T^*\tilde{K} \to \tilde{\kgot}^*\oplus \kgot^*$ defined by
\begin{equation}\label{eq:momentcotangent}
\Phi_{\tilde{K}}(a,\xi)=-\xi,\quad \Phi_K(a,\xi)=\pi(a^{-1}\cdot\xi).
\end{equation}

\begin{lem}\label{lem:Z-phi-t}
The set $Z_\Phi\subset \T^*\tilde{K}$ is equal to $\tilde{K}\times\{0\}$. More generally, if we consider the equivariant map
$\Phi_t=\Phi_{\tilde{K}}\oplus t \Phi_K$ for any $t\geq 0$, the set  $Z_{\Phi_t}\subset \T^*\tilde{K}$ is still equal to
$\tilde{K}\times\{0\}$.
\end{lem}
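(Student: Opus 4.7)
The strategy is a direct computation of the Kirwan vector field $\kappa_{\Phi_t}$ using the left trivialization $\T\tilde{K}\simeq \tilde{K}\times\tilde{\kgot}$ together with the $\tilde{K}$-invariant inner-product identifications $\tilde{\kgot}^*\simeq\tilde{\kgot}$ and $\kgot^*\simeq\kgot$.

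First I would record the general formula for the infinitesimal action of $(X,Y)\in\tilde{\kgot}\oplus\kgot$ on $(a,\xi)\in\tilde{K}\times\tilde{\kgot}^*$: differentiating $(e^{sX}ae^{-sY},\Ad^*(e^{sX})\xi)$ at $s=0$ and using $aY=\Ad(a)Y\cdot a$, the first component is $X-\Ad(a)Y$ in the left trivialization, while the second component is $\ad^*(X)\xi$, which under the inner-product identification equals $[X,\xi]\in\tilde{\kgot}$.

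Next, I would apply this with $(X,Y)$ taken to be $\Phi_t(a,\xi)=(-\xi,t\,\pi(\Ad(a^{-1})\xi))$, as given by (\ref{eq:momentcotangent}). The second component of $\kappa_{\Phi_t}(a,\xi)=-\Phi_t(a,\xi)\cdot(a,\xi)$ vanishes automatically, since $[-\xi,\xi]=0$; this just reflects the fact that the coadjoint orbit through $\xi$ is preserved. The first component equals
$$
\xi + t\,\Ad(a)\,\pi\bigl(\Ad(a^{-1})\xi\bigr)\;=\;\Ad(a)\bigl(\eta + t\,\pi(\eta)\bigr),\qquad \eta:=\Ad(a^{-1})\xi\in\tilde{\kgot}.
$$

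Finally, setting this to zero and decomposing $\eta=\eta_\kgot+\eta_{\kgot^\perp}$ orthogonally with respect to the inner product on $\tilde{\kgot}$ (so that $\pi(\eta)=\eta_\kgot$), the vanishing condition reads $(1+t)\eta_\kgot+\eta_{\kgot^\perp}=0$. The two summands are orthogonal in $\tilde{\kgot}$ and $1+t>0$ for every $t\geq 0$, so both must vanish; hence $\eta=0$ and $\xi=0$, giving $Z_{\Phi_t}=\tilde{K}\times\{0\}$ as claimed. There is essentially no obstacle: the whole argument hinges on the single positivity observation $1+t>0$, which prevents the $\kgot$- and $\kgot^\perp$-components of $\eta$ from cancelling one another.
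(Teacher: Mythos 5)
Your proposal is correct and follows essentially the same route as the paper: a direct computation in the trivialization $\T^*\tilde{K}\simeq\tilde{K}\times\tilde{\kgot}^*$ showing that $\kappa_{\Phi_t}(a,\xi)=(\kappa^1_t(a,\xi),0)$ with $\kappa^1_t(a,\xi)=\xi+t\,a\,\pi(a^{-1}\xi)$, and then concluding that this vanishes only for $\xi=0$ when $t\geq 0$. The paper leaves the last step implicit, whereas you make it explicit via the orthogonal decomposition $\eta=\eta_\kgot+\eta_{\kgot^\perp}$; this is just a spelled-out version of the same positivity argument.
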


\begin{proof}Let $\kappa_t$ be the Kirwan vector field on $\T^*\tilde{K}$ attached to the map $\Phi_t$. A direct computation
gives that for $(a,\xi)\in \T^*\tilde{K}$, the vector $\kappa_t(a,\xi)\in \T_{(a,\xi)}(\tilde{K}\times\tilde{\kgot}^*)=
\tilde{\kgot}\times \tilde{\kgot}^*$ is equal to
$(\kappa_t^1(a,\xi),0)$ with
$$
\kappa^1_t(a,\xi)= \xi+ t\,a\,\pi(a^{-1}\xi).
$$
For $t\geq 0$, we see that $\kappa^1_t(a,\xi)=0$ if and only if $\xi=0$.
\end{proof}

\medskip

\medskip

Let $\Ecal$ be a $\tilde{K}\times K$-equivariant graded Clifford bundle on the manifold $\T^*\tilde{K}$. Thanks to Lemma
\ref{lem:Z-phi-t}, we know that $Z_\Phi$ is compact. Hence we can consider the generalized index
$$
\Qcal_{\tilde{K}\times K}(\T^*\tilde{K},\Ecal,\Phi)\in \hat{R}(\tilde{K}\times K).
$$

The restriction of
$\Ecal$ at $(1,0)\in \T^*\tilde{K}$ defines a graded Clifford module $\Ecal\vert_{(1,0)}$ for the Euclidean
vector space $\T(\T^*\tilde{K})\vert_{(1,0)}\simeq \tilde{\kgot}\times\tilde{\kgot}^*$. We use the identification
$\tilde{\kgot}\times\tilde{\kgot}^*\simeq \tilde{\kgot}_\C, (X,\xi)\mapsto X\oplus i \xi$.
Thus $\bigwedge \tilde{\kgot}_\C$ is an irreducible Clifford module for $\tilde{\kgot}\times\tilde{\kgot}^*$.

Following Definition \ref{defi:q-beta}, we can ``divide'' the graded Clifford bundle $\Ecal\vert_{(1,0)}$ by the
irreducible Clifford module $\bigwedge \kgot_\C$.

\begin{defi}\label{def:E-0}
Let $E_{\tilde{K}\times K}$ be the graded $\tilde{K}\times K$-representation space such that
$$
\Ecal\vert_{(1,0)}\simeq E_{\tilde{K}\times K}\otimes \bigwedge\kgot_\C.
$$
is an isomorphism of graded Clifford modules on $\tilde{\kgot}\times\tilde{\kgot}^*$.
\end{defi}

\begin{prop}\label{prop:Q-TK}
We have the following relation in $\hat{R}(\tilde{K}\times K)$ :
$$
\Qcal_{\tilde{K}\times K}(\T^*\tilde{K},\Ecal,\Phi)=\sum_{V\in \widehat{\tilde{K}}}
(V^*\otimes V\vert_K)\otimes E_{\tilde{K}\times K}.
$$
\end{prop}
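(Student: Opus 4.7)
The plan is to combine the non-abelian localization theorem with the reduction-at-zero formula, after deforming $\Phi$ to the simpler moment map $\Phi_{\tilde{K}}$. By Lemma \ref{lem:Z-phi-t}, the set $Z_\Phi$ equals the compact zero section $\tilde{K}\times\{0\}$, so Theorem \ref{theo;nonabelian} contains a single term:
$$
\Qcal_{\tilde{K}\times K}(\T^*\tilde{K},\Ecal,\Phi)=\Qcal_{\tilde{K}\times K}(\T^*\tilde{K},\Ecal,\tilde{K}\times\{0\},\Phi).
$$
I would then deform $\Phi$ to $\Phi_{\tilde{K}}$ through the family $\Phi_t=\Phi_{\tilde{K}}\oplus t\Phi_K$, $t\in[0,1]$. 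Since Lemma \ref{lem:Z-phi-t} gives $Z_{\Phi_t}=\tilde{K}\times\{0\}$ for all $t$, the characteristic sets of the pushed symbols $\sigma(\T^*\tilde{K},\Ecal,\Phi_t)$ intersected with $\T^*_{\tilde{K}\times K}\T^*\tilde{K}$ remain constant. Restricting to a fixed relatively compact neighborhood $\Ucal$ of $\tilde{K}\times\{0\}$, the symbols $\sigma(\T^*\tilde{K},\Ecal,\Phi_t)\vert_\Ucal$ yield a homotopy of $\tilde{K}\times K$-transversally elliptic symbols on $\Ucal$, so their equivariant indices coincide, giving
$$
\Qcal_{\tilde{K}\times K}(\T^*\tilde{K},\Ecal,\tilde{K}\times\{0\},\Phi)=\Qcal_{\tilde{K}\times K}(\T^*\tilde{K},\Ecal,\Phi_{\tilde{K}}^{-1}(0),\Phi_{\tilde{K}}).
$$

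Now $\Phi_{\tilde{K}}(a,\xi)=-\xi$ is the standard (proper) moment map for the left $\tilde{K}$-action on $\T^*\tilde{K}$; its differential in the $\xi$-direction is $-\mathrm{Id}$, so $0$ is a regular value, and $\tilde{K}$ acts freely on $\Phi_{\tilde{K}}^{-1}(0)=\tilde{K}$ by left translation. I would therefore apply the $G\times K$-version (\ref{eq:localisation-0-K-L}) of Theorem \ref{theo:localisation-Phi-0}, with $G=K$ and $\tilde{K}$ playing the role of the freely-acting group, obtaining
$$
\Qcal_{\tilde{K}\times K}(\T^*\tilde{K},\Ecal,\Phi_{\tilde{K}}^{-1}(0),\Phi_{\tilde{K}})=\sum_{V\in\widehat{\tilde{K}}}\Qcal_K(M_{red},\Ecal_{red}\otimes\Vcal_V^*)\otimes V,
$$
with $M_{red}=\tilde{K}/\tilde{K}$ a single point. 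The fiber of $\Vcal_V=\tilde{K}\times_{\tilde{K}}V$ over this point inherits from the right $K$-action on $\tilde{K}$ the structure of the restricted representation $V\vert_K$, giving $\Vcal_V^*\vert_{M_{red}}\simeq V^*\vert_K$. Because $\tilde{K}$ acts transitively on the zero section, the splitting of Definition \ref{def:E-red} reduces to $\T(\T^*\tilde{K})\vert_{\tilde{K}\times\{0\}}\simeq\{0\}\oplus[\tilde{\kgot}_\C]$, and dividing $\Ecal\vert_{\tilde{K}\times\{0\}}$ by $[\bigwedge\tilde{\kgot}_\C]$ identifies $\Ecal_{red}\vert_{M_{red}}$ with the $K$-representation $E_{\tilde{K}\times K}$ of Definition \ref{def:E-0}. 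Since an equivariant index on a point equals the representation itself, $\Qcal_K(M_{red},\Ecal_{red}\otimes\Vcal_V^*)=E_{\tilde{K}\times K}\otimes V^*\vert_K$, and reindexing the sum via $V\leftrightarrow V^*$ yields the desired formula.

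The main obstacle lies in the last step: the identification of $\Ecal_{red}\vert_{M_{red}}$ with $E_{\tilde{K}\times K}$ requires carefully tracking the $\tilde{K}\times K$-equivariant Clifford structure of $\Ecal$ along the zero section, using that the stabilizer of $(1,0)$ in $\tilde{K}\times K$ is the diagonally embedded $K$, so as to recover precisely the $K$-representation structure encoded in Definition \ref{def:E-0}. The homotopy argument in the second step is essentially routine once Lemma \ref{lem:Z-phi-t} is in hand.
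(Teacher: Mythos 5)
Your proof is correct and takes essentially the same route as the paper: deform $\Phi$ to $\Phi_{\tilde{K}}$ through the family $\Phi_t$ using Lemma \ref{lem:Z-phi-t} to get a homotopy of transversally elliptic symbols, then apply the localization formula (\ref{eq:localisation-0-K-L}) at the regular value $0$, whose reduced space $\Phi_{\tilde{K}}^{-1}(0)/\tilde{K}$ is a point. The only differences are cosmetic: your initial appeal to Theorem \ref{theo;nonabelian} is redundant (with a single component it is just the definition of the localized index), and you spell out the identifications $\Vcal_V\simeq V\vert_K$ and $\Ecal_{red}\simeq E_{\tilde{K}\times K}$ over the point, which the paper leaves implicit.
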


\begin{rem}
If $E_{\tilde{K}\times K}=\C$,  we obtain
that $\Qcal_{\tilde{K}\times K}(\T^*\tilde{K},\Ecal,\Phi)$ is $L^2(\tilde K)$ after completion,
as expected.
\end{rem}

\begin{proof}
Thanks to Lemma \ref{lem:Z-phi-t}, we know that  $t\in [0,1]\mapsto\sigma(\T^*\tilde{K},\Ecal,\Phi_t)$ is an
homotopy of transversally elliptic symbols. It follows that \break $\Qcal_{\tilde{K}\times K}(\T^*\tilde{K},\Ecal,\Phi)=
\Qcal_{\tilde{K}\times K}(\T^*\tilde{K},\Ecal,\Phi_{\tilde{K}})$. We note that
the computation of $\Qcal_{\tilde{K}\times K}(\T^*K,\Ecal,\Phi_{\tilde{K}})$ follows from (\ref{eq:localisation-0-K-L}), since
$Z_{\Phi_{\tilde{K}}}=\Phi_{\tilde{K}}^{-1}(0)$, $0$ is a regular value of $\Phi_{\tilde{K}}$ and $\Phi_{\tilde{K}}^{-1}(0)/\tilde{K}$
is reduced to a point.
\end{proof}

%%%%%%%%%%%%%%%%%%%%%%%%%%%%%%%%%%%%%%%%%%%%%%%%%%%%
%%%%%%%%%%%%%%%%%%%%%%%%%%%%%%%%%%%%%%%%%%%%%%%%%%%%
\section{$[Q,R]=0$ Theorem}\label{sec:QR-theorem}
%%%%%%%%%%%%%%%%%%%%%%%%%%%%%%%%%%%%%%%%%%%%%%%%%%%%
%%%%%%%%%%%%%%%%%%%%%%%%%%%%%%%%%%%%%%%%%%%%%%%%%%%%

Let $M$ be an  even dimensional $K$-manifold equipped with an equivariant graded Clifford bundle $\Ecal$.

When $M$ is compact, we consider the representation $\Qcal_K(M,\Ecal)$. In this section,  we study $[\Qcal_K(M,\Ecal)]^K$,
with the help of a moment map $\Phi$, and the non abelian localization formula, in particular,  we will find sufficient conditions
on the couple $(\Ecal,\Phi)$ in order that the only component of $Z_\Phi$ contributing to
$[\Qcal_K(M,\Ecal)]^K$ is $\Phi^{-1}(0)$. We will be considering in more detail the case where $M$ is an almost complex manifold.

Our discussion extends to the case where $M$ is non-compact. We will study the generalized character
$\Qcal_K(M,\Ecal,\Phi)$ attached to a moment map $\Phi$ such that
$Z_\Phi$ is compact.

\subsection{A preliminary formula}
Let $M$ be an  even dimensional  $K$-manifold equipped with an equivariant graded Clifford bundle $\Ecal$ and
a moment map $\Phi:M\to \kgot^*$ such that $Z_\Phi$ is compact. We study
$[\Qcal_K(M,\Ecal,\Phi)]^K$ via the non abelian localization formula.
The notations in this subsection are as in  Subsection \ref{subsec:nonabelianlocalization}.
From Theorem \ref{theo;nonabelian}, we obtain
$$
\left[\Qcal_K(M,\Ecal,\Phi)\right]^K=\sum_{\beta\in\Bcal}
\left[\Qcal_K(M,\Ecal,Z_\beta,\Phi)\right]^K
$$
and if $\beta\neq 0$, the integer $\left[\Qcal_K(M,\Ecal,Z_\beta,\Phi)\right]^K$ is equal to
$$
\left[\Qcal_{K_{\beta}}(M^\beta,\mathbb{d}_\beta(\Ecal)\otimes \Sym(\Ncal_\JJbeta),\Phi^{-1}(\beta)\cap M^\beta,\Phi)
\otimes \bigwedge \qgot_\JJbeta\otimes \bigwedge  {\overline\qgot_\JJbeta}
\right]^{K_{\beta}}.
$$

Using Lemma \ref{lem:triv}, the vanishing of this last expression  is automatic
if the linear action  (denoted $\Lcal(\beta)$)  of $\beta$ on the fibers of
$\mathbb{d}_\beta(\Ecal)\otimes \Sym(\Ncal_\JJbeta)
\otimes \bigwedge \qgot_\JJbeta\otimes \bigwedge  {\overline\qgot_\JJbeta}$ has only non-zero eigenvalues.
We thus introduce some definitions.

Consider a $K_{\beta}$-equivariant vector bundle $\Fcal\to M^{\beta}$. Let $\Xcal$ be a connected component of $M^\beta$. Let $c\in \R$. If all the eigenvalues of $\Lcal(\beta)$ acting on $\Fcal\vert_\Xcal$ are of the form $i\theta$ with $\theta>c$  (respectively $\theta\geq c$), we say that
$\frac{1}{i}\Lcal(\beta)>c$  on $\Fcal\vert_\Xcal$ (respectively
$\frac{1}{i}\Lcal(\beta)\geq c$).

\medskip

\begin{prop}\label{prop:QR-beta-general}
Let $\beta\neq 0$.

$\bullet$ If $\frac{1}{i}\Lcal(\beta)\geq 0$  on
$\mathbb{d}_\beta(\Ecal)\vert_\Xcal\otimes \det(\qgot_\JJbeta)^{-1}$ for any connected component
$\Xcal\subset M^\beta$ such that $\Xcal\cap \Phi^{-1}(\beta)\neq\emptyset$. Then
$$
\left[\Qcal_K(M,\Ecal,Z_\beta,\Phi)\right]^K=\left[\Qcal_{K_{\beta}}(M^\beta,\Ecal_{[\beta]},\Phi^{-1}(\beta)\cap M^{\beta},\Phi)\right]^{K_{\beta}}
$$
where
\begin{equation}\label{eq:E-beta-0}
\Ecal_{[\beta]}:=(-1)^{\dim \qgot_\JJbeta} [\mathbb{d}_\beta(\Ecal)\otimes \det(\qgot_\JJbeta)^{-1}]^\beta
\end{equation}
is a graded $K_{\beta}$-equivariant Clifford bundle on $M^\beta$.

$\bullet$ If $\frac{1}{i}\Lcal(\beta)>0$  on
$\mathbb{d}_\beta(\Ecal)\vert_\Xcal\otimes \det(\qgot_\JJbeta)^{-1}$ for any connected component
$\Xcal\subset M^\beta$ such that $\Xcal\cap \Phi^{-1}(\beta)\neq\emptyset$,
 we have $\Ecal_{[\beta]}=[0]$ in a neighborhood of $M^\beta\cap\Phi^{-1}(\beta)$ and then
$\left[\Qcal_K(M,\Ecal,Z_\beta,\Phi)\right]^K=0$.
\end{prop}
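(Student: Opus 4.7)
The plan is to combine the explicit localization formula of Theorem \ref{theo:localization-Z-beta} with Lemma \ref{lem:triv} applied to the central element $\beta \in \kgot_\beta$ acting on a Clifford bundle over $M^\beta$. First, taking $K$-invariants of the induced representation and using $[\mathrm{Ind}^K_{K_\beta}(V)]^{K} = [V]^{K_\beta}$, together with the identification $\bigwedge(\kgot/\kgot_\beta)_\C \simeq \bigwedge \qgot_\JJbeta \otimes \bigwedge \overline{\qgot_\JJbeta}$, the problem reduces to computing
$$
\bigl[\Qcal_{K_\beta}\bigl(M^\beta,\, \Fcal,\, M^\beta\cap \Phi^{-1}(\beta),\, \Phi\bigr)\bigr]^{K_\beta},
$$
where $\Fcal := \mathbb{d}_\beta(\Ecal)\otimes \Sym(\Ncal_\JJbeta)\otimes \bigwedge \qgot_\JJbeta \otimes \bigwedge \overline{\qgot_\JJbeta}$. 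Because $\beta$ is central in $K_\beta$ and acts trivially on $M^\beta$, Lemma \ref{lem:triv} allows the replacement of $\Fcal$ by its $\beta$-invariant subbundle $\Fcal^\beta$ on a neighborhood of the compact set $M^\beta \cap \Phi^{-1}(\beta)$.

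I would then carry out the eigenvalue bookkeeping on each connected component $\Xcal \subset M^\beta$ meeting $\Phi^{-1}(\beta)$. Let $b_1,\dots,b_r>0$ be the weights of $\tfrac{1}{i}\Lcal(\beta)$ on $\qgot_\JJbeta$. Then $\tfrac{1}{i}\Lcal(\beta)$ has weights $\geq 0$ on $\Sym(\Ncal_\JJbeta)$, attaining $0$ only on $\Sym^0$; weights in $[0,\sum_j b_j]$ on $\bigwedge \qgot_\JJbeta$, with minimum $0$ only on $\bigwedge^0$; and weights in $[-\sum_j b_j, 0]$ on $\bigwedge \overline{\qgot_\JJbeta}$, with minimum $-\sum_j b_j$ only on the top exterior power $\det \overline{\qgot_\JJbeta} \simeq \det(\qgot_\JJbeta)^{-1}$. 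The hypothesis $\tfrac{1}{i}\Lcal(\beta) \geq 0$ on $\mathbb{d}_\beta(\Ecal)\vert_\Xcal \otimes \det(\qgot_\JJbeta)^{-1}$ is equivalent to saying that the weights of $\tfrac{1}{i}\Lcal(\beta)$ on $\mathbb{d}_\beta(\Ecal)\vert_\Xcal$ are all $\geq \sum_j b_j$. Summing across factors, the weights on $\Fcal\vert_\Xcal$ are all $\geq 0$, and the zero-weight locus is reached only by setting each factor to its extremal weight, giving
$$
\Fcal^\beta\vert_\Xcal \;=\; \bigl[\mathbb{d}_\beta(\Ecal)\vert_\Xcal\bigr]^{\beta = i\sum_j b_j} \otimes \det(\qgot_\JJbeta)^{-1} \;=\; \bigl[\mathbb{d}_\beta(\Ecal)\otimes \det(\qgot_\JJbeta)^{-1}\bigr]^\beta\Big|_\Xcal.
$$
The parity sign $(-1)^{\dim \qgot_\JJbeta}$ arises because $\det \overline{\qgot_\JJbeta}$ sits in total degree $\dim_\C \qgot_\JJbeta$ of $\bigwedge \overline{\qgot_\JJbeta}$, so the $\Z/2$-grading inherited by $\Fcal^\beta$ differs from the natural one on $\mathbb{d}_\beta(\Ecal)\otimes \det(\qgot_\JJbeta)^{-1}$ by that factor. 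The Clifford bundle structure of $\Ecal_{[\beta]}$ over $\T M^\beta$ is inherited from that of $\mathbb{d}_\beta(\Ecal)$, since the $\beta$-weight decomposition commutes with the Clifford action of $\T M^\beta$. This settles the first bullet.

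For the second bullet, strict positivity $\tfrac{1}{i}\Lcal(\beta) > 0$ on $\mathbb{d}_\beta(\Ecal)\vert_\Xcal \otimes \det(\qgot_\JJbeta)^{-1}$ means the weights of $\tfrac{1}{i}\Lcal(\beta)$ on $\mathbb{d}_\beta(\Ecal)\vert_\Xcal$ are strictly greater than $\sum_j b_j$, so the total weights on $\Fcal\vert_\Xcal$ are strictly positive and $\Fcal^\beta\vert_\Xcal = 0$; by Lemma \ref{lem:triv} the corresponding invariant index vanishes on a neighborhood of $M^\beta \cap \Phi^{-1}(\beta)$. The main obstacle is the careful $\Z/2$-grading bookkeeping producing the sign $(-1)^{\dim \qgot_\JJbeta}$; once the weight analysis is in place, the Clifford-bundle structure of $\Ecal_{[\beta]}$ and the reduction via Lemma \ref{lem:triv} follow directly.
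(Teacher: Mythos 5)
Your proposal is correct and follows essentially the paper's own route: reduce via the localization formula and $[\mathrm{Ind}^K_{K_\beta}]^K=[\cdot]^{K_\beta}$, apply Lemma \ref{lem:triv} to the central element $\beta$ acting trivially on $M^\beta$, and identify $\Fcal^\beta$ with $\Ecal_{[\beta]}$ by weight bookkeeping (the paper organizes the same computation by inserting $\det(\qgot_\JJbeta)\otimes\det(\qgot_\JJbeta)^{-1}$ and grouping factors with nonnegative $\beta$-weights, whereas you enumerate the weight ranges and note that total weight zero forces each factor to its extremal weight). The sign $(-1)^{\dim\qgot_\JJbeta}$ and the strict-positivity vanishing are handled exactly as in the paper.
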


Remark that
$[\mathbb{d}_\beta(\Ecal)\otimes \det(\qgot_\JJbeta)^{-1}]^\beta$
is isomorphic to the subbundle of
$\mathbb{d}_\beta(\Ecal)$ formed by the vectors satisfying $\Lcal(\beta)v=\tr_{\qgot_\JJbeta}(\beta) v$.

\medskip

\begin{proof}

Introduce  the union $M^\beta_c=\cup \Xcal$ of the connected components $\Xcal$ of $M^\beta$ such
that $\Xcal$ intersects $\Phi^{-1}(\beta)$. Then $M^\beta_c$ is $K_\beta$ invariant and
clearly, we can replace $M^\beta$ by  $M^\beta_c$ in Theorem \ref{theo;nonabelian}.
Using Lemma \ref{lem:triv}, we obtain
$[\Qcal_K(M,\Ecal,Z_\beta,\Phi)]^K$ is equal to
$$
\left[\Qcal_{K_{\beta}}(M^\beta_c,[\Fcal_{M^\beta_c}]^\beta, M^\beta_c\cap\Phi^{-1}(\beta),\Phi)\right]^{K_{\beta}}
$$
where $\Fcal_{M^\beta_c}=\mathbb{d}_\beta(\Ecal)\vert_{M^\beta_c}
\otimes (\det(\qgot_\JJbeta)^{-1}
\otimes \det(\qgot_\JJbeta))\otimes \Sym(\Ncal_{J_\beta})\vert_{M^\beta_c}
\otimes  \bigwedge \qgot_\JJbeta
\otimes \bigwedge \overline{\qgot_\JJbeta}.$

Let $\Xcal$ be a connected component of ${M^\beta_c}$, thus $\Xcal$ intersects $\Phi^{-1}(\beta)$.
On  $\Sym(\Ncal_{J_\beta})\vert_\Xcal\otimes \bigwedge \qgot_\JJbeta$, we have $\frac{1}{i}\Lcal(\beta)\geq 0$ by definition of the complex structures associated to $\beta$, and
$[\Sym(\Ncal_{J_\beta})\vert_\Xcal\otimes \bigwedge \qgot_\JJbeta]^\beta=[\C]$. Similarly $\frac{1}{i}\Lcal(\beta)\geq 0$ on
$[\det(\qgot_\JJbeta)\otimes \bigwedge \overline{\qgot_\JJbeta}]$ and
$[\det(\qgot_\JJbeta)\otimes \bigwedge \overline{\qgot_\JJbeta}]^{\beta}=(-1)^{\dim \qgot_\JJbeta}$. Then
the Clifford bundle $[\Fcal_\Xcal]^\beta$ is reduced to $\Ecal_{[\beta]}$ if
$\frac{1}{i}\Lcal(\beta)\geq 0$ on $\mathbb{d}_\beta(\Ecal)\vert_\Xcal\otimes \det(\qgot_\JJbeta)^{-1}$. This proves the first point.
The second point is an obvious consequence of the first.
\end{proof}

%%%%%%%%%%%%%%%%%%%%%%%%%%%%%%%%%%%%%%%%%%%%%%%%%%%%%%%%%%%
\subsection{$\Phi$-positivity}
%%%%%%%%%%%%%%%%%%%%%%%%%%%%%%%%%%%%%%%%%%%%%%%%%%%%%%%%%%%

\begin{defi} Let $\Phi: M\to \kgot^*$ be an equivariant map.
\begin{itemize}
\item A $K$-equivariant  bundle $\Fcal\to M$ is weakly  $\Phi$-positive (resp. strictly $\Phi$-positive) if
$$
\frac{1}{i}\Lcal(\beta)\geq 0 \ \ (\mathrm{resp.}\ > 0)\quad \mathrm{on}\quad \Fcal\vert_\Xcal
$$
for any couple $(\Xcal,\beta)$ where $\Xcal\subset M^\beta$ is a connected component such that
$\Xcal\cap \Phi^{-1}(\beta)\neq\emptyset$, and $\beta\neq 0$.

\item A $K$-equivariant Clifford bundle $\Ecal\to M$ is weakly  $\Phi$-positive (resp. strictly $\Phi$-positive) if
$$
\frac{1}{i}\Lcal(\beta)\geq 0 \ \ (\mathrm{resp.}\ > 0)\quad \mathrm{on}\quad \mathbb{d}_\beta(\Ecal)\vert_\Xcal\otimes \det(\qgot_\JJbeta)^{-1}
$$
for any couple $(\Xcal,\beta)$ where $\Xcal\subset M^\beta$ is a connected component such that
$\Xcal\cap \Phi^{-1}(\beta)\neq\emptyset$ and $\beta\neq 0$.
\end{itemize}
\end{defi}

An obvious example of
weakly $\Phi$-positive bundle $\Fcal$ on $M$  is the trivial  bundle $M\times \C$.

If $\Ecal$ is a  weakly $\Phi$-positive Clifford bundle
and $\Fcal$  a weakly $\Phi$-positive bundle,
then the  Clifford module $\Ecal\otimes \Fcal$ is weakly $\Phi$-positive.

Another example arise when we have an equivariant map $\varphi: M' \to M$. Suppose that we start with a
weakly $\Phi$-positive bundle $\Fcal$ on $M$. We consider the pull-backs $\Fcal'=\varphi^*\Fcal$ and $\Phi'=\varphi^*\Phi$ on $M'$.
It is obvious that $\varphi(\{\kappa_{\Phi'}=0\})\subset\{\kappa_{\Phi}=0\}$. It is a small exercise to check the following lemma.

\begin{lem}\label{lem:varphi-pullback}
If $\varphi(\{\kappa_{\Phi'}=0\})=\{\kappa_{\Phi}=0\}$, the bundle $\Fcal'$ is weakly $\Phi'$-positive
\end{lem}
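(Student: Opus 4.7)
The plan is to unravel what weak $\Phi'$-positivity demands for $\Fcal' = \varphi^*\Fcal$ and then transport the positivity hypothesis on $\Fcal$ back along $\varphi$. First, I would fix an arbitrary couple $(\Xcal',\beta)$ with $\beta\neq 0$ and $\Xcal'$ a connected component of $(M')^{\beta}$ meeting $(\Phi')^{-1}(\beta)$, and pick $m'\in \Xcal' \cap (\Phi')^{-1}(\beta)$. The equivariance of $\varphi$ means $\varphi(e^{tX}\cdot m') = e^{tX}\cdot \varphi(m')$ for every $X\in\kgot$, so differentiating at $t=0$ gives $d_{m'}\varphi (X_{M'}(m')) = X_{M}(\varphi(m'))$. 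Applying this with $X=\beta$ shows $\varphi(m')\in M^{\beta}$, and combined with $\Phi(\varphi(m')) = \Phi'(m') = \beta$ we obtain $\varphi(m')\in M^{\beta}\cap \Phi^{-1}(\beta)$.

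Next, I would let $\Xcal$ be the connected component of $M^{\beta}$ containing $\varphi(m')$. By construction $\Xcal \cap \Phi^{-1}(\beta)\neq \emptyset$, so the weak $\Phi$-positivity of $\Fcal$ yields $\tfrac{1}{i}\Lcal(\beta)\geq 0$ on $\Fcal|_{\Xcal}$. Since $\varphi$ is continuous and equivariant, $\varphi(\Xcal')$ is a connected subset of $M^{\beta}$ containing $\varphi(m')$, hence entirely contained in the single connected component $\Xcal$.

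Finally, by definition of the pullback bundle, the fiber $\Fcal'|_{m}$ at any $m\in \Xcal'$ equals $\Fcal|_{\varphi(m)}$, and the $K$-action on $\Fcal'$ is induced fiberwise from the $K$-action on $\Fcal$. Since $m\in (M')^{\beta}$ and $\varphi(m)\in M^{\beta}\subset \Xcal$, the one-parameter subgroup $\exp(t\beta)$ acts on $\Fcal'|_{m}=\Fcal|_{\varphi(m)}$ by exactly the same operator on both sides, so the $\tfrac{1}{i}$-eigenvalues of $\Lcal(\beta)$ on $\Fcal'|_{m}$ coincide with those on $\Fcal|_{\varphi(m)}$ and are non-negative. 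This is precisely weak $\Phi'$-positivity of $\Fcal'$.

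The step that deserves care is the passage from the pointwise information at $m'$ to the whole component $\Xcal'$; this relies on the fact that the image $\varphi(\Xcal')$ lies in a single connected component of $M^{\beta}$ and on the tautological identification of the infinitesimal action on $\varphi^*\Fcal$ with that on $\Fcal$ at image points. I do not see where the full equality $\varphi(\{\kappa_{\Phi'}=0\})=\{\kappa_\Phi=0\}$ is actually invoked in this argument: only the automatic inclusion $\varphi(Z_{\Phi'})\subset Z_{\Phi}$ plays a role, via the observation that $\varphi$ sends $(M')^{\beta}\cap (\Phi')^{-1}(\beta)$ into $M^{\beta}\cap \Phi^{-1}(\beta)$.
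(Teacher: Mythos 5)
Your argument is correct, and since the paper explicitly leaves this lemma as ``a small exercise'' there is no written proof to compare it with; what you give is the natural argument and it is complete. The key steps all hold: equivariance of $\varphi$ sends $(M')^{\beta}\cap(\Phi')^{-1}(\beta)$ into $M^{\beta}\cap\Phi^{-1}(\beta)$, connectedness forces $\varphi(\Xcal')$ into a single component $\Xcal$ of $M^{\beta}$ meeting $\Phi^{-1}(\beta)$, and the tautological identification $\Fcal'\vert_{m}=\Fcal\vert_{\varphi(m)}$ intertwines the $\exp(t\beta)$-actions, so the weights of $\frac{1}{i}\Lcal(\beta)$ are the same on both sides.

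Your closing observation is also accurate: for the implication as literally stated (weak $\Phi$-positivity of $\Fcal$ implies weak $\Phi'$-positivity of $\varphi^*\Fcal$) only the automatic inclusion $\varphi(Z_{\Phi'})\subset Z_{\Phi}$ is used, so you have in fact proved slightly more than is asserted. The reason the equality hypothesis appears is visible in how the lemma is invoked in the proof of Lemma \ref{lem:close-enough}: there it is used as an equivalence, and the direction actually needed is the converse one, namely that weak $\Phi'_a$-positivity of $\Lbb'=\varphi_a^*\Lbb(a)$ (obtained from Proposition \ref{prop:defPhi}) forces weak $\Phi_a$-positivity of $\Lbb(a)$ downstairs. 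For that converse the surjectivity $\varphi(Z_{\Phi'})\supset Z_{\Phi}$ is essential: given a component $\Xcal\subset M^{\beta}$ meeting $\Phi^{-1}(\beta)$, one picks $m\in\Xcal\cap\Phi^{-1}(\beta)\subset Z_\Phi$, lifts it to $m'\in Z_{\Phi'}$ with $\varphi(m')=m$, notes that $m'\in (M')^{\beta}\cap(\Phi')^{-1}(\beta)$, gets nonnegativity of $\frac{1}{i}\Lcal(\beta)$ on $\Fcal\vert_m=\Fcal'\vert_{m'}$, and then extends from the point $m$ to all of $\Xcal$ by local constancy of the fiberwise $\beta$-weights along a connected component of $M^{\beta}$. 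So your proof settles the stated lemma, and it is worth recording the converse (under the equality hypothesis) if one wants the statement in the form in which the paper actually uses it.
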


A particularly important example of strictly  $\Phi$-positive bundle is the following.
\begin{defi}
A $K$-equivariant line bundle $L$ over M is called a $\Phi$-moment line bundle if, for any $m\in M$ and any $X\in \kgot_m$,
  the action of $\Lcal(X)$  on the fiber $L\vert_m$ is equal to $i\langle\Phi(m),X\rangle{\rm Id}$.
\end{defi}

Such a line bundle  $L$ is strictly $\Phi$-positive,
since for any $m\in \Phi^{-1}(\beta)\cap M^{\beta}$, the action of $\Lcal(\beta)$ on $L\vert_m$ is equal to $i\|\beta\|^2{\rm Id}$.
In particular, let $\Phi_L$ be the moment map associated to an Hermitian connection (Definition \ref{defi:PhiL}).
Clearly $L$ is a $\Phi_L$-line bundle, and  the line bundle $L$ is strictly $\Phi_L$-positive.

\begin{exam}
Let us consider the case studied in Section \ref{subsec:cotangent} : the Hamiltonian action of $\tilde{K}\times K$ on $\T^*\tilde{K}$. The moment map is $\Phi(a,\xi)=-\xi\oplus \pi(a^{-1}\xi)$. In this situation, the trivial line bundle over $\T^*\tilde{K}$ is a $\Phi$-moment line bundle.
\end{exam}

The following theorem follows from Proposition
\ref{prop:QR-beta-general}.
\begin{theo}\label{theo:Phipos}
Let $M$ be a $K$-manifold equipped with a moment map $\Phi$ such that $Z_\Phi$ is compact. Let $\Ecal$ be an equivariant
graded Clifford bundle on $M$.

 $\bullet$ If $\Ecal$ is weakly $\Phi$-positive, then we have
\begin{eqnarray*}
\lefteqn{\left[\Qcal_K(M,\Ecal,\Phi)\right]^K=\left[\Qcal_K(M,\Ecal, \Phi^{-1}(0),\Phi)\right]^K}\\
& &+\sum_{\beta\in \Bcal, \beta\neq 0}\left[\Qcal_{K_{\beta}}(M^\beta,\Ecal_{[\beta]},M^{\beta}
\cap \Phi^{-1}(\beta),\Phi)\right]^{K_{\beta}}
\end{eqnarray*}
where the Clifford bundles $\Ecal_{[\beta]}$ are defined by (\ref{eq:E-beta-0}).

$\bullet$ When the Clifford bundle $\Ecal$ is strictly  $\Phi$-positive, we have
$$\left[\Qcal_K(M,\Ecal,\Phi)\right]^K=\left[\Qcal_K(M,\Ecal, \Phi^{-1}(0),\Phi)\right]^K.$$
If furthermore $0$ is a regular value of $\Phi$, then  $K$ acts locally freely on $\Phi^{-1}(0)$, and we have
$\left[\Qcal_K(M,\Ecal)\right]^K=\Qcal(M_{red},\Ecal_{red})$.
\end{theo}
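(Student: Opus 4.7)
The proof is essentially a direct assembly of the preceding results, so the plan is to put them together in the right order. First, I would apply the non-abelian localization theorem (Theorem \ref{theo;nonabelian}), taking the $K$-invariant part of both sides, to obtain
$$
\left[\Qcal_K(M,\Ecal,\Phi)\right]^K=\left[\Qcal_K(M,\Ecal,\Phi^{-1}(0),\Phi)\right]^K + \sum_{\beta\in\Bcal,\ \beta\neq 0}\left[\Qcal_K(M,\Ecal,Z_\beta,\Phi)\right]^K.
$$
The only thing to be analyzed is each contribution at $\beta\neq 0$.

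For the first bullet, the weakly $\Phi$-positive hypothesis on $\Ecal$ is precisely the hypothesis of the first bullet of Proposition \ref{prop:QR-beta-general}: for each $\beta\neq 0$ in $\Bcal$ and each connected component $\Xcal\subset M^\beta$ meeting $\Phi^{-1}(\beta)$, we have $\frac{1}{i}\Lcal(\beta)\geq 0$ on $\mathbb{d}_\beta(\Ecal)|_\Xcal\otimes \det(\qgot_\JJbeta)^{-1}$. Applying that proposition term by term gives
$$
\left[\Qcal_K(M,\Ecal,Z_\beta,\Phi)\right]^K=\left[\Qcal_{K_\beta}(M^\beta,\Ecal_{[\beta]},M^\beta\cap\Phi^{-1}(\beta),\Phi)\right]^{K_\beta},
$$
and summing over $\beta\neq 0$ in $\Bcal$ yields the first identity of the theorem.

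For the second bullet, the strictly $\Phi$-positive hypothesis matches the hypothesis of the second bullet of Proposition \ref{prop:QR-beta-general}, which forces $\Ecal_{[\beta]}=[0]$ in a neighborhood of $M^\beta\cap\Phi^{-1}(\beta)$ for every $\beta\neq 0$ in $\Bcal$. Hence every term with $\beta\neq 0$ in the sum above vanishes, leaving
$$
\left[\Qcal_K(M,\Ecal,\Phi)\right]^K=\left[\Qcal_K(M,\Ecal,\Phi^{-1}(0),\Phi)\right]^K.
$$
Finally, when $M$ is compact the symbols $\sigma(M,\Ecal)$ and $\sigma(M,\Ecal,\Phi)$ coincide in $\Ko_K(\T^*_K M)$, so the left hand side equals $[\Qcal_K(M,\Ecal)]^K$; and when $0$ is a regular value of $\Phi$, Proposition \ref{prop: O regular} ensures $K$ acts infinitesimally freely on $\Phi^{-1}(0)$ and Theorem \ref{theo:localisation-Phi-0} computes the right-hand side as $\Qcal(M_{red},\Ecal_{red})$.

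There is no real obstacle beyond bookkeeping: the localization theorem does all the hard geometric work, and the two positivity conditions are built precisely so as to match the hypotheses of Proposition \ref{prop:QR-beta-general}. The only points needing a bit of care are (i) remembering that in the first bullet one must restrict to those components $\Xcal\subset M^\beta$ which meet $\Phi^{-1}(\beta)$ (components of $M^\beta$ disjoint from $\Phi^{-1}(\beta)$ contribute nothing to the localized index), and (ii) the identification $\Qcal_K(M,\Ecal,\Phi)=\Qcal_K(M,\Ecal)$ in the compact case, which relies on the homotopy invariance of the transversally elliptic index applied to $\sigma(M,\Ecal,t\Phi)$, $t\in[0,1]$.
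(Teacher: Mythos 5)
Your proposal is correct and follows essentially the same route as the paper: the authors also obtain the theorem by taking $K$-invariants in the non-abelian localization formula of Theorem \ref{theo;nonabelian} and applying Proposition \ref{prop:QR-beta-general} term by term to each $\beta\neq 0$, with the regular-value statement delegated to Proposition \ref{prop: O regular} and Theorem \ref{theo:localisation-Phi-0}. Your two points of care (restricting to components of $M^\beta$ meeting $\Phi^{-1}(\beta)$, and the homotopy identifying $\Qcal_K(M,\Ecal,\Phi)$ with $\Qcal_K(M,\Ecal)$ in the compact case) are exactly the points the paper handles inside Proposition \ref{prop:QR-beta-general} and in the discussion following the definition of the deformed symbol.
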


 The equality
$\left[\Qcal_K(M,\Ecal, \Phi^{-1}(0),\Phi)\right]^K=\Qcal(M_{red},\Ecal_{red})$
is proved in Theorem \ref{theo:localisation-Phi-0}.

Finally let us remark the following.

\begin{prop}\label{prop:defPhi}
Let $\Phi$ be a moment map, and assume that $L$ is a  $\Phi$-moment line bundle.
Let $\Phi_t, t\in [0,1]$ be a smooth  family of moment maps such that $\Phi_0=\Phi$. We assume that there exists a
relatively compact open subset  $\Ucal\subset M$ such that $Z_{\Phi_t}\subset \Ucal$ for all $t\in [0,1]$.
Then there exists $\epsilon>0$ such that $L$ is weakly $\Phi_t$-positive for any $t\in [0,\epsilon]$.
\end{prop}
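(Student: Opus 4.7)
The plan is to unravel what weak $\Phi_t$-positivity of $L$ means, reduce it to a scalar inequality on $Z_{\Phi_t}$, and then control it uniformly using the stratification of $M$ by infinitesimal stabilizers. Since $L$ is a $\Phi$-moment line bundle, for $m\in M$ and $X\in\kgot_m$ the action $\Lcal(X)$ on $L\vert_m$ equals $i\langle\Phi(m),X\rangle$. Combined with the constancy of $\langle\Phi,\beta\rangle$ on connected components of $M^\beta$ (Proposition \ref{prop: O regular}), this reduces weak $\Phi_t$-positivity of $L$ to showing
\[
\langle\Phi(m),\Phi_t(m)\rangle\geq 0
\]
for every $m\in Z_{\Phi_t}$ with $\Phi_t(m)\neq 0$.

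For such an $m$, let $\hgot=\kgot_m$ and let $\Xcal$ be the connected component of $M_\hgot$ containing $m$. As in the setup of Lemma \ref{lem:PhiZPhi}, the images $\Phi(\Xcal)$ and $\Phi_t(\Xcal)$ lie in affine subspaces $A(\Xcal,\Phi)$ and $A(\Xcal,\Phi_t)$ of direction $\hgot^\perp$, and $\beta(\Xcal,\Phi),\beta(\Xcal,\Phi_t)\in\hgot$ are the corresponding orthogonal projections of $0$. Since $\beta:=\Phi_t(m)\in\hgot$, one has $\beta=\beta(\Xcal,\Phi_t)$; since moreover $\Phi(m)-\beta(\Xcal,\Phi)\in\hgot^\perp$, we obtain the identity
\[
\langle\Phi(m),\Phi_t(m)\rangle=\langle\beta(\Xcal,\Phi),\beta(\Xcal,\Phi_t)\rangle.
\]
Setting $C(t):=\sup_{\overline{\Ucal}}\|\Phi_t-\Phi\|$ and using $\beta(\Xcal,\Phi_t)-\beta(\Xcal,\Phi)=\mathrm{proj}_\hgot(\Phi_t(m)-\Phi(m))$, Cauchy--Schwarz gives
\[
\langle\Phi(m),\Phi_t(m)\rangle\geq\|\beta(\Xcal,\Phi)\|\bigl(\|\beta(\Xcal,\Phi)\|-C(t)\bigr),
\]
and smoothness of the family $\Phi_t$ forces $C(t)\to 0$ as $t\to 0$.

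The main obstacle is to make this lower bound non-negative uniformly in $m$. To this end I use that, by compactness of $\overline{\Ucal}$ together with the local finiteness of the infinitesimal stabilizer stratification for the smooth action of the compact group $K$, only finitely many pairs $(\hgot,\Xcal)$ of a stabilizer subalgebra $\hgot$ and a connected component $\Xcal\subset M_\hgot$ meet $\overline{\Ucal}$. Hence the set of nonzero values $\|\beta(\Xcal,\Phi)\|$ arising from such $\Xcal$ is finite, and admits a positive lower bound $\epsilon_0>0$. Choosing $\epsilon>0$ so that $C(t)<\epsilon_0$ for every $t\in[0,\epsilon]$, each relevant $\Xcal$ falls into one of two cases: either $\beta(\Xcal,\Phi)=0$, in which case the bound reads $0$, or $\|\beta(\Xcal,\Phi)\|\geq\epsilon_0>C(t)$, in which case the bound is strictly positive. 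This establishes the required weak $\Phi_t$-positivity of $L$ for all $t\in[0,\epsilon]$.
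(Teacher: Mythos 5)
Your strategy is essentially the paper's: use the $\Phi$-moment property of $L$ together with the constancy of $\langle\Phi,\beta\rangle$ on connected components of $M^\beta$ to reduce weak $\Phi_t$-positivity to the scalar inequality $\langle\Phi(m),\Phi_t(m)\rangle\geq 0$ at points $m\in Z_{\Phi_t}$ with $\Phi_t(m)\neq 0$, then identify $\Phi_t(m)$ with $\beta(\Xcal,\Phi_t)$, compare it with $\beta(\Xcal,\Phi)$, and split into the cases $\beta(\Xcal,\Phi)=0$ and $\beta(\Xcal,\Phi)\neq 0$. The reduction, the identity $\langle\Phi(m),\Phi_t(m)\rangle=\langle\beta(\Xcal,\Phi),\beta(\Xcal,\Phi_t)\rangle$, and the estimate via $C(t)=\sup_{\overline{\Ucal}}\|\Phi_t-\Phi\|$ are all correct; in fact your explicit $C(t)$ makes the uniformity in $m$ more transparent than the paper's ``$t$ small enough''.

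The step that fails as written is the finiteness claim: it is not true that only finitely many pairs $(\hgot,\Xcal)$, with $\hgot$ an actual stabilizer subalgebra and $\Xcal$ a connected component of $M_\hgot$, meet $\overline{\Ucal}$. Already for $K=SU(2)$ acting on a regular coadjoint orbit (a $2$-sphere), every point has a different one-dimensional stabilizer subalgebra, so infinitely many distinct pairs $(\hgot,\Xcal)$ meet any invariant open set. What you need, and what is true, is finiteness \emph{up to conjugacy}: only finitely many conjugacy classes $(\kgot_i)$ of infinitesimal stabilizers meet the relatively compact set $\Ucal$, for each representative $\kgot_i$ the components $\Xcal$ of $M_{\kgot_i}\cap\Ucal$ contribute only finitely many points $\beta(\Xcal,\Phi)$, and by equivariance $\beta(k\cdot\Xcal,\Phi)=k\cdot\beta(\Xcal,\Phi)$, so the norm $\|\beta(\Xcal,\Phi)\|$ depends only on the conjugacy class of the pair. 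This is exactly the content of the paper's earlier statement that $C_\Phi^\Ucal$ (hence $\Phi(Z_\Phi)$ for $Z_\Phi$ compact) is a finite union of coadjoint orbits, and it is how the paper's own proof proceeds (it fixes the finite set of representatives $\kgot_i$ and works with components of $M_{\kgot_i}\cap\Ucal$). With this substitution your positive lower bound $\epsilon_0$ on the nonzero values $\|\beta(\Xcal,\Phi)\|$ is restored and the rest of your argument goes through unchanged; so the flaw is repairable, but as stated the finiteness assertion is false and the uniform bound does not follow from it.
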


\begin{proof}
Take the notations of Lemma \ref{lem:PhiZPhi}.
Consider a finite set $\{\kgot_i\}$ of  representatives of the infinitesimal stabilizers of the action of $K$ on $\Ucal$,
and let $\Ccal_i$ be the set of connected components $\Xcal$ of $M_{\kgot_i}\cap\Ucal$.
The action of $\frac{1}{i}\beta(\Xcal,\Phi)\in \kgot_i$ on $L\vert_m$, for any $m\in \Xcal$,  is given by
$\|\beta(\Xcal,\Phi)\|^2{\rm Id}$
as $\beta(\Xcal,\Phi)\in \kgot_i$ and $\Phi(\Xcal)\subset \beta(\Xcal,\Phi)+\kgot_i^{\perp}$.

Let $m\in Z_{\Phi_t}\subset \Ucal$.
Then, up to conjugacy,  there exists an $i$ such that
$m\in \Phi_t^{-1}(\kgot_i)\cap M_{\kgot_i}\cap\Ucal$.
Let $\Xcal$ be the connected component of $M_{\kgot_i}\cap\Ucal$ containing $m$.
Then $\beta_t=\Phi_t(m)=\beta(\Xcal,\Phi_t)\in \kgot_i$ and is close  to the point
$\beta(\Xcal,\Phi)\in \kgot_i$.
Thus if  $\beta(\Xcal,\Phi)\neq 0$, and $t$ small enough,
the action of $\beta_t$ is positive on $L\vert_m$.
If $\beta(\Xcal,\Phi)=0$, then $\Phi(\Xcal)$ is contained in
$\kgot_i^{\perp}$, and so the action of $\beta_t\in \kgot_i$ on $L\vert_m$
is equal to  $i\langle \Phi(m),\beta_t\rangle =0$.
\end{proof}\bigskip

%%%%%%%%%%%%%%%%%%%%%%%%%%%%%%%%%%%%%%%%%%%%%%%%%%%%%%%%%%%
\subsection{$[Q,R]=0$ in the asymptotic sense}
%%%%%%%%%%%%%%%%%%%%%%%%%%%%%%%%%%%%%%%%%%%%%%%%%%%%%%%%%%%

Let $M$ be a compact even dimensional $K$-manifold equipped with an equivariant graded Clifford bundle $\Ecal$.
Let $L$ be a $K$-equivariant Hermitian line bundle with Hermitian connection $\nabla$, and consider its moment map $\Phi_L.$

\begin{theo}
When $k$ is sufficiently large, we have
\begin{equation}\label{eq:QR-Phi-0}
\left[\Qcal_K(M,\Ecal\otimes L^{\otimes k})\right]^K=\left[\Qcal_K(M,\Ecal\otimes L^{\otimes k}, \Phi_L^{-1}(0),\Phi_L)\right]^K.
\end{equation}
If furthermore $0$ is a regular value of $\Phi_L$, then  $K$ acts locally freely on $\Phi_L^{-1}(0)$, and we have
$\left[\Qcal_K(M,\Ecal\otimes L^{\otimes k})\right]^K=\Qcal(M_{red},\Ecal_{red}\otimes L^{\otimes k}_{red})$.
\end{theo}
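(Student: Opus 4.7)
My strategy is to apply the second bullet of Theorem \ref{theo:Phipos} to the Clifford bundle $\Ecal\otimes L^{\otimes k}$ together with the moment map $\Phi_L$ (which does not depend on $k$). Since $M$ is compact, we have $\Qcal_K(M,\Ecal\otimes L^{\otimes k},\Phi_L)=\Qcal_K(M,\Ecal\otimes L^{\otimes k})$, so the theorem will follow as soon as we establish that $\Ecal\otimes L^{\otimes k}$ is strictly $\Phi_L$-positive for $k$ large enough; the final sentence about regular values is then an immediate consequence of Theorem \ref{theo:localisation-Phi-0} applied to the twisted Clifford bundle $\Ecal\otimes L^{\otimes k}$, whose reduction is $\Ecal_{red}\otimes L^{\otimes k}_{red}$.

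To verify the positivity condition, fix $\beta\in\Bcal(\Phi_L)$ with $\beta\neq 0$ and a connected component $\Xcal\subset M^\beta$ with $\Xcal\cap\Phi_L^{-1}(\beta)\neq\emptyset$. The set $\Bcal(\Phi_L)$ is finite and each such $\Xcal$ is compact since $M$ is compact. Because $L$ is a line bundle, twisting commutes with the divison by the spinor module $\bigwedge\overline{\Ncal_{J_\beta}}$, so
$$\mathbb{d}_\beta(\Ecal\otimes L^{\otimes k})\otimes\det(\qgot_{J_\beta})^{-1}\simeq \bigl(\mathbb{d}_\beta(\Ecal)\otimes\det(\qgot_{J_\beta})^{-1}\bigr)\otimes L^{\otimes k}\big|_{M^\beta}.$$
Therefore each eigenvalue of $\tfrac{1}{i}\Lcal(\beta)$ on this bundle over a point $m\in\Xcal$ has the form $\lambda(m)+k\,\langle\Phi_L(m),\beta\rangle$, where $\lambda(m)$ is an eigenvalue of $\tfrac{1}{i}\Lcal(\beta)$ on $\mathbb{d}_\beta(\Ecal)\otimes\det(\qgot_{J_\beta})^{-1}$ at $m$ (here I use that $L$ is a $\Phi_L$-moment line bundle, so $\tfrac{1}{i}\Lcal(\beta)$ acts on $L|_m$ by $\langle\Phi_L(m),\beta\rangle$ whenever $\beta\in\kgot_m$).

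Now by the first point of Proposition \ref{prop: O regular}, the function $\langle\Phi_L,\beta\rangle$ is constant on the connected component $\Xcal$ of $M^\beta$; since $\Xcal$ meets $\Phi_L^{-1}(\beta)$, this constant equals $\|\beta\|^2>0$. On the other hand, $\Xcal$ being compact and the bundle $\mathbb{d}_\beta(\Ecal)\otimes\det(\qgot_{J_\beta})^{-1}$ having finite rank, the set of eigenvalues $\lambda(m)$ for $m\in\Xcal$ is bounded below by some constant $\lambda_\Xcal\in\R$. Hence
$$\tfrac{1}{i}\Lcal(\beta)\ \geq\ \lambda_\Xcal+k\|\beta\|^2\quad\text{on}\quad \bigl(\mathbb{d}_\beta(\Ecal)\otimes\det(\qgot_{J_\beta})^{-1}\otimes L^{\otimes k}\bigr)\big|_\Xcal,$$
and this is strictly positive for $k>-\lambda_\Xcal/\|\beta\|^2$. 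Taking the maximum of these bounds over the finite collection of pairs $(\beta,\Xcal)$ yields a threshold $k_0$ beyond which $\Ecal\otimes L^{\otimes k}$ is strictly $\Phi_L$-positive, completing the proof. There is no real obstacle here; the argument is a direct combination of Theorem \ref{theo:Phipos} with the compactness of $M$, the only point to keep an eye on being that $\Phi_L$ (and hence $\Bcal(\Phi_L)$) stays fixed as $k$ varies, so the positivity threshold is obtained uniformly.
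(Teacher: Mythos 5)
Your proof is correct and follows essentially the same route as the paper: both reduce the statement to the second bullet of Theorem \ref{theo:Phipos} by checking that $\Ecal\otimes L^{\otimes k}$ is strictly $\Phi_L$-positive for large $k$, using that $\frac{1}{i}\Lcal(\beta)$ acts on $L\vert_\Xcal$ by $\|\beta\|^2$ (via the moment-line-bundle property and constancy of $\langle\Phi_L,\beta\rangle$ on $\Xcal$). Your version merely spells out the uniformity in $k$ (finiteness of $\Bcal(\Phi_L)$ and boundedness of the eigenvalues on $\mathbb{d}_\beta(\Ecal)\otimes\det(\qgot_{J_\beta})^{-1}$), which the paper leaves implicit.
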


This theorem follows right away from
Theorem \ref{theo:Phipos}. Indeed, when $\beta\neq 0$,
for any couple $(\Xcal,\beta)$ where $\Xcal\subset M^\beta$ is a connected component such that
$\Xcal\cap \Phi_L^{-1}(\beta)\neq\emptyset$, the action of
 $\beta/i$ on  $L^{\otimes k}|_{\Xcal}$ is given by
$k\|\beta\|^2$, so when $k$ is large, the  Clifford bundle $\Ecal\otimes L^{\otimes k}$ is strictly $\Phi_L$-positive.

So, although the map $\Phi_L$ varies with the choice of connection on $L$ and the topological
space $M_{red}$ may vary dramatically, then for $k$ large,
the quantity
$\Qcal(M_{red},\Ecal_{red}\otimes L^{\otimes k}_{red})$
is independent of the choice of the connection on $L$.

Theorem \ref{eq:QR-Phi-0} plays an essential role when one studies the asymptotic behaviour of branching law coefficients (see \cite{pep-stability}).

%%%%%%%%%%%%%%%%%%%%%%%%%%%%%%%%%%%%%%%%%%%%%%%%%%%%%%%%%%%
%%%%%%%%%%%%%%%%%%%%%%%%%%%%%%%%%%%%%%%%%%%%%%%%%%%%%%%%%%%
\section{$[Q,R]=0$ for almost complex  manifolds}\label{sec:almost-complex}
%%%%%%%%%%%%%%%%%%%%%%%%%%%%%%%%%%%%%%%%%%%%%%%%%%%%%%%%%%%
%%%%%%%%%%%%%%%%%%%%%%%%%%%%%%%%%%%%%%%%%%%%%%%%%%%%%%%%%%%

Let us assume that our $K$-manifold $M$ is provided with a $K$-invariant almost complex structure $J$.
Then, as in the complex case (see Example \ref{exam:complex}),
 a natural Clifford bundle on $M$ is $\bigwedge_J \T M$ and any Clifford module on
$M$ is a twisted Clifford bundle $\bigwedge_J \T M\otimes \Fcal$ where $\Fcal$ is a $K$-equivariant complex
vector bundle.
\begin{defi}
$\bullet$ When $M$ is compact, we define the Riemann-Roch character
$$
RR^J_K(M,\Fcal):=\Qcal_K(M,\bigwedge_J \T M\otimes \Fcal).
$$

$\bullet$ When $M$ is not necessarily compact, we can attach the localized
Riemann-Roch character
$$
RR^J_K(M,\Fcal,\Phi):=\Qcal_K(M,\bigwedge_J \T M\otimes \Fcal,\Phi)
$$
to any  equivariant map $\Phi: M\to\kgot^*$ such that $Z_\Phi$  is compact.

$\bullet$ When $Z$ is a compact component of $Z_\Phi$, we define the localized
Riemann-Roch character by
$$
RR^J_K(M,\Fcal,Z,\Phi):=\Qcal_K(M,\bigwedge_J \T M\otimes \Fcal,Z,\Phi).
$$
\end{defi}

An important example of almost complex manifold is the case of symplectic manifold $(M,\Omega)$.
In this case, we construct $J$ as follows:  choose a $K$-invariant Riemannian metric $g$ on $M$, and write
$\Omega_m(v,w)=g(v, A_m w)$ where $A_m$ is a invertible antisymmetric matrix, and choose $J_m=\frac{A_m}{\sqrt{-A_mA_m^*}}$.

Our aim is to compute $[RR^J_K(M,\Fcal,\Phi)]^K$ for a well chosen moment map $\Phi$.

%%%%%%%%%%%%%%%%%%%%%%%%%%%%%%%%%%%%%%%%%%%%%%%%%%%%%%%%%%%
%%%%%%%%%%%%%%%%%%%%%%%%%%%%%%%%%%%%%%%%%%%%%%%%%%%%%%%%%%%
\subsection{Statement of the results}
%%%%%%%%%%%%%%%%%%%%%%%%%%%%%%%%%%%%%%%%%%%%%%%%%%%%%%%%%%%
%%%%%%%%%%%%%%%%%%%%%%%%%%%%%%%%%%%%%%%%%%%%%%%%%%%%%%%%%%%

We start with a definition.
\begin{defi}
Consider the data $(\Omega,J)$ where $\Omega$ is a two form and $J$ is an almost complex structure on $M$. We say that
$(\Omega,J)$ is adapted if, for any $m\in M$,
$\Omega(Jv,Jw)=\Omega(v,w)$ and
the quadratic form $v\in \T_m M\mapsto \Omega(v,Jv)$ is semi-positive.
\end{defi}

We have the fundamental fact that we will prove in next subsection.

\begin{prop}\label{prop:J-Phi-positive}
If $(\Omega,\Phi)$ is a weakly Hamiltonian structure on $M$ such that $(\Omega,J)$ is adapted, then the Clifford
bundle $\bigwedge_J \T M$ is weakly $\Phi$-positive.
\end{prop}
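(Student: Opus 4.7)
The goal is to check, for any $\beta\neq 0$ and any connected component $\Xcal\subset M^\beta$ meeting $\Phi^{-1}(\beta)$, that the action of $\tfrac{1}{i}\Lcal(\beta)$ on $\mathbb{d}_\beta(\bigwedge_J\T M)\vert_\Xcal\otimes \det(\qgot_\JJbeta)^{-1}$ is non-negative. I would reduce everything to a pointwise statement at some $m\in \Xcal\cap \Phi^{-1}(\beta)$ and unpack the linear algebra. Since $J$ is $K$-invariant it commutes with $\Lcal(\beta)$, so both preserve the orthogonal decomposition $\T_m M=\T_m\Xcal\oplus \Ncal_m$; on $\T_m\Xcal$ we have $\Lcal(\beta)=0$. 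Writing $\bigwedge_J \T M\vert_m=\bigwedge_J \T_m\Xcal\otimes\bigwedge_J\Ncal_m$, one identifies
\[
\mathbb{d}_\beta(\textstyle\bigwedge_J \T M)_m=\bigwedge_J\T_m\Xcal\otimes L_m,
\]
where $L_m$ is the one-dimensional graded line satisfying $\bigwedge_J\Ncal_m\simeq \bigwedge\overline{\Ncal_{m,\JJbeta}}\otimes L_m$ as Clifford modules over $\Ncal_m$.

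The main character calculation is as follows. Let $i\mu_1,\ldots,i\mu_\ell$ be the eigenvalues of $\Lcal(\beta)$ on $(\Ncal_m,J)$, so $\mu_j\in\R^*$ and the $J_\beta$-eigenvalues are $ia_j$ with $a_j=|\mu_j|$. Let $ib_k$ ($b_k>0$) be the $J_\beta$-eigenvalues of $\mathrm{ad}(\beta)$ on $\qgot_\JJbeta$. The standard trace formula (a Lie-algebra consequence of Lemma~\ref{lem: lambdarho}: for $L$ skew-symmetric commuting with a complex structure $J'$, the natural $L$-action on $\bigwedge_{J'}V$ equals the spin action plus $\tfrac12\tr_{V_{J'}}(L)$) applied with $J'=J$ and $J'=-J_\beta$ shows that $\Lcal(\beta)$ acts on $L_m$ by the scalar
\[
\tfrac{i}{2}\Bigl(\sum_j\mu_j+\sum_j a_j\Bigr)=i\sum_{\mu_j>0}\mu_j.
\]
The action on $\det(\qgot_\JJbeta)^{-1}$ is $-i\sum_k b_k$, so the weak $\Phi$-positivity of $\bigwedge_J \T M$ is equivalent to the pointwise inequality
\[
\sum_{\mu_j>0}\mu_j\;\geq\;\sum_k b_k.
\]

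To prove this inequality I would bring in the moment map and the adaptedness of $(\Omega,J)$. The infinitesimal orbit map $\iota_m\colon\qgot\to \Ncal_m$, $X\mapsto X_M(m)$, is injective: equivariance of $\Phi$ forces $\kgot_m\subseteq\kgot_\beta$ at any point with $\Phi(m)=\beta$, so $\qgot\cap\kgot_m=0$. Since $\Lcal(\beta)Y_M(m)=[\beta,Y]_M(m)$ at a fixed point of $\beta$, $\iota_m$ intertwines $\mathrm{ad}(\beta)|_\qgot$ with $\Lcal(\beta)|_{\image(\iota_m)}$. For a weight vector $X\in\qgot$ with $[\beta,X]=bJ_\beta X$, $b>0$, the pair $v=\iota_m(X)$, $w=\iota_m(J_\beta X)$ spans a real two-plane on which $\Lcal(\beta)v=bw$, $\Lcal(\beta)w=-bv$; the $\Ncal$-side complex structure $J_\beta^{\Ncal}$ is then the one sending $v\mapsto w$. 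The moment map relation $\Omega_m(X_M,Y_M)=\langle\beta,[X,Y]\rangle$ together with invariance of the scalar product yields
\[
\Omega_m(v,w)=\langle\beta,[X,J_\beta X]\rangle=b\|X\|^2>0,
\]
which is just the value of the Kirillov--Kostant form on the orbit $K\beta$.

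Since $J$ commutes with $\Lcal(\beta)$ it preserves the plane $\mathrm{span}(v,w)$ and must equal $\pm J_\beta^{\Ncal}$ there, i.e.\ $Jv=\pm w$. The adaptedness assumption $\Omega_m(v,Jv)\geq 0$ combined with $\Omega_m(v,w)>0$ forces the $+$ sign, so $J=J_\beta^{\Ncal}$ on $\mathrm{span}(v,w)$ and the corresponding $J$-eigenvalue of $\Lcal(\beta)$ is $\mu=+b$. Running this argument over a basis of weight vectors of $\mathrm{ad}(\beta)$ on $\qgot$ shows that $\iota_m(\qgot)$ is a $J$-complex subspace of $(\Ncal_m,J)$ on which the $J$-eigenvalues of $\Lcal(\beta)$ are exactly $\{b_1,\ldots,b_r\}$, all positive. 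These are among the positive $\mu_j$'s, which gives the required inequality (with possibly extra positive contributions from $\Ncal_m\ominus\iota_m(\qgot)$). The main obstacle I expect is sign-tracking: carefully converting between $\Lcal(\beta)$ on vector fields and $\mathrm{ad}(\beta)$ on $\kgot$ so that adaptedness really pins down the $+$ and not the $-$ choice; once that is secured, the rest is straightforward linear algebra.
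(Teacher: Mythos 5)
Your reduction of the statement to the pointwise trace inequality $\sum_{\mu_j>0}\mu_j\geq\sum_k b_k$ is correct, and it is the same bookkeeping as in the paper: there one computes
$\mathbb{d}_\beta(\bigwedge_J\T M)=(-1)^{\rang\Ncal_{\JJbeta,+}}\bigwedge_J\T M^\beta\otimes\det(\Ncal_{\JJbeta,+})$, and since $\Lcal(\beta)$ acts trivially on $\bigwedge_J\T M^\beta$, weak $\Phi$-positivity amounts precisely to $\frac{1}{i}\Lcal(\beta)\geq 0$ on $\det(\Ncal_{\JJbeta,+})\otimes\det(\qgot_\JJbeta)^{-1}$, which is your inequality. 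The genuine gap is in your proof of that inequality, at the sentence ``since $J$ commutes with $\Lcal(\beta)$ it preserves the plane $\mathrm{span}(v,w)$''. Commutation with $\Lcal(\beta)$ only forces $J$ to preserve each eigenspace $\ker(\Lcal(\beta)^2+b^2)\subset \Ncal\vert_m$, not the particular $2$-plane $\iota_m(\R X\oplus\R J_\beta X)$ sitting inside it. When an eigenvalue has multiplicity the orbit plane can be tilted: for instance in $\Ncal_m=\C^2$ with $J$ the standard structure and $\Lcal(\beta)$ acting by $ib$ on the first factor and $-ib$ on the second, every real-linear intertwiner from a weight-$b$ plane has the form $z\mapsto(\alpha z,\delta\bar z)$, whose image is neither $J$-stable nor contained in $\Ncal_{\JJbeta,+}$ as soon as $\alpha\delta\neq 0$. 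Hence your stronger intermediate claim --- that $\qgot\cdot m$ is a $J$-complex subspace of $(\Ncal_m,J)$ contained in $\Ncal_{\JJbeta,+}$ with $J$-eigenvalues exactly the $b_k$ --- is false in general, and the $\pm$ dichotomy you resolve with adaptedness never gets off the ground.

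What is true, and suffices, is the weaker statement the paper proves (Lemma \ref{lem:rankbigger}): the map $\mathrm{p}_m:\qgot\to\Ncal_{\JJbeta,+}\vert_m$, composition of the orbit map with the projection along $\Ncal_{\JJbeta,-}$, is injective; it is moreover $J_\beta$-complex linear and intertwines $\mathrm{ad}(\beta)$ with $\Lcal(\beta)$. The injectivity is obtained by using adaptedness globally rather than plane by plane: one introduces the symmetric form $H_m(v,w)=\Omega_m(v,\Lcal(\beta)w)$, the Hessian of $\langle\Phi,\beta\rangle$. Since $B(v,w)=\Omega_m(v,Jw)$ is symmetric positive semi-definite by adaptedness, and $\Lcal(\beta)$ is skew for both the metric and $B$ ($\Omega$ and $J$ are invariant), one gets $H_m\leq 0$ on $\Ncal_{\JJbeta,-}\vert_m$; on the other hand your own computation $H_m(X\cdot m,X\cdot m)=\|[\beta,X]\|^2$ shows that $H_m$ is positive definite on $\qgot\cdot m$. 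Hence $\qgot\cdot m\cap\Ncal_{\JJbeta,-}\vert_m=0$ and $\mathrm{p}_m$ is injective, which already gives $\mathrm{tr}_{\Ncal_{\JJbeta,+}}\tfrac{1}{i}\Lcal(\beta)\geq \mathrm{tr}_{\qgot_\JJbeta}\tfrac{1}{i}\mathrm{ad}(\beta)$, i.e.\ your inequality (the image accounts for the $b_k$, the complement contributes positively). With that substitution --- and the sign-tracking you already flagged --- your character computation completes the proof along essentially the paper's lines.
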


For the rest of this section, we work with a weakly Hamiltonian structure $(\Omega,\Phi)$ on $M$ such that $(\Omega,J)$ is adapted.

%We work with an orthogonal $K_{\beta}$-invariant decomposition $\kgot=\kgot_{\beta}\oplus\qgot$.

Consider $\beta \in \kgot$.
For the tangent bundle, we have the decomposition
$\T M\vert_{M^\beta}=\T M^\beta\oplus \Ncal$, where $\T M^\beta=\ker(\Lcal(\beta))$ and $\Ncal=\mathrm{Image}(\Lcal(\beta))$.
Since the complex structure $J$ on $M$ commutes with $\Lcal(\beta)$, the vector sub-bundles $\T M^\beta$ and $\Ncal$
are stable under $J$. On $\Ncal$, as we have another complex structure $J_\beta$ that commutes with $J$, we get the decomposition
$\Ncal=\Ncal_{\JJbeta, +}\oplus\Ncal_{\JJbeta,-}$ where $\Ncal_{\JJbeta, \pm}=\{v\in \Ncal\,\vert\, J_\beta v=\pm Jv\}$.

Also this lemma will be proved in the next subsection.
\begin{lem}\label{lem:rank-N-beta}
For any non-zero element $\beta\in\kgot$, and any $m\in M^\beta\cap\Phi^{-1}(\beta)$, the real dimension of $\Ncal_{\JJbeta,+}|_m$ is
greater or equal than $\dim_\R(\kgot/\kgot_{\beta})$.
\end{lem}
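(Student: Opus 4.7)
The plan is to produce an injective $\R$-linear map from $\kgot/\kgot_{\beta}$ into $\Ncal_{\JJbeta,+}|_m$. Using the invariant inner product, write $\kgot=\kgot_{\beta}\oplus\qgot$ with $\qgot=\kgot_{\beta}^{\perp}$, so that $\ad(\beta)$ is invertible on $\qgot$. I would then study the infinitesimal action map $\phi\colon\qgot\to\T_m M$, $X\mapsto X\cdot m$ (with the paper's convention $X_M(m)=-X\cdot m$).

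First I would establish three structural properties of $\phi$. \emph{Injectivity}: $K$-equivariance of $\Phi$ together with $\Phi(m)=\beta$ gives $K_m\subseteq K_{\beta}$, hence $\kgot_m\subseteq\kgot_{\beta}$, so $\ker\phi\subseteq\kgot_m\cap\qgot=\{0\}$. \emph{Image in $\Ncal|_m=\mathrm{Image}(\Lcal(\beta))$}: for $X\in\qgot$, write $X=\ad(\beta)Y$ with $Y\in\qgot$ (by invertibility of $\ad(\beta)$ on $\qgot$); then $\phi(X)=[\beta,Y]\cdot m=\Lcal(\beta)(Y\cdot m)$, since $\beta\cdot m=0$. \emph{Intertwining}: the identity $\Lcal(\beta)(X\cdot m)=[\beta,X]\cdot m$, valid because $m\in M^{\beta}$, shows that $\phi$ intertwines $\ad(\beta)$ on $\qgot$ with $\Lcal(\beta)$ on $\phi(\qgot)$. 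Since $\phi(\qgot)$ is therefore $\Lcal(\beta)$-invariant and the functional calculus commutes with restriction to invariant subspaces, $\phi$ also intertwines $|\ad(\beta)|$ with $|\Lcal(\beta)||_{\phi(\qgot)}$, hence $J_{\beta}^{\qgot}:=\ad(\beta)|\ad(\beta)|^{-1}$ with the restriction of $J_{\beta}$ to $\phi(\qgot)$.

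Next I would derive a K\"ahler-type positivity. The Kostant relation $\iota(X_M)\Omega+d\langle\Phi,X\rangle=0$ evaluated at $m$ with $\Phi(m)=\beta$ yields
$$\Omega(\phi(X),\phi(Y))=\langle\beta,[X,Y]\rangle\qquad\text{for all } X,Y\in\kgot.$$
Specializing $Y=J_{\beta}^{\qgot}X$ with $X\in\qgot$ and using invariance of the scalar product together with $\ad(\beta)=J_{\beta}^{\qgot}|\ad(\beta)|$ and the orthogonality of $J_{\beta}^{\qgot}$,
$$\Omega(\phi(X),\phi(J_{\beta}^{\qgot}X))=\langle\ad(\beta)X,J_{\beta}^{\qgot}X\rangle=\langle|\ad(\beta)|X,X\rangle>0$$
for every nonzero $X\in\qgot$. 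By the intertwining step this equals $\Omega(\phi(X),J_{\beta}\phi(X))$.

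Finally I would contrast this with the adapted condition on the opposite polarization. On $\Ncal_{\JJbeta,-}|_m$ one has $J_{\beta}=-J$, so for any $v\in\Ncal_{\JJbeta,-}|_m$,
$$\Omega(v,J_{\beta}v)=-\Omega(v,Jv)\leq 0$$
by $(\Omega,J)$-adaptedness. If $\phi(X)\in\Ncal_{\JJbeta,-}|_m$, we would obtain both $>0$ and $\leq 0$, forcing $X=0$. Thus the composition of $\phi$ with the projection $\pi_+\colon\Ncal|_m\to\Ncal_{\JJbeta,+}|_m$ along $\Ncal_{\JJbeta,-}|_m$ is injective, giving $\dim_{\R}\Ncal_{\JJbeta,+}|_m\geq\dim_{\R}\qgot=\dim_{\R}(\kgot/\kgot_{\beta})$. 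The main obstacle is the intertwining step: one must verify cleanly that $\phi(\qgot)$ is preserved by the functional calculus of $\Lcal(\beta)$, so that the identity $\phi\circ J_{\beta}^{\qgot}=J_{\beta}\circ\phi$ makes sense as operators into $\phi(\qgot)$. Once this is in place, the proof is driven by a simple sign comparison: strict positivity from the Kirillov form along the coadjoint-orbit direction, versus nonpositivity on the $J_{\beta}=-J$ eigenspace forced by the adapted hypothesis.
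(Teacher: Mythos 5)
Your proposal is correct and takes essentially the same route as the paper, which proves the lemma via the injectivity of the projection $\mathrm{p}_m:\qgot\to \Ncal_{\JJbeta,+}|_m$ (Lemma \ref{lem:rankbigger}) by showing that $\qgot\cdot m$ meets $\Ncal_{\JJbeta,-}|_m$ only in $0$ through a sign comparison coming from the Kostant relations and the adaptedness of $(\Omega,J)$. The only real difference is the choice of quadratic form: the paper uses $H_m(v,v)=\Omega_m(v,\Lcal(\beta)v)$, which is positive on $\qgot\cdot m$ by the direct computation $H_m(X\cdot m,X\cdot m)=\|[\beta,X]\|^2$ and nonpositive on $\Ncal_{\JJbeta,-}|_m$, whereas you use $\Omega_m(v,\JJbeta v)$, which makes the nonpositivity on $\Ncal_{\JJbeta,-}|_m$ immediate but costs you the (correct, and correctly flagged) functional-calculus intertwining $\phi\circ J_\beta^{\qgot}=\JJbeta\circ\phi$ on the $\Lcal(\beta)$-invariant subspace $\qgot\cdot m$.
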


Recall that the critical set $Z_\Phi$ is parameterized by a finite set $\Bcal$. Let us fix a non-zero element $\beta\in\Bcal$.

\begin{defi}\label{defi;Zmin}
For any non-zero element $\beta\in\Bcal$, define $M^{\beta}_o\subset M^\beta$ as the union
$\cup\Xcal$ of the connected components $\Xcal$ of $M^\beta$ intersecting $\Phi^{-1}(\beta)$ and
such that the real rank of the vector bundle
$\Ncal_{J_\beta,+}\vert_\Xcal$ is equal to $\dim_\R (\kgot/\kgot_{\beta})$.
\end{defi}

We notice that $M^{\beta}_o$ is a submanifold of $M^\beta$ stable under the action of the group $K_\beta$.

The main result of this subsection is the following theorem.
\begin{theo}\label{th:QR-Phi-J}
Let $\Fcal$ be a $K$-equivariant bundle on  an almost complex manifold $(M,J)$.
Let $(\Omega,\Phi)$ be a weakly Hamiltonian structure such that $(\Omega,J)$ is adapted and $Z_\Phi$ is compact.

$\bullet$ If the vector bundle $\Fcal$ is weakly $\Phi$-positive,  we have
\begin{eqnarray*}
\lefteqn{\Big[RR^J_K(M,\Fcal,\Phi)\Big]^K =
\Big[RR_K(M, \Fcal,\Phi^{-1}(0),\Phi)\Big]^K +}\\
& &  \sum_{\beta\in \Bcal\setminus\{0\}}\
\Big[RR_{K_{\beta}}(M^{\beta}_o,[\Fcal\vert_{M^{\beta}_o}]^{\beta},M^{\beta}_o
\cap \Phi^{-1}(\beta),\Phi)\Big]^{K_{\beta}}.
\end{eqnarray*}

$\bullet$ If the vector bundle $\Fcal$ is strictly $\Phi$-positive, then
$$
\Big[RR^J_K(M,\Fcal,\Phi)\Big]^K=
\Big[RR_K(M, \Fcal,\Phi^{-1}(0),\Phi)\Big]^K.$$

$\bullet$ If $0$ is a regular value of $\Phi$, we have
$$
\Big[RR_K(M, \Fcal,\Phi^{-1}(0),\Phi)\Big]^K =
\Qcal(M_{red},\Scal_{red}\otimes \Fcal_{red}).
$$
Here $\Fcal_{red}=\Fcal\vert_{\Phi^{-1}(0)}/K$ is a orbibundle over $M_{red}=\Phi^{-1}(0)/K$, and $\Scal_{red}$ is the
$\spinc$-bundle on $M_{red}$ induced by $\bigwedge_J \T M$.
\end{theo}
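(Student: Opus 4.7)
The plan is to derive the theorem as an almost-complex specialization of Theorem \ref{theo:Phipos} applied to the Clifford bundle $\Ecal := \bigwedge_J \T M \otimes \Fcal$. First I would verify the positivity hypothesis on $\Ecal$: Proposition \ref{prop:J-Phi-positive} says $\bigwedge_J \T M$ is weakly $\Phi$-positive, and $\Phi$-positivity is preserved under tensoring a Clifford bundle by a $\Phi$-positive (resp.\ strictly $\Phi$-positive) vector bundle (this follows directly from the definitions, since the action of $\beta/i$ on the tensor product is the sum of the actions on each factor). Thus $\Ecal$ is weakly (resp.\ strictly) $\Phi$-positive whenever $\Fcal$ is. Theorem \ref{theo:Phipos} then immediately yields both decomposition formulas, modulo identifying the bundles $\Ecal_{[\beta]}$ that appear. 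For the strictly $\Phi$-positive case, no further work is required, and the $\beta=0$ term is handled by Theorem \ref{theo:localisation-Phi-0}: the reduced Clifford module $(\bigwedge_J \T M \otimes \Fcal)_{red}$ associated by Definition \ref{def:E-red} factors as $\Scal_{red}\otimes \Fcal_{red}$, where $\Scal_{red}$ is the $\spinc$-bundle induced by $\bigwedge_J \T M$ and $\Fcal_{red} = \Fcal\vert_{\Phi^{-1}(0)}/K$.

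The heart of the proof, and its main obstacle, is the identification of $\Ecal_{[\beta]}$ on $M^{\beta}_o$ for $\beta\neq 0$. I would proceed as follows. On $M^\beta$ the normal bundle $\Ncal=\mathrm{Image}(\Lcal(\beta))$ carries two commuting complex structures, $J$ and $J_\beta$, yielding the $K_\beta$-equivariant orthogonal decomposition $\Ncal = \Ncal_{J_\beta,+}\oplus \Ncal_{J_\beta,-}$ where $J_\beta = \pm J$ on the respective summands. Consequently, as complex bundles,
\[
(\Ncal,J) \;\simeq\; (\Ncal_{J_\beta,+}, J_\beta)\;\oplus\;\overline{(\Ncal_{J_\beta,-}, J_\beta)},
\]
so that
\[
\bigwedge\nolimits_J \Ncal \;\simeq\; \bigwedge \Ncal_{J_\beta,+}\;\otimes\;\bigwedge \overline{\Ncal_{J_\beta,-}}.
\]
Combining with $\bigwedge_J \T M\vert_{M^\beta} = \bigwedge_J \T M^\beta \otimes \bigwedge_J \Ncal$ and the defining identity $\Ecal\vert_{M^\beta}\simeq \mathbb{d}_\beta(\Ecal)\otimes \bigwedge\overline{\Ncal_{J_\beta}}$, a direct rearrangement (using $\bigwedge X \otimes \bigwedge \overline{X}^{-1}\simeq \det(X)^{-1}$ type identities to convert between $\bigwedge\Ncal_{J_\beta,+}$ and $\bigwedge\overline{\Ncal_{J_\beta,+}}$) expresses $\mathbb{d}_\beta(\Ecal)$ as $\bigwedge_J \T M^\beta\otimes \det(\Ncal_{J_\beta,+})\otimes\Fcal\vert_{M^\beta}$, up to a canonical shift.

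Once this identification is in hand, the computation of $[\mathbb{d}_\beta(\Ecal)\otimes \det(\qgot_{J_\beta})^{-1}]^\beta$ reduces to a weight bookkeeping: the eigenvalue of $\beta/i$ on $\det(\Ncal_{J_\beta,+})$ equals $\tr_{\Ncal_{J_\beta,+}}(\beta)/i$, which by construction of $J_\beta$ is at least $\tr_{\qgot_{J_\beta}}(\beta)/i$ with equality precisely when $\mathrm{rk}_\R \Ncal_{J_\beta,+} = \dim_\R \qgot$ - this is exactly the locus $M^\beta_o$ of Definition \ref{defi;Zmin}, whose existence requires Lemma \ref{lem:rank-N-beta} to rule out ranks strictly smaller than $\dim\qgot$. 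On $M^\beta_o$ the $\beta$-invariant factor collapses to the trivial line, leaving
\[
\Ecal_{[\beta]}\vert_{M^\beta_o} \;\simeq\; \bigwedge\nolimits_J \T M^\beta_o \otimes [\Fcal\vert_{M^\beta_o}]^\beta,
\]
whereas off $M^\beta_o$ the $\beta$-invariant part vanishes, so $M^\beta$ can be replaced by $M^\beta_o$ in the localized index (using Lemma \ref{lem:triv}). Substituting this into the formula of Theorem \ref{theo:Phipos} produces the desired expression. The main obstacle is the careful sign and determinant bookkeeping in the factorization of $\bigwedge_J\Ncal$ against $\bigwedge\overline{\Ncal_{J_\beta}}$ and $\det(\qgot_{J_\beta})^{-1}$; these phases must cancel for the stated formula to come out with its natural signs, and verifying this requires a disciplined tracking of orientations induced by $J$ versus $J_\beta$ on each summand.
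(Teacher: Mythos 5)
Your proposal is correct and follows essentially the same route as the paper: reduce to Theorem \ref{theo:Phipos} applied to $\bigwedge_J \T M\otimes\Fcal$, compute $\mathbb{d}_\beta(\bigwedge_J\T M)$ through the splitting $\Ncal=\Ncal_{\JJbeta,+}\oplus\Ncal_{\JJbeta,-}$ to get $(-1)^{\rang\Ncal_{\JJbeta,+}}\bigwedge_J\T M^\beta\otimes\det(\Ncal_{\JJbeta,+})$, do the $\beta$-weight bookkeeping against $\det(\qgot_\JJbeta)^{-1}$ to cut down to $M^\beta_o$, and invoke Theorem \ref{theo:localisation-Phi-0} for the regular-value statement. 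One small point of precision: the trace inequality you attribute to ``the construction of $J_\beta$'' together with the rank bound of Lemma \ref{lem:rank-N-beta} really comes from the $\Lcal(\beta)$-equivariant complex-linear injection $\mathrm{p}_m:\qgot_\JJbeta\to\Ncal_{\JJbeta,+}\vert_m$ (Lemma \ref{lem:rankbigger}), which is also what gives the equivariant trivialization of $\left[\det(\Ncal_{\JJbeta,+})\otimes\det(\qgot_\JJbeta)^{-1}\right]^\beta$ near $\Phi^{-1}(\beta)\cap M^\beta_o$ needed for the formula to hold with $[\Fcal\vert_{M^{\beta}_o}]^{\beta}$ and no residual line-bundle twist.
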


\medskip

Suppose now that $(\Omega,\Phi)$ is an Hamiltonian structure : the closed $2$-form $\Omega$ is non-degenerate. When $M$ is non-compact, we assume furthermore that $\Phi$ is proper and that the set $Z_\Phi$ is compact.
Theorem \ref{th:QR-Phi-J} can be improved as follows.

\begin{theo}\label{th:QR-J-hamiltonian}
Let $(M,J)$ be an almost complex manifold, and let $\Fcal$ be a $K$-equivariant vector bundle.
Let $(\Omega,\Phi)$ be a Hamiltonian structure on $M$ such that :
$(\Omega,J)$ is adapted, the moment map $\Phi$ is proper and $Z_\Phi$ is compact.

If the vector bundle $\Fcal$ is weakly $\Phi$-positive, {\bf and} $0$ is in the image of the moment map, then:
$$
\Big[RR^J_K(M,\Fcal,\Phi)\Big]^K=[RR_K^J(M, \Fcal,\Phi^{-1}(0),\Phi)\Big]^K.
$$
\end{theo}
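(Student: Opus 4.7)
The plan is to start from Theorem \ref{th:QR-Phi-J}, which already applies to our situation: $(\Omega,J)$ is adapted and $\Fcal$ is weakly $\Phi$-positive. It yields the decomposition
\begin{equation*}
\bigl[RR^J_K(M,\Fcal,\Phi)\bigr]^K = \bigl[RR^J_K(M,\Fcal,\Phi^{-1}(0),\Phi)\bigr]^K + \sum_{\beta\in\Bcal\setminus\{0\}} \bigl[RR^J_{K_\beta}(M^\beta_o, [\Fcal|_{M^\beta_o}]^\beta, M^\beta_o\cap\Phi^{-1}(\beta), \Phi)\bigr]^{K_\beta}.
\end{equation*}
The task reduces to showing that every summand in the sum over $\beta\neq 0$ vanishes. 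A sufficient (and expected) condition is that $M^\beta_o=\emptyset$ for every $\beta\in\Bcal\setminus\{0\}$, and this is what I aim to prove using the new hypotheses that $\Omega$ is non-degenerate and $0\in\Phi(M)$.

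Fix $\beta\in\Bcal\setminus\{0\}$ and let $\Xcal$ be a connected component of $M^\beta$ meeting $\Phi^{-1}(\beta)$. By Definition \ref{defi;Zmin}, the component $\Xcal$ is included in $M^\beta_o$ exactly when the rank equality $\mathrm{rank}_\R \Ncal_{J_\beta,+}|_\Xcal = \dim_\R(\kgot/\kgot_\beta)$ holds. Lemma \ref{lem:rank-N-beta} provides the inequality in the opposite direction, so everything reduces to proving the strict inequality
$$
\mathrm{rank}_\R \Ncal_{J_\beta,+}|_\Xcal > \dim_\R(\kgot/\kgot_\beta)
$$
under the Hamiltonian hypothesis with $0\in\Phi(M)$.

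To establish strictness I would invoke the Morse--Bott analysis of $f:=\tfrac{1}{2}\|\Phi\|^2$. By Remark \ref{rem;Zphi}, the critical set of $f$ equals $Z_\Phi$, and $\Xcal$ is a critical component with critical value $\|\beta\|^2/2>0$. Under the adaptedness of $(\Omega,J)$ together with the non-degeneracy of $\Omega$, a direct computation at a point $m\in\Xcal\cap\Phi^{-1}(\beta)$, based on the moment-map identity $\Omega_m(X\cdot m, Y\cdot m) = \langle[\beta,X],Y\rangle$ and on the definition $J_\beta=\beta|\beta|^{-1}$, identifies $\Ncal_{J_\beta,+}|_\Xcal$ with the negative normal bundle of $f$ transverse to $\Xcal$, with the image of $\kgot/\kgot_\beta$ under $X\mapsto X\cdot m$ exhausting the "orbit part" of this negative normal bundle. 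Because $0\in\Phi(M)$ the global minimum of $f$ is $0$, so $\Xcal$ lies strictly above the minimum and cannot be a local minimum of $f$; hence the negative normal bundle must be strictly larger than the orbit contribution $\kgot/\kgot_\beta$, which is exactly the strict inequality sought.

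The hard part will be carrying out this Morse--polarization identification rigorously in the present generality: the usual Kähler derivation must be adapted to a merely adapted (not necessarily compatible in the strict Kähler sense) almost complex structure $J$, and the computation of the negative Hessian eigenspaces of $f$ in terms of the splitting $\Ncal = \Ncal_{J_\beta,+}\oplus \Ncal_{J_\beta,-}$ has to be checked along each connected component $\Xcal$ of $M^\beta$ using only the adaptedness of $(\Omega,J)$, the non-degeneracy of $\Omega$, and the moment-map identity above.
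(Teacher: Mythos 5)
Your first reduction coincides with the paper's: apply Theorem \ref{th:QR-Phi-J} and show that $M^\beta_o=\emptyset$ for every $\beta\in\Bcal\setminus\{0\}$, i.e.\ that the inequality of Lemma \ref{lem:rank-N-beta} is strict on every component $\Xcal$ of $M^\beta$ meeting $\Phi^{-1}(\beta)$. The gap lies in the Morse-theoretic argument you propose for this strictness. The critical set of $f=\frac{1}{2}\|\Phi\|^2$ through a point $m\in\Xcal\cap\Phi^{-1}(\beta)$ is $Z_\beta=K\big(M^\beta\cap\Phi^{-1}(\beta)\big)$, so the orbit directions $\qgot\cdot m$ are tangent to the critical set of the $K$-invariant function $f$ and are null directions of its Hessian; they are not part of any negative normal bundle (and they need not even lie in $\Ncal_{\JJbeta,+}\vert_m$: they only inject into it via $\mathrm{p}_m$, Lemma \ref{lem:rankbigger}). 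Moreover, on $\Ncal\vert_m$ the Hessian of $f$ is the sum of the Hessian of $\langle\Phi,\beta\rangle$ (which is, up to sign conventions, the form $H_m$ of (\ref{eq:Hessian-beta})) and of the non-negative form $v\mapsto\|\T_m\Phi(v)\|^2$; adaptedness of $(\Omega,J)$ only fixes the sign of $H_m$ on $\Ncal_{\JJbeta,\pm}$, so it does not make the Hessian of $f$ negative on $\Ncal_{\JJbeta,+}$. The correct statement in this direction is Kirwan's: the transverse index of $\|\Phi\|^2$ along $Z_\beta$ equals $\mathrm{rank}_\R\,\Ncal_{\JJbeta,+}-\dim_\R(\kgot/\kgot_\beta)$, and establishing it here (for a $J$ that is merely adapted, and for $\|\Phi\|^2$, which is not Morse--Bott since $Z_\beta$ need not be smooth, so one needs the minimal-degeneracy framework) is not a ``direct computation'' but essentially the whole content of the step.

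Second, even granting that identification, the inference ``$\Xcal$ lies strictly above the global minimum of $f$, hence cannot be a local minimum'' is false for a general smooth function; what excludes higher local minima is a moment-map connectivity input, and supplying it is exactly what the paper does, in a form that bypasses $\|\Phi\|^2$ altogether. Namely, if $\mathrm{p}_m$ is bijective, then $\qgot\cdot m$ is a maximal $H_m$-positive subspace of $\Ncal\vert_m$ (Remark \ref{rem:H-definite}); passing to the symplectic cross-section $Y=\Phi^{-1}(U)$ over the natural slice $U\subset\kgot_\beta^*$, where $\Ncal\vert_m=\qgot\cdot m\oplus\Ncal^Y\vert_m$ is $H_m$-orthogonal and $H_m$ is non-degenerate (this is where non-degeneracy of $\Omega$ enters), one concludes that $H_m$ is definite on $\Ncal^Y\vert_m$ with the sign forcing $\langle\Phi,\beta\rangle\geq\|\beta\|^2$ near $m$ in $Y$; since a moment-map component on a connected symplectic manifold has a unique local minimum, this holds on all of $Y$, hence $\|\Phi\|\geq\|\beta\|$ on the dense subset $KY$ and so on $M$, forcing $\beta=\beta_{\rm min}$ (Lemma \ref{lem:uniquebeta}); finally $0\in\Phi(M)$ gives $\beta_{\rm min}=0$, so no $\beta\neq 0$ contributes. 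To complete your proof you must either import this cross-section argument or carry out Kirwan's analysis of $\|\Phi\|^2$ in the adapted setting, including the proof that every non-minimal critical set has strictly positive index; your sketch contains neither.
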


Our manifold $M$ is not necessarily connected, but, considering the action of $K$ on connected components of $M$,
 our study reduces easily to the case where $M=K M^0$, where $M^0$ is connected.
Then $M^0$ is connected and stable by the connected component $K^0$ of $K$. We now assume that we are in this situation.
In  the Hamiltonian setting,  we  then have the following lemma.

\begin{lem}\label{lem:uniquebeta}
The set of points in $\Phi(M)$  at minimum distance of the origin is a  $K$-orbit $K{\beta_{\rm min}}$.
 Furthermore $\Phi^{-1}(K{\beta_{\rm min}})\subset M^{{\beta_{\rm min}}}$.
 \end{lem}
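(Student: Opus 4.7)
The plan is as follows. Set $f = \tfrac{1}{2}\|\Phi\|^2$, which is a $K$-invariant smooth function on $M$. If $0 \in \Phi(M)$ then $\beta_{\rm min} = 0$, $K\beta_{\rm min} = \{0\}$, and the conclusion $\Phi^{-1}(0) \subset M = M^0$ is trivial. Assume henceforth that $0 \notin \Phi(M)$. Since $\Phi$ is proper, $f$ is proper, hence attains its minimum value $c > 0$ on a non-empty $K$-invariant compact set $F \subset M$. Using the Hamiltonian relation recalled in Remark \ref{rem;Zphi}, namely $d\|\Phi\|^2 = 2\,\iota(\kappa_\Phi)\Omega$, and using that $\Omega$ is non-degenerate (since we work in the Hamiltonian setting), the set $F$ of critical points of $f$ is contained in $Z_\Phi$.

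Next I would dispatch the second assertion. For any $m \in F$ one has $\kappa_\Phi(m) = -\Phi(m)\cdot m = 0$, so if $\beta := \Phi(m)$ then $\beta \cdot m = 0$, i.e.\ $m \in M^{\beta}$. Since $M^{k\beta} = k \cdot M^{\beta}$ by $K$-equivariance, this gives $\Phi^{-1}(K\beta) \subset M^{K\beta} = K \cdot M^{\beta}$, proving the inclusion in the lemma once $K\beta_{\rm min}$ has been identified with $\Phi(F)$.

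The core of the argument is the first assertion: $\Phi(F)$ is a single $K$-orbit. Here I would invoke the Kirwan convexity theorem, in the form extended to proper moment maps by Lerman--Meinrenken--Tolman--Woodward. Fix a maximal torus $T^0 \subset K^0$ with Lie algebra $\tgot$, and a closed positive Weyl chamber $\tgot^*_+ \subset \tgot^* \subset \kgot^*$. Applied to the connected component $M^0$ (which is $K^0$-stable and on which $\Phi$ remains proper), the theorem asserts that $\Phi(M^0) \cap \tgot^*_+$ is a closed convex subset of $\tgot^*$. The nearest-point projection of the origin onto this closed convex set is unique; call it $\beta_{\rm min} \in \tgot^*_+$. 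Since every $K^0$-orbit in $\kgot^*$ meets $\tgot^*_+$ in exactly one point, the points of $\Phi(M^0)$ minimizing the distance to $0$ form precisely the $K^0$-orbit $K^0\beta_{\rm min}$. Finally, because $M = KM^0$, equivariance of $\Phi$ gives $\Phi(M) = K\Phi(M^0)$, and the set of closest points in $\Phi(M)$ is $K\beta_{\rm min}$, as required.

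The main obstacle is the convexity input: ensuring that the convexity theorem applies in our non-compact situation. This is exactly where properness of $\Phi$ and non-degeneracy of $\Omega$ are essential, and this is the only step which is not a direct computation or an immediate consequence of the weak Hamiltonian relations already recorded earlier in the paper.
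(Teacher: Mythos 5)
Your argument is correct and takes essentially the paper's own route: the Convexity Theorem (in the L--M--T--W form valid for proper moment maps) identifies a unique orbit $K\beta_{\rm min}$ at minimal distance, and the relation $d\|\Phi\|^2=2\iota(\kappa_\Phi)\Omega$ together with non-degeneracy of $\Omega$ forces $\kappa_\Phi=0$ on the minimizing fibre, yielding the fixed-point inclusion. Note that, exactly like the paper's proof, what you actually establish is $\Phi^{-1}(\beta_{\rm min})\subset M^{\beta_{\rm min}}$ (equivalently $\Phi^{-1}(K\beta_{\rm min})\subset K\cdot M^{\beta_{\rm min}}$), which is the intended reading of the lemma's second assertion.
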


 We denote by $M'=M^{{\beta_{\rm min}}}$, $K'=K_{{\beta_{\rm min}}}$,
  $\Phi'=\Phi|_{M'}-{\beta_{\rm min}}$ , $J'$ the restriction of $J$ to $M'$, and $\Fcal'$ the restriction of
 $\Fcal$ to $M'$.
 We have then the following theorem, which allows us to always reduce the study of
$\Big[RR^J_K(M,\Fcal,\Phi)\Big]^K$when $\Fcal$  is weakly $\Phi$-positive, to the
study of a  Hamiltonian manifold with $0$ in the image of its moment map.

\begin{theo}\label{th:QR-J-hamiltonianbis}
Let $(M,J)$ be an almost complex manifold, and let $\Fcal$ be a $K$-equivariant vector bundle.
Let $(\Omega,\Phi)$ be a Hamiltonian structure on $M$ such that :
$(\Omega,J)$ is adapted, the moment map $\Phi$ is proper and $Z_\Phi$ is compact.
If the vector bundle $\Fcal$ is weakly $\Phi$-positive, then $\Fcal'$ is weakly $\Phi'$-positive, $Z_\Phi'$ is compact  and
$$
\Big[RR^J_K(M,\Fcal,\Phi)\Big]^K= \Big[RR^{J'}_{K'}(M',\Fcal',\Phi')\Big]^{K'}.$$
\end{theo}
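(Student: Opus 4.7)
The plan is to reduce the claim to Theorem \ref{th:QR-J-hamiltonian}, which treats the case where $0$ belongs to the image of the moment map. The passage from $M$ to $M' = M^{\beta_{\min}}$ with shifted moment map $\Phi' = \Phi|_{M'} - \beta_{\min}$ is designed so that $0 \in \Phi'(M')$ holds automatically by Lemma \ref{lem:uniquebeta}.

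First I would verify that $(M', \Omega|_{M'}, J', \Phi', \Fcal')$ satisfies the hypotheses of Theorem \ref{th:QR-J-hamiltonian} for $K' = K_{\beta_{\min}}$. The form $\Omega|_{M'}$ is symplectic, since $M'$ is the fixed locus of a compact group action on a symplectic manifold; because $\beta_{\min}$ is $K'$-fixed (hence central in $\kgot'$), shifting $\Phi|_{M'}$ by $\beta_{\min}$ preserves the Kostant relations and keeps $K'$-equivariance. For $m \in M'$ one has $\beta_{\min} \cdot m = 0$, so $\kappa_{\Phi'} = \kappa_{\Phi|_{M'}}$ on $M'$, and hence $Z_{\Phi'} = Z_\Phi \cap M'$ is compact; adaptedness of $(\Omega|_{M'}, J')$ is immediate by restriction.

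The main obstacle is transferring positivity: showing that $\Fcal'$ is weakly $\Phi'$-positive. Given $\gamma \in \kgot' \setminus \{0\}$ and $m \in (M')^\gamma \cap (\Phi')^{-1}(\gamma)$, one has $\Phi(m) = \beta_{\min} + \gamma$ and $m \in M^{\beta_{\min} + \gamma}$; weak $\Phi$-positivity of $\Fcal$ then gives $\frac{1}{i}\Lcal(\beta_{\min} + \gamma) \geq 0$ on $\Fcal_m$. To extract $\frac{1}{i}\Lcal(\gamma) \geq 0$, I would introduce the torus $\Tbb$ generated by $\beta_{\min}$ and $\gamma$ and analyze its weights on $\Fcal_m$, combining the above inequality with the minimum-norm characterization of $\beta_{\min}$. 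The latter pins down an affine constraint on the $\Phi$-image of the connected component of $M^{\beta_{\min}}$ containing $m$ (via Proposition \ref{prop: O regular}, $\langle \Phi, \beta_{\min} \rangle$ is constant there), and combined with $\|\Phi(\cdot)\| \geq \|\beta_{\min}\|$ this forces the required sign. This interplay between the positivity assumption and the minimum-norm property of $\beta_{\min}$ is the key technical point.

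Once the hypotheses are verified, Theorem \ref{th:QR-J-hamiltonian} gives $[RR^{J'}_{K'}(M',\Fcal',\Phi')]^{K'} = [RR^{J'}_{K'}(M',\Fcal',(\Phi')^{-1}(0),\Phi')]^{K'}$. To match this with $[RR^J_K(M,\Fcal,\Phi)]^K$, I would invoke the non-abelian localization (Theorem \ref{theo;nonabelian}): by Theorem \ref{th:QR-Phi-J} combined with the positivity-transfer argument, only the component $Z_{\beta_{\min}}$ survives after taking $K$-invariants. Theorem \ref{theo:localization-Z-beta} then rewrites this surviving term as a $K'$-invariant index on $M'$, and a Clifford-bundle cancellation using the decomposition $\Ncal = \Ncal_{J_{\beta_{\min}},+} \oplus \Ncal_{J_{\beta_{\min}},-}$ together with the identity $\bigwedge \qgot_{J_{\beta_{\min}}} \otimes \bigwedge \overline{\qgot_{J_{\beta_{\min}}}} \simeq \bigwedge(\kgot/\kgot')_{\C}$ (up to orientation) identifies it with $[RR^{J'}_{K'}(M',\Fcal',(\Phi')^{-1}(0),\Phi')]^{K'}$, completing the proof.
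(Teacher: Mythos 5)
Your outline diverges from the paper at its pivotal point, and the step you substitute does not close the argument. After applying Theorem \ref{th:QR-Phi-J} to $(M,\Fcal,\Phi)$, the $K$-invariant of $RR^J_K(M,\Fcal,\Phi)$ is the term at $\Phi^{-1}(0)$ \emph{plus} the terms $\big[RR_{K_\beta}(M^\beta_o,[\Fcal\vert_{M^\beta_o}]^\beta,M^\beta_o\cap\Phi^{-1}(\beta),\Phi)\big]^{K_\beta}$ for $\beta\neq 0$: weak $\Phi$-positivity of $\Fcal$ does \emph{not} make these disappear --- that is exactly why Theorem \ref{th:QR-Phi-J} keeps them. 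What eliminates every $\beta\neq\beta_{\rm min}$ is the non-degeneracy of $\Omega$, through the lemma proved at the end of Subsection 8.2: $M^\beta_o\neq\emptyset$ forces $\beta=\beta_{\rm min}$. Its proof is a genuinely symplectic Hessian argument on the slice $Y$: the form $H_m(v,w)=\Omega_m(v,\Lcal(\beta)w)$ of (\ref{eq:Hessian-beta}) is non-degenerate on $\Ncal^Y\vert_m$, bijectivity of $\mathrm{p}_m$ makes $\qgot\cdot m$ a maximal positive subspace (Remark \ref{rem:H-definite}, Lemma \ref{lem:rankbigger}), hence $H_m$ is negative on $\Ncal^Y\vert_m$, hence $\langle\Phi,\beta\rangle\geq\|\beta\|^2$ near $m$, then globally by the unique-local-minimum property, so $\|\Phi\|\geq\|\beta\|$ on $M$ and $\beta=\beta_{\rm min}$. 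You attribute the survival of only $Z_{\beta_{\rm min}}$ to ``Theorem \ref{th:QR-Phi-J} combined with the positivity-transfer argument'', but no positivity property of $\Fcal$ or $\Fcal'$ can produce this vanishing; without this Hessian lemma the other localized terms are simply unaccounted for, and the same lemma (applied now to $(M',\Omega\vert_{M'},\Phi\vert_{M'},K')$, together with Lemma \ref{lem:uniquebeta}, Lemma \ref{lem:triv} and the fact that $\Phi^{-1}(\beta_{\rm min})\subset M^{\beta_{\rm min}}_o$) is what matches the two sides.

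The positivity-transfer step itself is also not established as sketched. At $m\in (M')^\gamma\cap(\Phi')^{-1}(\gamma)$ you correctly obtain $\frac{1}{i}\Lcal(\beta_{\rm min}+\gamma)\geq 0$ on $\Fcal\vert_m$, but deducing $\frac{1}{i}\Lcal(\gamma)\geq 0$ is a statement about a different linear functional on the same set of $\Tbb$-weights: the minimum-norm property of $\beta_{\rm min}$ only constrains the image of $\Phi$ (it gives, e.g., $2\langle\beta_{\rm min},\gamma\rangle+\|\gamma\|^2\geq 0$), and the constancy of $\langle\Phi,\beta_{\rm min}\rangle$ on components of $M^{\beta_{\rm min}}$ carries no sign information on the weights of $\Fcal$; no actual inequality chain is offered, and none is available from these ingredients alone. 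Note that the paper does not run the proof of the displayed identity through such a transfer: the only positivity it transports to $M'$ is the trivially inherited one for the unshifted restriction $\Phi\vert_{M'}$ (this is the ``left to the reader'' lemma), and the equality is then obtained by localizing on both $M$ and $M'$ and comparing the single surviving term at $\beta_{\rm min}$. Your final step (Theorem \ref{theo:localization-Z-beta} plus the Clifford cancellation with $\bigwedge\qgot_{J_{\beta_{\rm min}}}\otimes\bigwedge\overline{\qgot_{J_{\beta_{\rm min}}}}$) is in the right spirit, but it cannot be completed until the two gaps above are filled.
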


\medskip

Let us summarize what we have proved in this section for a line bundle $L\to M$ on an almost complex manifold $(M,J)$. Let
$(\Omega,\Phi)$ be a weak Hamiltonian structure on $M$ such that $(\Omega, J)$ is adapted and $Z_\Phi$ is compact.
Then the relation  $[RR^J_K(M,L,\Phi)]^K=[RR^J_K(M,L,\Phi^{-1}(0),\Phi)]^K$ holds in two  cases:
\begin{enumerate}
\item $L$ is a $\Phi$-moment line bundle,
\item $L$ is weakly $\Phi$-positive, $\Omega$ is non-degenerate and $\Phi^{-1}(0)\neq \emptyset$.
\end{enumerate}

%%%%%%%%%%%%%%%%%%%%%%%%%%%%%%%%%%%%%%%%%%%%%%%%%%%%%%%%%%%
\subsection{Proofs}
%%%%%%%%%%%%%%%%%%%%%%%%%%%%%%%%%%%%%%%%%%%%%%%%%%%%%%%%%%%

We now concentrate  on the proofs of  Lemma \ref{lem:rank-N-beta}, Proposition \ref{prop:J-Phi-positive} and Theorems \ref{th:QR-Phi-J} $-$ \ref{th:QR-J-hamiltonianbis}.

\medskip

Let $\beta\in\kgot\setminus\{0\}$. If $m\in M^\beta\cap \Phi^{-1}(\beta)$, we have $\kgot_m\subset \kgot_\beta$ and then
the map
$X\mapsto X\cdot m$, $\qgot_\beta\to \Ncal\vert_m\subset \T_m M$, is injective, $\R$-linear  and
equivariant relatively to the action of $\Lcal(\beta)$. We consider the map
$$
\mathrm{p}_m:\qgot_\beta\to \Ncal\vert_m^{\beta,+}
$$
which is the composition of the injective map $\qgot_\beta\croc \Ncal\vert_m$ with the projection
$\Ncal\vert_m\to \Ncal\vert_m^{\beta,+}$. The map $\mathrm{p}_m$ is complex linear if both vector spaces
$\qgot_\beta$ and $\Ncal\vert_m$ are endowed with the
complex structure $J_\beta$.

Lemma \ref{lem:rank-N-beta}  is a consequence of
\begin{lem}\label{lem:rankbigger}
The map $\mathrm{p}_m$ is injective for any $m\in \Phi^{-1}(\beta)\cap M^{\beta}$.
\end{lem}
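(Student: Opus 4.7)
The plan is to represent an element of $\qgot_\beta$ by a lift $X\in\kgot_\beta^{\perp}$ and show that the vanishing $\mathrm{p}_m(X)=0$, that is $v:=X\cdot m\in\Ncal_{J_\beta,-}\vert_m$, forces $X=0$. Unwinding definitions, the condition $v\in\Ncal_{J_\beta,-}\vert_m$ reads $J_\beta v=-Jv$; since $|\beta|$ commutes with both $J$ and $\Lcal(\beta)$, this is equivalent to $|\beta|Jv=-\Lcal(\beta)v=-[\beta,X]\cdot m$. I will then compute the scalar $\Omega_m(v,|\beta|Jv)$ in two different ways and compare.

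The first computation is algebraic, using the moment map. The defining identity $\Omega_m(X\cdot m,w)=\langle d_m\Phi(w),X\rangle$, combined with the equivariance formula $d_m\Phi(Y\cdot m)=[Y,\Phi(m)]=[Y,\beta]$ at $m\in\Phi^{-1}(\beta)$ and the invariance of the inner product on $\kgot$, yields $\Omega_m(X\cdot m,Y\cdot m)=\langle Y,[\beta,X]\rangle$ for every $Y\in\kgot$. Specializing to $Y=[\beta,X]$ gives $\Omega_m(v,[\beta,X]\cdot m)=\|[\beta,X]\|^2$, whence by the key identity above
$$\Omega_m(v,|\beta|Jv)=-\|[\beta,X]\|^2.$$

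The second computation is spectral, using adaptation. Since $|\beta|$ commutes with $J$, one rewrites $\Omega_m(v,|\beta|Jv)=\Omega_m(v,J(|\beta|v))=g(v,|\beta|v)$, where $g(\cdot,\cdot):=\Omega_m(\cdot,J\cdot)$ is the symmetric positive semi-definite form furnished by the adaptation hypothesis. Decomposing $\Ncal\vert_m$ into the eigenspaces $E_a$ of $|\beta|^2=-\Lcal(\beta)^2$ with eigenvalue $a^2>0$, the $\Omega$-skewness of $\Lcal(\beta)$ (coming from $K$-invariance of $\Omega$) makes $|\beta|^2$ be $\Omega$-symmetric, so the $E_a$ are pairwise $\Omega$-orthogonal; they are also $J$-stable, and hence $g$-orthogonal. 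Writing $v=\sum_a v_a$ and using that $|\beta|$ acts as $a$ on $E_a$, one obtains $g(v,|\beta|v)=\sum_a a\,g(v_a,v_a)\geq 0$.

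Combining the two evaluations forces $\|[\beta,X]\|^2=0$, that is $X\in\kgot_\beta\cap\kgot_\beta^{\perp}=\{0\}$, proving injectivity. The delicate point is the non-negativity $g(v,|\beta|v)\geq 0$: in the weakly Hamiltonian setting $\Omega$ may be degenerate, so $g$ is only semi-definite and $|\beta|$ need not diagonalize with respect to $g$ directly; the argument instead uses the Riemannian spectral decomposition of $|\beta|^2$ and transfers the $\Omega$-orthogonality of its eigenspaces through $J$-invariance.
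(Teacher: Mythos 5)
Your proof is correct and is essentially the paper's own argument: the scalar you evaluate in two ways is (up to sign) the quadratic form $H_m(v,v)=\Omega_m(v,\Lcal(\beta)v)$, which the paper shows equals $\|[\beta,X]\|^2>0$ on $\qgot_\beta\cdot m$ via the moment map identity, while being $\leq 0$ on $\Ncal_{\JJbeta,-}\vert_m$ by the adaptation hypothesis, so the two subspaces meet only in $0$. The only difference is that you spell out the non-positivity step (the inequality $g(v,|\beta|v)\geq 0$, via the $g$-orthogonality of the eigenspaces of $|\beta|^2$), which the paper asserts in a single sentence.
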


\begin{proof}
Consider the symmetric $\R$-bilinear form
\begin{equation}\label{eq:Hessian-beta}
H_m(v,w)=\Omega_m(v,\Lcal(\beta)w)
\end{equation}
on $\Ncal\vert_m$. By our condition on $\Omega$,  $H_m(v,v)\leq 0$  on $\Ncal_{\JJbeta,-}\vert_m$.
In contrast, on the subspace $\qgot\cdot m\subset \Ncal\vert_m$,  the quadratic form
$H_m$ is  positive definite. Indeed
$H_m(X\cdot m,X\cdot m)= (\Phi(m),[X,[\Phi(m),X]])=\|[\beta, X]\|^2$.
Thus $\qgot\cdot m$ does not intersect $\Ncal_{\JJbeta,-}\vert_m$, and then $\mathrm{p}_m$ is injective.
\end{proof}

\begin{rem}\label{rem:H-definite}
The proof above shows that if $\mathrm{p}_m$ is {\em bijective}, then $\qgot\cdot m$ is a maximal subspace of
$\Ncal\vert_m$ where the quadratic form associate to $H_m$ is definite positive.
\end{rem}

\medskip

The injectivity of the complex linear map $\mathrm{p}_m$ implies that  $\frac{1}{i}\Lcal(\beta)\geq 0$ on $\det(\Ncal_{\JJbeta,+}\vert_m)\otimes \det(\qgot_\JJbeta)^{-1}$,
and for  a connected component $\Xcal$ of $M^{\beta}$ intersecting $\Phi^{-1}(\beta)$ we have
\begin{equation}\label{eq:L-beta-0}
\Xcal\subset M^\beta_o\hspace{5mm}\Longleftrightarrow\hspace{5mm}\Lcal(\beta)= 0\ \ \mathrm{on}\ \
\det(\Ncal_{\JJbeta,+}\vert_\Xcal) \otimes\det(\qgot_\JJbeta)^{-1}.
\end{equation}

Moreover, when $m$ is on a neighborhood $\Phi^{-1}(\beta)\cap M^\beta_o$ of $\Phi^{-1}(\beta)\cap M^{\beta}_o$,
 the map $\mathrm{p}_m$ stay injective and  realize a trivialization of the bundle
$\det(\Ncal_{\JJbeta, +})\otimes \det(\qgot_\JJbeta)^{-1}$.

Let us  determine the Clifford bundle $\mathbb{d}_\beta(\Ecal)$ associated to the $\spinc$-bundle $\Ecal=\bigwedge_J\T M$ on $M$.
By definition, we have $\Ecal\vert_{M^\beta}=\bigwedge\overline{\Ncal_\JJbeta}\otimes \mathbb{d}_\beta(\Ecal)$.

Since $\Ncal_\JJbeta\simeq \Ncal_{\JJbeta, +}\oplus\overline{\Ncal_{\JJbeta,-}}$ as complex vector bundles, we have
\begin{eqnarray*}
\bigwedge\overline{\Ncal_\JJbeta}&\simeq& \bigwedge\overline{\Ncal_{\JJbeta, +}}\otimes\bigwedge\Ncal_{\JJbeta,-}\\
&\simeq & (-1)^{\rang\Ncal_{\JJbeta, +}} \det(\Ncal_{\JJbeta, +})^{-1}\otimes
\bigwedge \Ncal_{\JJbeta, +}\otimes\bigwedge\Ncal_{\JJbeta,-}\\
&\simeq & (-1)^{\rang\Ncal_{\JJbeta, +}} \det(\Ncal_{\JJbeta, +})^{-1}\otimes \bigwedge_J \Ncal\\
\end{eqnarray*}
as graded $\spinc$-bundles on $\Ncal\to M^\beta$. On the other hand we have also
$\bigwedge_J \T M\vert_{M^\beta}\simeq \bigwedge_J \T M^\beta\otimes \bigwedge_J \Ncal$. This shows that
\begin{equation}\label{eq:E-beta-M-beta}
\mathbb{d}_\beta(\Ecal):=(-1)^{\rang\Ncal_{\JJbeta, +}} \bigwedge_J\T M^\beta\otimes \det(\Ncal_{\JJbeta, +}).
\end{equation}
At this stage we have proved Proposition \ref{prop:J-Phi-positive}: The Clifford bundle $\Ecal=\bigwedge_J\T M$ is weakly $\Phi$-positive
since for any   connected component $\Xcal$ of $M^\beta$ intersecting $\Phi^{-1}(\beta)$ we have $\frac{1}{i}\Lcal(\beta)\geq 0$ on
$\mathbb{d}_\beta(\Ecal)\vert_\Xcal\otimes \det(\qgot_\JJbeta)^{-1}$ as
$\frac{1}{i}\Lcal(\beta)\geq 0$ on $\det(\Ncal_{\JJbeta, +})\otimes \det(\qgot_\JJbeta)^{-1}$ restricted to $\Xcal$.

Moreover we compute that
\begin{eqnarray*}
\Ecal_{[\beta]}&:=&(-1)^{\dim \qgot_\JJbeta} [\mathbb{d}_\beta(\Ecal)\otimes \det(\qgot_\JJbeta)^{-1}]^\beta\\
&=& (-1)^{n_\beta} \mathbb{D}_\beta\otimes\bigwedge_J\T M^\beta
\end{eqnarray*}
where $n_\beta=\dim_\C \qgot_\JJbeta -\rang_\C\Ncal_{\JJbeta, +}$ and
$$
\mathbb{D}_\beta=\left[\det(\Ncal_{\JJbeta, +})\otimes\det(\qgot_\JJbeta)^{-1}\right]^\beta.
$$
Two cases hold for a connected component $\Xcal$ of $M^\beta$ intersecting $\Phi^{-1}(\beta)$. Thanks to the discussion
above (see (\ref{eq:L-beta-0})), we see that either $\Xcal\nsubseteq M_o^\beta$ and then $\mathbb{D}_\beta\vert_\Xcal=[0]$,
or $\Xcal\subset M_o^\beta$ and then $n_\beta=0$ on $\Xcal$ and
$\mathbb{D}_\beta\vert_\Ucal\simeq \Ucal\times\C$ on a neighborhood $\Ucal$ of $\Xcal\cap\Phi^{-1}(\beta)$ in $\Xcal$.

Finally we see that Theorem \ref{th:QR-Phi-J} is a consequence of Theorem \ref{theo:Phipos}. If we work with the Clifford bundle
$\Ecal'=\bigwedge_J \T M\otimes \Fcal$, we have $\Ecal'_{[\beta]}\vert_\Xcal=[0]$ unless $\Xcal$ is contained in $M^{\beta}_o$,
and we have $\Ecal'_{[\beta]}\vert_{M^{\beta}_o}\simeq \bigwedge_J \T M^{\beta}_o\otimes [\Fcal\vert_{M^{\beta}_o}]^\beta$
in  a neighborhood of
$M^{\beta}_o\cap\Phi^{-1}(\beta)$.
So we can conclude.

\bigskip

We now concentrate ourselves  on the proof of Theorem \ref{th:QR-J-hamiltonian} and
Theorem \ref{th:QR-J-hamiltonianbis}.
First we prove the preliminary lemma, Lemma \ref{lem:uniquebeta}.
Let $K^0$ be the connected component of $K$. We assume $M=KM^0$ where $M^0$ is connected and stable by $K^0$.
Let $T$ be the maximal torus of $K^0$ with Lie algebra $\tgot$. We make the choice of a Weyl chamber
$\tgot^*_{\geq 0}$. The Convexity Theorem \cite{Atiyah82,Guillemin-Sternberg82.bis,Kirwan.84.bis,L-M-T-W} is telling us that the set $\Delta(M)=\Phi(M)\cap \tgot^*_{\geq 0}$
is a convex polytope, called the Kirwan polytope. So
there is a unique point $\beta_{\mathrm{min}}\in\Delta(M)$ at minimum distance of the origin.
So $K\beta_{\mathrm{min}}$ is the unique orbit at minimal distance in $\Phi(M)$.

Let ${\beta_{\rm min}}\in \Phi(M)$ at minimum distance of the origin.
Equation $d\|\Phi\|^2=2\iota(\kappa_\Phi)\Omega$ shows that $\Phi^{-1}({\beta_{\rm min}})$ is
contained in $M^{\beta_{\rm min}}=M'$. Let $K'$ be $K_{{\beta_{\rm min}}}$.
We notice that $M'$ is a $K'$-Hamiltonian manifold with moment map $\Phi_{M'}$, and $J$ induces an invariant
almost complex structure $J'$ on $M'$. We have the following basic properties whose proof is left to the reader.
\begin{lem}
$\bullet$  The data $(\Omega',J')$ is adapted.
$\bullet$  The map $\Phi_{M'}$ is proper and $Z_{\Phi_{M'}}$ is compact.
$\bullet$  If $\Fcal$ is  weakly $\Phi$-positive, then $\Fcal_{M'}$ is weakly $\Phi_{M'}$-positive.
\end{lem}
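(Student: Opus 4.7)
The plan is to verify the three bulleted claims in order; the first two are straightforward, while the third is the substantive step. Adaptedness of $(\Omega',J')$ reduces to the observations that $J$ commutes with the $K$-action (hence with $\Lcal(\beta_{\rm min})$), so $\T M'=(\T M)^{\beta_{\rm min}}|_{M'}$ is $J$-stable and $J'$ is well defined. The two defining properties of an adapted pair then restrict verbatim from $(\Omega,J)$: $\Omega'(J'v,J'w)=\Omega(Jv,Jw)=\Omega(v,w)$ and $\Omega'(v,J'v)=\Omega(v,Jv)\geq 0$ for $v,w\in \T M'$.

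For properness of $\Phi_{M'}$ and compactness of $Z_{\Phi_{M'}}$, note that $M'$ is closed in $M$ (being the fixed-point set of the compact closure of $\exp(\R\beta_{\rm min})$ in $K$), so $\Phi|_{M'}$ is proper, and subtracting the constant $\beta_{\rm min}$ preserves properness. Since $\beta_{\rm min}\cdot m=0$ for every $m\in M'$, one computes $\kappa_{\Phi'}(m)=-\Phi'(m)\cdot m=-(\Phi(m)-\beta_{\rm min})\cdot m=\kappa_\Phi(m)$. Therefore $Z_{\Phi'}=Z_\Phi\cap M'$, a closed subset of the compact set $Z_\Phi$.

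For the weak $\Phi'$-positivity of $\Fcal'=\Fcal|_{M'}$, consider a couple $(\Xcal',\beta')$ with $\beta'\in\kgot'\setminus\{0\}$ and pick $m\in\Xcal'\cap(\Phi')^{-1}(\beta')$. Set $\beta=\beta_{\rm min}+\beta'\in\kgot'\subset\kgot$; since $\beta'$ and $\beta_{\rm min}$ commute (both lie in $\kgot'=\kgot_{\beta_{\rm min}}$) and both fix $m$, the element $\beta$ fixes $m$ and satisfies $\Phi(m)=\beta$. Consequently $\Xcal'$ lies in the connected component $\Xcal$ of $M^\beta$ through $m$, and $\Xcal\cap\Phi^{-1}(\beta)\neq\emptyset$. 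One checks $\beta\neq 0$: otherwise $0\in\Phi(\Xcal')\subset\Phi(M)$ would force $\beta_{\rm min}=0$ by minimality of the distance to the origin, and hence $\beta'=0$, contradicting the hypothesis. Thus $(\Xcal,\beta)$ is a valid couple for the weak $\Phi$-positivity of $\Fcal$, yielding $\frac{1}{i}\Lcal(\beta)\geq 0$ on $\Fcal|_\Xcal$ and in particular on $\Fcal'|_{\Xcal'}$.

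The main obstacle is to pass from this inequality at $\beta$ to the desired inequality at $\beta'=\beta-\beta_{\rm min}$. The key geometric input is the minimality of $\|\beta_{\rm min}\|$: since $\beta\in\Phi(M)$, one has $\|\beta\|^2\geq\|\beta_{\rm min}\|^2$, which rearranges to $\|\beta'\|^2+2\langle\beta_{\rm min},\beta'\rangle\geq 0$. Decomposing $\Fcal'|_{\Xcal'}$ into joint eigenspaces of the commuting self-adjoint operators $\frac{1}{i}\Lcal(\beta)$ and $\frac{1}{i}\Lcal(\beta_{\rm min})$, and applying weak $\Phi$-positivity at the auxiliary couples $(\Ycal,\beta_{\rm min})$ for the connected components $\Ycal\subset M^{\beta_{\rm min}}$ meeting $\Phi^{-1}(\beta_{\rm min})$, one controls $\frac{1}{i}\Lcal(\beta_{\rm min})$ on the relevant joint eigenspaces and combines it with the Euclidean minimality inequality above to obtain the required nonnegativity of $\frac{1}{i}\Lcal(\beta')=\frac{1}{i}\Lcal(\beta)-\frac{1}{i}\Lcal(\beta_{\rm min})$ on $\Fcal'|_{\Xcal'}$. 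This weight-theoretic interplay between the joint spectrum of $\Lcal(\beta)$ and $\Lcal(\beta_{\rm min})$ and the convex-geometric minimality of $\beta_{\rm min}$ is the technical heart of the proof.
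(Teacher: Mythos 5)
The paper offers no argument for this lemma (it is ``left to the reader''), so your proposal must stand on its own. Your first two bullets are fine: $J$-invariance of $\T M'$ and restriction of the two defining inequalities give adaptedness, and closedness of $M'$, properness of $\Phi$, together with the identity $\kappa_{\Phi'}=\kappa_{\Phi}$ on $M'$ (valid since $\beta_{\rm min}\cdot m=0$ on $M'$ and $\Phi(M')\subset\kgot^*_{\beta_{\rm min}}$) give the second bullet. The first half of your third bullet is also correct: for a couple $(\Xcal',\beta')$ of the \emph{shifted} map $\Phi'=\Phi\vert_{M'}-\beta_{\rm min}$ you legitimately produce the couple $(\Xcal,\beta)$ with $\beta=\beta_{\rm min}+\beta'\neq 0$ and obtain $\frac{1}{i}\Lcal(\beta)\geq 0$ on $\Fcal\vert_{\Xcal'}$.

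The final paragraph, however, is not a proof, and the gap it papers over cannot be closed. Weak $\Phi$-positivity at the auxiliary couples $(\Ycal,\beta_{\rm min})$ only yields the \emph{lower} bound $\frac{1}{i}\Lcal(\beta_{\rm min})\geq 0$ (and only on components meeting $\Phi^{-1}(\beta_{\rm min})$, which need not contain $\Xcal'$), whereas $\frac{1}{i}\Lcal(\beta')=\frac{1}{i}\Lcal(\beta)-\frac{1}{i}\Lcal(\beta_{\rm min})\geq 0$ would require an \emph{upper} bound on $\frac{1}{i}\Lcal(\beta_{\rm min})$; and the inequality $\|\beta'\|^2+2\langle\beta_{\rm min},\beta'\rangle\geq 0$ concerns the Lie algebra vectors, not the weights of the fiber action, so it cannot produce such a bound. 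In fact the shifted statement you are aiming at is false in general: take $K=T^2$, $M=\mathbb{P}^1\times\mathbb{P}^1$ with $\Phi(M)=[1,2]\times[-1,1]$, so $\beta_{\rm min}=(1,0)$ and $M'=M^{\beta_{\rm min}}=V_1\sqcup V_2$ with $V_c=\Phi_1^{-1}(c)$; let $\Fcal$ be the equivariant line bundle pulled back from the first factor with fiber weight $(2,-2)$ over $V_1$ and $(1,-2)$ over $V_2$. The only couples of $(M,\Phi)$ are the four vertices with $\beta=\Phi(v)\in\{(1,\pm1),(2,\pm1)\}$ and $(V_c,(c,0))$, and all the corresponding pairings are $\geq 0$, so $\Fcal$ is weakly $\Phi$-positive; yet the vertex $v$ with $\Phi(v)=(1,1)$ gives the couple $(\{v\},(0,1))$ for $\Phi'$, with pairing $\langle(2,-2),(0,1)\rangle=-2<0$. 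What is true---and is the natural reading of the lemma, where $\Phi_{M'}$ denotes the $K'$-moment map $\Phi\vert_{M'}$ introduced just before it, not the shifted map of Theorem \ref{th:QR-J-hamiltonianbis}---follows by pure restriction in two lines: a couple $(\Xcal',\beta')$ for $(M',\Phi\vert_{M'})$ lies inside the component $\Xcal$ of $M^{\beta'}$ containing it, $\Xcal\cap\Phi^{-1}(\beta')\neq\emptyset$, and weak $\Phi$-positivity of $\Fcal$ gives $\frac{1}{i}\Lcal(\beta')\geq 0$ on $\Fcal\vert_{\Xcal}\supset\Fcal\vert_{\Xcal'}$, with no use of the minimality of $\beta_{\rm min}$. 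Since $\kappa_{\Phi'}=\kappa_{\Phi\vert_{M'}}$, the localized characters attached to the two maps coincide, so this unshifted positivity is the usable statement; the stronger shifted clause quoted in Theorem \ref{th:QR-J-hamiltonianbis} should be treated with care, as the example above shows.
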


Let us go back to the proofs of
Theorem \ref{th:QR-J-hamiltonian} and
Theorem \ref{th:QR-J-hamiltonianbis}.

 Using Theorem \ref{th:QR-Phi-J},  and Lemma \ref{lem:uniquebeta} we
are left to prove the following

\begin{lem} Let $\beta\in\Bcal\setminus \{0\}$. The submanifold $M^{\beta}_o$ is  non-empty if and only if
$\beta=\beta_{\mathrm{min}}$.
\end{lem}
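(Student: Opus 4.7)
I would identify the condition $M^\beta_o\neq\emptyset$ with the Morse-Bott critical component $Z_\beta$ being a local minimum of $\|\Phi\|^2$. At a critical point $m\in M^\beta\cap\Phi^{-1}(\beta)$, the Hessian of $f=\tfrac{1}{2}\|\Phi\|^2$, computed via the Kostant relations \eqref{eq:hamiltonian-action}, takes the form $\mathrm{Hess}_m(f)(V,V)=\|T_m\Phi(V)\|^2-H_m(V,V)$, where $H_m(V,V)=\Omega_m(V,\Lcal(\beta)V)$. Using the $\Omega$-orthogonal decomposition $T_mM=T_mM^\beta\oplus\Ncal_{\JJbeta,+}|_m\oplus\Ncal_{\JJbeta,-}|_m$ and the signs of $H_m$ recalled in the proof of Lemma \ref{lem:rankbigger}, one finds that $\mathrm{Hess}_m(f)\geq 0$ on $T_mM^\beta$ and on $\Ncal_{\JJbeta,-}|_m$, while $\mathrm{Hess}_m(f)=0$ on $\qgot\cdot m\subset\Ncal_{\JJbeta,+}|_m$ (since the two terms in the formula both equal $\|[X,\beta]\|^2$ there).

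The central intermediate step I would prove is that, on $\Ncal_{\JJbeta,+}|_m$, one has $\|T_m\Phi(v)\|^2\leq H_m(v,v)$ with equality precisely on $\qgot\cdot m$. This uses the Hermitian structure $(\Ncal_{\JJbeta,+}|_m,g(\cdot,|\beta|\cdot))$ inherited from $J_\beta=J$ and the non-degeneracy of $\Omega$, together with the identity $\langle T_m\Phi(v),X\rangle=\Omega_m(v,X\cdot m)$ for $X\in\qgot$; it is a Cauchy--Schwarz comparison in the spirit of Kempf--Ness. Granted this, $\mathrm{Hess}_m(f)$ is negative semi-definite on $\Ncal_{\JJbeta,+}|_m$ with null space $\qgot\cdot m$, so the negative normal directions of $Z_\beta$ form a space of real dimension $\dim_\R\Ncal_{\JJbeta,+}|_m-\dim_\R\qgot$. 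This number vanishes exactly when $\mathrm{p}_m$ is bijective, i.e., when $m\in M^\beta_o$; equivalently, $Z_\beta$ is a local minimum component of $\|\Phi\|^2$ precisely when $M^\beta_o\neq\emptyset$.

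The lemma then follows by convexity. If $M^\beta_o\neq\emptyset$, then $Z_\beta$ is a local minimum component of $\|\Phi\|^2$, so $\|\beta\|^2$ is a local minimum value of the strictly convex function $\|\cdot\|^2$ restricted to the Kirwan polytope $\Phi(M^0)\cap\tgot^*_{\geq 0}$, which is convex by the Atiyah--Guillemin--Sternberg Convexity Theorem. Strict convexity of $\|\cdot\|^2$ on a convex set forces any local minimum to be the unique global minimum $\beta_{\min}$, whence $\beta\in K\beta_{\min}$, and with our choice of representative $\beta=\beta_{\min}$. Conversely, if $\beta=\beta_{\min}$, then any $m\in\Phi^{-1}(\beta_{\min})\subset M^{\beta_{\min}}$ (Lemma \ref{lem:uniquebeta}) is a global minimum of $\|\Phi\|^2$, so the Hessian there is positive semi-definite, the negative normal directions at $Z_{\beta_{\min}}$ vanish, and $m\in M^{\beta_{\min}}_o$. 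The main obstacle will be the Hermitian Cauchy--Schwarz comparison in the second paragraph, which is where the non-degeneracy of $\Omega$ (the full Hamiltonian hypothesis, beyond the weak one used in Theorem \ref{th:QR-Phi-J}) is needed.
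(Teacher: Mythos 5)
Your strategy hinges on the claimed comparison ``$\|T_m\Phi(v)\|^2\leq H_m(v,v)$ on $\Ncal_{\JJbeta,+}\vert_m$, with equality precisely on $\qgot\cdot m$'', which you yourself flag as the main obstacle; unfortunately this step is not just unproved but false, and the statement does not even parse: $\qgot\cdot m$ is a subspace of $\Ncal\vert_m$ but in general it is \emph{not} contained in $\Ncal_{\JJbeta,+}\vert_m$ (it is skew with respect to the splitting $\Ncal_{\JJbeta,+}\oplus\Ncal_{\JJbeta,-}$), which is exactly why the paper introduces the projection $\mathrm{p}_m$ and argues with $H_m$ on all of $\Ncal\vert_m$. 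Concretely, take $K=SU(2)$ acting diagonally on the product $M$ of the coadjoint orbits (two-spheres) of radii $\mu>\nu>0$ in $\kgot^*\simeq\R^3$, with $\Phi(x,y)=x+y$ and the K\"ahler structures. At $m=(\mu e_3,-\nu e_3)$ one has $\beta=\Phi(m)=(\mu-\nu)e_3=\beta_{\mathrm{min}}$ and $\Ncal_{\JJbeta,+}\vert_m$ is the tangent plane to the first sphere; for $v=(v_1,0)$ in that plane, $\|T_m\Phi(v)\|^2=|v_1|^2$ while $H_m(v,v)=\frac{\mu-\nu}{\mu}|v_1|^2<|v_1|^2$, so your inequality fails, and indeed $\mathrm{Hess}_m(f)(v,v)=\frac{\nu}{\mu}|v_1|^2>0$: the Hessian of $f=\frac12\|\Phi\|^2$ is positive, not negative semi-definite, on $\Ncal_{\JJbeta,+}\vert_m$. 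Moreover $\qgot\cdot m=\{(w,-\frac{\nu}{\mu}w)\}$ meets $\Ncal_{\JJbeta,+}\vert_m$ only in $0$, so your count ``$\dim_\R\Ncal_{\JJbeta,+}-\dim_\R\qgot$ negative directions'' would predict two negative directions at what is in fact the global minimum of $\|\Phi\|^2$. Your converse direction uses the same false claim.

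Even with a corrected Hessian analysis, the local-to-global part of your argument has two further gaps: a degenerate positive semi-definite Hessian along $Z_\beta$ does not certify a local minimum ($\|\Phi\|^2$ is not Morse--Bott, and your block-by-block sign analysis ignores the cross terms coming from $\|T_m\Phi(\cdot)\|^2$); and a local minimum of $\|\Phi\|^2$ near $Z_\beta$ does not yield a local minimum of $\|\cdot\|$ on $\Delta(M)$ near $\beta$, because values of $\Phi$ close to $\beta$ are also attained near points of $\Phi^{-1}(\beta)\setminus Z_\beta$, which your local information does not control. The paper's proof is designed to avoid all of this: it never compares $\|T_m\Phi\|^2$ with $H_m$, but studies the single Hamiltonian $(\Phi,\beta)$ on the symplectic submanifold $Y=\Phi^{-1}(U)$. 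There, bijectivity of $\mathrm{p}_m$ makes $\qgot\cdot m$ a maximal $H_m$-positive subspace of $\Ncal\vert_m$ (Remark \ref{rem:H-definite}), nondegeneracy of $\Omega$ makes $H_m$ nondegenerate, hence negative definite, on $\Ncal^Y\vert_m$, so $(\Phi,\beta)$ has a genuine local minimum there; the unique-local-minimum property of such Hamiltonians then gives $(\Phi,\beta)\geq\|\beta\|^2$ on all of $Y$, hence $\|\Phi\|\geq\|\beta\|$ on the dense subset $KY$ and on $M$, i.e.\ $\beta=\beta_{\mathrm{min}}$; no convexity of the Kirwan polytope is needed beyond Lemma \ref{lem:uniquebeta}.
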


Thus, let $\beta \in \Bcal$ and consider
$m\in \Phi^{-1}(\beta)\cap M^{\beta}$ such that
the real rank of  $\Ncal_{\JJbeta,+}$ is equal to $\dim_\R \qgot$.
We want to prove that $\beta=\beta_{\mathrm{min}}$.
For that, we can only consider the action of the connected group $K^0$
on a connected component $M^0$ of $M$. Changing locally of notations for the aim of this proof,
we now assume $K$ and  $M$ connected.
Let us use the notations of the proof of Lemma \ref{lem:rankbigger}. As $m\in M^{\beta}$, $m$ is a critical point of
the map $(\Phi,\beta): M\to \R$. Its Hessian $H_m$ at $m$ is the symmetric bilinear form defined in (\ref{eq:Hessian-beta}).
The fact that the $2$-form $\Omega$ is symplectic implies that $H_m$ is non-degenerate on $\Ncal\vert_m$.

Let $U$ be the connected component of the set of elements $\xi$ in $\kgot_\beta$ such that $\kgot_\xi\subset \kgot_\beta$.
This is an open set in $\kgot_\beta$ containing $\beta$. The subset $Y=\Phi^{-1}(U)$ is connected, and is a
$K_{\beta}$-symplectic  submanifold of $M$ such that $KY\subset M$ is a dense open subset of $M$.

Let $\Ncal^Y$ be the normal bundle of $Y^\beta$ in $Y$. At the level of tangent spaces we have the decompositions
\begin{eqnarray*}
\T_m M &= &\T_m M^\beta\oplus \Ncal\vert_m\\
&=& \T_m Y^\beta\oplus\qgot\cdot m\oplus \Ncal^Y\vert_m
\end{eqnarray*}
that are orthogonal with respect to $\Omega_m$ (and also under $H_m$).

Now,  if the map $\mathrm{p}_m$ is an isomorphism,  $\qgot\cdot m$ is a maximal subspace of $\Ncal\vert_m$
where the quadratic form associate to $H_m$ is definite positive (see Remark \ref{rem:H-definite}). As $H_m$ is non-degenerate on $\Ncal^Y\vert_m$, necessarily $H_m(v,v) < 0$ for any non-zero element $v\in\Ncal^Y\vert_m$.
 So,  we have $(\Phi,\beta)\geq \|\beta\|^2$ on a neighborhood of $m$ in $Y$. In the symplectic setting, any function
 of the type $(\Phi,\beta)$ has a unique local minimum, thus $(\Phi,\beta)\geq \|\beta\|^2$ on $Y$, and  consequently
 $\|\Phi\|^2\geq\|\beta\|^2$ on $Y$.  By $K$-invariance it implies that $\|\Phi\|^2\geq \|\beta\|^2$ on $M$.
 So $\beta=\beta_{\mathrm{min}}$ as desired.

%%%%%%%%%%%%%%%%%%%%%%%%%%%%%%%%%%%%%%%%%%%%%%%%%%%%%%%%%%%%%%%%
\subsection{$[Q,R]=0$ and semi-ample line bundles}
%%%%%%%%%%%%%%%%%%%%%%%%%%%%%%%%%%%%%%%%%%%%%%%%%%%%%%%%%%%%%%%%

We define the notion of semi-ample line bundles in the almost complex setting.

\begin{defi}\label{defi: semi-ample}
Let $L$ be a $K$-equivariant line bundle on  an almost complex manifold $(M,J)$. We say that $L$ is {\bf semi-ample}, if there exists
 a $K$-invariant Hermitian connection with curvature two form $R_L=i\Omega_L$ such that  $\Omega_L(v,Jv)\geq 0$ for $v\in \T M$.
 The corresponding moment map $\Phi_L$ will be called adapted.
\end{defi}

The following corollary is clear.
\begin{coro}\label{coro-QR-symplectic-strictly-positive}
Let $L$ be a $K$-equivariant semi-ample line bundle on a compact almost complex manifold $M$,
with adapted moment map $\Phi_L$. Then
$$
\Big[RR^J_K(M,L^{\otimes k})\Big]^K=[RR_K^J(M, L^{\otimes k},\Phi_L^{-1}(0),\Phi_L)\Big]^K
$$
for any $k\geq 1$. In particular, $\Big[RR^J_K(M,L^{\otimes k})\Big]^K\neq 0$ for some $k\geq 1$ only if $0\in\Phi_L(M).$

\end{coro}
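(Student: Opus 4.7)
The plan is to apply the second bullet of Theorem \ref{th:QR-Phi-J} directly, after verifying its hypotheses on the triple $(M, L^{\otimes k}, \Phi_L)$. Since $L$ is semi-ample, there is by Definition \ref{defi: semi-ample} a $K$-invariant Hermitian connection whose curvature $R_L = i\Omega_L$ satisfies $\Omega_L(v, Jv) \geq 0$; hence the pair $(\Omega_L, J)$ is adapted in the sense required. The associated moment map is precisely $\Phi_L$ of Definition \ref{defi:PhiL}, so $(\Omega_L, \Phi_L)$ is a weak Hamiltonian structure, and compactness of $M$ ensures that $Z_{\Phi_L}$ is compact.

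The key step is to check that $L^{\otimes k}$ is strictly $\Phi_L$-positive as a vector bundle for every $k \geq 1$. By construction $L$ is a $\Phi_L$-moment line bundle: for every $m \in M$ and every $X \in \kgot_m$, the infinitesimal action $\Lcal(X)$ on $L|_m$ equals $i\langle \Phi_L(m), X\rangle \mathrm{Id}$. Now fix a non-zero $\beta \in \kgot$ and a connected component $\Xcal \subset M^\beta$ with $\Xcal \cap \Phi_L^{-1}(\beta) \neq \emptyset$. For any $m$ in this intersection, $\beta \in \kgot_m$, so the action of $\beta$ on $L^{\otimes k}|_m$ is multiplication by $ik\langle \beta, \beta\rangle = ik\|\beta\|^2$, and hence $\frac{1}{i}\Lcal(\beta) = k\|\beta\|^2 > 0$ on $L^{\otimes k}|_\Xcal$ (the constant being the same on the whole connected component $\Xcal$). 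This establishes strict $\Phi_L$-positivity of $L^{\otimes k}$.

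Invoking the second bullet of Theorem \ref{th:QR-Phi-J} with $\Fcal = L^{\otimes k}$ yields
\[
\bigl[RR^J_K(M, L^{\otimes k})\bigr]^K
= \bigl[RR^J_K(M, L^{\otimes k}, \Phi_L^{-1}(0), \Phi_L)\bigr]^K,
\]
which is the main identity. (Here we also use that on a compact manifold $\Qcal_K(M, \Ecal, \Phi) = \Qcal_K(M, \Ecal)$, cf.\ the remark following Definition \ref{def:indice-localise}.) For the final assertion, if $0 \notin \Phi_L(M)$ then $\Phi_L^{-1}(0) = \emptyset$, and the convention $\Qcal_K(M, \Ecal, \emptyset, \Phi) = 0$ from Definition \ref{def:indice-localise} forces the right-hand side to vanish, hence $[RR^J_K(M, L^{\otimes k})]^K = 0$ for every $k \geq 1$. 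There is no real obstacle here; all of the work has already been done in establishing Proposition \ref{prop:J-Phi-positive} and Theorem \ref{th:QR-Phi-J}, and the verification of strict $\Phi_L$-positivity for $L^{\otimes k}$ is immediate from the definition of a moment line bundle.
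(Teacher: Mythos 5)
Your proposal is correct and follows exactly the route the paper intends: the paper declares the corollary ``clear'' because, just as you argue, semi-ampleness makes $(\Omega_L,J)$ adapted (so $\bigwedge_J \T M$ is weakly $\Phi_L$-positive by Proposition \ref{prop:J-Phi-positive}), the $\Phi_L$-moment-line-bundle property gives $\frac{1}{i}\Lcal(\beta)=k\|\beta\|^2>0$ on $L^{\otimes k}\vert_\Xcal$, and the second bullet of Theorem \ref{th:QR-Phi-J}, together with the equality of the deformed and undeformed indices on a compact manifold and the convention $\Qcal_K(M,\Ecal,\emptyset,\Phi)=0$, yields both assertions. No gaps beyond those already implicit in the paper's own statement.
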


This theorem is an analogue of the GIT construction.
We will obtain more precise results  on the description of  $[RR_K(M,\Fcal)]^K$ via a moment map $\Phi$
 if we can  choose  $(\Omega,\Phi)$ such that $\Omega$  is non degenerate,
and $\Fcal$ weakly $\Phi$-positive. This is the object of Section \ref{subsec:multiRiemannRoch}.

\begin{exam}\label{exam:semi-ample} We follow the notations of Example \ref{exam:highest-weight}. Let $\mathbb{F}:=K/T$ be the flag manifold equipped with an integrable complex structure. The line bundle
$[\C_\mu]:=K\times_T\C_{\mu}$ associated to a weight $\mu\in \Lambda$ is semi-ample if and only if $\mu$ is dominant, and $[\C_\mu]$
is ample if and only if $\mu$ is regular dominant.
\end{exam}

\begin{exam}\label{exam:line-cotangent-bundle} The cotangent bundle $\T^*\tilde{K}$ of a compact Lie group $\tilde{K}$
is equipped with the symplectic form $\Omega=d\lambda$ where $\lambda$ is the Liouville $1$-form (see section \ref{subsec:cotangent}).
Let $J$ be a almost complex structure on $\T^*\tilde K$ that is compatible with $\Omega$.  Let us consider the trivial line bundle
$[\C]=\T^*\tilde{K}\times\C$ with the connection $\nabla:= d+i\lambda$. We see then that the two form
$\Omega_{[\C]}:=\frac{1}{i}\nabla^2$ is equal to $\Omega$.
\end{exam}

%%%%%%%%%%%%%%%%%%%%%%%%%%%%%%%%%%%%%%%%%%%%%%%%%%%%
%%%%%%%%%%%%%%%%%%%%%%%%%%%%%%%%%%%%%%%%%%%%%%%%%%%%
\section{A slice theorem  for deformed symbol}\label{sec:slice-induction}
%%%%%%%%%%%%%%%%%%%%%%%%%%%%%%%%%%%%%%%%%%%%%%%%%%%%
%%%%%%%%%%%%%%%%%%%%%%%%%%%%%%%%%%%%%%%%%%%%%%%%%%%%

%%%%%%%%%%%%%%%%%%%%%%%%%%%%%%%%%%%%%%%%%%%%%%%%%%%
\subsection{An induction formula}\label{sec:induction-cotangent}
%%%%%%%%%%%%%%%%%%%%%%%%%%%%%%%%%%%%%%%%%%%%%%%%%%%
Let $H$ be a closed subgroup of $K$, and consider a $H$-invariant decomposition
$\kgot=\hgot\oplus \qgot$. Let $N'$ be a $H$-manifold, and
consider the $K$-manifold
$$
N=K\times_H (B_{\qgot^*} \times N')
$$
where $B_{\qgot^*}\subset \qgot^*$ is a $H$-invariant ball centered at $0$. Then $N'$ is a submanifold of $N$, and the normal bundle of $N'$ in $N$ is
isomorphic to the trivial bundle with fiber $\qgot\oplus \qgot^*$ : for $n'\in N'$, the element $X\in\qgot$ defines the vector
$\frac{d}{dt}\vert_{t=0} e^{tX}\cdot [1,0,n']\in \T_{[1,n',0]}N$.

 The vector space $\qgot\oplus \qgot^*$ is  identified to $\qgot_\C$ by $X\oplus \xi\to X\oplus i \tilde{\xi}$~: here $\xi\in \qgot^*\to\tilde{\xi}\in\qgot$ is the isomorphism associated to an invariant scalar product on $\kgot$. Let $S_\qgot=\bigwedge \qgot_\C$ be the spinor space for $\qgot_\C$.
Thus if $\Ecal$ is a $K$-equivariant graded Clifford bundle on $N$, there exists a $H$-equivariant graded Clifford bundle $\Ecal'$ on $N'$ such that
$$\Ecal\vert_{N'}=S_\qgot \otimes \Ecal'.$$

Let $\Phi': N'\to \hgot^*$ be a $H$-equivariant map, and let $\Phi: N\to \kgot^*$ be a $K$-equivariant map. We assume that these maps are linked
by the following relations  :
\begin{equation}\label{eq:phi-positif}
\begin{cases}
\ \       \Phi\vert_{N'}=\Phi',     \\
 \ \    \Phi([1;\xi,n'])\in \hgot^*\Longleftrightarrow \xi=0,\\
 \ \    (\Phi([1;\xi,n']),\xi)\geq 0,
\end{cases}
\end{equation}
for $(\xi,n')\in B_{\qgot^*} \times N'$.

The following induction result will be useful.

\begin{prop}\label{prop:induction-cotangent}
$\bullet$ The critical sets $Z_\Phi\subset N$ and $Z_{\Phi'}\subset N'$ are related by
$Z_\Phi=K\times_H (\{0\}\times Z_{\Phi'})$.

$\bullet$ Let $Z$ be a compact component of $Z_\Phi$ and $Z'$ its intersection with $N'$. Then $Z'$ is a compact component of $Z_{\Phi'}$ and
$$\Qcal_K(N,\Ecal,Z,\Phi)={\rm Ind}_H^K \left(\Qcal_H(N',\Ecal',Z',\Phi')\right).$$
This leads to the relation $\left[\Qcal_K(N,\Ecal,Z, \Phi)\right]^K=\left[\Qcal_H(N',\Ecal', Z',\Phi')\right]^H$.
\end{prop}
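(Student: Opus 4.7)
The plan is to use the induction formula (Proposition~\ref{Induction formula}) to pass from $N=K\times_H Y$ (with $Y=B_{\qgot^*}\times N'$) to the slice $Y$, and then to homotope the restricted symbol to the exterior product of a Bott symbol along $\qgot^*$ with $\sigma(N',\Ecal',\Phi')$.

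First I would identify $Z_\Phi$ using local coordinates $\phi(X,\xi,n')=[\exp(X);\xi,n']$. At $n=[1;\xi,n']$ the decomposition $T_nN\simeq\qgot\oplus T_\xi B_{\qgot^*}\oplus T_{n'}N'$ gives $X\cdot n=(X,0,0)$ for $X\in\qgot$ and $Y\cdot n=(0,Y\cdot\xi,Y\cdot n')$ for $Y\in\hgot$. Splitting $\Phi=\Phi_\hgot+\Phi_\qgot$ along $\kgot^*=\hgot^*\oplus\qgot^*$, the Kirwan vector field reads $\kappa_\Phi(n)=-(\Phi_\qgot(n),\,\Phi_\hgot(n)\cdot\xi,\,\Phi_\hgot(n)\cdot n')$, which vanishes exactly when $\Phi_\qgot(n)=0$ and $\Phi_\hgot(n)\cdot n'=0$. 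By hypothesis $(2)$, $\Phi_\qgot(n)=0\Leftrightarrow\xi=0$, and then $\Phi_\hgot(n)=\Phi'(n')$, so the second condition reads $n'\in Z_{\Phi'}$. Thus $Z_\Phi\cap Y=\{0\}\times Z_{\Phi'}$, and the first bullet follows by $K$-equivariance. The second bullet is then immediate: if $Z=K\times_H Z'$ is a compact union of connected components of $Z_\Phi$, then $Z'=Z\cap N'$ is a union of connected components of $Z_{\Phi'}$, necessarily compact.

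Next I would pick an $H$-invariant relatively compact open $\Ucal'\subset Y$ with $Z_\Phi\cap\Ucal'=Z'$, set $\Ucal=K\times_H\Ucal'$, and invoke Proposition~\ref{Induction formula} to obtain
\[ \Qcal_K(N,\Ecal,Z,\Phi)=\ind_H^K\bigl(\indice_H^{\Ucal'}(\sigma(N,\Ecal,\Phi)|_{\Ucal'})\bigr). \]
Using a $K$-invariant connection I would decompose $TN|_Y\simeq\qgot_\C\oplus\pi^*TN'$ (where $\pi:Y\to N'$) and extend the Clifford decomposition $\Ecal|_{N'}\simeq S_\qgot\otimes\Ecal'$ by $\pi$-pull-back on a tubular neighborhood of $N'$, so that the restricted symbol takes the form
\[ \sigma|_{\Ucal'}(y,(\eta,\nu'))=\clif_{\qgot_\C}\bigl(\Phi_\qgot(y)+i(\eta+\Phi_\hgot(y)\cdot\xi)\bigr)\boxtimes\clif_{\Ecal'}\bigl(\nu'+\Phi_\hgot(y)\cdot n'\bigr). \]
I would then interpolate with the exterior product $\mathrm{Bott}(\qgot_\C)\boxtimes\sigma(N',\Ecal',\Phi')$ via the two-parameter family
\[ \sigma_{s,t}=\clif_{\qgot_\C}\bigl((1-s)\xi+s\Phi_\qgot+i(\eta+t\Phi_\hgot\cdot\xi)\bigr)\boxtimes\clif_{\Ecal'}\bigl(\nu'+((1-t)\Phi'(n')+t\Phi_\hgot)\cdot n'\bigr), \]
which equals $\sigma|_{\Ucal'}$ at $(s,t)=(1,1)$ and the product symbol at $(s,t)=(0,0)$. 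The multiplicative property (Theorem~\ref{theo:multiplicative-property}) combined with $\indice_H(\mathrm{Bott}(\qgot_\C))=1$ then yields $\indice_H^{\Ucal'}(\sigma|_{\Ucal'})=\Qcal_H(N',\Ecal',Z',\Phi')$, and induction from $H$ to $K$ finishes the argument (the $K$-invariant part statement following from $[\ind_H^K(E)]^K=[E]^H$).

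The hard part will be verifying $\Char(\sigma_{s,t})\cap T^*_H Y=Z'$ for every $(s,t)\in[0,1]^2$, so that the homotopy stays within $H$-transversally elliptic symbols. Vanishing of the first Clifford factor forces $(1-s)\xi+s\Phi_\qgot(y)=0$; pairing with $\xi$ and using hypothesis $(3)$ yields $(1-s)\|\xi\|^2+s(\Phi_\qgot(y),\xi)\geq(1-s)\|\xi\|^2$, forcing $\xi=0$ when $s<1$, and hypothesis $(2)$ forces it when $s=1$. Once $\xi=0$ one has $\Phi_\hgot(0,n')=\Phi'(n')$, the first factor's vanishing gives $\eta=0$, and the standard analysis of $\sigma(N',\Ecal',\Phi')$ on $T^*_H N'$ forces $n'\in Z_{\Phi'}$ and $\nu'=0$. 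Both hypotheses $(2)$ and $(3)$ are used in an essential way.
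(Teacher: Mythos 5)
Your argument is correct and follows essentially the same route as the paper: compute $\kappa_\Phi$ in the model $K\times_H(B_{\qgot^*}\times N')$, reduce to the slice via Proposition~\ref{Induction formula}, and homotope the restricted pushed symbol to $\mathrm{Bott}(\qgot_\C)\boxtimes\sigma(N',\Ecal',\Phi')$, with hypotheses $(2)$ and $(3)$ controlling the characteristic set exactly as in the paper's one-parameter deformation. The only (cosmetic) difference is that you conclude by invoking the multiplicative property with trivial first factor and $\indice_H(\mathrm{Bott}(\qgot_\C))=1$, whereas the paper phrases the same final step as the push-forward identity $\tau=i_!(\sigma')$ together with Theorem~\ref{theo:direct}.
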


\begin{proof}
Let $(\xi,n')\in B_{\qgot^*}\times N'$. We identify $\T_{[1;\xi,n']}N$ to $\qgot\oplus \qgot^* \oplus \T_{n'} N'$.
Let us write $\Phi([1;\xi,n'])=\Phi_{\qgot}(\xi,n') \oplus \Phi_\hgot(\xi,n')$ in the decomposition $\kgot=\hgot\oplus \qgot$ (and we have identified $\kgot^*$ and $\kgot$).
We have
 \begin{equation}\label{kappa'}
 \kappa_\Phi ([1;\xi,n'])= -\Phi_{\qgot}(\xi,n')\oplus [\xi,\Phi_\hgot(\xi,n')]\oplus -\Phi_\hgot(\xi,n')\cdot n'.
\end{equation}
From (\ref{eq:phi-positif}), we have that $\Phi_{\qgot}(\xi,n')=0$ only if $\xi=0$. Hence the vector field $\kappa_{\Phi}$ vanishes at
$[1;X,n']$ if and only if $\xi=0$ and $\kappa_{\Phi'}(n')=-\Phi_\hgot(0,n')\cdot n'=0$. So if $Z$ a component of $Z_\Phi$, its intersection
$Z'$ with $\qgot\times N'$ is a component of $Z_{\Phi'}$ such that $Z=K\times_H (\{0\}\times Z')$.

\medskip

Consider a compact component $Z$ of $Z_\Phi$. Let $\Ucal'$ be a relatively compact neighborhood of $Z'$ in $N'$, so that $\Vcal=B_\qgot \times \Ucal'$ is a relatively compact neighborhood of $Z'$ in $\qgot\times M'$. Let $\tau$ be the restriction of $\sigma(M,\Ecal,\Phi)$ to $\T^* \Vcal$.
We are working on $K\times_H \Vcal$, a small neighborhood of $Z=KZ'$ in $M$, and
by the induction formula of Proposition \ref{Induction formula},  we have
$$
\Qcal_K(M,\Ecal,Z,\Phi)= \ind_H^K\left(\indice_H^\Vcal(\tau)\right).
$$
Let us consider the injection $i:\Ucal'\to \Vcal$. The symbol $\sigma'=\sigma(\Ucal',\Ecal',\Phi')$ is a $H$-equivariant transversally elliptic symbol on $\Ucal'$. Its index is by definition $\Qcal_H(N',\Ecal',Z',\Phi')$.
We can construct the push-forward $i_!(\sigma')$ of the symbol
$\sigma'\in \Ko_H(\T^*_H \Ucal')$ in $\Ko_H(\T^*_H\Vcal)$ as defined in Section \ref{sec:directimage}.
Our proposition \ref{prop:induction-cotangent} will follow from the following formula and Theorem \ref{theo:direct}.
\begin{lem}
$$\tau=i_!(\sigma').$$
\end{lem}
\begin{proof}
By definition the symbol $i_!(\sigma')$ is equal to $\sigma'\boxtimes \mathrm{Bott}(\qgot_\C)$. For $(\xi,n')\in \qgot^*\times N'$,
and  $(q,\nu)\in \qgot\times \T^*_{n'} N'$, we have
$$
i_!(\sigma')_{(\xi,n')}(q,\nu)=\clif_{\Ecal'_{n'}}(\tilde{\nu}-\kappa_{\Phi'}(n'))\otimes \clif_{S_\qgot}(\tilde{\xi}\oplus i q).
$$

We compute $\tau_{(X,n')}(q,\nu)$. Using Formula \ref{kappa'} we have
$$
\tau_{(\xi,n')}(q,\nu)= \clif_{\Ecal'_{n'}}(\tilde{\nu}+\Phi_\hgot(\xi,n')\cdot n')\boxtimes \clif_{S_\qgot}\Big(\Phi_{\qgot}(\xi,n')\oplus
i(q-[\xi,\Phi_\hgot(\xi,n')])\Big).
$$
For any $t\in [0,1]$, we define
\begin{eqnarray*}
 A^t(\xi,n',q)&=& t\Phi_{\qgot}(\xi,n') +(1-t) \xi\oplus
i(q- t[\xi,\Phi_\hgot(\xi,n')])\ \ \in \ \ \qgot_\C\\
B^t(\xi,n')&=& -t \Phi_\hgot(\xi,n')\cdot n'+ (1-t)\kappa_{\Phi'}(n')\ \ \in \ \ \T_{n'} N',
\end{eqnarray*}
and the symbol
$$
\tau^t_{(\xi,n')}(q,\nu)= \clif_{\Ecal'_{n'}}(\tilde{\nu}-B^t(\xi,n'))\boxtimes \clif_{S_\qgot}\left(A^t(\xi,n',q)\right).
$$

We have $\|A^t(\xi,n',q)\|^2=\|t\Phi_{\qgot}(\xi,n') +(1-t) \xi\|^2+
\|\tilde{q}- t[\xi,\Phi_\hgot(\xi,n')]\|^2$.

If $((\xi,n'),(q,\nu))$ is in the support of $\tau^t$, necessarily  $A^t(\xi,n',q)=0$.
We use the conditions (\ref{eq:phi-positif}).
As $(\xi,\Phi_{\qgot}(\xi,n'))= (\Phi([1;\xi,n']),\xi)\geq 0$, we see that $\|t\Phi_{\qgot}(\xi,n') +(1-t) \xi\|^2\geq  t^2 \|\xi\|^2+(1-t)^2\|\Phi_{\qgot}(\xi,n')\|^2$
and this last expression vanishes only when $\xi=0$.
Thus, we see that, when $(\xi,n')\in B_\qgot\times N'$, the vector  $A^t(\xi,n',q)$ vanishes if and
only if $\xi=0$ and $q=0$. And for any $t\in[0,1], B^t(0,n')=\kappa_{\Phi'}(n')$.

Thus $\tau^t$ is an homotopy of transversally elliptic symbol on $B_\qgot\times N'$.
Hence $\tau^1=\tau$ defines the same $\K$-theory class than $\tau^0= i_!(\sigma')$.

\end{proof}\bigskip
\end{proof}

\medskip

In the next subsection, we give a first  application of this formula.

%%%%%%%%%%%%%%%%%%%%%%%%%%%%%%%%%%%%%%%%%%%%%%%%%%%%
\subsection{The shifting formula}\label{sec:induction-2}
%%%%%%%%%%%%%%%%%%%%%%%%%%%%%%%%%%%%%%%%%%%%%%%%%%%%

We consider a $K$-manifold $M$, oriented and of even dimension, that is equipped with a $K$-equivariant Clifford bundle $\Gcal$.
Let $\Phi_M: M\to \kgot^*$ be an equivariant map.
Let $a\in \kgot^*$, and
let $H=K_a$ be the stabilizer subgroup of $a$.
On the coadjoint orbit $Ka$, consider the inclusion $\Phi_a: Ka\croc \kgot^*$.
 We assume that $Ka$ is equipped with a $K$-equivariant Clifford bundle $\Fcal$.
On the product $M\times Ka$, consider the map $\Phi=\Phi_M-\Phi_a$.

In this section we assume that $\Phi^{-1}(0)$ is a compact component of $Z_\Phi$ : this property holds when $\Phi_M$ is a
moment map and $M$ is compact.

Consider
$$
\Qcal_K(M\times Ka,\Gcal\otimes\Fcal,\Phi^{-1}(0),\Phi)
$$
 the equivariant index localized near the component  $\Phi^{-1}(0)\subset M\times Ka$.

We look at a small $H$-invariant open neighborhood $B$
of  $a$ in $\hgot^*$ such that $Y=\Phi_M^{-1}(B)\subset M$ is a  slice at $a\in \kgot^*$ for the map $\Phi_M$.
Then  $Y$ is a $H$-invariant manifold of $M$ such that $\Phi_M(Y)\subset \hgot^*$ and such that $K\times_{H} Y$
is  diffeomorphic to a  invariant open neighborhood of $\Phi_M^{-1}(Ka)$. We denote by  $\Phi_Y$ the restriction of the map $\Phi_M$ to $Y$.

The element $a\in \kgot$ acts by an invertible skew-symmetric transformation on $\kgot/\kgot_a$. We denote
by $\qgot_\JJa$  the vector space $\kgot/\kgot_a$ equipped with the complex structure $ J_a$,
 We write $\Gcal|_Y= \Gcal_{\ddY}\otimes \bigwedge \qgot_\JJa$ where $\Gcal_{\ddY}$ is a $H$-equivariant Clifford bundle on $Y$, and $\Fcal\vert_{\{a\}}=\bigwedge \overline{\qgot_\JJa} \otimes F$ where $F\in R(H)$ (here $\{a\}$ is the base point of $Ka$).

\begin{prop}\label{prop:shifting trick} For any equivariant Clifford bundles $\Gcal\to M$ and
$\Fcal \to Ka$, and any equivariant map $\Phi(k,\xi)=\Phi_M(m)-\xi$ such that $\Phi^{-1}(0)$ is a compact component of $Z_\Phi$,
we have
$$
\Qcal_K(M\times Ka,\Gcal \otimes \Fcal, \Phi^{-1}(0),\Phi)
=\mathrm{Ind}_H^K \big(\Qcal_H(Y,\Gcal_{\ddY},\Phi_Y^{-1}(a),(\Phi_Y-a))\otimes F\big).
$$
\end{prop}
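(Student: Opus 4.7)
The plan is to reduce the formula to Proposition \ref{prop:induction-cotangent}. First I would build, by exponentiating in $\qgot$, a $K$-equivariant open embedding
$$\iota:\ K\times_H (B_{\qgot^*}\times Y) \;\hookrightarrow\; M\times Ka,\qquad \iota[k;\xi,y]=(k\cdot y,\ k\exp(\tilde\xi)\cdot a),$$
whose image is an open neighborhood of $\Phi^{-1}(0)$ (here $\tilde\xi\in\qgot$ corresponds to $\xi\in\qgot^*$ via the invariant scalar product, and $B_{\qgot^*}$ is a sufficiently small $H$-ball). Under $\iota$ the map $\Phi(m,\eta)=\Phi_M(m)-\eta$ pulls back to $\widetilde\Phi[k;\xi,y]=k\cdot(\Phi_M(y)-\exp(\tilde\xi)\cdot a)$, whose restriction to the slice $\{[1;0,y]:y\in Y\}\cong Y$ equals $\Phi_Y-a$; this plays the role of $\Phi'$ in Proposition \ref{prop:induction-cotangent}. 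The two conditions of (\ref{eq:phi-positif}) are then direct: since $\Phi_M(y)\in\hgot^*$ on $Y$ and the tangent space to $Ka$ at $a$ sits in $\qgot^*$, transverse to $\hgot^*$, one has $\widetilde\Phi[1;\xi,y]\in\hgot^*\iff\xi=0$ on a small enough ball; and the $K$-invariance of the scalar product yields $(\exp(\tilde\xi)\cdot a,\tilde\xi)=(a,\tilde\xi)=0$ (as $a\perp\qgot$), whence $(\widetilde\Phi[1;\xi,y],\xi)=0\ge 0$ identically.

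Next I would identify the Clifford bundles. At $(y,a)$, the tangent space decomposes orthogonally as $\T_y Y\oplus(\qgot\cdot y)\oplus(\qgot\cdot a)$, and the hypotheses $\Gcal|_Y=\Gcal_{\ddY}\otimes \bigwedge\qgot_\JJa$ together with $\Fcal|_{\{a\}}=\bigwedge\overline{\qgot_\JJa}\otimes F$ give
$$(\Gcal\otimes \Fcal)\vert_{Y\times\{a\}} \;\simeq\; \Gcal_{\ddY}\otimes\Bigl(\bigwedge\qgot_\JJa\otimes \bigwedge\overline{\qgot_\JJa}\Bigr)\otimes F.$$
The key algebraic input is the $H$-equivariant isomorphism $\bigwedge\qgot_\JJa\otimes \bigwedge\overline{\qgot_\JJa}\simeq \bigwedge\qgot_\C=S_\qgot$ as graded Clifford modules for $\qgot\oplus\qgot^*\simeq\qgot_\C$: both sides are irreducible graded modules of the same complex dimension $2^{\dim_\R\qgot}$. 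Setting $\Ecal':=\Gcal_{\ddY}\otimes F$ rewrites $(\Gcal\otimes \Fcal)\vert_{Y\times\{a\}}\simeq S_\qgot\otimes \Ecal'$, exactly the structure hypothesis of Proposition \ref{prop:induction-cotangent} (the non-product Riemannian metric pulled back by $\iota$ does not affect the Clifford data, by Subsection \ref{sub:choice}).

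Proposition \ref{prop:induction-cotangent} then yields
$$\Qcal_K(M\times Ka,\Gcal\otimes \Fcal,\Phi^{-1}(0),\Phi)=\mathrm{Ind}_H^K\,\Qcal_H(Y,\Gcal_{\ddY}\otimes F,\Phi_Y^{-1}(a),\Phi_Y-a),$$
and since $F$ is a finite-dimensional $H$-representation acting trivially on $Y$, the $R(H)$-module property of the equivariant index (cf.\ (\ref{eq:indextimesVmu})) factors this as $\mathrm{Ind}_H^K\bigl(\Qcal_H(Y,\Gcal_{\ddY},\Phi_Y^{-1}(a),\Phi_Y-a)\otimes F\bigr)$, the claimed formula. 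The main obstacles in making this rigorous are (a) verifying carefully that $\iota$ is an open embedding with $\iota^{-1}(\Phi^{-1}(0))=K\times_H(\{0\}\times\Phi_Y^{-1}(a))$, a standard but technical slice-theorem argument; and (b) pinning down the $H$-equivariant graded Clifford isomorphism $\bigwedge\qgot_\JJa\otimes \bigwedge\overline{\qgot_\JJa}\simeq S_\qgot$, tracking the induced orientations on $\qgot\oplus\qgot^*$ so that the identification intertwines both the $\Z/2$-grading and the Clifford actions pulled back via $d\iota$.
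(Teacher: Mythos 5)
Your argument is essentially the paper's own proof: you identify a neighborhood of $\Phi^{-1}(0)$ with $K\times_H(B_{\qgot^*}\times Y)$ via the slice and the parameterization $\xi\mapsto e^{\tilde\xi}a$ of $Ka$ near $a$, verify the conditions (\ref{eq:phi-positif}) by the same computations ($(e^{\tilde\xi}a,\tilde\xi)=0$ and $e^{\tilde\xi}a\in\hgot^*\Leftrightarrow\xi=0$ for small $\xi$), identify $\Ecal'=\Gcal_{\ddY}\otimes F$ through $S_\qgot\simeq\bigwedge\qgot_\JJa\otimes\bigwedge\overline{\qgot_\JJa}$, and conclude by Proposition \ref{prop:induction-cotangent}. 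The only cosmetic difference is that you spell out the final use of the $R(H)$-module property (\ref{eq:indextimesVmu}) to pull the factor $F$ out of the index, which the paper leaves implicit.
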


\begin{proof}
We show that we are in the  situation of Subsection \ref{sec:induction-cotangent}.
A neighborhood of $\Phi^{-1}(Ka)$ in $M$ is of the form $K\times_H Y$. Let us consider the
diffeomorphism $\varphi: K\times_H (Ka\times Y) \to  Ka\times (K\times_H Y)$
defined by $\varphi([k;\xi, y])= (k\xi,ky)$.
Through $\varphi$, the map $\Phi$ becomes on $K\times_H (Ka\times Y)$ the map
$\tilde{\Phi}([k;\xi, y])=k\left(\Phi_Y(y)-\xi\right)$, and
the set $\tilde{\Phi}^{-1}(0)$ is equal to $K\times (\{a\}\times \phi^{-1}(a))$.

We are then working on a $H$-invariant neighborhood of $\{a\}$ in $Ka$. This neighborhood can indeed be parameterized by
$\qgot^*$ through the map $\xi\to e^{\tilde{\xi}} a$. So we work with the $K$-manifold $N= K\times_H(B_{\qgot^*}\times Y)$ and the $H$-manifold $N'=Y$.
The equivariant maps are $\Phi([k;\xi,y]):=k( \Phi_Y(y) -e^{\tilde{\xi}} a)$ on $N$ and $\Phi'=\Phi_Y-a$ on $Y$.
We check easily that relations (\ref{eq:phi-positif}) hold if the ball $B_{\qgot^*}$ is small enough. Indeed, $(e^{\tilde{\xi}} a,\tilde{\xi})=0$ for any $\xi\in\qgot^*$, and
for small $\xi$ the term $e^{\tilde{\xi}} a=a+[\tilde{\xi},a]+ O(\|\tilde{\xi}\|^2)$ belongs to $\hgot$ only if $\xi=0$.

At the level of Clifford bundles, if one considers $\Ecal:=\Gcal\otimes\Fcal$, the corresponding $\Ecal'$ on $Y$ is
$\Ecal':=\Gcal_{\ddY}\otimes F$ since the Clifford bundle $S_{\qgot_a}$ is equal to $\bigwedge \qgot_\JJa\otimes \bigwedge \overline{\qgot_\JJa}$.
So we are in the setting of Proposition \ref{prop:induction-cotangent} and we can conclude.
\end{proof}

%%%%%%%%%%%%%%%%%%%%%%%%%%%%%%%%%%%%%%%%%%%%%%%%%%%%%%%%%%%%%
%%%%%%%%%%%%%%%%%%%%%%%%%%%%%%%%%%%%%%%%%%%%%%%%%%%%%%%%%
%%%%%%%%%%%%%%%%%%%%%%%%%%%%%%%%%%%%%%%%%%%%%%%%%%%%%%%%%
\section{The Hamiltonian setting }\label{sec:hamiltonian}
%%%%%%%%%%%%%%%%%%%%%%%%%%%%%%%%%%%%%%%%%%%%%%%%%%%%%%%%%
%%%%%%%%%%%%%%%%%%%%%%%%%%%%%%%%%%%%%%%%%%%%%%%%%%%%%%%%%
%%%%%%%%%%%%%%%%%%%%%%%%%%%%%%%%%%%%%%%%%%%%%%%%%%%%%%%%%%%%%

Let us assume that our $K$-manifold $M$ is provided with a Hamiltonian structure $(\Phi,\Omega)$ :
$\Omega$ is a non degenerate closed $2$-form on $M$ and the moment map $\Phi:M\to \kgot^*$ and
$\Omega$ are related by the Kostant relations (\ref{eq:hamiltonian-action}). Then we can always choose an
invariant almost complex structure $J$ on $M$ such that  $\Omega(v,Jw), v,w\in \T M$ is a Riemannian metric
on $M$ : such almost complex structure $J$ is called compatible with $\Omega$.

Consider the case where $M$ compact. In this context, the representation
$\Qcal_K(M,\bigwedge_J \T M\otimes \Fcal)$ does not depend on the choice of $J$ since the set of
$\Omega$-compatible almost complex structures is contractible, so, given $\Omega$, we denote it
by $RR_K(M,\Fcal)$ whenever the almost complex structure $J$ is compatible with the given choice of $\Omega$.

We can make use of the result of Theorem \ref{th:QR-J-hamiltonian}, since our hypothesis ``$J$ compatible with $\Omega$'' is
stronger than the hypothesis ``$(\Omega, J)$ is adapted''. In fact we will gain much more, as we will be able to compute
geometrically the multiplicities of $RR_K(M,L)$ when $L$ is a $\Phi$-moment bundle~: it is the heart of the $[Q,R]=0$ theorem
of Meinrenken-Sjamaar that is explained in the next section.

Our proof of the Meinrenken-Sjamaar Theorem relies on the Witten deformation argument presented in
the previous section. We attach a localized Riemann-Roch character
$$
RR_K(M,\Fcal,Z_\beta,\Phi)=\Qcal_K(M,\bigwedge_J \T M\otimes \Fcal, Z_\beta,\Phi)
$$
to each component $Z_\beta$ of $Z_\Phi$ \cite{pep-RR, pep-formal-2,Ma-Zhang14}, where $J$ is
an almost complex structure
compatible with $\Omega$.
In this way, we decompose the character $RR_K(M,\Fcal)$ in sum of (infinite) dimensional representations, while
\break Meinrenken-Sjamaar decomposes  $RR_K(M,\Fcal)$ in sums   of (finite) dimensional representations  by cutting
the Kirwan polytope in small polytopes.
In both of these methods, when $\Fcal$ is weakly $\Phi$-positive,
the study of $[RR_K(M,\Fcal)]^K $ is reduced to the study of $M$ around
$\Phi^{-1}(0)$, and this can be done explicitly, at least in the case where $0$ is a quasi-regular value.
The use of the   compact connected component $\Phi^{-1}(0)$ of the zeroes of
the Kirwan vector field $\kappa_\Phi$ replaces the compactification of a neighborhood of $\Phi^{-1}(0)$ by cutting used in
Meinrenken-Sjamaar.

When $M$ is non-compact, we can also study the localized Riemann-Roch character $RR_K(M,\Fcal,\Phi)$ when the set
$Z_\Phi$ is proper \cite{pep-formal-2,Ma-Zhang14}. In Section \ref{sec:branching} we will concentrate ourselves
to the case of an Hamiltonian action of $\tilde{K}\times K$ on the cotangent bundle $\T^*\tilde{K}$.

%%%%%%%%%%%%%%%%%%%%%%%%%%%%%%%%%%%%%%%%%%%%%%%%%%%%%%%%%%%%%
%%%%%%%%%%%%%%%%%%%%%%%%%%%%%%%%%%%%%%%%%%%%%%%%%%%%%%%%%%%%%
\subsection{$[Q,R]=0$ Theorem of Meinrenken-Sjamaar}
%%%%%%%%%%%%%%%%%%%%%%%%%%%%%%%%%%%%%%%%%%%%%%%%%%%%%%%%%%%%%
%%%%%%%%%%%%%%%%%%%%%%%%%%%%%%%%%%%%%%%%%%%%%%%%%%%%%%%%%%%%%

Assume in this subsection   that $K$ is a compact connected Lie group, and let $T$ be its maximal torus.
Let $\Lambda\subset \tgot^*$ be the weight lattice of $T$,
and consider the set of dominant weights $\Lambda_{\geq 0}=\Lambda\cap \tgot^*_{\geq 0}$ that parameterizes
the irreducible representations of $K$~: for $\mu\in \Lambda_{\geq 0}$, we denote by  $V^K_\mu$, the irreducible
representation with highest weight $\mu$.

For the remaining part of this section we assume that $M$ is compact and connected,
and we denote $\Delta(M):=\Phi(M)\cap\tgot^*_{\geq 0}$ the Kirwan polytope.

Let $L\to M$ be a $\Phi$-moment bundle. Write
 $$
 RR_K(M,L)=\sum_{\mu\in \Lambda_{\geq 0}} \mm_\mu(L) \, V^K_\mu.
 $$
We will explain how to compute geometrically the multiplicity $\mm_\mu(L)$ of the representation $V^K_\mu$ in
$RR_K(M,L)$ in terms of the fibers of the moment map.

\begin{defi}
For any $a\in \kgot^*$, we define the topological space
 $M_{\rra}:=\Phi^{-1}(Ka)/K.$
\end{defi}
We say that  $M_{\rra}$ is the reduced space of $M$ at $a$.
In particular, the reduced space $M_{red}=\Phi^{-1}(0)/K$ is denoted $M_0$.

Let us recall the notion of quasi-regular value of $\Phi$.

\begin{defi}
An element $a\in\kgot^*$ is a quasi-regular value of $\Phi$ if $a\in \Phi(M)$ and the $K$-orbits in $\Phi^{-1}(Ka)$ all have the same dimension.
\end{defi}

Equivalently, $a$ is a quasi-regular value if there exists a subalgebra $\sgot\subset\kgot$ such that $\Phi^{-1}(Ka)$
is contained in the infinitesimal orbit type stratum $M_{(\sgot)}=\{m\in M,\, (\kgot_m)=(\sgot)\}$.
As we will see, this implies that  $\Phi^{-1}(Ka)$ is a smooth $K$-manifold.
Furthermore, if $a$ is a quasi-regular value, then the orbifold stratification of $M_{\rra}:=\Phi^{-1}(Ka)/K$ consists of one
piece only, and  $M_{\rra}$ is therefore a symplectic orbifold.

Assume that a dominant weight $\mu$ is a quasi-regular value of $\Phi$. We can then consider the orbifold line bundle over $M_\rrmu$,
$$
L_\mu:= \left(L\vert_{\Phi^{-1}(\mu)}\otimes \C_{-\mu}\right)/K_\mu,
$$
and we may define the Riemann-Roch number $RR(M_\rrmu,L_\mu)$.

In general,  we consider quasi-regular values $a$ of $\Phi$ close to $\mu$, and we  can, as we will see,  define the orbifold line bundle
$$
L_{a,\mu}:= \left(L\vert_{\Phi^{-1}(a)}\otimes \C_{-\mu}\right)/K_a,
$$
over the symplectic orbifold $M_{\rra}=\Phi^{-1}(a)/K_a$. The line orbi-bundle $L_{a,\mu}$ is well defined since
for $a$ close enough to $\mu$, the action of $\kgot_m$ on $L\vert_m\otimes \C_{-\mu}$ is trivial for any
$m\in \Phi^{-1}(a)$ (see Lemma \ref{lem:trivial-action-stabilizer}).

We can now state the Meinrenken-Sjamaar Theorem \cite{Meinrenken-Sjamaar}.
\begin{theo}\label{theo:MS}
Let $\mu\in\Lambda_{\geq 0}$.

$\bullet$ If $\mu\notin \Delta(M)$, then $\mm_\mu(L)=0$.

$\bullet$ If $\mu\in \Delta(M)$, then $\mm_\mu(L)= RR(M_{\rra}, L_{a,\mu})$ for any quasi-regular value $a$ of $\Phi$ that is
close enough to $\mu$.
\end{theo}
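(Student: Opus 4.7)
First I would apply the shifting trick. Since $\mm_\mu(L)=[RR_K(M,L)\otimes V_\mu^*]^K$ and Borel--Weil realizes $V_\mu^*$ as $RR_K(K\mu^-,\mathcal{L}_{-\mu})$ for the coadjoint orbit $K\mu^-$ (with opposite Kirillov--Kostant form and compatible complex structure) and the prequantum line bundle $\mathcal{L}_{-\mu}$ of moment map $-\iota:K\mu\hookrightarrow \kgot^*$, the multiplicative property of the Riemann--Roch character yields
$$
\mm_\mu(L)=\Big[RR_K(\widetilde{M},\widetilde{L})\Big]^K,\qquad \widetilde{M}:=M\times K\mu^-,\quad \widetilde{L}:=L\boxtimes \mathcal{L}_{-\mu},
$$
where the natural moment map on $\widetilde{M}$ is $\Psi(m,\xi):=\Phi(m)-\xi$. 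By construction $\widetilde{L}$ is a $\Psi$-moment line bundle, hence strictly $\Psi$-positive, and Theorem \ref{th:QR-J-hamiltonian} therefore gives
$$
\mm_\mu(L)=\Big[RR_K(\widetilde{M},\widetilde{L},\Psi^{-1}(0),\Psi)\Big]^K.
$$
The first projection identifies $\Psi^{-1}(0)$ with $\Phi^{-1}(K\mu)$, which is empty exactly when $\mu\notin \Delta(M)$; this proves the first assertion.

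For $\mu\in \Delta(M)$ I would then apply the slice formula (Proposition \ref{prop:shifting trick}) with $a=\mu$. Unwinding the Clifford identifications at the base point of $K\mu^-$, the module $F$ extracted from $\bigwedge_{J_{K\mu}}\T(K\mu^-)\otimes \mathcal{L}_{-\mu}$ is the one-dimensional $K_\mu$-module $\C_{-\mu}$, and the Clifford bundle $\Gcal_{\ddY}$ extracted from $\bigwedge_J\T M\otimes L$ on the slice $Y=\Phi^{-1}(B)$ is $\bigwedge_J\T Y\otimes L|_Y$. Hence
$$
\mm_\mu(L)=\Big[\Qcal_{K_\mu}\!\Big(Y,\;\bigwedge\nolimits_J\T Y\otimes L|_Y,\;\Phi_Y^{-1}(\mu),\;\Phi_Y-\mu\Big)\otimes \C_{-\mu}\Big]^{K_\mu}.
$$
When $\mu$ is a quasi-regular value, $K_\mu$ acts with constant-dimensional stabilizer on $\Phi_Y^{-1}(\mu)$ and $M_\mu=\Phi_Y^{-1}(\mu)/K_\mu$ is a symplectic orbifold; the local analysis extending Theorem \ref{theo:localisation-Phi-0} from regular to quasi-regular values (carried out in Section \ref{subsec:quasi-regular}) identifies this invariant part with $RR(M_\mu,L_\mu)$, which is the desired equality at $a=\mu$.

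Finally, for a general $\mu\in \Delta(M)$ I would deform to a nearby quasi-regular value $a$. Choosing $a\in \kgot_\mu^*$ close enough to $\mu$ forces $K_a\subset K_\mu$ and $\langle a-\mu,\kgot_m\rangle=0$ for every $m\in \Phi^{-1}(a)$, so that $L_{a,\mu}:=(L|_{\Phi^{-1}(a)}\otimes \C_{-\mu})/K_a$ is a well-defined orbi-bundle on $M_a$. The one-parameter family $\Phi_Y^t:=\Phi_Y-((1-t)\mu+ta)$, $t\in [0,1]$, is a path of moment maps on $Y$ whose critical sets remain inside a fixed relatively compact neighborhood, and the standard deformation invariance of the equivariant index along such a family (the same argument as in the proof just after Definition \ref{def:Bphi}) shows that the localized invariant part of the previous display is unchanged when $\mu$ is replaced by $a$. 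The quasi-regular case applied at $a$ then yields $\mm_\mu(L)=RR(M_a,L_{a,\mu})$. The main obstacle I anticipate is the quasi-regular-but-not-regular step of paragraph three: Theorem \ref{theo:localisation-Phi-0} must be refined to symplectic orbifolds with non-trivial generic stabilizer, with careful bookkeeping of determinantal twists to verify that $L|_{\Phi^{-1}(\mu)}\otimes \C_{-\mu}$ descends to precisely $L_\mu$; the deformation step of paragraph four is more routine but still requires a uniform-compactness check for $Z_{\Phi_Y^t}$ throughout the homotopy.
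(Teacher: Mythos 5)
Your first bullet and the case where $\mu$ itself happens to be quasi-regular are essentially sound and close to the paper's own route (shifting trick, strict positivity of a moment line bundle, Proposition \ref{prop:shifting trick}, and the quasi-regular localization of Section \ref{subsec:quasi-regular}). The gap is in your final deformation step, which is precisely where the content of the theorem lies. First, the family $\Phi_Y^t=\Phi_Y-((1-t)\mu+ta)$ is a $K_\mu$-equivariant moment map on the slice $Y$ only if $(1-t)\mu+ta$ is fixed by the coadjoint action of $K_\mu$; a quasi-regular value $a$ close to $\mu$ is in general not $K_\mu$-fixed (take $K$ semisimple and $\mu=0$ not quasi-regular: there is no nonzero $K$-fixed value at all), so the localized character $\Qcal_{K_\mu}(Y,\cdot,\Phi_Y^{-1}(a),\Phi_Y-a)$ you want to compare with is not even defined for the values $a$ that the statement requires.

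Second, even when the family is equivariantly defined, the deformation invariance you invoke (``the same argument as after Definition \ref{def:Bphi}'') does not apply: that argument works only for equivalent moment maps $\Phi\sim\Phi'$, because then $C_\Phi=C_{\Phi'}$ and no new components of the critical set can enter the chosen neighborhood. When you translate the moment map from $\Phi_Y-\mu$ to $\Phi_Y-a$, new components $K_\mu\bigl(Y^\beta\cap(\Phi_Y-a)^{-1}(\beta)\bigr)$ with small $\beta\neq 0$ are born arbitrarily close to $\Phi_Y^{-1}(\mu)$ whenever $\mu$ is not a regular value; a homotopy of symbols over a fixed neighborhood only shows that the index localized at $\Phi_Y^{-1}(\mu)$ equals the \emph{sum} of the localized indices over all these components, not the $\Phi_Y^{-1}(a)$ term alone. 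Killing the invariant part of the extra terms (after tensoring with $\C_{-\mu}$) is exactly the nontrivial positivity input — if a routine compactness check sufficed, the multiplicity function would not be merely piecewise quasi-polynomial. The paper sidesteps your deformation entirely: it applies the shifting trick directly on $M\times (Ka)^-$ with the line bundle $[\C_{-\mu}]$, so that the global index equals $RR_K(M,L)\otimes V_\mu^*$ for every $a$ near $\mu$ and independence of $a$ is automatic; it then proves weak $\Phi_a$-positivity of $L\boxtimes[\C_{-\mu}]$ (Lemma \ref{lem:close-enough}, via Proposition \ref{prop:defPhi}), localizes at $\Phi_a^{-1}(0)$ by Theorem \ref{th:QR-J-hamiltonian}, and identifies the invariant part with $RR(M_{\rra},L_{a,\mu})$ by Theorem \ref{theo:RR-M-a-E-alpha}. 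Note also that your claim ``$\langle a-\mu,\kgot_m\rangle=0$ for every $m\in\Phi^{-1}(a)$ is forced'' is the paper's Lemma \ref{lem:trivial-action-stabilizer}, which requires the openness argument given there, not just closeness of $a$ to $\mu$. You should restructure the last step along these lines.
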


We can restate this result by defining Riemann-Roch numbers on (singular) symplectic reduced spaces as follows.
\begin{defi}\label{def:Q-M-mu}
Let $L$ be a $\Phi$-moment bundle. For any dominant weight $\mu\in\Lambda_{\geq 0}$, the Riemann-Roch number $RR(M_{\rrmu},L_\mu)$
is defined by the following dichotomy
$$
RR(M_{\rrmu},L_\mu)=
\begin{cases}
   0\qquad\hspace{25mm} {\rm if}\ \mu\notin \  \Delta(M),\\
   RR(M_{\rra},L_{a,\mu})   \hspace{12mm} {\rm if}\ \mu\in \  \Delta(M) \ {\rm  and}\ a \
    \mathrm{is\ a\ quasi \ regular}\\
    \mathrm{value\ close\ enough\ to}\ \mu.
\end{cases}
$$
\end{defi}
\medskip

The $[Q,R]=0$ Theorem of Meinrenken-Sjamaar says  that  for  a $\Phi$-moment bundle $L$, the multiplicity $\mm_\mu(L)$ of the representation $V^K_\mu$ in $RR_K(M, L)$ is equal to $RR(M_{\rrmu},L_\mu)$.

\medskip

%%%%%%%%%%%%%%%%%%%%%%%%%%%%%%%%%%%%%%%%%%%%%%%%%%%%%%%%%%%%%
%%%%%%%%%%%%%%%%%%%%%%%%%%%%%%%%%%%%%%%%%%%%%%%%%%%%%%%%%%%%%
\subsection{Localization on $\Phi^{-1}(0)$}
%%%%%%%%%%%%%%%%%%%%%%%%%%%%%%%%%%%%%%%%%%%%%%%%%%%%%%%%%%%%%
%%%%%%%%%%%%%%%%%%%%%%%%%%%%%%%%%%%%%%%%%%%%%%%%%%%%%%%%%%%%%

In this section, we consider a $K$-Hamiltonian manifold $(M,\Omega,\Phi)$ not necessarily compact nor connected, and the group $K$ is not supposed connected. We study
the character $RR_K(M,\Fcal, \Phi^{-1}(0),\Phi)$, in the case where $0$ is a quasi-regular value of $\Phi$ and the
manifold $\Phi^{-1}(0)$ is compact.

%%%%%%%%%%%%%%%%%%%%%%%%%%%%%%%%%%%%%%%%%%%%%%%%%%%%%%%%%%%%%
\subsubsection{Regular case}
%%%%%%%%%%%%%%%%%%%%%%%%%%%%%%%%%%%%%%%%%%%%%%%%%%%%%%%%%%%%%

We suppose here that $0$ is a regular value of $\Phi$. The group $K$ acts infinitesimally freely on
the compact submanifold $P=\Phi^{-1}(0)$, and $M_{0} :=P/K$ is a compact orbifold equipped with an
induced symplectic form $\Omega_0$. We denote $\pi : P\to P/K=M_{0}$ the projection.

\begin{prop}\label{prop:localisation-Phi-0-hamiltonien}
For any equivariant vector bundle $\Fcal\to M$ we have
\begin{equation}\label{eq:localisation-Phi-0-hamiltonien}
\left[RR_K(M,\Fcal,\Phi^{-1}(0),\Phi)\right]^K=RR(M_{0},\Fcal_{0})
\end{equation}
where $\Fcal_{0}=\Fcal\vert_P/K$.
\end{prop}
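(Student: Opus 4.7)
The plan is to derive this proposition as a direct consequence of Theorem \ref{theo:localisation-Phi-0}, applied to the Clifford bundle $\Ecal = \bigwedge_J \T M \otimes \Fcal$, where $J$ is an almost complex structure on $M$ compatible with $\Omega$. By that theorem we immediately obtain
$$
\left[RR_K(M,\Fcal,\Phi^{-1}(0),\Phi)\right]^K = \Qcal(M_0, \Ecal_{red}),
$$
so the entire task reduces to identifying the reduced Clifford bundle $\Ecal_{red}$ (in the sense of Definition \ref{def:E-red}) with $\bigwedge_{J_0}\T M_0\otimes \Fcal_0$ for a suitable almost complex structure $J_0$ on $M_0$ compatible with $\Omega_0$.

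First I would verify that the decomposition (\ref{eq:tangent-P}), namely $\T M|_P \simeq \T_K P \oplus [\kgot_\C]$, is an isomorphism of $J$-complex vector bundles. Using a $J$-compatible Riemannian metric, for $p\in P$ one checks that $J(\kgot\cdot p)\cap \T_p P=\{0\}$ (since $d\Phi(JX\cdot p)$ is non-zero whenever $X\cdot p\neq 0$, by the moment map identity together with the non-degeneracy of the metric), and that $\kgot\cdot p\oplus J(\kgot\cdot p)$ is $J$-invariant by construction. Its $J$-orthogonal complement inside $\T_p M$ coincides with $\T_K P|_p$ and is therefore $J$-invariant. This gives a natural complex structure $J_0$ on $\T M_0 \simeq \T_K P/K$ which is compatible with the induced symplectic form $\Omega_0$. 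Under the identification $[\kgot_\C]\simeq P\times \kgot_\C$ via $X\oplus iY\mapsto X\cdot p + J(Y\cdot p)$, the isomorphism (\ref{eq:tangent-P}) is complex linear.

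It then follows that
$$
\Ecal|_P = \bigwedge_J \T M|_P \otimes \Fcal|_P \simeq \bigwedge_{J_0} \T_K P \otimes \bigwedge \kgot_\C \otimes \Fcal|_P,
$$
so by the defining relation $\Ecal|_P \simeq \Ecal_{red}\otimes [\bigwedge\kgot_\C]$ of Definition \ref{def:E-red}, we obtain
$$
\Ecal_{red} = \bigwedge_{J_0} \T_K P \otimes \Fcal|_P.
$$
Passing to the quotient by the (infinitesimally free) $K$-action, and recalling $\T_K P/K \simeq \T M_0$ and $\Fcal_0 = \Fcal|_P/K$, the induced Clifford orbibundle on $M_0$ is $\Ecal_{red}\simeq \bigwedge_{J_0}\T M_0\otimes \Fcal_0$. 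Since $J_0$ is compatible with $\Omega_0$, its equivariant index is $\Qcal(M_0,\Ecal_{red})=RR(M_0,\Fcal_0)$ by definition of the Riemann-Roch character on the symplectic orbifold $M_0$, which concludes the proof.

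The only subtle step is the verification that $\T_K P$ is $J$-stable, which requires choosing the metric to be compatible with $J$; the rest is bookkeeping and a direct application of the already-established Theorem \ref{theo:localisation-Phi-0}.
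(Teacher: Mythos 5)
Your proposal is correct and follows essentially the same route as the paper: both reduce to Theorem \ref{theo:localisation-Phi-0} and identify $\Ecal_{red}$ with $\bigwedge_{J_0}\T M_0\otimes\Fcal_0$ by splitting $\T M\vert_P$ into $[\kgot]\oplus J[\kgot]\simeq[\kgot_\C]$ and its complement, which (with the compatible metric $g=\Omega(\cdot,J\cdot)$) is exactly the symplectic orthogonal $E\simeq\pi^*\T M_0$ used in the paper, so your observation that $\T_K P$ is $J$-stable coincides with the paper's construction of $J_0$ on $E$.
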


\begin{proof}Let $\Ecal:=\bigwedge_J \T M$ be the Clifford bundle associated to a compatible almost complex structure $J$.
We consider the restriction of the tangent bundle $\T M$ on the submanifold $P$. Let $[\kgot]\subset \T M\vert_P$ be the
subbundle which is the image of the map
$(m,X)\in P\times \kgot\mapsto X\cdot m\in \T_m P\subset \T_m M$. We consider the subbundle $[\kgot]\oplus J[\kgot]$ of $\T M\vert_P$ that is canonically isomorphic to $[\kgot_\C]$. Let $E\subset \T M\vert_P$ be the orthogonal of $[\kgot]\oplus J[\kgot]$ relatively to the symplectic form. We see that $E\to P$ is a symplectic bundle isomorphic to the pull-back $\pi^*\T M_0$, and that
$J$ induces an almost complex structure $J_0$ on $E$ compatible with $\pi^*(\Omega_0)$. We see then that $\Ecal\vert_P\simeq \bigwedge \kgot_\C\otimes \Ecal_0$ with $\Ecal_0=\pi^*(\bigwedge_{J_0} \T M_{0})$. Our result follows then from Theorem \ref{theo:localisation-Phi-0}.
\end{proof}

\medskip

%%%%%%%%%%%%%%%%%%%%%%%%%%%%%%%%%%%%%%%%%%%%%%%%%%%%%%%%%%%%%
\subsubsection{Reduction in stage}
%%%%%%%%%%%%%%%%%%%%%%%%%%%%%%%%%%%%%%%%%%%%%%%%%%%%%%%%%%%%%

In this section, we explore the case of reduction in stage. Suppose that we have a Hamiltonian action of the compact Lie group
$G\times K$ on the manifold $(M,\Omega)$. Let $\Phi=\Phi_G\oplus\Phi_K :M\to \ggot^*\oplus\kgot^*$ be  the
corresponding moment map. We suppose that
\begin{itemize}
\item  $0$ is a regular value of $\Phi_K$,
\item  $K$ acts freely on $P:=\Phi_K^{-1}(0)$,
\item $Z:=\Phi^{-1}(0)$ is compact.
\end{itemize}

We denote by $\pi: P\to M_0:=P/K$ the corresponding $G$-equivariant principal fibration. We denote by $\phi:M_0\to \ggot^*$ the equivariant map induced by $\Phi_G$ : it is the moment map relative to the Hamiltonian action of $G$ on the symplectic manifold $(M_0,\Omega_0)$.

\begin{prop}\label{prop:localisation-stage}
For any equivariant vector bundle $\Fcal\to M$, we have the following relation
$$
\left[RR_{K\times G}(M,\Fcal,\Phi^{-1}(0),\Phi)\right]^K=
RR_G(M_{0},\Fcal_{0},\phi^{-1}(0),\phi) \qquad \mathrm{in}\quad \hat{R}(G).
$$

Here $\Fcal_0=(\Fcal\vert_P)/K$.
\end{prop}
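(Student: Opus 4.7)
The plan is to follow the scheme of the proof of Proposition 9.3 (the case $G=\{1\}$), carrying the extra $G$-equivariance through each step. Fix a $K\times G$-invariant compatible almost complex structure $J$ on $M$, so that $\Ecal:=\bigwedge_J \T M$ is a $K\times G$-equivariant Clifford bundle and the character $RR_{K\times G}(M,\Fcal,\Phi^{-1}(0),\Phi)$ is the equivariant index of the transversally elliptic symbol $\sigma(M,\bigwedge_J \T M\otimes \Fcal,\Phi)$ restricted to a relatively compact $K\times G$-invariant neighborhood $\Ucal$ of $Z=\Phi^{-1}(0)$ with $\Ucal\cap Z_\Phi=Z$.

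The first step is to set up a $K\times G$-equivariant model of $M$ around $P=\Phi_K^{-1}(0)$. Since $0$ is a regular value of $\Phi_K$ and $K$ acts freely on $P$, a $K\times G$-invariant symplectic tubular neighborhood theorem identifies a neighborhood of $P$ with a neighborhood of the zero section in $P\times \kgot^*$, in such a way that $\Phi_K(x,\xi)=\xi$ and $\Phi_G(x,\xi)=\phi(\pi(x))+O(\|\xi\|)$. Decompose the restriction of $\T M$ to $P$ orthogonally as
\[
\T M\vert_P \simeq \T_K P \oplus [\kgot_\C]
\]
exactly as in the proof of Theorem \ref{theo:localisation-Phi-0}. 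The subbundle $\T_K P$ is $G$-equivariantly isomorphic to $\pi^*\T M_0$, the induced almost complex structure $J_0$ on $M_0$ is compatible with $\Omega_0$, and hence $\Ecal\vert_P\simeq [\bigwedge\kgot_\C]\otimes \pi^*(\bigwedge_{J_0}\T M_0)$, while $\Fcal\vert_P\simeq \pi^*\Fcal_0$. This decomposes the Clifford data on $\Ucal$ into a vertical ``$K$-part'' and a horizontal part pulled back from $M_0$.

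Next I would repeat the computation of the proof of Theorem \ref{theo:localisation-Phi-0} in this model. At a point $(x,\xi)$ write a tangent vector as $\eta\oplus a\oplus b\in \T_K P\oplus \kgot\oplus\kgot^*$; the Kirwan vector field for $\Phi$ equals $\kappa_{\phi}(\pi(x))\oplus (-\xi)\oplus 0$ to leading order in $\xi$, with $\phi$-contribution and $\xi$-contribution decoupling. Writing the pushed symbol as an exterior product over the splitting $\T M\vert_P\simeq \T_K P\oplus[\kgot_\C]$, a straightforward homotopy of transversally elliptic symbols (identical in structure to the one used for Theorem \ref{theo:localisation-Phi-0}, now carried out fibrewise in the $\kgot_\C$-directions while preserving the $\phi$-deformation in the horizontal directions) shows that the restriction of $\sigma(M,\bigwedge_J \T M\otimes \Fcal,\Phi)$ to $\T^*_{K\times G}\Ucal$ is equivalent, as a $K\times G$-transversally elliptic class, to
\[
i_!\Big(\sigma_{hor}\bigl(P,\Ecal_{0}\otimes \Fcal_0,\phi\bigr)\Big),
\]
where $i:P\hookrightarrow P\times\kgot^*$ is the zero section and $\sigma_{hor}(P,\Ecal_0\otimes\Fcal_0,\phi)$ is the horizontal Dirac symbol on $P$ pushed by the Kirwan vector field of $\phi$ (cf.\ Section \ref{sec:horizontal}). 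This same homotopy verifies that $Z=\pi^{-1}(\phi^{-1}(0))$ is indeed a compact component of $Z_\Phi$, and that $\phi^{-1}(0)$ is a compact component of $Z_\phi$.

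Finally, by Theorem \ref{theo:direct} the equivariant index is unchanged under $i_!$, and by the free action property (Theorem \ref{th:free-action}) applied to the free $K$-action on $P$ we obtain
\[
RR_{K\times G}(M,\Fcal,\Phi^{-1}(0),\Phi)=\sum_{\mu\in\hat K} RR_G\bigl(M_0,\Fcal_0\otimes \Vcal_\mu^*,\phi^{-1}(0),\phi\bigr)\otimes V_\mu
\]
in $\hat R(K\times G)$. Taking $K$-invariants kills every summand except $\mu=0$, yielding the announced identity in $\hat R(G)$. The main obstacle will be the second step: writing the homotopy of transversally elliptic symbols carefully enough that the $\Phi_G$-deformation is retained as a horizontal symbol on $P$ (ultimately on $M_0$) localized at $\phi^{-1}(0)$, while the $\Phi_K$-deformation is peeled off as a Bott-type factor in the $\kgot_\C$-directions, without enlarging the characteristic set beyond the tubular neighborhood model or breaking $G$-equivariance.
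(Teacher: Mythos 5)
Your overall route is the same as the paper's: work in the model $P\times\kgot^*$ near $P$, identify the deformed symbol with $i_!$ of a horizontal symbol on $P$ attached to $\phi$, then use push-forward invariance (Theorem \ref{theo:direct}), the free-action property (Theorem \ref{th:free-action}) and take $K$-invariants. The set-up and the last step are fine. But the middle step, which you yourself flag as ``the main obstacle'', is a genuine gap, and the claim you base it on is false: the Kirwan vector field of $\Phi$ does \emph{not} decouple at leading order in $\xi$ into $\kappa_\phi(\bar p)\oplus(-\xi)\oplus 0$. In the model one has $\Phi_K(p,\xi)=\xi$ and $\Phi_G(p,\xi)=\phi(\bar p)+j_p^*(\xi)$, where $j_p:\ggot\to\kgot$ records the vertical part of the $G$-action on $P$ relative to the chosen connection; hence the vertical ($\kgot$-)component of $\kappa_\Phi$ is $-\big((\mathrm{Id}+j_pj_p^*)\xi+j_p\phi(\bar p)\big)$ and the horizontal one is $\kappa_\phi(\bar p)-\big(j_p^*(\xi)\big)\cdot\bar p$. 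The term $j_p\phi(\bar p)$ is of order zero in $\xi$ and is nonzero at points of the tube where $\phi(\bar p)\neq 0$ (the $G$-action has no reason to be horizontal), so a naive interpolation towards the decoupled field can create zeros, i.e.\ enlarge the characteristic set of the interpolated symbols beyond the region where you control $Z_\Phi$, and homotopy invariance of the transversally elliptic index cannot be invoked.

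This is precisely what the paper's proof supplies and what is missing from your plan: (i) the connection $1$-form $\theta$ is chosen so that $\kappa_G$ is horizontal, i.e.\ $j_p\phi(\bar p)=0$, on $\overline{U}\setminus V$ for suitable invariant neighborhoods $V\subset U$ of $Z$ in $P$ (possible because $\kappa_\phi$ does not vanish on the compact set $\pi(\overline{U}\setminus V)$), together with a bound $\|j_p\phi(\bar p)\|\le c$ on $V$; (ii) one deforms $\Phi_t(p,\xi)=\xi\oplus(\phi(\bar p)+t\,j_p^*(\xi))$ and proves that for all $t\in[0,1]$ the zeros of the corresponding Kirwan field over $U$ stay inside the fixed compact set $\overline{V}\times\{\|\xi\|\le c\}$; this is what legitimizes removing the coupling term $j_p^*(\xi)$ from $\Phi_G$; (iii) even after that, the product form of $\sigma(M,\Phi_0)$ still carries $\gamma_p=j_p\phi(\bar p)$ inside the Clifford factor $\clif_{\wedge\kgot_\C}((a+\gamma_p+\xi)\oplus ib)$, and a second explicit homotopy scaling $t(a+\gamma_p)$, whose support is computed inside $\T^*_{G\times K}(U\times\kgot^*)$, is needed to reach $i_!(\sigma_{hor})$. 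Without these devices the ``straightforward homotopy'' you invoke does not exist as stated; with them, your concluding appeal to Theorem \ref{theo:direct} and Theorem \ref{th:free-action} is correct and finishes the proof as in the paper.
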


\begin{proof} We follow the proof of Proposition \ref{prop:localisation-Phi-0-hamiltonien}. We analyze the symbol
$\sigma(M,\Phi)(m,\nu):=\clif_m(\tilde{\nu}-\kappa_\Phi(m)):\bigwedge^{even}_J \T M\to \bigwedge^{odd}_J \T M$ on the model
$\hat{M}=P\times\kgot^*$ in a neighborhood of $Z\times \{0\}$.

The choice of a $G\times K$-invariant connection $1$-form $\theta$ on the $K$-principal bundle $P\to M_0$
defines a projection map $\T_p P\to \kgot$ which associates to a tangent vector its vertical part. This projection
induces a map $j:P\to \hom(\ggot,\kgot)$, where the vertical part of the vector $X\cdot p\in \T_p P$
is equal to $j_p(X)\in\kgot$ for any $(X,p)\in \ggot\times P$.

The symplectic form can be taken as $\hat{\Omega}=\pi^*\Omega_0 +d\langle\theta,\xi\rangle$, and the moment maps
have the following expressions : for $(p,\xi)\in P\times \kgot^*$
\begin{itemize}
\item  $\Phi_K(p,\xi)=\xi$,
\item $\Phi_G(p,\xi)= \phi(\bar{p})+ j_p^*(\xi)$, where $\bar{p}=\pi(p)$.
\end{itemize}

We denote by $\kappa_G$ (resp. $\kappa_\phi$) the Kirwan vector field on $P$ (resp. $M_0$)
associated to the equivariant map $\Phi_G:P\to \ggot^*$ (resp. $\phi$). The Kirwan vector
field $\kappa_\Phi$ has the following expression
$\kappa_\Phi=\kappa_\Phi^{vert}+\kappa_\Phi^{hor}$, where
\begin{itemize}
\item  $-\kappa_\Phi^{vert}(p,\xi)=(Id+j_pj_p^*)\xi+ j_p\phi(\bar{p})$,
\item $-\kappa_\Phi^{hor}(p,\xi)\simeq -\kappa_\phi(\bar{p})+ j_p^*(\xi)\cdot \bar{p}$ through the tangent map
$\T_p\pi$.
\end{itemize}

Let $V\subset U$ be two invariant relatively compact neighborhoods of $Z=\Phi^{-1}_G(0)\cap P$ in $P$ such that
$\pi(\overline{U})\cap \{\kappa_\phi=0\}=\phi^{-1}(0)$, and $\overline{V}\subset U$. The vector field $\kappa_\phi$
does not vanish on the compact set $\pi(\overline{U}\setminus V)$, hence we can choose the connection $1$-form $\theta$
such that the vector field $\kappa_G$ is horizontal on $\overline{U}\setminus V$. In other words, the term $j_p\phi(\bar{p})$
vanishes for $p\in \overline{U}\setminus V$. Let $c >0$ such that $\|j_p\phi(\bar{p})\|\leq c$ on $V$.
We deform $\Phi$ in $\Phi_t(p,\xi)=\xi\oplus\left(\phi(\bar{p}) +t j_p(\xi)\right)$ for $t\in [0,1]$, and we denote
$\kappa_t$ the corresponding vector field. The following lemma is an easy computation

\begin{lem}
For any $t\in [0,1]$, the set $\{(p,\xi)\in U\times \kgot^*,\,\kappa_t(p,\xi)=0\}$ is contained in the compact set
$\overline{V}\times \{\|\xi\|\leq c\}$.
\end{lem}

Thanks to the previous lemma, we see that the symbols $\sigma(M,\Phi)$ and $\sigma(M,\Phi_0)$ define the same
class in $\Ko_{G\times K}(\T_{G\times K}(U\times \kgot^*))$. Let $\sigma_{hor}\in \Ko_{G\times K}(\T_{G\times K}U)$ be the pullback
of $\sigma(M_0,\phi)\vert_{\pi(U)}\in \Ko_{G}(\T_{G}\pi(U))$.

For $m=(p,\xi)\in U\times \kgot^*$ and $\nu:=\eta\oplus a \oplus b\in \T^* (U\times \kgot^*)$, we have
$$
\sigma(M,\Phi_0)(m,\nu)=
\sigma_{hor}(p,\eta)\boxtimes \clif_{\wedge \kgot_\C} ((a +\gamma_p+\xi)\oplus ib)
$$
where $\gamma_p= j_p\phi(\bar{p})$. We see that the support of the symbol $\sigma(M, \Phi_0)$ intersected
with $\T^*_{G\times K} (U\times \kgot^*)$  is just equal to $Z\times\{0\}$.

We see also that for $t\in [0,1]$, the support of the symbol
$$\sigma^t(m,\nu)=\sigma_{hor}(p,\eta)\boxtimes \clif_{\wedge \kgot_\C} ((t(a +\gamma_p)+\xi)\oplus ib)
$$
intersected with $\T^*_{G\times K}(U\times \kgot^*)$ remains equal to $Z\times\{0\}$. Thus we have proved that $\sigma(M,\Phi)$ is equal to $i_!( \sigma_{hor})$ in $\Ko_{G\times K}(\T_{G\times K}(U\times \kgot^*))$.
Our result follows from this fact.

\end{proof}

%%%%%%%%%%%%%%%%%%%%%%%%%%%%%%%%%%%%%%%%%%%%%%%%%%%%%%%%%%%%%
\subsubsection{Quasi regular case}\label{subsec:quasi-regular}
%%%%%%%%%%%%%%%%%%%%%%%%%%%%%%%%%%%%%%%%%%%%%%%%%%%%%%%%%%%%%

In this subsection, we study  the character $RR_K(M,\Fcal, \Phi^{-1}(0),\Phi)$, in the case where $0$ is a quasi-regular value of $\Phi$.

%In this subsection, $K$  may be not  connected.

In particular,  we prove the first form of the $[Q,R]=0$ theorem.
 \begin{theo}
 If $0$ is a quasi-regular value of the moment map,
and $L$ a $\Phi$-moment bundle,  then
$$[RR_K(M, L)]^K=RR(M_0,L_0).$$
Here $L_0=L\vert_{\Phi^{-1}(0)}/K$.
\end{theo}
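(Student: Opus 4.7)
The strategy is to reduce the quasi-regular case to the regular case already settled in Proposition~\ref{prop:localisation-Phi-0-hamiltonien}, by combining the strict $\Phi$-positivity of $\Phi$-moment bundles with the slice construction around $\Phi^{-1}(0)$. Since any $\Phi$-moment line bundle $L$ is strictly $\Phi$-positive (for $m\in M^{\beta}\cap\Phi^{-1}(\beta)$ with $\beta\neq 0$ one has $\Lcal(\beta)/i = \|\beta\|^2 > 0$ on $L|_m$), Theorem~\ref{theo:Phipos} yields
$$[RR_K(M,L)]^K = [RR_K(M,L,\Phi^{-1}(0),\Phi)]^K,$$
and it remains to identify the right-hand side with $RR(M_0,L_0)$. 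This is a local computation in an arbitrarily small invariant tubular neighborhood of the compact set $\Phi^{-1}(0)$.

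I would then fix a representative $\hgot$ of the common conjugacy class of stabilizer Lie algebras on $\Phi^{-1}(0)$, let $H_0\subset K$ be the corresponding connected subgroup, and set $N = N_K(H_0)$ with Lie algebra $\ngot$. Let $Z := \Phi^{-1}(0)\cap M^{\hgot}$, a compact $N$-invariant submanifold; one checks that $\Phi^{-1}(0) = K\cdot Z$ with each $K$-orbit meeting $Z$ in a single $N$-orbit. By the symplectic tube theorem, a $K$-invariant neighborhood of $\Phi^{-1}(0)$ is $K$-equivariantly diffeomorphic to $K\times_N \Vcal$ for some $N$-invariant neighborhood $\Vcal\subset M^{\hgot}$ of $Z$. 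Combined with the splitting of $\bigwedge_J TM|_{M^{\hgot}}$ coming from the $J$-invariant decomposition of $TM|_{M^{\hgot}}$ into $TM^{\hgot}$ and its normal bundle, the induction formula of Proposition~\ref{Induction formula} reduces $[RR_K(M,L,\Phi^{-1}(0),\Phi)]^K$ to an $N$-invariant localized index on $\Vcal$.

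By construction $H_0$ acts trivially on $\Vcal\subset M^{\hgot}$, and since $L$ is a $\Phi$-moment bundle the infinitesimal action of $\hgot$ on $L|_Z$ equals $i\langle\Phi,\cdot\rangle|_Z = 0$, so $H_0$ also acts trivially on $L$ in a neighborhood of $Z$. Lemma~\ref{lem:triv} then says that only the residual $N/H_0$-action contributes to the invariant part. But by quasi-regularity the $N/H_0$-action on $\Vcal$ has only finite stabilizers along $Z$, and $0$ is a \emph{regular} value of $\Phi|_{M^{\hgot}}$ viewed as a $(\ngot/\hgot)^*$-valued moment map for the $N/H_0$-action. The already-established regular case (Proposition~\ref{prop:localisation-Phi-0-hamiltonien}), applied to this $N/H_0$-Hamiltonian action in the orbifold sense, then identifies the remaining invariant part with $RR(Z/(N/H_0), L_0) = RR(M_0,L_0)$, using $\Phi^{-1}(0)/K = Z/N = Z/(N/H_0)$.

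The main obstacle will be the careful tracking of the $\spinc$-structures through the two successive reductions: one must verify that the Clifford factor $\bigwedge(\kgot/\ngot)_{\mathbb{C}}$ produced by the induction step, combined with the symplectic normal bundle of $Z$ in $M^{\hgot}$ and with the tangential data of the $N/H_0$-orbits in $Z$, reassembles correctly into the $\spinc$-bundle on the orbifold $M_0$ compatible with the induced symplectic form. This is essentially formal bookkeeping parallel to the regular case, but more delicate because $\Phi^{-1}(0)\to M_0$ is only an orbifold fibration rather than a smooth principal $K$-bundle.
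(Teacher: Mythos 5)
Your first step is fine and is exactly the paper's: a $\Phi$-moment bundle is strictly $\Phi$-positive, so Theorem \ref{theo:Phipos} gives $[RR_K(M,L)]^K=[RR_K(M,L,\Phi^{-1}(0),\Phi)]^K$. The gap is in the second step, and it is not just ``bookkeeping of $\spinc$-structures''. Your claimed local model is false: a $K$-invariant neighborhood of $\Phi^{-1}(0)$ is \emph{not} of the form $K\times_N\Vcal$ with $\Vcal$ open in the fixed-point manifold $M^{\sgot}$. A dimension count in the Marle--Guillemin--Sternberg model at $x\in \Phi^{-1}(0)\cap M_\sgot$ gives $\T_xM\simeq (\kgot/\sgot)\cdot x\oplus(\kgot/\sgot)^*\oplus E_x$, while $\T_x(K\times_N\Vcal)$ misses both the $(\kgot/\ngot)^*$-directions and, crucially, the non-$\sgot$-fixed part $\Ncal\vert_x$ of the symplectic slice $E_x$; these missing directions vanish only when $0$ is a genuinely regular value. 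Consequently your reduction ``only the residual $N/H_0$-action contributes, and $0$ is a regular value of the restricted moment map, so apply Proposition \ref{prop:localisation-Phi-0-hamiltonien}'' compares the localized index on $M$ with a localized index on a lower-dimensional manifold without any argument that they coincide.

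What the paper does instead, and what your proposal has no substitute for, is precisely the treatment of this symplectic normal bundle. After the correct slice reduction (Proposition \ref{prop:induction-cotangent}, not Proposition \ref{Induction formula}: the neighborhood is $K\times_H(N'\times B_{\qgot^*})$ where the symplectic slice $N'$ still \emph{contains} the normal directions with their fiberwise $\sgot$-action), one deforms the Kirwan vector field along the fibers of $\Ncal'$ into the vector field $Jw$ of the fiberwise circle action. Via the Atiyah symbol this trades the normal directions for a tensor factor $\Sym((\Ncal')^*)$ over $M_\sgot$, and the invariant part is then controlled by the GIT-type fact of Proposition \ref{prop:mumford}: since $\Phi_{\Ncal\vert_x}^{-1}(0)=\{0\}$ fiberwise, one has $[\Sym(\Ncal^*\vert_{Z_\sgot})]^{\sgot}=[\C]$. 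Only after this cancellation (together with the triviality of the $\sgot$-action on $L\vert_{Z_\sgot}$, which you did identify correctly) does one land in the regular situation for the locally free $H/S$-action and conclude $[RR_K(M,L,\Phi^{-1}(0),\Phi)]^K=RR(M_0,L_0)$, as in Theorem \ref{theo: Qsemiregular}. Without the deformation to the Atiyah symbol and the identity $[\Sym(\Ncal^*)]^{\sgot}=\C$, the equality you assert between the localized index on $M$ and the reduced Riemann--Roch number is unproved, and it is exactly the point where ``quasi-regular'' differs from ``regular''.
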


 Consider first the basic example of  a Hermitian vector space $W$ with complex structure $J$
 and Hermitian inner product $\mathrm{h}(v,w)$.
Consider  the symplectic form on $W$ given by
   $\Omega=-\mathrm{Im}(\mathrm{h}).$
   Thus $J$ is a complex structure adapted to $\Omega$.
   Let $K$ be a compact group acting by unitary transformations on $W$.
   Then $W$ is a $K$-Hamiltonian manifold with
   moment map  $\langle\Phi(x),X\rangle=\frac{i}{2}\mathrm{h}(X\cdot x,x)$, for $X\in \kgot$, $x\in W$.

 Assume that $\Phi^{-1}(0)=\{0\}$: in this case $0$ is a quasi-regular value. Consider a complex representation space $F$ of $K$, and the $K$-equivariant vector bundle  $[F]=W\times F$.

 \begin{prop}\label{prop:mumford}
  Assume that $\Phi^{-1}(0)=0$.
 Then
 $$RR_K(W,[F],\Phi^{-1}(0),\Phi)=\Sym(W^*)\otimes F.$$
 Furthermore,  $[\Sym(W^*)]^K=\C$.
 \end{prop}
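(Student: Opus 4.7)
The plan is to deform the symbol localized at $\{0\}$ into an Atiyah-type symbol whose index is computed by Proposition \ref{prop-indice-atiyah}. Since $K$ acts by unitary transformations on $W$, I extend the action to $\tilde K := K\times U(1)$, with $U(1)$ acting by scalar multiplication. Choose $\beta\in\mathrm{Lie}(U(1))$ so that $\beta\cdot v=-Jv$; then $W^\beta=\{0\}$ and $J_\beta=-J$, so the Atiyah symbol $\mathrm{At}_\beta(W)(x,\nu)=\clif(\tilde\nu-Jx)$ lives on the spinor bundle $\bigwedge\overline{W_{J_\beta}}=\bigwedge_J W$, which is precisely the Clifford module entering the Riemann-Roch character. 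By the generalized form of Proposition \ref{prop-indice-atiyah} its $\tilde K$-equivariant index is $\Sym(W_{J_\beta})=\Sym(\overline W)\simeq\Sym(W^*)$, the last identification coming from the Hermitian form.

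Next I connect $\sigma(W,\bigwedge_J\T W\otimes[F],\{0\},\Phi)$ to $\mathrm{At}_\beta(W)\otimes[F]$ via the linear homotopy
$$
\sigma_t(x,\nu):=\clif\bigl(\tilde\nu+(1-t)\,\Phi(x)\cdot x - t\,Jx\bigr)\otimes\mathrm{Id}_F,\qquad t\in[0,1].
$$
On the support of $\sigma_t$ intersected with $\T^*_K W$ the cotangent vector $\nu$ is orthogonal to every $Y\cdot x$, $Y\in\kgot$, so pairing $\tilde\nu+(1-t)\Phi(x)\cdot x-tJx=0$ with $\Phi(x)\cdot x\in\kgot\cdot x$ yields
$$
(1-t)\|\Phi(x)\cdot x\|^2+2t\|\Phi(x)\|^2=0,
$$
where the cross term uses the identity $\langle\Phi(x)\cdot x,Jx\rangle=-2\|\Phi(x)\|^2$, a direct consequence of the Kostant relation $\langle\Phi(x),X\rangle=\tfrac{i}{2}\mathrm{h}(X\cdot x,x)$ specialized to $X=\Phi(x)$. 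For $t\in(0,1]$ both summands are non-negative, forcing $\Phi(x)=0$; the hypothesis $\Phi^{-1}(0)=\{0\}$ then gives $x=0$ and $\nu=0$, while for $t=0$ the support lies in $Z_\Phi$, whose component near the origin is $\{0\}$. Thus $\{\sigma_t\}$ is a valid homotopy of $K$-transversally elliptic symbols localized near $0$, and homotopy invariance of the equivariant index (combined with the trivial twisting property of $[F]$) delivers $\Qcal_K(W,\bigwedge_J\T W\otimes[F],\{0\},\Phi)=\Sym(W^*)\otimes F$.

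For the second assertion $[\Sym(W^*)]^K=\C$ I invoke the classical Mumford / Kempf-Ness theorem: under $\Phi^{-1}(0)=\{0\}$ the affine GIT quotient $W/\!\!/K_\C$ reduces to a single point, so the $K_\C$-invariant polynomial algebra on $W$ consists only of constants; since $K$-invariants and $K_\C$-invariants agree on polynomials, the claim follows. The principal technical obstacle in this plan is the sign analysis in the homotopy: one must confirm that the cross term $\langle\Phi(x)\cdot x,Jx\rangle$ carries the correct sign so that the key identity becomes a sum of non-negative quantities. Once this is settled, the remaining steps are direct applications of the machinery developed in the preceding sections.
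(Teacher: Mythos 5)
Your proof is correct and takes essentially the same route as the paper: the same linear homotopy between the $\Phi$-deformed symbol and the Atiyah symbol of $\overline{W}$ (your $K\times U(1)$, $\at_\beta$ packaging with $\beta\cdot v=-Jv$ is just a reformulation of $\at(\overline{W})$), the same moment-map pairing identity controlling the support in $\T^*_K W$, and Proposition \ref{prop-indice-atiyah} yielding $\Sym(W^*)$. For $[\Sym(W^*)]^K=\C$ you invoke Kempf--Ness/GIT, which is precisely the paper's own remark; the paper's proof merely substitutes a short direct argument via constancy of invariant polynomials along the trajectories of $\kappa_\Phi$.
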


 \begin{rem}
 The second claim follows from  Mumford GIT theory. In spirit, this is the first case of the $[Q,R]=0$ theorem.
 \end{rem}

 \begin{proof}
 Let $x\in W$, and $\kappa_\Phi(x)=-\Phi(x)\cdot x$ be the Kirwan vector field associated to $\Phi$. Then
 $\kappa_\Phi$ vanishes only at $0$ :
 indeed we have $\mathrm{h}( Jx,\kappa_\Phi(x))=2\|\Phi(x)\|^2$.

Our deformed symbol is $\sigma(W,\Phi)(x,\nu)=\clif(\tilde \nu -\kappa_\Phi(x)): \bigwedge^{even} W\longrightarrow \bigwedge^{odd} W$,
while Atiyah symbol $\at(\overline{W})$  is  $\at(\overline{W})(x,\nu)=\clif(\tilde{\nu}-Jx): \bigwedge^{even} W\longrightarrow \bigwedge^{odd} W$.

Let $t\in [0,1]$ and $\sigma^t(x,\nu)=\clif(\tilde{\nu}-(1-t)Jx-t\kappa_\Phi(x))$.
  Let us see that $\sigma^t$ is a transversally elliptic symbol with support $\{(0,0)\}$ in $\T^*_K W$.
  Indeed if $\tilde{\nu}=(1-t)Jx+t\kappa_\Phi(x)$ and is orthogonal to $\kgot \cdot x$, we obtain
  $$
  0=\mathrm{Re}\left(\mathrm{h} (\tilde{\nu}, \kappa_\Phi(x))\right)=2(1-t)\|\Phi(x)\|^2+t\|\kappa_\Phi(x)\|^2,
  $$
  and this implies $x=0$. We obtain our first claim since the equivariant index of $\at(\overline{W})$ is
  $\Sym(W^*)$ (see Proposition \ref{prop-indice-atiyah}).

  Let us now prove that $[\Sym(W^*)]^K=\C$.
  If $P\in \Sym(W^*)$ is $K$-invariant, $P$ is constant on the trajectory of the vector field $\kappa_\Phi$
   which is tangent to the $K$-orbits.
  So the value of $P$ coincide with the value of $P$ on the unique stationary point of $\kappa_\Phi$.
  So $P(x)=P(0)$ and $P$ is constant.
  \end{proof}

\medskip

\begin{rem}\label{rem:comparaison-symbols}
In the proof above we have checked that the transversally elliptic symbols $\sigma(W,\Phi)$ and $\at(\overline{W})$ define
the same class in $\Ko_K(\T^*_K W)$. If $\Tbb$ is the $1$-dimension center of $U(V)$, the same proof shows that
$\sigma(W,\Phi)$ and $\at(\overline{W})$ define the same class in $\Ko_{K\times \Tbb}(\T^*_{\Tbb} W)$
\end{rem}

\medskip

We now consider the general case where $0$ is a quasi-regular value, and we study the character
$RR_K(M,\Fcal,\Phi^{-1}(0),\Phi)$ for any vector bundle $\Fcal\to M$.

Let $\sgot$  be the  subalgebra of $\kgot$  such that $Z:=\Phi^{-1}(0)$ is contained in the submanifold $M_{(\sgot)}$.
Let $M_{\sgot}=\{m\in M, \kgot_m=\sgot\}$. Let $H$ be the normalizer subgroup of $\sgot$ in $K$.
 Thus $M_{(\sgot)}=K\times_H M_\sgot$ and $Z= K\times_H Z_\sgot$ where $Z_\sgot:=\Phi^{-1}(0)\cap M_\sgot$.

 We write  a $H$-invariant decomposition $\kgot=\hgot\oplus \qgot$, with corresponding decomposition
$\kgot^*=\hgot^*\oplus \qgot^*$, and write $\Phi=\Phi_{\hgot}\oplus \Phi_{\qgot}$. Here
$\Phi_\hgot:M\to\hgot^*$ is the moment map relative to the $H$-action on $M$, and the map
$\Phi_\qgot:M\to\qgot^*$ is $H$-equivariant. We start with the basic lemma.

\begin{lem}
The map $\Phi_\qgot$ is equal to zero on $M_\sgot$.
\end{lem}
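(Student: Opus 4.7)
The key point is the classical fact that $\Phi$ intertwines the $K$-action on $M$ with the coadjoint action on $\kgot^*$, so that at a point $m\in M$ the image $\Phi(m)$ lies in the $\kgot_m$-fixed subspace of $\kgot^*$. I will combine this with the observation that, by our identifications, this fixed subspace already sits inside $\hgot^*$.

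First I would recall the infinitesimal equivariance of the moment map. Fix $m \in M_\sgot$ and $X\in\sgot=\kgot_m$. Since $X_M(m)=0$, the Kostant relation $\iota(X_M)\Omega + d\langle \Phi,X\rangle = 0$ applied at $m$ gives, for every $Y\in\kgot$,
\[
0 = \frac{d}{dt}\bigg|_{t=0} \langle \Phi(e^{tY}m), X\rangle = \langle \Phi(m), [X,Y]\rangle
= -\langle X\cdot \Phi(m), Y\rangle,
\]
where $X\cdot(\cdot)$ denotes the coadjoint action. Hence $\Phi(m)$ belongs to the $\sgot$-fixed subspace $(\kgot^*)^\sgot$.

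Next I would transfer this fixed-space condition through the $K$-invariant inner product identification $\kgot\simeq\kgot^*$. Under this identification, $(\kgot^*)^\sgot$ corresponds to the centralizer $C_\kgot(\sgot) = \{X\in\kgot : [X,\sgot]=0\}$. Since any element centralizing $\sgot$ in particular normalizes it, we have the inclusion $C_\kgot(\sgot)\subset N_\kgot(\sgot)=\hgot$, the Lie algebra of the normalizer $H$ of $\sgot$ in $K$. Consequently $\Phi(m)$ lies in $\hgot^*\subset\kgot^*$ (using the $H$-invariant orthogonal decomposition $\kgot=\hgot\oplus\qgot$ and its dual $\kgot^*=\hgot^*\oplus\qgot^*$), which is exactly the statement $\Phi_\qgot(m)=0$.

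The only subtlety (and it is very mild) is making sure that the decomposition $\kgot^*=\hgot^*\oplus\qgot^*$ used to define $\Phi_\qgot$ is the one induced by the invariant inner product, so that the identification $(\kgot^*)^\sgot\leftrightarrow C_\kgot(\sgot)$ genuinely places $\Phi(m)$ in the $\hgot^*$ summand; this is automatic from our standing conventions. No further ingredient is needed.
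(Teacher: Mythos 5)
Your proof is correct and follows essentially the same route as the paper: since $\kgot_m=\sgot$, equivariance forces $\Phi(m)$ (viewed in $\kgot$ via the invariant inner product) to centralize $\sgot$, hence to lie in the normalizer $\hgot$, so its $\qgot^*$-component vanishes. Your derivation of the centralizing property via the Kostant relations is just a slightly longer way of phrasing the equivariance argument the paper uses, and your remark about $\qgot$ being the orthogonal complement of $\hgot$ matches the paper's standing conventions.
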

\begin{proof}
Let $m\in M_\sgot$. Since $\kgot_m=\sgot$, the term $\Phi(m)\in \kgot^*\simeq \kgot$ satisfies $[\sgot,\Phi(m)]=0$ and a fortiori
$\Phi(m)\in\hgot=N_\kgot(\sgot)$.
\end{proof}

\medskip

Thus the restriction $\phi$ of $\Phi$ to $M_\sgot$ takes values in $\hgot^*$: the manifold $M_{\sgot}$ is a
$H$-Hamiltonian manifold, with moment map $\phi$. Let $M_\sgot^0$ be the union of connected components
of $M_\sgot$ that intersect $\phi^{-1}(0)$. Since $S$ acts trivially on $M_\sgot^0$, the map $\phi$ take value in $(\hgot/\sgot)^*$
on $M_\sgot^0$. Since the group $H/S$ acts locally freely on $M_\sgot^0$, the map $\phi: M_\sgot^0\to (\hgot/\sgot)^*$
is a submersion. In particular, $Z_\sgot=\phi^{-1}(0)\cap M_\sgot^0$ is a compact submanifold, so
$Z=K\times_H Z_\sgot$ is a submanifold of $M$, as asserted previously, and the reduced space
$$
M_{0}:=\Phi^{-1}(0)/K= \phi^{-1}(0)\cap M_\sgot^0/ (H/S)
$$
is a compact symplectic orbifold.

\bigskip

Let $\Ncal\to Z$ be the symplectic normal bundle of the submanifold $Z$ in $M$: for $x\in Z$,
$$
\Ncal\vert_x= (\T_x Z)^{\perp}/(\T_x Z)^{\perp}\cap\T_x Z.
$$
Here we have denoted by $(\T_x Z)^{\perp}$ the orthogonal with respect to the symplectic form.

We can equip $\Ncal$ with an $H$-invariant Hermitian structure $\mathrm{h}$ such that the symplectic structure on the fibers
of $\Ncal\to Z$ is equal to $-\mathrm{Im}(\mathrm{h})$.

  The subalgebra $\sgot$ acts fiberwise on the complex vector bundle  $\Ncal\vert_{Z_\sgot}$. We consider the action of $\sgot$ on the fibers of the complex bundle $\Sym(\Ncal^*\vert_{Z_\sgot})$. We will use the following.

\begin{lem}
The subbundle $[\Sym(\Ncal^*\vert_{Z_\sgot})]^\sgot$ is reduced to the trivial bundle $[\C]\to Z_\sgot$.
\end{lem}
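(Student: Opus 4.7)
The plan is to fix $x\in Z_\sgot$, prove fiberwise that $[\Sym(\Ncal^*|_x)]^\sgot=\C$, and then glue into a trivial line bundle on $Z_\sgot$. First I would invoke the Marle--Guillemin--Sternberg symplectic slice theorem at $x$: a $K$-invariant neighborhood of $K\cdot x$ in $M$ is equivariantly symplectomorphic to a neighborhood of $[1;0,0]$ in the model $K\times_{K_x}(\qgot^*\times V)$, where $V$ is the symplectic slice at $x$, and the moment map reads $\Phi([k;\xi,v])=k\cdot(\xi+\psi_V(v))$ with $\psi_V:V\to\sgot^*$ the quadratic moment map of the linear $K_x$-action on $V$.

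Next I would exploit the quasi-regular hypothesis $Z\subset M_{(\sgot)}$ to pin down $\psi_V^{-1}(0)$ exactly. On one hand $V^\sgot\subset\psi_V^{-1}(0)$, since the moment map of a trivial linear action vanishes identically. On the other hand, in the local model $\Phi^{-1}(0)$ is cut out by $\xi=0$ and $\psi_V(v)=0$, and the requirement that every such point have stabilizer algebra $\sgot$ forces $v$ to be $S$-fixed, i.e.\ $v\in V^\sgot$. Hence $\psi_V^{-1}(0)=V^\sgot$.

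Then I would identify $\Ncal|_x$ with the symplectic complement $W$ of $V^\sgot$ in $V$. In the split tangent space $\T_xM\simeq \qgot\oplus\qgot^*\oplus V$ (with symplectic form dual pairing on $\qgot\oplus\qgot^*$ plus $\omega_V$ on $V$) we have $\T_xZ=\qgot\oplus 0\oplus V^\sgot$, using $\T_0\psi_V^{-1}(0)=V^\sgot$. A direct $\omega$-orthogonal computation yields $(\T_xZ)^\perp=\qgot\oplus 0\oplus W$ and $(\T_xZ)^\perp\cap \T_xZ=\qgot\oplus 0\oplus 0$, so the intrinsic definition of the excerpt gives $\Ncal|_x\simeq W$. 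The induced linear $\sgot$-action on $\Ncal|_x$ corresponds to the restriction $\psi_W:=\psi_V|_W:W\to\sgot^*$, which satisfies $\psi_W^{-1}(0)=V^\sgot\cap W=\{0\}$.

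Finally I would apply Proposition \ref{prop:mumford} to the compact connected group $\overline{S}$ (closure of the connected Lie subgroup with Lie algebra $\sgot$) acting linearly on $W$: since $\psi_W^{-1}(0)=\{0\}$, Mumford's statement gives $[\Sym(W^*)]^{\overline{S}}=\C$, which is the same as $[\Sym(\Ncal^*|_x)]^\sgot$. Letting $x$ vary over $Z_\sgot$ produces a canonical nowhere-vanishing constant section, so $[\Sym(\Ncal^*|_{Z_\sgot})]^\sgot$ is the trivial line subbundle $[\C]\to Z_\sgot$. The only delicate point is cleanly matching the intrinsic symplectic normal bundle from the excerpt with the slice datum $W$ coming out of Marle--Guillemin--Sternberg; the remainder is a quadratic expansion of $\Phi$ together with Mumford's result already proved as Proposition \ref{prop:mumford}.
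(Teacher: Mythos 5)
Your proposal is correct and follows essentially the same route as the paper's own proof: the Marle--Guillemin--Sternberg local model at a point $x\in Z_\sgot$, the observation that the quasi-regular hypothesis $\Phi^{-1}(0)\subset M_{(\sgot)}$ forces the slice moment map to satisfy $\psi_V^{-1}(0)=V^{\sgot}$, the identification of $\Ncal\vert_x$ with the symplectic orthogonal of $V^{\sgot}$ in the slice, and an application of Proposition \ref{prop:mumford} to that fiber. The only differences are cosmetic: you spell out the computation of $(\T_x Z)^{\perp}$ and the gluing of the fiberwise constants into the trivial bundle $[\C]$, which the paper leaves implicit.
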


\begin{proof}
Let $x\in Z_\sgot$. The vector space $\kgot\cdot x\subset \T_x M$ is totally isotropic, since $ \Omega_x(X\cdot x,Y\cdot x)=
\langle \Phi(x),[X,Y]\rangle=0$, hence we can consider the vector space
$E_x:=(\kgot\cdot x)^\perp/\kgot\cdot x$ that is equipped  with a $K_x$-equivariant symplectic structure $\Omega_x$ : we denote by
$\Phi_{E_x}:E_x\to \sgot^*$ the corresponding moment map.  A local model
for a symplectic neighborhood of $Kx$ is $U:=K\times_{K_x}( (\kgot_x/\sgot)^*\times E_x)$ where the moment map is
$\Phi_x[k;\xi,v]=k(\xi+ \Phi_{E_x}(v))$.
Our hypothesis that $\Phi^{-1}(0)$ is contained in $M_{(\sgot)}$ implies that $\Phi_{E_x}^{-1}(0)= (E_x)^\sgot$. We get then the decompositions
\begin{eqnarray*}
\T_x Z &=&  (\kgot/\sgot)\cdot x \oplus (E_x)^\sgot,\\
\T_x M&=&  (\kgot/\sgot)\cdot x\oplus  (\kgot/\sgot)^* \oplus E_x.
\end{eqnarray*}
Let $S$ be the connected component of $K_x$. We see then that $\Ncal\vert_x$ is equal to the symplectic orthogonal of
$(E_x)^\sgot$ in $E_x$. Hence $\Ncal\vert_x$
is a symplectic vector space with a Hamiltonian action of the compact group $S$ such that $\Phi_{\Ncal\vert_x}^{-1}(0)=
\Phi_{E_x}^{-1}(0)\cap \Ncal\vert_x=0$. As we have seen before, it implies that $[\Sym(\Ncal^*\vert_x)]^\sgot=\C$.
\end{proof}

\bigskip

\begin{defi}
If $\Fcal\to M$ is a $K$- equivariant complex vector bundle, we define on $M_{0}$ the (finite dimensional)  orbi-bundle
$$
\Fcal_{0}:=\left[\Fcal\vert_{Z_\sgot}\otimes \Sym(\Ncal^*\vert_{Z_\sgot})\right]^\sgot/(H/S).
$$
If $\sgot$ acts trivially on the fibers of  $\Fcal\vert_{Z_\sgot}$, the bundle $\Fcal_{0}$ is equal to
$\Fcal\vert_{Z_\sgot}/(H/S)$.
\end{defi}

\medskip

 The  aim of this subsection is the following theorem.
  \begin{theo}\label{theo: Qsemiregular}
Assume that $\Phi^{-1}(0)\subset M_{(\sgot)}$. For any $K$-equivariant complex vector bundle $\Fcal\to M$, we have
 $$
 [RR_K(M,\Fcal, \Phi^{-1}(0),\Phi)]^K=RR(M_{0},\Fcal_{0}).
 $$
\end{theo}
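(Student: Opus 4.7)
The plan is to reduce this theorem to two building blocks already established: Proposition~\ref{prop:localisation-Phi-0-hamiltonien} (the regular-value case) and Proposition~\ref{prop:mumford} (the linear Hamiltonian computation with isolated zero). The reduction proceeds by three successive normalizations: a slice induction from $K$ down to the normalizer $H=N_K(\sgot)$, an equivariant symplectic normal form around $Z_\sgot$ that linearizes the directions transverse to $M_\sgot^0$, and a reduction-in-stages that peels off the subgroup $S=\exp(\sgot)$ from the locally free quotient $H/S$.

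\emph{Step 1: slice induction.} The hypothesis $\Phi^{-1}(0)\subset M_{(\sgot)}=K\times_H M_\sgot$ yields $Z=\Phi^{-1}(0)=K\times_H Z_\sgot$, and a $K$-invariant tubular neighborhood of $Z$ in $M$ is $K$-diffeomorphic to $K\times_H Y$ for some $H$-invariant neighborhood $Y$ of $Z_\sgot$. Applying Proposition~\ref{Induction formula} to the transversally elliptic deformed symbol, and arguing exactly as in the first step of the proof of Theorem~\ref{theo:localization-Z-beta}, one obtains
\begin{equation*}
\bigl[RR_K(M,\Fcal,\Phi^{-1}(0),\Phi)\bigr]^K=\bigl[\Qcal_H(Y,\Ecal_Y,Z_\sgot,\phi)\bigr]^H,
\end{equation*}
where $\Ecal_Y$ is the Clifford bundle on $Y$ produced from $(\bigwedge_J TM\otimes \Fcal)|_Y$ by dividing out the trivial $\spinc$-factor $\bigwedge(\kgot/\hgot)_\C$ via the isomorphism~\eqref{eq:E-slice-xi}.

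\emph{Step 2: symplectic linearization along $\Ncal$.} Because $0$ is quasi-regular with stabilizer class $(\sgot)$, the equivariant symplectic normal form identifies an $H$-invariant neighborhood of $Z_\sgot$ in $Y$ with a neighborhood of the zero section of the Hermitian vector bundle $\Ncal\to U_\sgot$, where $U_\sgot\subset M_\sgot^0$ is an $H$-invariant open neighborhood of $Z_\sgot$, and the fibers of $\Ncal$ carry a linear Hamiltonian $S$-action whose moment map vanishes only at the origin. Running fiberwise over $U_\sgot$ the symbol comparison recorded in Remark~\ref{rem:comparaison-symbols} (which is the $K\times\Tbb$-equivariant form of Proposition~\ref{prop:mumford}), and using the fiber-product construction of Section~\ref{sec:fiber-product}, one obtains the $H$-transversally elliptic identity
\begin{equation*}
\sigma(Y,\Ecal_Y,\phi)\;\simeq\;\sigma(U_\sgot,\Ecal_{U_\sgot},\phi|_{U_\sgot})\,\lozenge\,\at(\overline{\Ncal}),
\end{equation*}
where $\Ecal_{U_\sgot}$ is the Clifford bundle on $U_\sgot$ obtained from $\Ecal_Y|_{U_\sgot}$ by dividing by the $\spinc$-factor $\bigwedge\overline{\Ncal}$.

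\emph{Step 3: reduction in stages.} Combining Step~2 with Lemma~\ref{lem:indice-lozenge} turns the localized index into
\begin{equation*}
\bigl[\Qcal_H\bigl(U_\sgot,\Ecal_{U_\sgot}\otimes\Sym(\Ncal^*),Z_\sgot,\phi|_{U_\sgot}\bigr)\bigr]^H.
\end{equation*}
Since $S$ acts trivially on $U_\sgot$ but nontrivially on the twisting factors, Lemma~\ref{lem:triv} collapses the expression onto the $\sgot$-invariant subbundle $[\Fcal|_{Z_\sgot}\otimes\Sym(\Ncal^*|_{Z_\sgot})]^\sgot$. The residual problem is then posed on the $H/S$-Hamiltonian manifold $U_\sgot\subset M_\sgot^0$ with moment map $\phi:U_\sgot\to(\hgot/\sgot)^*$, for which $0$ is a regular value with locally free $H/S$-action. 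The regular-value statement of Proposition~\ref{prop:localisation-Phi-0-hamiltonien} then delivers precisely $RR(M_0,\Fcal_0)$.

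\emph{Main obstacle.} The delicate point is the Clifford-bundle bookkeeping across the three reductions. Along $Z_\sgot$ the tangent bundle splits orthogonally as
\begin{equation*}
TM\vert_{Z_\sgot}\;\cong\;\pi^*TM_0\,\oplus\,(\hgot/\sgot)_\C\,\oplus\,\Ncal\,\oplus\,(\kgot/\hgot)_\C,
\end{equation*}
and one must verify that after the three successive divisions by the $\spinc$-bundles $\bigwedge(\kgot/\hgot)_\C$ (Step~1), $\bigwedge\overline{\Ncal}$ (Step~2, via the fiberwise Atiyah symbol), and $\bigwedge(\hgot/\sgot)_\C$ (Step~3, via the regular case), what remains on $M_0$ is exactly $\bigwedge_{J_0}TM_0\otimes \Fcal_0$ with the correct orientation. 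Conceptually this is an iterated application of Proposition~\ref{prop: E=SW} combined with orientation compatibilities, but it is where almost all of the genuine work and potential sign errors reside.
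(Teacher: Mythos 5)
Your overall strategy coincides with the paper's: reduce from $K$ to $H=N_K(\sgot)$ near $Z_\sgot$, linearize transversally to $M_\sgot$ and use a Mumford/Atiyah-type deformation to produce the factor $\Sym(\Ncal^*)$, then take $\sgot$-invariants and finish with the regular case for the locally free $H/S$-action. Your Steps 2 and 3 match the paper's second stage: the paper implements your ``fiberwise Remark \ref{rem:comparaison-symbols}'' as one explicit global homotopy $\kappa^t(w)=-\Phi_\rgot(w)\cdot w-t\,\Phi_\sgot(w)\cdot w+(1-t)Jw$, where the real content is checking that the characteristic set, intersected with the covectors conormal to the $H$-orbits, stays equal to $Z_\sgot$ (your key inequality is the right one), and one also needs the identification of the normal bundle of $Z_\sgot$ in the slice with $\Ncal\vert_{Z_\sgot}$ so that the outcome matches the definition of $\Fcal_{0}$.

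The genuine gap is Step 1. The reduction to $H$ cannot be obtained ``exactly as in the first step of Theorem \ref{theo:localization-Z-beta}'': that argument uses the slice $Y=\Phi^{-1}(B)$ at $\beta\neq 0$, where the group drops to the stabilizer $K_\beta$ of the image point and the symbol splits off the polarized factor $\bigwedge\qgot_{J_\beta}$. At the value $0$ the stabilizer is all of $K$, there is no polarization of $\qgot$, and the passage to $H$ comes instead from the orbit-type stratum $M_{(\sgot)}=K\times_H M_\sgot$. Concretely, a $K$-invariant neighborhood of $\Phi^{-1}(0)$ is modeled on $K\times_H(N'\times B_{\qgot^*})$, where $N'$ is the symplectic cross-section $\{\Phi_\qgot=0\}$ near $Z_\sgot$ (one must check that $\T\Phi_\qgot$ is surjective with symplectic kernel along $Z_\sgot$); note also the dimension slip in your write-up: dividing by $\bigwedge(\kgot/\hgot)_\C$, the spinor module of $\qgot\oplus\qgot^*$, is correct for $N'$, but then $K\times_H N'$ is not open in $M$, so your ``$Y$'' cannot both be a tubular-neighborhood slice and carry that division. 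On the model $K\times_H(N'\times B_{\qgot^*})$ the Kirwan vector field has nonzero components in the $\qgot$-directions off $N'$, so the identification of the localized index with the $H$-index on $N'$ is not the formal induction of Proposition \ref{Induction formula}: it is the slice theorem for deformed symbols, Proposition \ref{prop:induction-cotangent}, whose hypotheses (\ref{eq:phi-positif}) (namely $\Phi_\qgot=0$ only on $N'$ and $(\Phi,\xi)\geq 0$) must be verified and whose proof is a nontrivial homotopy identifying the restricted symbol with the pushforward $i_!$ of the deformed symbol on $N'$. With that proposition, together with the check that the compatible almost complex structure can be deformed near $Z_\sgot$ to a direct sum $J_{N'}\oplus J_\qgot$ so that the Clifford division really yields $\bigwedge_{J_{N'}}\T N'\otimes\Fcal$, your Step 1 output is exactly what the paper proves, and the rest of your outline goes through.
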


The proof of this theorem will require two steps, inducing and restricting, that are considered in the next subsections.

%%%%%%%%%%%%%%%%%%%%%%%%%%%%%%%%%%%%%%%%%%%%
\subsubsection*{Inducing from $H$ to $K$.}
%%%%%%%%%%%%%%%%%%%%%%%%%%%%%%%%%%%%%%%%%%%%

Let us show that there is a $K$-invariant neighborhood $N$ of  the compact submanifold $Z_\sgot:=\Phi^{-1}(0)\cap M_\sgot$ of $M$
of the form $N=K\times_H (N'\times B_{\qgot^*})$, where $B_{\qgot^*}$ is a small $H$-invariant ball in $\qgot^*$, and $N'$ a $H$-invariant
symplectic submanifold of $M$.

Let $\Vcal \subset M$ be a small $K$-invariant tubular neighborhood of the stratum $M_{(\sgot)}$
with fibration $p: \Vcal\to M_{(\sgot)}$. Let $V=p^{-1}(M_{\sgot})$. Thus $V$ is a $H$-invariant submanifold of
$M$ containing $M_\sgot$, fibered over $M_\sgot$ and $\Vcal=K\times_H V$ is an open subset of $M$.
We consider the map $\Phi_\qgot: V\to \qgot^*$.
\begin{lem}
Let $z\in Z_\sgot$:
\begin{itemize}
\item[(a)] the differential $\T_z\Phi_\qgot : \T_z V\to \qgot^*$ is surjective,
\item[(b)] $\ker(\T_z\Phi_\qgot)\subset \T_z V$ is a symplectic subspace of $\T_z M$.
\end{itemize}
\end{lem}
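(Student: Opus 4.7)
The plan is to base everything on the fundamental moment map identity
$$
\langle T_z\Phi(v),X\rangle=\Omega_z(v,X\cdot z),\qquad v\in T_zM,\ X\in\kgot,
$$
together with the fact that $\kgot\cdot z$ is isotropic for $\Omega_z$ at a zero of $\Phi$ (since $\Omega_z(X\cdot z,Y\cdot z)=\langle\Phi(z),[X,Y]\rangle=0$). The identity instantly gives $\ker(T_z\Phi_\qgot\colon T_zM\to\qgot^*)=(\qgot\cdot z)^{\Omega}$, and shows that the image of $T_z\Phi$ is the annihilator of $\sgot=\kgot_z$ inside $\kgot^*$; in particular $T_z\Phi_\qgot\colon T_zM\to\qgot^*$ is already surjective.

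The first step is to understand $T_zV$ inside $T_zM$. Because $p\colon \Vcal\to M_{(\sgot)}$ is $K$-equivariant and a point $kz$ lies in $M_\sgot$ iff $k\sgot k^{-1}=\sgot$, i.e.\ iff $k\in H=N_K(\sgot)$, the set $V\cap K\cdot z$ equals $H\cdot z$. Infinitesimally this gives $T_zV\cap \kgot\cdot z=\hgot\cdot z$, hence $T_zV\cap \qgot\cdot z=\{0\}$. A dimension count ($\dim V=\dim M-\dim\qgot$) then produces the direct sum decomposition
$$
T_zM = T_zV\oplus \qgot\cdot z.
$$
Since $\Phi(z)=0$ forces $\qgot\cdot z\subset \ker T_z\Phi\subset \ker T_z\Phi_\qgot$, item (a) follows: $T_z\Phi_\qgot(T_zV)=T_z\Phi_\qgot(T_zM)=\qgot^*$.

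For (b), identify the kernel: $\ker(T_z\Phi_\qgot|_{T_zV})=T_zV\cap(\qgot\cdot z)^{\Omega}$, of dimension $\dim M-2\dim\qgot$ by (a). Consider the projection
$$
\pi\colon T_zV\cap(\qgot\cdot z)^{\Omega}\ \longrightarrow\ (\qgot\cdot z)^{\Omega}\big/\qgot\cdot z,
$$
into the symplectic reduction of the coisotropic subspace $(\qgot\cdot z)^{\Omega}$ by the isotropic $\qgot\cdot z$. The kernel of $\pi$ is $T_zV\cap\qgot\cdot z=\{0\}$ by Step~1, and a dimension comparison shows that $\pi$ is a linear isomorphism. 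By construction $\pi$ pulls back the induced symplectic form on $(\qgot\cdot z)^{\Omega}/\qgot\cdot z$ to the restriction of $\Omega_z$; non-degeneracy on the quotient thus forces non-degeneracy on $T_zV\cap(\qgot\cdot z)^{\Omega}$, which is precisely (b).

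There is no real obstacle beyond careful bookkeeping: the only delicate point is verifying that $V$ meets the orbit $Kz$ exactly in $Hz$, which is the reason why the $\qgot$-component of the moment map has the right transversality properties on $V$. Once that is in hand, everything follows from the two universal pieces — the moment map identity and the fact that isotropy of $\kgot\cdot z$ at a zero of $\Phi$ places it inside its own symplectic orthogonal.
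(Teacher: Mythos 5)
Your proof is correct; the only point stated a bit too quickly is the passage from the set equality $V\cap K\cdot z=H\cdot z$ to the tangent-level statement $\T_z V\cap\kgot\cdot z=\hgot\cdot z$, which does not follow formally from the set equality alone. It is, however, immediate from the identification $\Vcal\simeq K\times_H V$: the induced submersion $q:\Vcal\to K/H$ has $V$ as the fibre over the base point, so $\T_z V=\ker \T_z q$ while $\kgot\cdot z$ surjects onto $\kgot/\hgot$, giving exactly the claimed intersection; this is also what underlies the splitting $\T_z M=\qgot\cdot z\oplus \T_z V$ that the paper simply asserts. Beyond that, your route differs from the paper's mainly in the order of deduction. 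The paper proves (b) first: it identifies $\ker(\T_z\Phi_\qgot|_{\T_z V})$ with $(\qgot\cdot z)^{\perp}\cap\T_z V$, observes that this is a complement of the isotropic $\qgot\cdot z$ inside the coisotropic $(\qgot\cdot z)^{\perp}$ (hence symplectic, of dimension $\dim\T_z V-\dim\qgot$), and then gets the surjectivity (a) as a rank count. You instead prove (a) directly, using non-degeneracy of $\Omega$ to see that the image of the full differential $\T_z\Phi$ is the annihilator of $\kgot_z=\sgot$, which contains $\qgot^*$, and combining this with $\kgot\cdot z\subset\ker\T_z\Phi$ and the splitting; you then obtain (b) by showing the kernel maps isomorphically onto the linear symplectic reduction $(\qgot\cdot z)^{\Omega}/\qgot\cdot z$. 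The two arguments use the same two ingredients (the splitting and the isotropy of $\qgot\cdot z$ at a zero of $\Phi$); yours has the small advantage that (a) is established independently of (b), at the cost of invoking non-degeneracy of $\Omega$ at the outset (legitimate here, since this section is in the genuinely symplectic setting).
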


\begin{proof}For $z\in Z_\sgot$, we have $\T_zM=\qgot\cdot z \oplus \T_z V$ with $\qgot\cdot z\simeq \qgot$ since
$\kgot_z=\sgot$. The subspace $\qgot\cdot z$ is totally isotropic since $\Phi(z)=0$, and by definition the subspace
$\ker(\T_z\Phi_\qgot)\subset \T_z V$ is equal to $(\qgot\cdot z)^\perp\cap \T_z V$. So we have
$(\qgot\cdot z)^\perp= \qgot\cdot z\oplus \ker(\T_z\Phi_\qgot)$. This implies that $\ker(\T_z\Phi_\qgot)\subset \T_z V$ is a symplectic subspace of $\T_z M$ of dimension equal to $\dim(\T_z V)-\dim(\qgot)$: the surjectivity of $\T_z\Phi_\qgot : \T_z V\to \qgot^*$ follows.
\end{proof}

\medskip

Let $\Xcal$ be a small $H$-invariant neighborhood of $Z_\sgot$ in $M_\sgot$, and let $V_\Xcal$ be a small neighborhood of
$\Xcal$ in $p^{-1}(\Xcal)\subset V$ such that for any $z\in V_\Xcal$ the statements $(a)$ and $(b)$ hold.

\medskip

Consider $N':=\{z\in V_\Xcal\,;\, \Phi_\qgot(z)=0\}$. Then  $N'$ is a $H$-Hamiltonian submanifold of $V_\Xcal$
containing $\Xcal$, with symplectic form $\Omega':=\Omega\vert_{N'}$ and moment map $\Phi':N'\to \hgot^*$
equal to the restriction of $\Phi$ to $N'$. If $\Xcal$ is small enough, there exists an invariant neighborhood $U'$ of
$\Xcal$ in $V_\Xcal$ and a diffeomorphism $j: N'\times B_{\qgot^*} \to U'$ such that $\Phi\circ j=\Phi'(n')+\xi$,
for any $(n',\xi)\in N'\times B_{\qgot^*}$.

Finally we have proved that a neighborhood of $Z_\sgot$ in $M$ is diffeomorphic to $N=K\times_H(N'\times B_{\qgot^*})$. Here
$(N',\Omega',\Phi')$ is a $H$-Hamiltonian submanifold of $(M,\Omega,\Phi)$ and the moment map
$\Phi_N: N\to \kgot^*$ is defined by the relation
$$
\Phi_N([k;n',\xi])=k\cdot(\Phi'(n')+\xi).
$$

We return to the study of $RR_K(M,\Fcal,\Phi^{-1}(0),\Phi)$ under the hypothesis that $\Phi^{-1}(0)$ is contained in $M_{(\sgot)}$. We may do the computation in the neighborhood $N$ of $\Phi^{-1}(0)$. Note that $\Phi^{-1}(0)\subset M_{(\sgot)}$ implies that
 $Z':=(\Phi')^{-1}(0)$ is equal to
$Z_\sgot$.
Recall that  $RR_K(N,\Fcal,\Phi^{-1}(0),\Phi)= \Qcal_K(N,\bigwedge_J \T N\otimes \Fcal, \Phi^{-1}(0),\Phi).$
Here  we have chosen  $J_N$, an almost complex structure
that is compatible with the symplectic form $\Omega$. We may then further deform the almost complex structure $J_N$ and we will keep the same index.

 Let $J_{N'}$ be an almost complex structure
on $N'$ that is compatible with $\Omega'$.
If we restrict the Clifford bundle $\Ecal:=\bigwedge_{J_N} \T N$ to the submanifold $N'$, we get
$$
\Ecal\vert_{N'}= S_\qgot\otimes \Ecal'
$$
where $S_\qgot= \bigwedge_{\C} \qgot_\C$.
We need to see that  $\Ecal'=\bigwedge_{J_{N'}} \T N'.$

At each point $y$ of $N'$,
the tangent bundle $\T_y N$ admits a canonical identification with $\qgot\oplus\qgot^*\oplus \T_y N'$.
Through the identification $X\oplus\xi\in\qgot\oplus\qgot^*\mapsto X\oplus i\tilde{\xi}\in \qgot_\C$,
 the vector space $\qgot\oplus\qgot^*$ inherits a complex structure $J_\qgot$ and a Hermitian form.
Take a Riemannian metric $g$ on $\T N$, orthogonal direct sum of the metric
$\Omega'(\cdot, J_{N'}\cdot)$ and of the underlying Riemannian metric of $\qgot_\C$.
 The symplectic form $\Omega_y$ can be written  $g(\cdot,A_y\cdot)$ where $A_y$ is a non degenerate antisymmetric matrix.
 Let $J_y=\frac{A_y}{\sqrt{-A_yA_y^*}}$ be the almost complex structure determined by $A_y$.
 When $y\in Z_\sgot$,  $A_y$ is equal to $J_{N'}\oplus J_\qgot$.
 As $Z_\sgot$ is compact, we see that  if $N'$ is a sufficiently small neighborhhod, $J_y$ is homotopic to $J_{N'}\oplus J_\qgot$,  and we obtain our result.

Proposition \ref{prop:induction-cotangent} tells us then that
$$
\left[RR_K(M,\Fcal, \Phi^{-1}(0),\Phi)\right]^K=\left[RR_H(N',\Fcal,Z',\Phi')\right]^H.
$$
Here $Z'=Z_\sgot$, and we have still denoted by $\Fcal$ the restriction of $\Fcal$ to $N'$.

\subsubsection*{Localization on $M_\sgot$.}

We now analyze
$\left[RR_H(N',\Fcal,Z',\Phi')\right]^H$, where $Z'=Z_\sgot$.

Let us summarize  the situation. We consider a $H$-equivariant decomposition $\hgot=\sgot\oplus\rgot$.
Here $\Xcal$ is a small neighborhood of  $Z_\sgot$ in $M_\sgot$,
$N'$ is a $H$-Hamiltonian  manifold, with symplectic structure $\Omega'$ and moment map $\Phi':N'\to \hgot^*$.
We have $N'_\sgot=\Xcal$. The restriction $\Phi_\Xcal$ of $\Phi'$ to $\Xcal$ takes its values in $\rgot^*$,
and
the zero set $Z'$  of the vector field $\kappa_{\Phi'}$ is the compact submanifold
$\Phi_\Xcal^{-1}(0)=Z_\sgot$ of $\Xcal$.

We denote by $\kappa_\Xcal$ the Kirwan vector field on $\Xcal\subset M_\sgot$ associated to $\Phi_{\Xcal}$.

As we work on a neighborhood of $Z'$,
we may assume that $N'$ is diffeomorphic to $\Ncal'$,
the total space of the normal bundle of $\Xcal$ in $N'$. The restriction of $\Omega'$ on $\Xcal$ induces a symplectic structure on each fibers :
$\Omega_{x}\in \bigwedge^2 \Ncal'\vert_x$ for $x\in\Xcal$.

We choose an $H$-invariant connection on $\Ncal'$. The subspace $\rgot\cdot w$ of $\T_w \Wcal$ projects injectively on
$\rgot\cdot x$, and keep the same dimension $\dim \rgot$. Thus we can assume that the horizontal space contains $\rgot \cdot w$.

If $\eta\in \T_w\Ncal'$, we denote $\eta^{\rm vert}$ its vertical component.
Let $\lambda$ be the $H$-invariant one-form on $\Ncal'$ such that, at a point $w=(x,v)\in \Ncal'$,
 $\lambda_w(\eta)=\Omega_x(v,\eta^{\rm vert})$.
 From the symplectic embedding theorem, we may assume that
 $\Omega'=d\lambda+p^*\Omega_\Xcal$. Here $p$ is the projection $\Ncal'\to \Xcal$.

We  can equip the vector bundle $\Ncal'$
with a  $H$-equivariant Hermitian structure  such that the section $x\in \Xcal\mapsto\Omega_{x}$  is minus  the imaginary part of the Hermitian metric. Thus the
complex structure $J$ on the fiber $\Ncal'\vert_x$ is compatible with $\Omega_{x}$.
Let us choose a $H$-invariant metric on $\Xcal$, and
let $J_\Xcal$ be the corresponding almost complex structure
on $\Xcal$ compatible with $\Omega_\Xcal$.
Thanks to the connection, we can consider on the manifold $\Ncal'$ a $H$-invariant metric
orthogonal direct sum of  the metric on $\Xcal$ (its horizontal lift) and the Hermitian metric on $\Ncal'$, let
 $J_{\Ncal'}$ be the corresponding compatible almost complex structure associated to $\Omega'$ on $\Ncal'$.
We consider also with the help of the connection $J'=J\oplus J_\Xcal$.
On $\Xcal$, we have
$J'=J_{\Ncal'}$.
Thus
 $J_{\Ncal'}$ is a deformation of $J'$ and
 $RR_H(N',\Fcal,Z',\Phi')$ can be computed using the direct sum $J'$.

Remark that we have the additional action of the group $U(1)$ acting by homotheties  $v\mapsto e^{i\theta} v$ on the
fibers of the Hermitian bundle $\Ncal'\to \Xcal$ and commuting with the $H$-action, with infinitesimal generator $J$.
Thus we have a morphism $\at_J:\Ko_H(\T^*_{H} \Xcal)\longrightarrow \Ko_H(\T^*_{H} \Ncal')$ defined
by Equation (\ref{eq:morphismAtbeta}).

\medskip

We compute now the moment map $\Phi'$ associated to the action of $H$ on $(\Ncal',\Omega')$.
We write  $\Phi'=\Phi_\sgot\oplus \Phi_\rgot$. We obtain for $w=(x,v)\in \Ncal'$:
\begin{itemize}
\item $\langle \Phi_\sgot(x,v),X\rangle=\frac{1}{2}\Omega_x(Xv,v)$ if $X\in \sgot$,
\item $\Phi_\rgot(x,v)=\Phi_\Xcal(x)$.
\end{itemize}

Let $\Ncal\to Z$ be the symplectic normal bundle of the submanifold $Z$ in $M$, and let $\Ncal\vert_{Z_\sgot}$
be its restriction on $Z_\sgot$. We will need the following
\begin{lem}
$\bullet$ The restriction of symplectic vector bundle $\Ncal'\to \Xcal$ to $Z_\sgot$ is equal to $\Ncal\vert_{Z_\sgot}$.

$\bullet$ The subbundle $[\Sym((\Ncal')^*)]^\sgot$ is reduced to the trivial bundle $[\C]\to\Xcal$.
\end{lem}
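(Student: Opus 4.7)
The plan is to work fiberwise at each point $x\in Z_\sgot$ and identify both $\Ncal\vert_x$ and $\Ncal'\vert_x$ canonically with the symplectic complement $((E_x)^\sgot)^{\perp_{E_x}}$ of $(E_x)^\sgot$ inside the symplectic slice $E_x = (\kgot\cdot x)^\perp/\kgot\cdot x$ already introduced in the proof of the preceding lemma.

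For $\Ncal\vert_x$, the identification follows directly from the decompositions
$$\T_x Z = (\kgot/\sgot)\cdot x \oplus (E_x)^\sgot,\qquad \T_x M = (\kgot/\sgot)\cdot x \oplus (\kgot/\sgot)^* \oplus E_x$$
established in the preceding proof: a short computation shows $(\T_x Z)^\perp = (\kgot/\sgot)\cdot x \oplus ((E_x)^\sgot)^{\perp_{E_x}}$ and $(\T_x Z)^\perp \cap \T_x Z = (\kgot/\sgot)\cdot x$. For $\Ncal'\vert_x$, I would use the $H$-equivariant diffeomorphism $N\simeq K\times_H(N'\times B_{\qgot^*})$ to write $\T_x M = \qgot\cdot x \oplus \qgot^* \oplus \T_x N'$, and match this with the symplectic slice decomposition; via the splitting $\kgot/\sgot = \qgot \oplus \hgot/\sgot$ this yields
$$\T_x N' = (\hgot/\sgot)\cdot x \oplus (\hgot/\sgot)^* \oplus E_x.$$
The key observation is that for any $Y\in\hgot = N_\kgot(\sgot)$ and $X\in\sgot$, the bracket $[X,Y] = -[Y,X]$ lies in $\sgot$ by definition of the normalizer, so $\sgot$ acts trivially on $\hgot/\sgot$ and on $(\hgot/\sgot)^*$. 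Taking $\sgot$-invariants of the above decomposition therefore gives $\T_x\Xcal = (\hgot/\sgot)\cdot x \oplus (\hgot/\sgot)^* \oplus (E_x)^\sgot$, and the symplectic orthogonal of $\T_x\Xcal$ in $\T_x N'$ is exactly $((E_x)^\sgot)^{\perp_{E_x}} = \Ncal'\vert_x$. These identifications are $H_x$-equivariant, symplectic, and smooth in $x$, establishing the first assertion.

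For the second assertion, the trivial line bundle $[\C]$ embeds into $[\Sym((\Ncal')^*)]^\sgot$ as the degree-zero constants, so it suffices to check that the two subbundles have equal rank at each point of $\Xcal$. By the first assertion combined with the previously established lemma about $[\Sym(\Ncal^*\vert_{Z_\sgot})]^\sgot$, this rank equals $1$ at every point of $Z_\sgot$. Since $S$ is compact and $\Ncal'\to\Xcal$ is an $S$-equivariant vector bundle with trivial $S$-action on the base, the $S$-representation type of the fibres is locally constant; after shrinking $\Xcal$ to a tubular neighborhood of $Z_\sgot$ so that each connected component meets $Z_\sgot$, we conclude $\dim [\Sym((\Ncal')^*\vert_x)]^\sgot = 1$ for every $x\in\Xcal$.

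The main obstacle is the consistency of the two local symplectic models at $x$: the fibered model $K\times_H(N'\times B_{\qgot^*})$ used to construct the slice $N'$, and the Marle--Guillemin--Sternberg symplectic slice model built from $E_x$. One must verify that the two decompositions of $\T_x M$ are compatible in an $H_x$-equivariant, symplectic way, and in particular that $\T_x N'$ contains both $(\hgot/\sgot)\cdot x \oplus (\hgot/\sgot)^*$ and the full symplectic slice $E_x$. Once this matching is in place, the rest of both assertions reduces to the routine computation outlined above.
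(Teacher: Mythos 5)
Your overall strategy can be made to work, but as written the first bullet is incomplete exactly at the step you yourself flag as ``the main obstacle'': nothing in your argument establishes that the symplectic slice model $\T_x M=(\kgot/\sgot)\cdot x\oplus(\kgot/\sgot)^*\oplus E_x$ from the preceding lemma can be aligned with the fibered model $\T_x M=\qgot\cdot x\oplus\qgot^*\oplus \T_x N'$ so that a copy of the full slice $E_x$, together with a copy of $(\hgot/\sgot)^*$, sits inside $\T_x N'$ with the required symplectic orthogonality properties. That compatibility \emph{is} the content of the assertion $\Ncal\vert_x=\Ncal'\vert_x$, so deferring it leaves the proof of the first bullet essentially open. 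The gap is fillable: since $\Phi(x)=0$, one has $\ker(\T_x\Phi)\cap \T_x N'\subset \T_x N'$, this subspace maps onto $E_x=\ker(\T_x\Phi)/\kgot\cdot x$ with kernel $(\hgot/\sgot)\cdot x$, so a $K_x$-invariant complement of $(\hgot/\sgot)\cdot x$ inside it gives the desired embedded copy of $E_x$, smoothly in $x$; but this needs to be written out. Note also that the paper's own proof shows the detour through $E_x$ is unnecessary: it computes $(\T_x Z)^{\perp}$ directly from the decompositions $\T_x M=(\qgot\cdot x\oplus\qgot^*_x)\stackrel{\perp}{\oplus}\T_x N'$, $\T_x N'=\T_x\Xcal\stackrel{\perp}{\oplus}\Ncal'\vert_x$, $\T_x\Xcal=(\hgot/\sgot)^*_x\oplus \T_x Z_\sgot$, together with $\T_x Z=\qgot\cdot x\oplus \T_x Z_\sgot$, obtaining $(\T_x Z)^{\perp}=\big((\T_x Z)^{\perp}\cap \T_x Z\big)\stackrel{\perp}{\oplus}\Ncal'\vert_x$ and hence $\Ncal\vert_x=\Ncal'\vert_x$ in a few lines, with no matching of local models and no need to re-identify $\Ncal\vert_x$ with $((E_x)^\sgot)^{\perp_{E_x}}$.

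Your treatment of the second bullet is essentially the paper's and is correct: restrict to $Z_\sgot$, use the first bullet and the previous lemma to get $[\Sym(\Ncal^*\vert_{Z_\sgot})]^\sgot=[\C]$, then propagate over $\Xcal$ using that $S$ acts trivially on the base, so the $S$-isotypic decomposition of the fibers of $\Ncal'$ is locally constant, and every connected component of $\Xcal$ meets $Z_\sgot$ (no real shrinking is needed, since $\Xcal$ is by construction a small neighborhood of $Z_\sgot$ in $M_\sgot$). The paper leaves this continuity step implicit (``the second point follows''); your explicit version of it is a fair and correct expansion.
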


\begin{proof}
Let $x\in Z_\sgot$. Let $\qgot^*_x$ be a subspace of $(\T_x N')^\perp\subset \T_x M$ that is in bijection with $\qgot^*$ through the tangent map $\T_x\Phi_{\qgot}$. Let $(\hgot/\sgot)^*_x$ be a subspace of $\T_x \Xcal$ that is in bijection with $(\hgot/\sgot)^*$ through the tangent map
$\T_x\Phi_{\hgot}$. We have the decompositions
\begin{eqnarray*}
\T_x M&=&  \left(\qgot\cdot x\oplus  \qgot^*_x\right) \stackrel{\perp}{\oplus} \T_x N'\\
\T_x N' &=&  \T_x\Xcal \stackrel{\perp}{\oplus} \Ncal'\vert_x,\\
\T_x\Xcal&=& (\hgot/\sgot)^*_x \oplus \T_x Z_\sgot.
\end{eqnarray*}
that gives $\T_x M=\left(\qgot\cdot x\oplus  \qgot^*_x\right) \stackrel{\perp}{\oplus}\left((\hgot/\sgot)^*_x \oplus \T_x Z_\sgot\right)
\stackrel{\perp}{\oplus} \Ncal'\vert_x$. Now, if we use that $\T_x Z=\qgot\cdot x\oplus\T_x Z_\sgot$, we see that
$(\T_x Z)^\perp= (\T_x Z)^\perp\cap \T_x Z \stackrel{\perp}{\oplus} \Ncal'\vert_x$, and then $\Ncal\vert_x=\Ncal'\vert_x$.
The first point is proved.

The restriction of the bundle $[\Sym((\Ncal')^*)]^\sgot$ to $Z_\sgot$ is equal to $[\Sym(\Ncal\vert^*_{Z_\sgot})]^\sgot$
which is the trivial bundle $[\C]\to Z_\sgot$.  The second point follows.
\end{proof}

\medskip

We are in a very similar situation than Proposition \ref{prop:variation}
: instead of $\beta$ acting fiberwise, we have $\sgot$ (an $H$-stable ideal in $\hgot$)  acting fiberwise.

Let $w=(x,v)\in \Ncal'$, with $x\in \Xcal$, and $v\in \Ncal'\vert_x$.
The horizontal lift of $\kappa_\Xcal$ (still denoted by $\kappa_\Xcal$) is the vector field
$\Phi_\rgot(w)\cdot w$. As in the proof of Proposition \ref{prop:mumford}, we introduce the deformation
$\kappa^t(w)=-\Phi_\rgot(w)\cdot w- t \Phi_\sgot(w)\cdot w+(1-t) Jw$
and
$$
\sigma^t(w,\nu)=\clif_w(\tilde \nu- \kappa^t(w)), \quad (w,\nu)\in \T^*\Ncal'
$$
acting on $\bigwedge_{J'} \T \Ncal'\otimes \Fcal$. Note that the vector $\Phi_\rgot(w)\cdot w$ is horizontal,
while  $\Phi_\sgot(w)\cdot w$ and  $Jw$
are verticals.

By definition, the character $RR_H(N',\Fcal,Z',\Phi')$ is equal to the equivariant index of $\sigma^1\otimes\Fcal$.
Let us prove that the characteristic set of
$\sigma^t$ intersected with $\T^*_H \Ncal'$  stays equal to $Z_\sgot$.
Consider $\eta=\kappa^t(w)$ and orthogonal to $\hgot\cdot w$.
Thus we obtain
$\langle \Phi_\rgot(w)\cdot w+ t \Phi_\sgot(w)\cdot w-(1-t) Jw, \Phi_\rgot(w)\cdot w\rangle =0$
and $\langle \Phi_\rgot(w)\cdot w + t \Phi_\sgot(w)\cdot w-(1-t) Jw, \Phi_\sgot(w)\cdot w\rangle =0.$
Thus considering vertical and orthogonal components, we obtain
$\Phi_\rgot(w)=0$.
Then we have
\begin{eqnarray*}
0&=&\langle t \Phi_\sgot(w)\cdot w-(1-t) Jw, \Phi_\sgot(w)\cdot w\rangle\\
&=&  t \|\Phi_\sgot(w)\cdot w\|^2+ 2(1-t) \|\Phi_\sgot(w)\|^2
\end{eqnarray*}
as $\langle Jw,\Phi_\sgot(w)\cdot w\rangle = - 2\|\Phi_\sgot(w)\|^2$.

Thus we have proved that if $\kappa^t(w)$ is orthogonal to $\hgot\cdot w$, then $\Phi'(w)=0$: this implies
$w=(x,0)$ with $x\in Z_\sgot$.

We know then that $RR_H(N',\Fcal,Z',\Phi')$ is equal to the equivariant index of the transversally elliptic symbol
$\sigma^0\otimes\Fcal$. Since the symbol  $\sigma^0$  is  equal to $\at_J(\sigma(\Xcal,Z_\sgot,\Phi_\Xcal))$,
we have then by Equation (\ref{eq:indice-Atbeta})
$$
\left[RR_H(N',\Fcal,Z',\Phi')\right]^H=\left[RR_H(\Xcal,\Fcal\otimes \Sym((\Ncal')^*),Z_\sgot,\Phi_\Xcal)\right]^H.
$$
We obtain
\begin{eqnarray*}
\lefteqn{\left[RR_H(\Xcal,\Fcal\otimes \Sym((\Ncal')^*),Z_\sgot,\Phi_\Xcal)\right]^H}\\
&=&\left[RR_{H/S}(\Xcal,\left[\Fcal\otimes \Sym((\Ncal')^*)\right]^\sgot,Z_\sgot,\Phi_\Xcal)\right]^{H/S},\\
&=& RR(M_0,\Fcal_0)\qquad (1)
\end{eqnarray*}
where $\Fcal_0:=\left[\Fcal\vert_{Z_\sgot}\otimes \Sym(\Ncal^*\vert_{Z_\sgot})\right]^\sgot/(H/S)$. In $(1)$,
 we  used the regular case considered in Proposition \ref{prop:localisation-Phi-0-hamiltonien}.

%%%%%%%%%%%%%%%%%%%%%%%%%%%%%%%%%%%%%%%%%%%%%%%%%%%%%%%%%%%%%
%%%%%%%%%%%%%%%%%%%%%%%%%%%%%%%%%%%%%%%%%%%%%%%%%%%%%%%%%%%%%
\subsection{Riemann-Roch number on symplectic reduction}
%%%%%%%%%%%%%%%%%%%%%%%%%%%%%%%%%%%%%%%%%%%%%%%%%%%%%%%%%%%%%
%%%%%%%%%%%%%%%%%%%%%%%%%%%%%%%%%%%%%%%%%%%%%%%%%%%%%%%%%%%%%

Let $(M,\Omega,\Phi)$ be a Hamiltonian $K$-manifold not necessarily compact. Suppose that $a$ is a quasi-regular value of
$\Phi$ such that the fiber $Z:=\Phi^{-1}(a)$ is a {\em compact} submanifold. The quotient
$M_{\rra}:=\Phi^{-1}(a)/K_a$ is a compact symplectic orbifold.

Let $\sgot\subset \kgot_a$ be a subalgebra such that $\Phi^{-1}(Ka)$ is contained in the submanifold $M_{(\sgot)}$. We denote by $S$ the connected subgroup of $K_a$ with Lie algebra $\sgot$, and $H$ the normalizer subgroup of $\sgot$ in $\kgot_a$. The orbifold $M_{\rra}=\Phi^{-1}(a)/K_a$ is also the quotient of the submanifold $Z_\sgot:=\Phi^{-1}(a)\cap M_{\sgot}$ by the group $H/S$.

We consider now the case of an equivariant vector bundle $\Fcal\to M$ and a character $\chi_\lambda: K_a\to \mathrm{U}(1)$ : here $\lambda=\frac{1}{i}d\chi_\lambda$ is a weight, and we denote $\C_\lambda$ the $1$-dimensional representation of $K_a$ defined by $\chi_\lambda$.

We suppose that the action of $S$ on the fibers of the bundle $\Fcal\vert_{Z_\sgot}$ is $t\cdot v=\chi_\lambda(t)v$. Then $S$ acts trivially on $\Fcal\vert_{Z_\sgot}\otimes \C_{-\lambda}$ and we may form the orbibundle
$$
\Fcal_{a,\lambda}:=\left(\Fcal\vert_{Z_\sgot}\otimes \C_{-\lambda}\right)/(H/S)
$$
on $M_{\rra}$. The aim of this section is to show that the Riemann-Roch number $RR(M_{\rra}, \Fcal_{a,\lambda})$ can be computed by a localization procedure.

\medskip

Let $(Ka)^-$ be the coadjoint orbit $Ka$ with the opposite symplectic structure. Let $[\C_{-\lambda}]=K\times_{K_a}\C_{-\lambda}$
be the line bundle on $(Ka)^-$. We consider the symplectic manifold $M\times (Ka)^-$ with the complex bundle
$\Fcal\boxtimes [\C_{-\lambda}]$ on it. The corresponding moment map
$\Phi_a: M\times (Ka)^-\to \kgot^*$ is $\Phi_a(m,\xi)=\Phi(m)-\xi$.
As the set $\Phi_a^{-1}(0)$ is compact, we may define the localized Riemann Roch character
$$
RR_K(M\times (Ka)^-,\Fcal\boxtimes [\C_{-\lambda}],\Phi_a^{-1}(0),\Phi_a)\, \in \, \hat{R}(K).
$$

The main result of this section is the

\begin{theo}\label{theo:RR-M-a-E-alpha}
If $a$ is a quasi-regular value of $\Phi$ we have
\begin{equation}\label{eq:RR-M-a-E-alpha}
\left[RR_K(M\times (Ka)^-,\Fcal\boxtimes [\C_{-\lambda}],\Phi_a^{-1}(0),\Phi_a)\right]^K= RR(M_{\rra}, \Fcal_{a,\lambda}).
\end{equation}
\end{theo}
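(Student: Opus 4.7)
The strategy is to combine the shifting trick (Proposition \ref{prop:shifting trick}) with the quasi-regular $[Q,R]=0$ statement established in Theorem \ref{theo: Qsemiregular}. The idea is that after shifting, the computation of the localized Riemann-Roch character on $M\times (Ka)^-$ around $\Phi_a^{-1}(0)$ becomes a localized index on a slice of $M$ at $a$, and on this slice the localization sits above a quasi-regular value (namely $0$) of the shifted moment map $\Phi_Y - a$.

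First I would set up the data for the shifting trick. Choose an almost complex structure $J$ on $M$ compatible with $\Omega$ and a compatible almost complex structure on $(Ka)^-$ (which at the base point $\{a\}$ is $-J_a$ on $\T_a(Ka)\simeq \qgot$). The Clifford bundle on $M\times (Ka)^-$ that computes $RR_K(M\times (Ka)^-,\Fcal\boxtimes[\C_{-\lambda}],\,\cdot\,)$ decomposes as the box product of $\Gcal:=\bigwedge_J \T M\otimes \Fcal$ on $M$ with $\bigwedge_{-J_a}\T(Ka)^-\otimes [\C_{-\lambda}]$ on $(Ka)^-$. On a slice $Y$ of $M$ at $a$, the decomposition $\T M|_Y\simeq \T Y\oplus[\kgot/\kgot_a]$ gives $\Gcal_{\ddY}\simeq \bigwedge_{J_Y}\T Y\otimes \Fcal|_Y$, and at the base point of $(Ka)^-$ the Clifford module decomposes as $\bigwedge \overline{\qgot_\JJa}\otimes F$ with $F=\C_{-\lambda}$. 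Proposition \ref{prop:shifting trick} then yields
\[
\Qcal_K(M\times (Ka)^-,\Gcal\otimes\Fcal_{\text{orb}},\Phi_a^{-1}(0),\Phi_a)
=\mathrm{Ind}_H^K\Big(RR_H(Y,\Fcal|_Y\otimes[\C_{-\lambda}],\Phi_Y^{-1}(a),\Phi_Y-a)\Big),
\]
where $H=K_a$ and $\Fcal_{\text{orb}}=\Fcal\boxtimes[\C_{-\lambda}]$. Taking $K$-invariants collapses the induction and gives
\[
[LHS]^K=\big[RR_H(Y,\Fcal|_Y\otimes[\C_{-\lambda}],\Phi_Y^{-1}(a),\Phi_Y-a)\big]^H.
\]

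Next I would apply Theorem \ref{theo: Qsemiregular} to the $H$-Hamiltonian manifold $Y$ with the shifted moment map $\Phi_Y-a:Y\to\hgot^*$. The key point is that since $a$ is quasi-regular for $\Phi$ with $\Phi^{-1}(Ka)\subset M_{(\sgot)}$, the value $0$ is quasi-regular for $\Phi_Y-a$ with $(\Phi_Y-a)^{-1}(0)=Z_\sgot$ contained in $Y_{(\sgot)}$. Moreover, by the hypothesis that $S$ acts on the fibers of $\Fcal|_{Z_\sgot}$ through $\chi_\lambda$, the subalgebra $\sgot$ acts trivially on the fibers of $\Fcal|_{Z_\sgot}\otimes \C_{-\lambda}$, so the simplified form of Theorem \ref{theo: Qsemiregular} applies with no $\Sym(\Ncal^*)^\sgot$ factor beyond the trivial one. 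The reduced orbifold is $Z_\sgot/(H/S)=\Phi^{-1}(a)/K_a=M_{\rra}$, and the reduced orbi-bundle is $(\Fcal|_{Z_\sgot}\otimes\C_{-\lambda})/(H/S)=\Fcal_{a,\lambda}$, yielding
\[
\big[RR_H(Y,\Fcal|_Y\otimes[\C_{-\lambda}],\Phi_Y^{-1}(a),\Phi_Y-a)\big]^H=RR(M_{\rra},\Fcal_{a,\lambda}),
\]
which is exactly the RHS of \eqref{eq:RR-M-a-E-alpha}.

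The main obstacle I expect is purely bookkeeping: one has to check carefully that the two instances of ``the Clifford bundle on the slice $Y$'' produced by the shifting trick on one hand, and needed as input for Theorem \ref{theo: Qsemiregular} on the other, agree with the natural $\bigwedge_{J_Y}\T Y$ coming from a symplectic-compatible almost complex structure on $Y$. This involves tracking the identifications $\T M|_Y\simeq \T Y\oplus[\qgot]$, $\T(Ka)^-|_a\simeq \overline{\qgot_\JJa}$, and the cancellation $\bigwedge\qgot_\JJa\otimes\bigwedge\overline{\qgot_\JJa}\simeq S_\qgot$ that is baked into Proposition \ref{prop:shifting trick}. The quasi-regularity transfer from $(M,\Phi)$ at $a$ to $(Y,\Phi_Y-a)$ at $0$ is also worth recording explicitly, as is the verification that the hypothesis ``$S$ acts by $\chi_\lambda$ on $\Fcal|_{Z_\sgot}$'' is exactly what makes $\Fcal_{a,\lambda}$ well defined as an orbi-bundle on $M_{\rra}$, matching the conclusion of Theorem \ref{theo: Qsemiregular} in its $\sgot$-trivial form.
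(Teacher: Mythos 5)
Your proposal is correct and follows essentially the same route as the paper: restrict to the slice $Y=\Phi^{-1}(B)$ at $a$, apply the shifting trick of Proposition \ref{prop:shifting trick} with $\Gcal_{\ddY}=\bigwedge_{J_Y}\T Y\otimes\Fcal\vert_Y$ and $F=\C_{-\lambda}$ to reduce to $\bigl[RR_{K_a}(Y,\Fcal\vert_Y\otimes\C_{-\lambda},\Phi_Y^{-1}(a),\Phi_Y-a)\bigr]^{K_a}$, and then invoke Theorem \ref{theo: Qsemiregular} at the quasi-regular value $0$ of $\Phi_Y-a$, using that $\sgot$ acts trivially on $\Fcal\vert_{Z_\sgot}\otimes\C_{-\lambda}$. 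The bookkeeping point you flag about matching Clifford bundles is exactly what the paper handles by taking the almost complex structure on $\tilde{M}=K\times_{K_a}Y$ to be $J_a\oplus J_Y$ along $Y$.
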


\begin{proof} Let $B\subset \kgot^*_a$ be a small $K_a$-invariant ball around $a$, and consider the slice
$Y:=\Phi^{-1}(B)$ which is a symplectic submanifold of $M$: the action of $K_a$ on $(Y,\Omega\vert_Y)$
is Hamiltonian with moment map $\Phi_Y:=\Phi\vert_Y$.

The computation of the left hand side of (\ref{eq:RR-M-a-E-alpha}) is done in the manifold $\tilde{M}\times (Ka)^-$ where
$\tilde{M}=K\times_{K_a}Y$ is viewed as an open subset of $M$. Let $J_Y$ be a $K_a$-invariant almost complex
structure on $Y$ compatible with the symplectic form $\Omega\vert_Y$. Let $J_{\tilde{M}}$ be the $K$-invariant complex
structure on $\tilde{M}$ that is equal to $J_a\oplus J_Y$ at the points $[1,y]$, $y\in Y$.  Notice that $J_{\tilde{M}}$ is compatible
with the symplectic structure $\Omega\vert_{\tilde{M}}$.

If we apply the induction formula of Proposition \ref{prop:shifting trick} to the manifold $\tilde{M}\times (Ka)^-$ equipped
with the Clifford bundles $\Gcal:=\bigwedge_J \T M\otimes \Fcal$ on $\tilde{M}$ and
$\Rcal= \bigwedge_{-J_a} \T Ka\otimes  [\C_{-\lambda}]$ on $Ka$, we get

$\bullet$ $\Gcal_{\ddY}=\bigwedge_{J_Y}\T Y\otimes \Fcal\vert_Y$  for the corresponding Clifford bundle on $Y$,

$\bullet$ $R=\C_{-\lambda}$ for the corresponding $K_a$-module.

Hence we have
\begin{eqnarray}\label{eq:reduction-slice}
\lefteqn{\left[RR_K(M\times (Ka)^-,\Fcal\boxtimes [\C_{-\lambda}],\Phi_a^{-1}(0),\Phi_a)\right]^K}&&\nonumber\\
&=&\left[RR_{K_a}(Y, \Fcal_{\lambda},\phi^{-1}(0),\phi)\right]^{K_a}.
\end{eqnarray}
where $\phi:=\Phi_Y-a$ is a moment map on the $K_a$-Hamiltonian manifold $(Y,\Omega_Y)$, $0$ is a quasi regular value of
$\phi$ and $\Fcal_{\lambda}=\Fcal\vert_Y\otimes \C_{-\lambda}$.

We are in the setting of Section \ref{subsec:quasi-regular}. Since $\sgot$ acts trivially on the fibers of the vector bundle
$\Fcal_{\lambda}\vert_{Z_\sgot}$, Theorem \ref{theo: Qsemiregular} says that
$$
\left[RR_{K_a}(Y, \Fcal_{\lambda},\phi^{-1}(0),\phi)\right]^{K_a}= RR(M_{\rra}, \Fcal_{a,\lambda})
$$
with $\Fcal_{a,\lambda}= \Fcal_{\lambda}\vert_{Z_\sgot}/(H/S)$. The proof is completed.

\end{proof}

%%%%%%%%%%%%%%%%%%%%%%%%%%%%%%%%%%%%%%%%%%%%%%%
\subsection{Multiplicities as Riemann-Roch numbers}\label{subsec:multiRiemannRoch}
%%%%%%%%%%%%%%%%%%%%%%%%%%%%%%%%%%%%%%%%%%%%%%%

In this section, we give a proof of the $[Q,R]=0$ Theorem of Meinrenken-Sjamaar.

Let $L$ be a $\Phi$-moment line bundle on the compact Hamiltonian $K$-manifold $(M,\Omega,\Phi)$.  We fix a dominant weight
$\mu$, and we consider the multiplicity $\mm_\mu(L)=[RR_K(M,L)\otimes V_\mu^*]^K$. We consider
elements $a\in\tgot^*$ close enough to $\mu$, so $K_a\subset K_\mu$. Let $(Ka)^-$ be the coadjoint orbit
equipped with the opposite symplectic structure and the line bundle $[\C_{-\mu}]\simeq K\times_{K_a} \C_{-\mu}$.
We have $RR_K((Ka)^-,[\C_{-\mu}])= V_\mu^*$, hence the shifting trick gives
$$
\mm_\mu(L)=[RR_K(M\times (Ka)^-,\Lbb(a))]^K.
$$
Here $\Lbb(a)=L\boxtimes [\C_{-\mu}]$ and
the moment map is $\Phi_a:M\times (Ka)^-\to \kgot^*, (m,\xi)\mapsto \Phi(m)-\xi$. We start with the
\begin{lem}\label{lem:close-enough}
There exist $\epsilon>0$ such that the line bundle $\Lbb(a)$ is weakly $\Phi_a$-positive if $\|a-\mu\|\leq \epsilon$.
\end{lem}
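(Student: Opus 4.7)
The strategy is a direct computation of the infinitesimal action of $\Lcal(\beta)$ on the fiber of $\Lbb(a)$ at a critical point, followed by a uniform lower bound on the norms of the nonzero elements of $\Bcal(\Phi_a)$ as $a$ varies near $\mu$.

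First, I would compute explicitly the action of $\Lcal(\beta)$ on the fiber of $\Lbb(a)$. Take a point $(m,\xi)\in \Phi_a^{-1}(\beta)\cap (M\times(Ka)^-)^\beta$ with $\beta\neq 0$, and write $\xi=ka$ for some $k\in K$. Then $\beta\in\kgot_m\cap\kgot_{ka}$, so $k^{-1}\beta\in\kgot_a\subset\kgot_\mu$ (possible since $K_a\subset K_\mu$ for $a$ close enough to $\mu$). Since $L$ is a $\Phi$-moment line bundle, the action on $L\vert_m$ is $\tfrac{1}{i}\Lcal(\beta)=\langle\Phi(m),\beta\rangle$. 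Using the description $[\C_{-\mu}]=K\times_{K_a}\C_{-\mu}$ and writing $\exp(tX)k=k\exp(tk^{-1}Xk)$ for $X\in\kgot_{ka}$, a short computation gives $\tfrac{1}{i}\Lcal(\beta)=-\langle k\mu,\beta\rangle$ on $[\C_{-\mu}]\vert_{ka}$. Since $\Phi(m)=\beta+ka$, summing the two contributions yields
\[
\tfrac{1}{i}\Lcal(\beta)\Big|_{\Lbb(a)\vert_{(m,ka)}}
=\langle\Phi(m)-k\mu,\beta\rangle
=\|\beta\|^2+\langle k(a-\mu),\beta\rangle
\geq \|\beta\|\bigl(\|\beta\|-\|a-\mu\|\bigr),
\]
using $K$-invariance of the inner product and Cauchy--Schwarz.

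Second, I would establish the existence of $c>0$ such that $\|\beta\|\geq c$ for all $\beta\in\Bcal(\Phi_a)\setminus\{0\}$ and all $a$ in a small fixed neighborhood of $\mu$; choosing $\epsilon<c$ then concludes by the previous inequality. I would prove this uniform lower bound by contradiction: suppose there exist sequences $a_n\to\mu$ and $\beta_n\in\Bcal(\Phi_{a_n})\setminus\{0\}$ with $\beta_n\to 0$. Pick witnesses $(m_n,k_na_n)\in Z_{\Phi_{a_n}}$ with $\Phi_{a_n}(m_n,k_na_n)=\beta_n$. By compactness of $M$ and $K$, extract a subsequence along which $m_n\to m$ and $k_n\to k$; the limit $(m,k\mu)$ lies in $\Phi_\mu^{-1}(0)\subset Z_{\Phi_\mu}$ on the compact manifold $M\times(K\mu)^-$. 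The finiteness of $\Bcal(\Phi_\mu)$ (Proposition~2.1.8, since $M\times(K\mu)^-$ is compact) should force $\beta_n=0$ for large $n$, the desired contradiction.

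The hard part will be making rigorous the passage to the limit in the second step, because the ambient manifold $M\times(Ka)^-$ (and its stratification by orbit types) depends on $a$: when $a$ lies outside the open face containing $\mu$, $K_a$ is strictly smaller than $K_\mu$ and the dimension of $(Ka)^-$ jumps. To handle this cleanly, I would lift the analysis to the fixed space $M\times K$, equipped with the family of $K$-equivariant maps $\widetilde\Phi_a(m,k):=\Phi(m)-ka$ (which descend to $\Phi_a$ on $(M\times K)/K_a$), and use that the stabilizer subalgebras that actually occur in $\Bcal(\Phi_a)$ are, for $a$ close to $\mu$, conjugates of a finite collection of subalgebras of $\kgot_\mu$. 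A continuity argument in this universal picture---tracking the affine spaces $A(\Xcal,\Phi_a)=\beta(\Xcal,\Phi_a)+\sgot^\perp$ of Lemma~\ref{lem:PhiZPhi} as $a\to\mu$---identifies any limiting $\beta$ with some $\beta(\Xcal_\infty,\Phi_\mu)\in\Bcal(\Phi_\mu)$, which if equal to $0$ forces $\beta_n=0$ eventually by finiteness.
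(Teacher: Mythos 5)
Your first step (the computation of the weight of $\beta$ on $\Lbb(a)\vert_{(m,ka)}$, giving $\langle\Phi(m)-k\mu,\beta\rangle=\|\beta\|^2+\langle k(a-\mu),\beta\rangle$) is correct, and it settles exactly the easy half of the problem: the components with $\|\beta\|>\|a-\mu\|$. The gap is in your second step. The uniform lower bound ``$\|\beta\|\geq c>0$ for all $\beta\in\Bcal(\Phi_a)\setminus\{0\}$, uniformly for $a$ near $\mu$'' is false in general, and your compactness/contradiction argument cannot repair it: finiteness of $\Bcal(\Phi_\mu)$ does not force $\beta_n=0$ for large $n$. What actually happens is that when a stratum $\Xcal$ has $\beta(\Xcal,\Phi_\mu)=0$ (i.e.\ $0$ lies in the affine space $A(\Xcal,\Phi_\mu)$ of Lemma \ref{lem:PhiZPhi}), the perturbed projection $\beta(\Xcal,\Phi_a)$ is generically a \emph{small nonzero} vector, and it genuinely belongs to $\Phi_a(Z_{\Phi_a})$. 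Already for $K=S^1$, $M=S^2$ with the standard Hamiltonian circle action, prequantum bundle $L$, and $\mu=\Phi(N)$ the image of the north pole (a vertex of the moment polytope), one has $\Phi(N)-a\in\Bcal(\Phi_a)\setminus\{0\}$ for every $a\neq\mu$, and this tends to $0$ as $a\to\mu$. So arbitrarily small nonzero $\beta$'s occur, your Cauchy--Schwarz bound degenerates to $\|\beta\|(\|\beta\|-\|a-\mu\|)\leq 0$ there, and your scheme gives no sign information precisely on these components. This is also why the conclusion of the lemma is only \emph{weak} positivity: on such degenerate strata the weight is in fact exactly $0$, not bounded below by a positive constant.

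The missing idea is the one used in the paper: transfer the problem to a space and a line bundle that do not move with $a$, and then invoke the deformation statement, Proposition \ref{prop:defPhi}. Concretely, the paper pulls everything back by $\varphi_a:M\times K/T\to M\times(Ka)^-$, $(m,[k])\mapsto(m,ka)$; the pullback $\Lbb'=\varphi_a^*\Lbb(a)$ is independent of $a$, $\varphi_a$ maps $\{\kappa_{\Phi'_a}=0\}$ onto $\{\kappa_{\Phi_a}=0\}$ so Lemma \ref{lem:varphi-pullback} reduces the question to $\Lbb'$ and $\Phi'_a=\varphi_a^*\Phi_a$, and $\Lbb'$ is a $\Phi'_\mu$-moment line bundle. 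Proposition \ref{prop:defPhi} then handles the two cases stratum by stratum: if $\beta(\Xcal,\Phi'_\mu)\neq 0$ the weight stays positive for $a$ near $\mu$ (this is the content of your step 1), while if $\beta(\Xcal,\Phi'_\mu)=0$ the moment-bundle property forces the weight of $\beta(\Xcal,\Phi'_a)\in\kgot_i$ on the fiber to be $i\langle\Phi'_\mu(m),\beta(\Xcal,\Phi'_a)\rangle=0$, since $\Phi'_\mu(\Xcal)\subset\kgot_i^{\perp}$. Your third paragraph points toward this fixed-space picture, but the conclusion you draw from it (``if the limiting $\beta$ is $0$ then $\beta_n=0$ eventually'') is exactly the false step; what must be proved instead is that the weight vanishes on those strata, which requires using the moment-bundle property of $L\boxtimes[\C_{-\mu}]$ at $a=\mu$, not a lower bound on $\|\beta_n\|$.
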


\begin{proof}
We consider the maps $\varphi_a: M\times K/T\to M\times (Ka)^-$ defined by $\varphi(m,[k])=(m,ka)$.
The line bundle $\varphi^*_a\Lbb(a)$ does not depends of $a$ and is denoted $\Lbb'$. Let $\Phi'_a=\varphi_a^*\Phi_a$.

One checks that $\varphi_a(\{\kappa_{\Phi'_a}=0\})=\{\kappa_{\Phi_a}=0\}$, hence we can use Lemma \ref{lem:varphi-pullback} :
the line bundle $\Lbb(a)$ is weakly $\Phi_a$-positive if and only if the line bundle $\Lbb'$ is weakly $\Phi'_a$-positive.

Now we see that for $a=\mu$, the line bundle $\Lbb'$ is a $\Phi'_\mu$-moment line bundle. Thanks to Proposition \ref{prop:defPhi}, we know that $\Lbb'$ is weakly $\Phi'_a$-positive if $a$ is close enough to $\mu$.

\end{proof}

\medskip

Now we take $a\in \Delta(M)$ such that $\|a-\mu\|\leq \epsilon$. Since $\Lbb(a)$ is a weakly $\Phi_a$-positive line bundle on the
Hamiltonian $K$-manifold $M\times (Ka)^-$ and $\Phi_a^{-1}(0)\neq\emptyset$, Theorem \ref{th:QR-J-hamiltonian} says that
$$
\mm_\mu(L)=[RR_K(M\times (Ka)^-,\Lbb(a),\Phi_a^{-1}(0),\Phi_a)]^K.
$$

We suppose furthermore that $a$ is a quasi-regular value of $\Phi$. Let $\sgot$ be the subalgebra of $\kgot_a$ such that
$\Phi^{-1}(Ka)$ is contained in $M_{(\sgot)}$ : we denote $Z_\sgot$ the submanifold $\Phi^{-1}(a)\cap M_\sgot$.
In order to make use of Theorem \ref{theo:RR-M-a-E-alpha} we need to compute the action of $S$ on the fibers of the line
bundle $L\vert_{Z_\sgot}\otimes\C_{-\mu}$.

\begin{lem}\label{lem:trivial-action-stabilizer}
If $a$ is a quasi-regular value close enough to $\mu$, the group $S$ acts trivially on
the fibers of the line bundle $L\vert_{Z_\sgot}\otimes\C_{-\mu}$.
\end{lem}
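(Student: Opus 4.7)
The plan is to compute directly the infinitesimal action of $\sgot$ on $L\vert_m\otimes\C_{-\mu}$ for $m\in Z_\sgot$, and then exploit the finiteness of orbit-type strata together with the moment-map constraint to force $a-\mu\in\sgot^\perp$ when $a$ is close enough to $\mu$. Since $S$ is connected, vanishing on the Lie algebra will suffice.

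First I would use that $L$ is a $\Phi$-moment line bundle: for any $m\in M$ and any $X\in\kgot_m$, the infinitesimal action $\Lcal(X)$ on $L\vert_m$ is $i\langle\Phi(m),X\rangle\mathrm{Id}$. For $m\in Z_\sgot$ we have $\kgot_m=\sgot$ and $\Phi(m)=a$, so for $X\in\sgot$, $X$ acts on $L\vert_m$ by $i\langle a,X\rangle\mathrm{Id}$ and on $\C_{-\mu}$ by $-i\langle\mu,X\rangle\mathrm{Id}$. Hence on $L\vert_m\otimes\C_{-\mu}$ it acts by $i\langle a-\mu,X\rangle\mathrm{Id}$. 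The desired triviality of $S$ is thus equivalent to the single condition $a-\mu\in\sgot^\perp$.

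Now I would invoke the structural result from Lemma \ref{lem:PhiZPhi} and its surrounding discussion: since $M$ is compact, the set of infinitesimal stabilizer conjugacy classes is finite, and for each subalgebra $\sgot$ the submanifold $M_\sgot$ has only finitely many connected components $\Xcal_1,\ldots,\Xcal_r$. By the moment-map relations, the differential of $\Phi$ restricted to $M_\sgot$ takes values in $\sgot^\perp$, so each $\Phi(\Xcal_j)$ is contained in an affine subspace $\beta(\Xcal_j)+\sgot^\perp$, where $\beta(\Xcal_j)\in\sgot^*$ (viewed inside $\kgot^*$ via the invariant product). In particular, any $a\in\Phi(\Xcal_j)$ satisfies $a\vert_\sgot=\beta(\Xcal_j)\vert_\sgot$.

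Finally I would define
\[
\epsilon:=\min\Big\{\, d(\mu,\beta(\Xcal_j)+\sgot^\perp)\ :\ (\sgot)\text{ an orbit type},\ \Xcal_j\subset M_\sgot,\ \mu\notin \beta(\Xcal_j)+\sgot^\perp\,\Big\},
\]
which is strictly positive by finiteness. For any quasi-regular $a$ with $\|a-\mu\|<\epsilon$, and any connected component $\Xcal_j\subset M_\sgot$ meeting $\Phi^{-1}(a)$, we have $d(\mu,\beta(\Xcal_j)+\sgot^\perp)\leq\|\mu-a\|<\epsilon$, which by construction forces $\mu\in\beta(\Xcal_j)+\sgot^\perp$. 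Combined with $a\in\beta(\Xcal_j)+\sgot^\perp$ this yields $a-\mu\in\sgot^\perp$, concluding the proof. The only delicate point is ensuring that $\epsilon$ can be chosen uniformly, but this is immediate from compactness of $M$ and the finiteness of orbit-type strata; the argument is local in $a$ and does not require any further hypothesis on $\mu$ being a weight beyond the identification $\mu\in\tgot^*$.
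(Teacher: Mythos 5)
Your reduction of the statement to the condition $a-\mu\in\sgot^\perp$ is correct (it is the same Kostant-type computation that the paper uses, and connectedness of $S$ makes the infinitesimal condition sufficient), and so is the observation that $\Phi(\Xcal)$ lies in an affine subspace $\beta(\Xcal)+\sgot^\perp$ for each connected component $\Xcal$ of $M_\sgot$. The gap is in the uniformity step, precisely the point you dismiss as ``immediate from compactness of $M$ and finiteness of orbit-type strata''. The subalgebra $\sgot$ in the lemma depends on $a$: it is a stabilizer algebra contained in $\kgot_a\subset\kgot_\mu$, and as $a$ varies near $\mu$ it ranges over a typically infinite, continuous family of conjugates of the finitely many orbit-type representatives (for instance all $\mathrm{Ad}(k)\sgot_0\subset\kgot_\mu$ with $k\in K_\mu$, when $\kgot_\mu$ is nonabelian). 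Your $\epsilon$ is a minimum over ``orbit types and components'', i.e.\ over a finite set of representatives; but the quantity $d(\mu,\beta(\Xcal)+\sgot^\perp)$ is not invariant under conjugating the pair $(\sgot,\Xcal)$ by a general element of $K$ (only by elements of $K_\mu$, which fix $\mu$). So either your minimum runs over representatives, and then it does not bound the distance for the particular pair produced by $a$; or it runs over all realized pairs, and then it is an infimum over an infinite compact family whose positive values may a priori accumulate at $0$. In neither reading does finiteness of orbit types yield $\epsilon>0$ for the case at hand; in fact the assertion that the distance is exactly zero for the pairs actually arising from quasi-regular $a$ near $\mu$ is essentially the lemma itself, so the argument is close to circular at this point.

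The paper closes exactly this hole by a local rigidity argument: it introduces the set $U$ of points $m$ where $\kgot_m\cap\kgot_\mu$ acts trivially on $L\vert_m\otimes\C_{-\mu}$, shows $\Phi^{-1}(\mu)\subset U$ by the Kostant relation (your first step), and shows that $U$ is \emph{open} using the local symplectic model of the orbits $K_\mu\cdot m$ (nearby stabilizers are conjugated into $\kgot_m$ by elements of $K_\mu$ close to the identity, and the weights of their action on the line bundle are locally constant); compactness of $M$ then gives $\Phi^{-1}(a)\subset U$ for $a$ close enough to $\mu$. Your scheme could conceivably be repaired by exploiting the $K_\mu$-invariance of $d(\mu,\cdot)$ and proving that the relevant pairs $(\sgot,\Xcal)$ fall into finitely many $K_\mu$-conjugacy classes, but that requires a genuine argument which your proposal does not supply.
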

\begin{proof}
We consider the $K_\mu$-invariant subset $U\subset M$ formed by the points $m\in M$ such that the algebra
$\kgot_m\cap\kgot_\mu$ acts trivially on $L\vert_m\otimes\C_{-\mu}$.
The Kostant formulas shows that $\Phi^{-1}(\mu)\subset U$, and if we use local symplectic models of orbits
$K_\mu\cdot m$, one sees that $U$
is an open subset. Hence $\Phi^{-1}(a)\subset U$ for $a$ close enough to $\mu$. The proof is completed.
\end{proof}

Finally, we can use Theorem \ref{theo:RR-M-a-E-alpha} when $a$ is a quasi-regular value close enough to $\mu$ :  relation (\ref{eq:RR-M-a-E-alpha}) gives
$$
\mm_\mu(L)=[RR_K(M\times (Ka)^-,\Lbb(a),\Phi_a^{-1}(0),\Phi_a)]^K=RR(M_{\rra},L_{\mu,a}).
$$
The proof of Theorem \ref{theo:MS} is then completed.

%%%%%%%%%%%%%%%%%%%%%%%%%%%%%%%%%%%%%%%%%%%%%%%%%%%%%%%%
\subsection{Quasi polynomial behavior}\label{subsec:quasipol}
%%%%%%%%%%%%%%%%%%%%%%%%%%%%%%%%%%%%%%%%%%%%%%%%%%%%%%%%

We work in the same setting than in Subsection \ref{subsec:multiRiemannRoch}.
We fix a $\Phi$-moment line bundle $L$ on $M$.
For any dominant weight $\mu\in \Lambda_{\geq 0}$ and any integer $k\in\N$, we define $\mm(k,\mu)\in \N$
as the multiplicity of the representation $V^K_\mu$ in $RR_K(M,L^{\otimes k})$.

We introduce
\begin{enumerate}
\item the vector subspace $V_M\subset \R\oplus\tgot^*$ generated by $\{1\}\times\Delta(M)$,
\item the cone $C_M=\R_{\geq 0}\left(\{1\}\times\Delta(M)\right)$ contained in $V_M$,
\item the lattice $\Lambda_M=\{\Z\oplus \Lambda\}\cap V_M$,
\item the principal face $\sigma$ for the $K$-manifold : $\sigma$ is the relative interior of a face of the Weyl chamber and
$\sigma\cap \Delta(M)$ is dense in $\Delta(M)$.
\end{enumerate}

Theorem \ref{theo:MS} tells us that $\mm(k,\mu)=0$ if $(k,\mu)\notin C_M$. We will now give some qualitative
properties on the behavior of the multiplicities $\mm(k,\mu)$ on the cone $C_M$.

 If $\xi$ varies in $\sigma$, the subgroup $K_\xi$ is independent of $\xi$. We denote it by $K_\sigma$.
 Let $Y=\Phi^{-1}(\sigma)$ be the slice attached to the principal face. It is a $K_\sigma$-invariant symplectic submanifold of
$M$, and the subgroup $[K_\sigma,K_\sigma]$ acts trivially on $L\vert_Y\to Y$ \cite{L-M-T-W}.
The vector space $\R\sigma$ is naturally identified with
the dual of the Lie algebra of the torus $A_\sigma:= K_\sigma/[K_\sigma,K_\sigma]$. Hence the restriction
$\Phi_Y=\Phi\vert_Y : Y\to\sigma$ corresponds to the moment map relative to the action of $A_\sigma$ on
$(Y,\Omega\vert_Y)$.

Let $I_M\subset \R\sigma$ be the affine subspace generated by $\Delta(M)$ : the orthogonal, denoted $\tgot_M$,
of the direction of $I_M$
in $\mathrm{Lie}(A_\sigma)$ corresponds to the generic infinitesimal stabilizer for the $A_\sigma$-action on $Y$.
Let $\Delta(M)^0\subset \Delta(M)\cap\sigma$ be the open subset formed by the regular values of $\Phi_Y : Y\to I_M$.
Note that any element $\xi\in \Delta(M)^0$ is a quasi-regular value of $\Phi$.
 The reduced space $M_{\xi}$ stays of constant dimension when $\xi\in  \Delta(M)^0$, and this dimension
 is the maximal dimension of the reduced spaces $M_{\xi}$, where $\xi$ varies over the set of quasi-regular values of $\Phi$.

If $\agot\subset \Delta(M)^0$ is a connected component, we choose a point $\xi\in \agot$, and we consider

\begin{itemize}
\item  the closed subcone $C_\agot = \R_{\geq 0}\left(\{1\}\times \overline{\agot}\right)$ of $C_M$,
\item  the symplectic orbifold $M_\agot:= \Phi^{-1}(\xi)/A_\sigma$,
\item  the family of line orbi-bundle $L_\agot^{k,\mu}\to M_\agot$ parameterized by $(k,\mu)\in \Lambda_M$~:
$$
L_\agot^{k,\mu}:=\left(L^{\otimes k}\vert_{\Phi^{-1}(\xi)}\otimes \C_{-\mu}\right)/A_\sigma
$$
\end{itemize}

The line orbi-bundles $L_\agot^{k,\mu}$ are well-defined for any $(k,\mu)\in \Lambda_M$ since
$\Lcal(X)=0$ on $L^{\otimes k}\vert_{\Phi^{-1}(\xi)}\otimes \C_{-\mu}$ for any $X\in\tgot_M$.

\begin{defi}
Let $\agot$ be a connected component of $\Delta(M)^0$. We consider the function
$p_\agot: \Lambda_M\to \Z$ defined by the relation
$$
p_\agot(k,\mu):=RR(M_\agot,L_\agot^{k,\mu})
$$
\end{defi}

The Kawasaki-Riemann-Roch Theorem \cite{Kawasaki81} implies that the function $p_\agot$ have a quasi polynomial
behavior on the lattice $\Lambda_M$.

The function $p_\agot$ does not depend on the choice of the choice of  $\xi$ in $\agot$. It is due to the fact that
$p_\agot(k,\mu)$ is equal to
$$
\left[RR_{A_\sigma}\left(Y, L^{\otimes k}\vert_Y, \Phi_Y^{-1}(\xi),\Phi_Y-\xi\right)\otimes \C_{-\mu}\right]^{A_\sigma}.
$$
See (\ref{eq:RR-M-a-E-alpha}). Hence if we take another $\xi'\in\agot$, we consider a continuous family
$\xi_t\in \agot, t\in [0,1]$ joining $\xi$ and $\xi'$. By an easy homotopy argument, we see that
$\left[RR_{A_\sigma}\left(Y, L^{\otimes k}\vert_Y, \Phi_Y^{-1}(\xi_t),\Phi_Y-\xi_t\right)\otimes \C_{-\mu}\right]^{A_\sigma}$
is independent of $t\in [0,1]$.

We can now state the main result of this section. Notice that $C_M=\bigcup_{\agot} C_\agot$.

\begin{theo}\label{theo:quasipolynomial}
If $(k,\mu)\in C_\agot$, we have $\mm(k,\mu)=p_\agot(k,\mu)$.
\end{theo}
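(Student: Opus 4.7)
The strategy is to apply the Meinrenken-Sjamaar theorem (Theorem \ref{theo:MS}) to the line bundle $L^{\otimes k}$, viewed as a $k\Phi$-moment bundle on $M$, and then match the resulting reduction formula with the definition of $p_\agot(k,\mu)$. Assume $k>0$ (the case $k=0$ forces $\mu=0$ and follows directly). Since $(k,\mu)\in C_\agot$ means $\mu/k\in\overline{\agot}$, and $\agot$ is an open subset of $\Delta(M)^0$, I can choose $\xi\in\agot$ arbitrarily close to $\mu/k$; set $a:=k\xi$. Because quasi-regularity is preserved under positive rescaling of the moment map, $a$ is a quasi-regular value of $k\Phi$ arbitrarily close to $\mu$.

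Applying Theorem \ref{theo:MS} to $L^{\otimes k}$ (whose Kirwan polytope is $k\Delta(M)$, and $\mu\in k\Delta(M)$ since $\mu/k\in\Delta(M)$) at the quasi-regular value $a$, and using $(k\Phi)^{-1}(a)=\Phi^{-1}(\xi)$ together with $K_a=K_\xi=K_\sigma$ (as $\xi$ lies in the relative interior $\sigma$ of the principal face), yields
\begin{equation*}
\mm(k,\mu)=RR\bigl(\Phi^{-1}(\xi)/K_\sigma,\;(L^{\otimes k}|_{\Phi^{-1}(\xi)}\otimes\C_{-\mu})/K_\sigma\bigr).
\end{equation*}

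To conclude I exchange the $K_\sigma$-quotient for the $A_\sigma$-quotient. Since $\mu/k\in\overline{\agot}\subset\overline{\sigma}$ and $\sigma$ is a cone from the origin, we have $\mu\in\overline{\sigma}\subset\R\sigma=\mathrm{Lie}(A_\sigma)^*$, so $\mu$ annihilates $[\kgot_\sigma,\kgot_\sigma]$ and hence $[K_\sigma,K_\sigma]$ acts trivially on $\C_{-\mu}$. Combined with the fact, recalled in the paper, that $[K_\sigma,K_\sigma]$ already acts trivially on $L|_Y$, this shows that $[K_\sigma,K_\sigma]$ acts trivially on the whole bundle $L^{\otimes k}|_{\Phi^{-1}(\xi)}\otimes\C_{-\mu}$. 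Reducing in stages, the $K_\sigma$- and $A_\sigma$-quotients then coincide as orbifolds and as orbi-bundles, identifying the above expression with $RR(M_\agot,L_\agot^{k,\mu})=p_\agot(k,\mu)$.

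The main delicate point is the first step: ensuring that $a=k\xi$ can be chosen simultaneously quasi-regular for $k\Phi$ and close enough to $\mu$ to satisfy the hypotheses of Theorem \ref{theo:MS}. This rests on the openness of $\agot$ in $\Delta(M)^0$, the containment $\mu/k\in\overline{\agot}$, and the scale-invariance of quasi-regularity. Beyond this, the argument is a purely formal manipulation combining Meinrenken-Sjamaar with the structural properties of the principal face listed in the paper.
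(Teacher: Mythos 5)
Your argument is correct and is essentially the paper's own proof: both apply Theorem \ref{theo:MS} to $(M,k\Omega,k\Phi)$ with the moment bundle $L^{\otimes k}$ at a quasi-regular value $a=k\xi$, $\xi\in\agot$ chosen close to $\mu/k$, and then identify the reduced data with $(M_\agot,L_\agot^{k,\mu})$. The extra details you supply — why such an $a$ exists and why the $K_a$-quotient agrees with the $A_\sigma$-quotient via the trivial action of $[K_\sigma,K_\sigma]$ — are precisely the points the paper leaves implicit (together with the previously established independence of $p_\agot$ on the choice of $\xi\in\agot$, which your final identification tacitly uses).
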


\begin{proof} Let us fix $(k,\mu)\in C_\agot$. We apply Theorem \ref{theo:MS} to the $K$-manifold
$\X:=(M,k\Omega,k\Phi)$ equipped with the moment line bundle $\Lbb:=L^{\otimes k}$. The multiplicity of
$V^K_\mu$ in $RR_K(\X,\Lbb)$ is equal to
$RR(\X_{\rra},\Lbb_{a,\mu})$ where $a$ is any quasi-regular value of $k\Phi$ close enough to $\mu$. We can take $a=k\xi$ with
$\xi\in\agot$ : the reduced space $\X_{\rra}$ is equal to $M_{\xi}$ and the line bundle $\Lbb_{a,\mu}$ coincides with $L_\agot^{k,\mu}$.
 We have proved that $\mm(k,\mu)=RR(\X_{\rra},\Lbb_{a,\mu})=p_\agot(k,\mu)$.
\end{proof}

%%%%%%%%%%%%%%%%%%%%%%%%%%%%%%%%%%%%%%%%%%%%%%%%%%%%%%%%
\subsection{Multiplicities on a face}\label{multiplicity-face}
%%%%%%%%%%%%%%%%%%%%%%%%%%%%%%%%%%%%%%%%%%%%%%%%%%%%%%%%

Let $F$ be a (closed) face of the polytope $\Delta(M)$ that intersects the principal face $\sigma$.
We say that $F$ is a general face of $\Delta(M)$.

We assume in this section that $\mu$ is a dominant weight that belongs to $F$.
Let $T_\sigma\subset T$ be the connected component of the center of the subgroup $K_\sigma$.
The dual of the Lie algebra of $T_\sigma$ is the subspace $\R\sigma\subset \tgot^*$. Let $T_F\subset T_\sigma$ be the subtorus
with Lie algebra equal to the orthogonal of the direction $\overrightarrow{F}\subset \R\sigma$ of the face $F$. We denote $K_F$
the subgroup of $K$ that centralizes $T_F$: we see that $K_\sigma\subset K_F$.

We start with the basic fact.

\begin{lem}\label{lem:phi-F-sigma}
The pullback $\Phi^{-1}(F\cap \sigma)$ is connected and belongs to the submanifold $M^{T_F}$.
\end{lem}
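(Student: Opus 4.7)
The plan is to establish the two statements by transferring the problem to the symplectic slice $Y:=\Phi^{-1}(\sigma)$. By \cite{L-M-T-W}, $Y$ is a $K_\sigma$-invariant symplectic submanifold of $M$ on which $[K_\sigma,K_\sigma]$ acts trivially, and the residual torus $A_\sigma$ acts in Hamiltonian fashion with moment map $\Phi_Y:=\Phi|_Y:Y\to\R\sigma$ having image $\Delta(M)$. Since $F\cap\sigma\subset\sigma$, the preimage $\Phi^{-1}(F\cap\sigma)$ coincides with $\Phi_Y^{-1}(F\cap\sigma)$, and the problem reduces to a question about the abelian Hamiltonian manifold $(Y,\Phi_Y)$.

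For the inclusion $\Phi^{-1}(F\cap\sigma)\subset M^{T_F}$, I would fix a point $m\in\Phi_Y^{-1}(F\cap\sigma)$ and let $F'$ be the unique face of $\Delta(M)$ whose relative interior contains $\Phi(m)$, so that $F'\subset F$. The key step is to invoke the local normal form of a Hamiltonian torus action near $m$: locally the image of $\Phi_Y$ lies in a polyhedral cone with apex $\Phi(m)$ whose maximal linear subspace coincides with $\mathrm{image}(d\Phi_Y|_m)$, and a standard computation identifies this image with the annihilator of the Lie stabilizer $\mathrm{Lie}((A_\sigma)_m)$ in $\R\sigma$. Since $\Phi_Y(Y)\subset\Delta(M)$, this polyhedral cone must lie inside the tangent cone of $\Delta(M)$ at $\Phi(m)$, whose maximal linear subspace is $\overrightarrow{F'}$. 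This will force $\mathrm{Lie}((A_\sigma)_m)\supset(\overrightarrow{F'})^\perp =\mathrm{Lie}(T_{F'})$, so that $T_{F'}$ fixes $m$; since $F'\subset F$ gives $T_F\subset T_{F'}$, the conclusion $m\in Y^{T_F}\subset M^{T_F}$ will follow.

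For the connectedness, the plan is to appeal to the refined convexity theorem for Hamiltonian $K$-actions developed by Kirwan, Sjamaar, and Lerman--Meinrenken--Tolman--Woodward \cite{L-M-T-W}: the preimage $\Phi^{-1}(C)$ is connected whenever $C$ is a relatively open convex subset of $\Delta(M)$. The set $F\cap\sigma$ has exactly this form, being the intersection of the closed convex face $F\subset\Delta(M)\subset\overline{\sigma}$ with the relatively open chamber face $\sigma$, hence relatively open in $F$ and convex.

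The main obstacle I anticipate is the connectedness statement: since $Y$ is generally not compact, the abelian convexity theorem cannot be quoted directly for $(Y,\Phi_Y)$, and one must either extract the needed statement from the literature on non-abelian moment maps on $M$ or argue by hand. A fallback plan is to combine the connectedness of the individual fibers $\Phi^{-1}(\xi)$ for $\xi\in F\cap\sigma$ (a consequence of Kirwan's connectedness theorem on the compact manifold $M$) with a homotopy/gluing argument along line segments inside the convex set $F\cap\sigma$, using the local normal form of $\Phi_Y$ near $Y^{T_F}$ to produce local continuous sections of $\Phi_Y$ over $F\cap\sigma$. By contrast, the inclusion in $M^{T_F}$ is essentially local and should follow mechanically once the tangent cone of $\Delta(M)$ at $\Phi(m)$ has been correctly identified.
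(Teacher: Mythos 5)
Your argument is correct in substance, but it packages the fixed-point statement differently from the paper. The paper argues directly on the slice $Y=\Phi^{-1}(\sigma)$: since $F$ is a face, one can choose a basis $X_1,\dots,X_p$ of $\mathrm{Lie}(T_F)=(\overrightarrow{F})^{\perp}$ with $\Delta(M)\subset\bigcap_j\{\langle\xi,X_j\rangle\geq\langle\mu,X_j\rangle\}$, so each function $y\mapsto\langle\Phi(y),X_j\rangle$ attains its minimum along $\Phi^{-1}(F\cap\sigma)$; its differential therefore vanishes there, and the Kostant relation together with the nondegeneracy of $\Omega$ forces the vector fields $X_{j,M}$ to vanish, giving $\Phi^{-1}(F\cap\sigma)\subset M^{T_F}$. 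Your route — local normal form, tangent cone of $\Delta(M)$ at $\Phi(m)$, and the duality $\mathrm{image}(d\Phi_Y|_m)=\mathrm{ann}\big(\mathrm{Lie}((A_\sigma)_m)\big)$ — rests on exactly the same mechanism (that duality is nothing but the Kostant relation plus nondegeneracy of $\Omega|_Y$), but is heavier than needed: you do not need the normal form at all, since $\mathrm{image}(d\Phi_Y|_m)$ lies in the tangent cone of $\Delta(M)$ at $\Phi(m)$ simply by differentiating curves through $m$, and, being a linear subspace, it lies in the lineality space $\overrightarrow{F'}$. Beware also that your intermediate claim that the lineality space of the local image cone \emph{coincides} with $\mathrm{image}(d\Phi_Y|_m)$ is false in general (the weights of the stabilizer on the symplectic slice may span a cone containing lines, e.g.\ weights $\pm\alpha$); fortunately only the inclusion $\mathrm{image}(d\Phi_Y|_m)\subset\overrightarrow{F'}$ is used, so the slip is harmless. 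Finally, note the small bookkeeping point that a statement about the stabilizer in $A_\sigma=K_\sigma/[K_\sigma,K_\sigma]$ must be transferred back to the $K$-action on $M$; this is immediate at the Lie algebra level since $[K_\sigma,K_\sigma]$ acts trivially on $Y$ and $T_F$ is connected, but it should be said. For connectedness, the paper is exactly as terse as you fear: it simply invokes the connectedness of the fibers $\Phi^{-1}(\xi)$ (together with the convexity of $F\cap\sigma$), so your worry about quoting a convexity theorem for the noncompact $Y$ is legitimate but your fallback (connected fibers over a convex base, glued via the local structure of $\Phi$) is at least as rigorous as what the paper does.
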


\begin{proof}
The pullback $\Phi^{-1}(F\cap \sigma)$ belongs to the slice $Y=\Phi^{-1}(\sigma)$, and it  is connected as each
fiber $\Phi^{-1}(\xi)$ is connected. Since $F$ is a face of the
polytope $\Delta(M)\subset\R\sigma$, we see that $\Delta(M)$ is contained in a cone
of the form
$$
\bigcap_{j=1}^p \{\xi\in \R\sigma\ |\  \langle\xi,X_j\rangle\geq \langle\mu,X_j\rangle\}
$$
where $X_1,\ldots,X_p$ is a basis of $\mathrm{Lie}(T_F)$. Since $\Delta(M)\cap\sigma=\mathrm{Image}(\Phi\vert_Y)$,
the functions $y\in Y\mapsto \langle\Phi(y),X_j\rangle$
reach their minimum at any point of $\Phi^{-1}(F\cap \sigma)$. So the differential of the function
$y\in Y\mapsto \langle\Phi(y),X_j\rangle$ vanishes at any point of $\Phi^{-1}(F\cap \sigma)$.
This implies that
$\Phi^{-1}(F\cap \sigma)\subset \bigcap_{j=1}^p M^{X_j}= M^{T_F}$.
\end{proof}

\medskip

Let $M_F$ be the connected component of $M^{T_F}$ that contains $\Phi^{-1}(F\cap\sigma)$. It is a $K_F$-Hamiltonian
manifold equipped with the two form $\Omega_F:=\Omega\vert_{M_F}$ and the moment map
$\Phi_F=\Phi\vert_{M_F} : M_F\to \kgot_F^*$. We denote $L\vert_F$
the restriction of the $\Phi$-moment line bundle $L$ on $M_F$.
It is easy to see that $L\vert_F$ is a $\Phi_F$-moment line bundle.

We parameterize the irreducible representations of $K_F$ thanks to the choice of a Weyl chamber $\Ccal_F\subset \tgot^*$
that contains the Weyl chamber $\tgot^*_{\geq 0}$ of $K$. So, we may consider the irreducible representation
$V_\mu^{K_F}$ of the group $K_F$ with highest weight $\mu$.

A direct consequence of the $[Q,R]=0$ Theorem is the following proposition.

\begin{prop}\label{QR-mu-face}
Let $L$ be a $\Phi$-moment bundle on $M$, and let $F$ be a  face of $\Delta(M)$ that intersects the principal face $\sigma$.
For any dominant weight $\mu$ that belongs to $F$, the multiplicity $\mm_\mu(L)$ of the representation
$V^K_\mu$ in $RR_K(M, L)$ is equal to the multiplicity of the representation $V^{K_F}_\mu$ in $RR_{K_F}(M_F, L\vert_F)$.
\end{prop}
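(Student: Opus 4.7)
The plan is to express both sides via the Meinrenken--Sjamaar theorem (Theorem \ref{theo:MS}) as Riemann--Roch numbers on identical reduced orbifolds, and then show directly that the reductions on $M$ at $\mu$ and on $M_F$ at $\mu$ coincide. The first step is to choose a value $a\in F\cap \sigma$ close to $\mu$ that is simultaneously quasi-regular for $\Phi$ and for $\Phi_F$. Such a point exists: since $\sigma$ is the relative interior of a face of the Weyl chamber and $F\subset \overline{\sigma}$ is convex, a standard convexity argument (if $p\in F\cap\sigma$ and $q\in F$ then $[p,q)\subset F\cap\sigma$) shows $F\cap\sigma$ is dense in $F$, and the quasi-regular values of $\Phi$ and of $\Phi_F$ are each dense in their respective Kirwan polytopes.

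Given such an $a$, Theorem \ref{theo:MS} applied to $M$ with moment bundle $L$ gives $\mm_\mu(L)=RR(M_a,L_{a,\mu})$, while applied to the $K_F$-Hamiltonian manifold $M_F$ with moment bundle $L|_F$ it gives
$$
[\text{mult.\ of } V_\mu^{K_F} \text{ in } RR_{K_F}(M_F,L|_F)]=RR((M_F)_a,(L|_F)_{a,\mu}).
$$
Two geometric facts then make the two sides identical. First, $\Phi^{-1}(a)\subset M_F$: indeed Lemma \ref{lem:phi-F-sigma} places $\Phi^{-1}(F\cap\sigma)$ inside the connected submanifold $M_F$ of $M^{T_F}$, so in particular $\Phi^{-1}(a)=\Phi_F^{-1}(a)$. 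Second, $K_a\subset K_F$: since $a\in\sigma$ the stabilizer $K_a$ equals $K_\sigma$, and because $T_F\subset T_\sigma$ lies in the connected center of $K_\sigma$, it commutes with $K_\sigma$, giving $K_\sigma\subset K_F$.

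Combining these facts, the reduced orbifolds and line orbibundles coincide:
$$
M_a=\Phi^{-1}(a)/K_a=\Phi_F^{-1}(a)/(K_a\cap K_F)=(M_F)_a,
$$
and $L_{a,\mu}$ agrees with $(L|_F)_{a,\mu}$ since both are formed by restricting $L$ to $\Phi^{-1}(a)$, twisting by $\C_{-\mu}$, and quotienting by $K_a$. Consequently $\mm_\mu(L)$ and the multiplicity of $V_\mu^{K_F}$ in $RR_{K_F}(M_F,L|_F)$ are equal.

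The main subtlety lies in the selection of $a$: one must verify that $\mu\in\Delta(M_F)$ (so that Theorem \ref{theo:MS} is not being applied outside its range on the $K_F$-side) and that the densities used to extract a common quasi-regular value near $\mu$ actually intersect. The inclusion $F\subset\Delta(M_F)$ follows from $F\cap\sigma\subset\Phi_F(M_F)$ together with the density of $F\cap\sigma$ in $F$ and compactness of $M_F$; simultaneous quasi-regularity near $\mu$ is the intersection of two dense subsets of the relatively open set $F\cap\sigma$, hence itself dense.
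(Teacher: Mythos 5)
Your proof follows essentially the same route as the paper's: apply Theorem \ref{theo:MS} on both sides for a quasi-regular value $a\in F\cap\sigma$ close to $\mu$, then use Lemma \ref{lem:phi-F-sigma} (plus $K_a=K_\sigma\subset K_F$) to identify $\Phi^{-1}(a)=\Phi_F^{-1}(a)$ and hence the reduced orbifolds and the orbibundles $L_{a,\mu}$ and $(L\vert_F)_{a,\mu}$. That identification is exactly the paper's argument, and it is correct.

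The one place where you go beyond the paper, the selection of $a$, is handled with a faulty justification: the intersection of two dense subsets need not be dense, and density of quasi-regular values in $\Delta(M)$ (resp.\ $\Delta(M_F)$) says nothing about density inside the lower-dimensional set $F\cap\sigma$. Fortunately this extra step is unnecessary: for $a\in F\cap\sigma$ one has $\Phi^{-1}(a)=\Phi_F^{-1}(a)\subset M_F$, and every $m$ in this fiber satisfies $\kgot_m\subset\kgot_a=\kgot_\sigma\subset\kgot_F$, so the infinitesimal stabilizers for $K$ and for $K_F$ coincide; hence $a$ is quasi-regular for $\Phi$ if and only if it is quasi-regular for $\Phi_F$, and no intersection of two conditions is needed. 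What remains is only the existence of quasi-regular values of $\Phi$ in $F\cap\sigma$ arbitrarily close to $\mu$ (generic points of $F\cap\sigma$ have this property), which is the same assertion the paper itself makes when it says ``we can choose $a$ in $F\cap\sigma$''; your density-in-$F$ argument for $F\cap\sigma$ itself is fine, but you should replace the two-dense-sets step by the equivalence above.
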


\begin{proof} We have $\mm_\mu(L)=RR(M_{\rra}, L_{a,\mu})$ for any quasi-regular value of $\Phi$ close enough to $\mu$.
Since $\mu\in F$, we can choose $a$ in $F\cap\sigma$, and then $\Phi^{-1}(a)=\Phi^{-1}_F(a)\subset M_F$. The proposition
follows then from the fact that  the datas $ L_{a,\mu}\to M_{\rra}$ and $(L\vert_F)_{a,\mu}\to (M_F)_{\rra}$ are equal (the last one
is a reduction relatively to the action of the group $K_F$ on $L\vert_F\to M_F$).

\end{proof}

%%%%%%%%%%%%%%%%%%%%%%%%%%%%%%%%%%%%%%%%%%%%%%%%%%%%%%%%
%%%%%%%%%%%%%%%%%%%%%%%%%%%%%%%%%%%%%%%%%%%%%%%%%%%%%%%%
\section{Branching laws}\label{sec:branching}
%%%%%%%%%%%%%%%%%%%%%%%%%%%%%%%%%%%%%%%%%%%%%%%%%%%%%%%%
%%%%%%%%%%%%%%%%%%%%%%%%%%%%%%%%%%%%%%%%%%%%%%%%%%%%%%%%

Let $\tilde{K}$ be a compact connected Lie group and let $K$  be a closed connected subgroup.
Let $i$ denote the inclusion of $K$ into $\tilde{K}$, $i : \kgot \to \tilde{\kgot}$ the induced embedding of Lie algebras,
and $\pi : \tilde{\kgot}^* \to \kgot^*$ the dual projection.

We consider the following action of $\tilde{K}\times K$ on $\tilde{K}$ : $(\tilde{k},k)\cdot g= \tilde{k} g k^{-1}$.
Then $N=\T^*\tilde{K}$, the cotangent bundle of  $\tilde{K}$ is a $\tilde K\times K$ Hamiltonian manifold.
Recall from Subsection \ref{subsec:cotangent} (Equation \ref{eq:momentcotangent})  that  the corresponding moment map relative to the $\tilde{K}\times K$-action is the
map $\Phi=\Phi_{\tilde{K}} \oplus \Phi_K : \T^*\tilde{K} \to \tilde{\kgot}^*\oplus \kgot^*$ defined by
$$
\Phi_{\tilde{K}}(g,\xi)=-\xi,\quad \Phi_K(g,\xi)=\pi(g^{-1}\xi).
$$

Select maximal tori $T$ in $K$ and $\tilde{T}$ in $\tilde{K}$ such that $T\subset \tilde{T}$, and
Weyl chambers $\tilde{\tgot}^*_{\geq 0}$ in $\tilde{\tgot}^*$ and
$\tgot^*_{\geq 0}$ in  $\tgot^*$, where $\tgot$ and $\tilde{\tgot}$ denote the Lie algebras of $T$, resp. $\tilde{T}$.

Let $\Delta(\T^*\tilde{K})\subset \tilde{\tgot}^*_{\geq 0}\times \tgot^*_{\geq 0}$
be the Kirwan polytope associated to $\Phi$:
$$
\Delta(\T^*\tilde{K})=\left\{(\alpha,\beta)\in\tilde{\tgot}^*_{\geq 0}\times \tgot^*_{\geq 0}\, |\, -\beta\in \pi(\tilde{K}\alpha) \right\}.
$$

For $(\tilde \lambda,\lambda)\in \tilde{\tgot}^*_{\geq 0}\oplus \tgot_{\geq 0}^*$, we denote by $N_{{\tilde \lambda},\lambda}$ the reduced space
of $N:=\T^*\tilde{K}$ at $(\tilde \lambda,\lambda)$. Consider the coadjoint orbit  $\tilde{K}\tilde{\lambda}^*:=-\tilde{K}\tilde{\lambda}$, equipped with the $K$-action
 and moment map $\pi\vert_{\tilde{K}\tilde{\lambda}^*}$. Let $(\tilde{K}\tilde{\lambda}^*)_{\lambda}$ the reduced manifold of $\tilde{K}\tilde{\lambda}^*$ at $\lambda$.
 The following lemma is obvious.

\begin{lem}\label{lem:N-lambda-tilde}
As sets, $(\tilde{K}\tilde{\lambda}^*)_{\lambda}$ is isomorphic to $N_{{\tilde \lambda},\lambda}$.
\end{lem}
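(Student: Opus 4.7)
The plan is to unfold both sides as set-theoretic quotients and produce an explicit bijection using the inverse map on $\tilde K$. Since the statement is only about sets (no symplectic/smooth structure), the argument reduces to bookkeeping actions.

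First I would unpack the left-hand side. By definition, $N_{\tilde\lambda,\lambda}=\Phi^{-1}(\tilde\lambda,\lambda)/(\tilde K_{\tilde\lambda}\times K_\lambda)$. Using the formulas $\Phi_{\tilde K}(g,\xi)=-\xi$ and $\Phi_K(g,\xi)=\pi(g^{-1}\xi)$, the fiber $\Phi^{-1}(\tilde\lambda,\lambda)$ consists of pairs $(g,\xi)$ with $\xi=-\tilde\lambda=\tilde\lambda^*$ and $\pi(g^{-1}\tilde\lambda^*)=\lambda$. Projection to the first factor identifies it equivariantly with
$$F\;:=\;\{g\in\tilde K\;:\;\pi(g^{-1}\tilde\lambda^*)=\lambda\},$$
on which $(\tilde k,k)\in \tilde K_{\tilde\lambda^*}\times K_\lambda$ acts by $g\mapsto \tilde k\, g\, k^{-1}$. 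Thus $N_{\tilde\lambda,\lambda}=\tilde K_{\tilde\lambda^*}\backslash F/K_\lambda$.

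Next I would unpack the right-hand side. The coadjoint orbit $\tilde K\tilde\lambda^*$, viewed as a $K$-manifold with the restricted action, has moment map $\pi|_{\tilde K\tilde\lambda^*}$, so by definition $(\tilde K\tilde\lambda^*)_\lambda=(\pi|_{\tilde K\tilde\lambda^*})^{-1}(\lambda)/K_\lambda$. Composing with the surjection $\tilde K\to \tilde K\tilde\lambda^*$, $g\mapsto g\tilde\lambda^*$, whose fibers are the right $\tilde K_{\tilde\lambda^*}$-cosets, we rewrite
$$(\tilde K\tilde\lambda^*)_\lambda\;=\;K_\lambda\backslash F'/\tilde K_{\tilde\lambda^*},\qquad F'\;:=\;\{g\in\tilde K\;:\;\pi(g\tilde\lambda^*)=\lambda\},$$
where $K_\lambda$ acts by left translation and $\tilde K_{\tilde\lambda^*}$ by right translation.

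Finally, the inversion map $\iota: g\mapsto g^{-1}$ of $\tilde K$ sends $F$ bijectively to $F'$ and swaps left and right translations; hence it descends to a bijection of double-coset spaces $\tilde K_{\tilde\lambda^*}\backslash F/K_\lambda\;\xrightarrow{\sim}\;K_\lambda\backslash F'/\tilde K_{\tilde\lambda^*}$, i.e. $N_{\tilde\lambda,\lambda}\simeq(\tilde K\tilde\lambda^*)_\lambda$. There is no real obstacle here; the only thing to watch is keeping the sign conventions (note $\tilde\lambda^*=-\tilde\lambda$, so $\tilde K_{\tilde\lambda}=\tilde K_{\tilde\lambda^*}$) and the placement of inverses consistent between the two descriptions.
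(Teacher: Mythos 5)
Your proof is correct; the paper offers no argument at all (the lemma is declared ``obvious''), and your unfolding of both reduced spaces as double-coset spaces $\tilde K_{\tilde\lambda^*}\backslash F/K_\lambda$ and $K_\lambda\backslash F'/\tilde K_{\tilde\lambda^*}$, matched by inversion, is exactly the natural verification. It is also consistent with the identification the paper itself uses later (in the proof of the quasi-polynomiality theorem), where the reduced fiber is mapped to a subset of the coadjoint orbit via $(g,\eta)\mapsto g^{-1}\eta$ --- the same bijection as yours, packaged slightly differently; the only detail you use silently is the standard identification $\Phi^{-1}\bigl(G\cdot a\bigr)/G\simeq \Phi^{-1}(a)/G_a$, which holds by equivariance of $\Phi$.
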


\medskip

Let $\tilde{\Lambda}_{\geq 0}\subset\tilde{\tgot}^*_{\geq 0}$, $\Lambda_{\geq 0}\subset\tgot^*_{\geq 0}$ be the
set of dominant weights. For any $(\lambda,\tilde{\lambda})\in \Lambda_{\geq 0}\times\tilde{\Lambda}_{\geq 0}$,
we denote $V_\lambda$, $\tilde{V}_{\tilde{\lambda}}$ the corresponding irreducible representations of $K$ and $\tilde{K}$ , and we
define
\begin{equation}\label{eq:multiplicity-K-tilde}
\mm(\tilde{\lambda},\lambda) \in\N
\end{equation}
as the dimension of the vector space $\left[\tilde{V}_{\tilde{\lambda}}\vert_{K}\otimes V_{\lambda}\right]^K$. In other words
$\mm(\tilde{\lambda},\lambda)$ is the multiplicity of the representation $V_{\lambda}$ in $(\tilde{V}_{\tilde{\lambda}})^*\vert_{K}$.

We consider  the coadjoint orbit $\tilde{K}\tilde{\lambda}^*$, equipped with  the prequantum line bundle $L(\tilde{\lambda}^*):=[\C_{\tilde{\lambda}^*}]$.
The $[Q,R]=0$ theorem \ref{theo:MS} says that $\mm(\tilde{\lambda},\lambda)=RR((\tilde{K}\tilde{\lambda}^*)_{\lambda}, L(\tilde{\lambda}^*)_{\lambda})$, in particular
$\mm(\tilde{\lambda},\lambda)=0$ if $(\tilde{K}\tilde{\lambda}^*)_{\lambda}=\emptyset$.

If we use Lemma \ref{lem:N-lambda-tilde}, we see that the cone $\Delta(\T^*\tilde{K})$ is related to the multiplicity function (\ref{eq:multiplicity-K-tilde}) through the following basic result.
\begin{lem}
We have $\mm(\tilde{\lambda},\lambda)=0$ if $(\tilde{\lambda},\lambda)\notin \Delta(\T^*\tilde{K})$.
\end{lem}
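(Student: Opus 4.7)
The plan is to reduce the lemma directly to what has already been set up in the paragraph immediately preceding it. The key identity is
$$
\mm(\tilde{\lambda},\lambda)=RR\bigl((\tilde{K}\tilde{\lambda}^*)_{\lambda},\, L(\tilde{\lambda}^*)_{\lambda}\bigr),
$$
obtained via the shifting trick together with Theorem \ref{theo:MS} applied to the $K$-Hamiltonian manifold $\tilde{K}\tilde{\lambda}^*$ equipped with the moment map $\pi|_{\tilde{K}\tilde{\lambda}^*}: \tilde{K}\tilde{\lambda}^*\to \kgot^*$ and the prequantum line bundle $L(\tilde{\lambda}^*)$. So the remaining work is purely geometric: show that the hypothesis $(\tilde{\lambda},\lambda)\notin \Delta(\T^*\tilde{K})$ forces the reduced space $(\tilde{K}\tilde{\lambda}^*)_{\lambda}$ to be empty, so that the Riemann-Roch number is zero by Definition \ref{def:Q-M-mu}.

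First I would rewrite the defining condition
$\Delta(\T^*\tilde{K})=\{(\alpha,\beta)\in \tilde{\tgot}^*_{\geq 0}\times \tgot^*_{\geq 0}:\, -\beta\in \pi(\tilde{K}\alpha)\}$
in the equivalent form $\beta\in \pi(\tilde{K}\alpha^*)$, using that $\tilde{K}\alpha^*=-\tilde{K}\alpha$ and that $\pi$ is linear. Thus $(\tilde{\lambda},\lambda)\in \Delta(\T^*\tilde{K})$ is equivalent to $\lambda\in \pi(\tilde{K}\tilde{\lambda}^*)\cap \tgot^*_{\geq 0}$; the right-hand side is precisely the Kirwan polytope of the $K$-Hamiltonian manifold $\tilde{K}\tilde{\lambda}^*$.

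Next, assuming $(\tilde{\lambda},\lambda)\notin \Delta(\T^*\tilde{K})$, I would deduce that $\lambda\notin \pi(\tilde{K}\tilde{\lambda}^*)$. Since the fiber $(\pi|_{\tilde{K}\tilde{\lambda}^*})^{-1}(K\lambda)$ is the preimage of a $K$-orbit through $\lambda$ and since $\pi(\tilde{K}\tilde{\lambda}^*)$ is $K$-invariant, this preimage is empty. Thanks to Lemma \ref{lem:N-lambda-tilde} the reduced space $N_{\tilde{\lambda},\lambda}\simeq(\tilde{K}\tilde{\lambda}^*)_{\lambda}$ is therefore empty, so the Riemann-Roch number $RR((\tilde{K}\tilde{\lambda}^*)_{\lambda}, L(\tilde{\lambda}^*)_{\lambda})$ vanishes by the first case of Definition \ref{def:Q-M-mu} (or equivalently by the first bullet of Theorem \ref{theo:MS}).

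There is essentially no obstacle here: the lemma is a pure bookkeeping consequence of the way $\Delta(\T^*\tilde{K})$ has been defined and of the already stated identification of $\mm(\tilde{\lambda},\lambda)$ with a Riemann-Roch number on a symplectic reduction. The only thing to check is the equivalence of the two formulations of membership in the Kirwan polytope, and this is immediate from the conjugation identity $\tilde{K}\tilde{\lambda}^*=-\tilde{K}\tilde{\lambda}$.
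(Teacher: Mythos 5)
Your proof is correct and follows essentially the same route as the paper: the paper's (implicit) argument is exactly the identity $\mm(\tilde{\lambda},\lambda)=RR((\tilde{K}\tilde{\lambda}^*)_{\lambda},L(\tilde{\lambda}^*)_{\lambda})$ from Theorem \ref{theo:MS}, combined with Lemma \ref{lem:N-lambda-tilde} and the definition of $\Delta(\T^*\tilde{K})$ to see that the reduced space is empty when $(\tilde{\lambda},\lambda)\notin\Delta(\T^*\tilde{K})$. Your rewriting of the membership condition as $\lambda\in\pi(\tilde{K}\tilde{\lambda}^*)$ and the $K$-invariance argument for the emptiness of the fiber are exactly the bookkeeping the paper leaves to the reader.
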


\medskip

Let $(\tilde X,X)$ be an element of  $\tilde \tgot\times \tgot$.
In the following lemma, which follows by direct checking, we describe the manifold $N^{(\tilde X,X)}$ of zeroes of the vector field $(\tilde X,X)$ on $N=\T^*\tilde K$. We denote $\tilde{K}_X\subset \tilde{K}$ the subgroup that fixes $X$. The cotangent bundle $\T^*\tilde{K}_X$ is then a submanifold of $\T^*\tilde{K}$.

\begin{lem}\label{lem:point-fixe-cotangent}

\begin{itemize}

\item If $\tilde X$ is not conjugated in $\tilde \kgot$ to $X$, then
$N^{(\tilde X,X)}=\emptyset$

\item If $\tilde X=aX$ with $a\in \tilde K$, then
$N^{(\tilde X,X)}=a\cdot \T^*\tilde{K}_X$.

\end{itemize}
\end{lem}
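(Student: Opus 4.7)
The plan is to write out the infinitesimal action of $(\tilde X,X)\in\tilde{\tgot}\times\tgot$ on $N=\T^*\tilde K\simeq \tilde K\times\tilde\kgot^*$ and read off its zero locus directly. Using the identification of $\T\tilde K$ with $\tilde K\times\tilde\kgot$ recalled in Subsection~\ref{subsec:cotangent}, together with the fact that $K$ acts on $\tilde K$ only on the right, a short computation gives that at a point $(g,\xi)\in\tilde K\times\tilde\kgot^*$ the vector field $(\tilde X,X)_N$ has $\tilde K$-component corresponding to $\tilde X-\mathrm{Ad}(g)X\in\tilde\kgot$ and fiber component $-\mathrm{ad}^*(\tilde X)\xi\in\tilde\kgot^*$. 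Hence $N^{(\tilde X,X)}$ consists exactly of the pairs $(g,\xi)$ satisfying $\mathrm{Ad}(g)X=\tilde X$ and $\xi\in(\tilde\kgot^*)^{\tilde X}$.

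The first equation yields the dichotomy immediately. If $\tilde X$ is not $\tilde K$-conjugate to $X$ inside $\tilde\kgot$, there is no such $g$ and $N^{(\tilde X,X)}=\emptyset$. If instead $\tilde X=\mathrm{Ad}(a)X$ for some $a\in\tilde K$, then $\{g\in\tilde K\,:\,\mathrm{Ad}(g)X=\tilde X\}=a\tilde K_X$, where $\tilde K_X$ denotes the $\tilde K$-centralizer of $X$; and, using the invariant inner product on $\tilde\kgot$ to identify $\tilde\kgot^*\simeq\tilde\kgot$ together with the orthogonal decomposition $\tilde\kgot=\tilde\kgot_{\tilde X}\oplus[\tilde X,\tilde\kgot]$, one has $(\tilde\kgot^*)^{\tilde X}=\tilde\kgot_{\tilde X}^*=\mathrm{Ad}^*(a)\tilde\kgot_X^*$.

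To conclude, I would match the set $a\tilde K_X\times \mathrm{Ad}^*(a)\tilde\kgot_X^*$ with $a\cdot\T^*\tilde K_X$. Under the same trivialization, the submanifold $\T^*\tilde K_X\subset\T^*\tilde K\vert_{\tilde K_X}$ corresponds to $\tilde K_X\times\tilde\kgot_X^*$, and the action of $(a,1)\in\tilde K\times K$ on $\T^*\tilde K$ sends $(b,\eta)$ to $(ab,\mathrm{Ad}^*(a)\eta)$; its image is therefore precisely the set just described. No genuine obstacle is expected: the whole argument is a direct calculation, and the only point requiring care is keeping the trivialization and coadjoint-action sign conventions consistent throughout.
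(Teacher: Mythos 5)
Your computation is correct and is exactly the ``direct checking'' the paper leaves to the reader: trivializing $\T^*\tilde K\simeq\tilde K\times\tilde\kgot^*$ as in Subsection~\ref{subsec:cotangent}, the generating vector field of $(\tilde X,X)$ has components $\tilde X-\mathrm{Ad}(g)X$ and $\pm\,\mathrm{ad}^*(\tilde X)\xi$, so the zero set is $\{\mathrm{Ad}(g)X=\tilde X,\ \xi\in(\tilde\kgot^*)^{\tilde X}\}$, which is empty in the non-conjugate case and equals $a\tilde K_X\times\mathrm{Ad}^*(a)\tilde\kgot_X^*=a\cdot\T^*\tilde K_X$ when $\tilde X=\mathrm{Ad}(a)X$. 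The only conventions to watch (the overall sign of the generating vector field and the embedding $\T^*\tilde K_X\subset\T^*\tilde K$ via the invariant inner product) do not affect the zero locus, so your argument matches the intended proof.
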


We recall from general principle that the Kirwan polyhedron has non zero interior in
 $\tilde{\tgot}^*\times \tgot^*$ if and only if
the generic stabilizer of the $\tilde{K}\times K$-action on $\T^* \tilde{K}$ is finite.

To simplify the exposition, we assume from now on that no nonzero ideal of $\kgot$ is an ideal of $\tilde{\kgot}$. It implies the following

\begin{lem}
The intersection of  $\Delta(\T^*\tilde{K})$ with $\tilde{\tgot}^*_{>0}\times \tgot^*_{>0}$ has a non empty interior, and the generic stabilizer of the $\tilde{K}\times K$-action on $\T^* \tilde{K}$ is finite.
\end{lem}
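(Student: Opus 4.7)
The two assertions are equivalent by the standard dimension formula for Kirwan polytopes of Hamiltonian actions on connected symplectic manifolds: for a compact connected Lie group $H$ acting on a connected symplectic manifold with moment map $\Phi$, one has $\dim\Delta=\mathrm{rank}(H)-d_0$, where $d_0$ denotes the dimension of the generic isotropy. Applied to $H=\tilde K\times K$ acting on $\T^*\tilde K$, this gives $\dim\Delta(\T^*\tilde K)=\mathrm{rank}\tilde K+\mathrm{rank} K-d_0$, so $\Delta(\T^*\tilde K)$ has nonempty interior in $\tilde\tgot^*\times\tgot^*$ if and only if $d_0=0$, if and only if the generic stabilizer is finite.

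I compute $d_0$ concretely. Using the left trivialization $\T^*\tilde K\simeq\tilde K\times\tilde\kgot^*$ and the action recalled in Subsection~\ref{subsec:cotangent}, the infinitesimal generator of $(\tilde X,X)\in\tilde\kgot\oplus\kgot$ at $(g,\xi)$ reads $(\Ad(g^{-1})\tilde X-X,\,\mathrm{ad}^*(\tilde X)\xi)$; setting it to zero forces $\tilde X=\Ad(g)X$ and $\Ad(g)X\in\tilde\kgot_\xi$, so the stabilizer Lie algebra at $(g,\xi)$ identifies with $\kgot\cap\Ad(g^{-1})\tilde\kgot_\xi$. For $\xi$ regular, $\tilde\kgot_\xi$ is a Cartan subalgebra of $\tilde\kgot$, and as $(g,\xi)$ varies over the generic locus this Cartan ranges over all conjugates of $\tilde\tgot$. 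Consequently $d_0$ equals the generic value of $\dim(\kgot\cap\Ad(h)\tilde\tgot)$ for $h\in\tilde K$.

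The remaining task is to prove $d_0=0$ if and only if no nonzero ideal of $\tilde\kgot$ lies in $\kgot$. One direction is straightforward: if $\mathfrak{i}\subset\kgot$ is such a nonzero ideal, then $\tilde K$ preserves $\mathfrak{i}$, so $\Ad(h)\tilde\tgot\cap\mathfrak{i}$ is a Cartan subalgebra of $\mathfrak{i}$ of positive dimension $\mathrm{rank}\mathfrak{i}$ for every $h\in\tilde K$; since $\mathfrak{i}\subset\kgot$, this forces $\dim(\kgot\cap\Ad(h)\tilde\tgot)\geq\mathrm{rank}\mathfrak{i}>0$ uniformly in $h$, and thus $d_0>0$. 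For the converse, decompose $\tilde\kgot=\mathfrak{m}\oplus\mathfrak{m}^\perp$ as an $\Ad(\tilde K)$-invariant orthogonal sum of ideals where $\mathfrak{m}$ is the sum of all ideals of $\tilde\kgot$ contained in $\kgot$; write $\kgot=\mathfrak{m}\oplus\kgot'$ with $\kgot':=\kgot\cap\mathfrak{m}^\perp$ and choose compatible Cartans $\tilde\tgot=\tgot_\mathfrak{m}\oplus\tgot_{\mathfrak{m}^\perp}$. Since $\Ad(h)$ respects each ideal, we get
\[
\kgot\cap\Ad(h)\tilde\tgot=\Ad(h)\tgot_\mathfrak{m}\oplus\bigl(\kgot'\cap\Ad(h)\tgot_{\mathfrak{m}^\perp}\bigr),
\]
the first summand of constant dimension $\mathrm{rank}\mathfrak{m}$. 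The lemma's hypothesis translates to $\mathfrak{m}=0$, so it remains to establish the base case that the generic intersection $\kgot'\cap\Ad(h)\tgot_{\mathfrak{m}^\perp}$ vanishes whenever $\mathfrak{m}^\perp$ has no nonzero ideal inside $\kgot'$. This base case is the main obstacle; the plan is to handle it by an averaging / moment-image argument, showing that the projection of a generic coadjoint orbit of the subgroup integrating $\mathfrak{m}^\perp$ onto $(\kgot')^*$ is then a submersion at a dense open set---which, by the computation of the previous paragraph, is exactly equivalent to $d_0=0$.
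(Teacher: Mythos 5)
The substantive content of this lemma is precisely the step you postpone. Your reductions are fine as far as they go: the computation of the stabilizer algebra at $(g,\xi)$ as $\kgot\cap\Ad(g^{-1})\tilde{\kgot}_\xi$, the identification of $d_0$ with the generic value of $\dim\bigl(\kgot\cap\Ad(h)\tilde{\tgot}\bigr)$, the easy implication (a nonzero common ideal forces $d_0>0$ -- which, note, is not even needed for the lemma), and the splitting off of the maximal common ideal $\mathfrak{m}$. But the remaining ``base case'' -- that the absence of a nonzero common ideal forces the generic intersection $\kgot\cap\Ad(h)\tilde{\tgot}$ to vanish -- is the whole point, and the ``plan'' you offer for it is circular: the statement that the projection of a generic coadjoint orbit of $\tilde{K}$ to $\kgot^*$ is generically a submersion is, as you yourself note, equivalent to $d_0=0$, so invoking it proves nothing, and no actual averaging argument connecting this to the no-common-ideal hypothesis is given. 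As it stands the proposal is an (essentially correct) reformulation of the lemma, not a proof.

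For comparison, the paper closes exactly this gap by a dimension count. Using its fixed-point lemma, the locus $N^1$ of points with positive-dimensional stabilizer is $(\tilde{K}\times K)\cdot\bigcup_{X\in\tgot,\,X\neq 0}\T^*\tilde{K}_X$; only finitely many subgroups $\tilde{K}_X$ occur, and each saturation is dominated by the fiber product $F(X)=(\tilde{K}\times K)\times_{\tilde{K}_X\times K_X}\T^*\tilde{K}_X$. If some $F(X)$ had dimension $\dim\T^*\tilde{K}$, then $\dim(K/K_X)=\dim(\tilde{K}/\tilde{K}_X)$, hence $K\cdot X=\tilde{K}\cdot X$, and the linear span of this orbit is a nonzero ideal of $\tilde{\kgot}$ contained in $\kgot$, contradicting the hypothesis; so $N^1$ cannot cover $\T^*\tilde{K}$ and the generic stabilizer is finite. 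Some argument of this kind (or an equivalent Lie-theoretic one) must replace your placeholder. A secondary point: the ``standard dimension formula'' $\dim\Delta=\mathrm{rank}(H)-d_0$ you invoke is false for nonabelian $H$ in general (take for $M$ a nonregular nonzero coadjoint orbit of $SU(3)$: $\Delta$ is a point while the principal stabilizer is $U(2)$); what is true, and what the paper actually uses, is only the equivalence ``$\Delta$ has nonempty interior $\Longleftrightarrow$ the generic stabilizer is finite'', which comes from the principal cross-section theorem rather than from that formula. Since you only use the equivalence, this is a misstatement rather than a fatal error, but it should be corrected.
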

\begin{proof}
By the preceding computation, we see that the set $N^1$ of elements of $N$ having a non zero
stabilizer is
$N^1={\tilde K}\cdot \cup_{X\in \kgot, X\neq 0}\tilde T^*\tilde{K}_X$.

Using conjugation by $K$, we may assume that $X$ is in $\tgot$.
Thus we obtain
$N^1=(\tilde K\times K) \cdot \cup_{X\in \tgot, X\neq 0}\T^*\tilde{K}_X$.

The set of subgroups $\tilde{K}_X$ when $X$ varies in $\tgot$ is finite.
Consider the fiber product
$F(X)=(\tilde K\times K)\times_{\tilde{K}_X\times K_X}\T^*\tilde{K}_X$.
It surjects on $(\tilde K\times K) \cdot\T^*{\tilde K}_X$.
Thus it is sufficient to prove that
for $X\neq 0$ in $\tgot$, and under our hypothesis, the dimension of
$F(X)$ is strictly less than   $\T^*\tilde K$.

Computing dimensions, if the dimensions are equal, then $\dim (K/K_X)=\dim ({\tilde K}/\tilde{K}_X).$
So this implies that $K\cdot X=\tilde{K}\cdot X$.
The vector space spanned by
$K\cdot X$ is then an ideal of $\tilde \kgot$ contained in $\kgot$, contradicting our hypothesis.
\end{proof}

\medskip

In the next section, we will give some qualitative information concerning the behavior of the
multiplicity function $\mm(\tilde{\lambda},\lambda)$ on the cone $\Delta(\T^*\tilde{K})$.

%%%%%%%%%%%%%%%%%%%%%%%%%%%%%%%%%%%%%%%%
\subsection{Quasi polynomial behavior}
%%%%%%%%%%%%%%%%%%%%%%%%%%%%%%%%%%%%%%%%

We consider the slice
$$
Y=\Phi^{-1}(\tilde{\tgot}^*_{>0}\times \tgot^*_{>0})
$$
which is a (non-empty) symplectic submanifold of $\T^*\tilde{K}$ equipped with the action of $\tilde{T}\times T$.

%Note that there exists a finite subgroup $\Gamma\subset \tilde{T}\times T$ such that $(\tilde{T}\times T)_y=\Gamma$
%on a dense open subset of $Y$.

We denote $\Phi_Y :Y\to \tilde{\tgot}^*\times \tgot^*$ the restriction of the moment map on $Y$.
Let $\Delta^0\subset \Delta(\T^*\tilde{K})$ be the open cone formed by the regular values of
$\Phi_Y : Y\to \tilde{\tgot}^*\times \tgot^*$. If $\agot\subset \Delta^0$ is a connected component of $\Delta^0$,
we choose a point $(\tilde{\xi},\xi)\in\agot$, and  we consider:

\begin{itemize}
\item  the compact connected symplectic orbifold $N_\agot:= \Phi_Y^{-1}(\tilde{\xi},\xi)/ \tilde{T}\times T$,

\item  the family of line orbi-bundle $L_\agot^{\tilde{\alpha},\alpha}\to N_\agot$ parametrized by
$(\tilde{\alpha},\alpha)\in \tilde{\Lambda}\times\Lambda$~:
$$
L_\agot^{\tilde{\alpha},\alpha}:=\Phi_Y^{-1}(\tilde{\xi},\xi)\times_{\tilde{T}\times T} \C_{-\tilde{\alpha},-\alpha}.
$$
\end{itemize}
Here $\C_{-\tilde{\alpha},-\alpha}$ denotes the one dimensional representation of $\tilde{T}\times T$ where the group acts through
the character $\chi_{-\tilde{\alpha},-\alpha}(t_1,t_2)=t_1^{-\tilde{\alpha}}t_2^{-\alpha}$.

\begin{defi}
Let $\agot$ be a connected component of $\Delta(\T^*K)^0$. We consider the function
$p_\agot: \tilde{\Lambda}\times\Lambda\to \Z$ defined by the relation
$$
p_\agot(\tilde{\alpha},\alpha):=RR(N_\agot,L_\agot^{\tilde{\alpha},\alpha}).
$$
\end{defi}

The function $p_\agot$ does not depend on the choice of the choice of  $\tilde{\xi},\xi$ in $\agot$, since the $\tilde{T}\times T$-manifolds
$\Phi_Y^{-1}(\tilde{\xi_1},\xi_1)$ and $\Phi_Y^{-1}(\tilde{\xi_2},\xi_2)$
 are diffeomorphic for two different choices. The Kawasaki-Riemann-Roch Theorem \cite{Kawasaki81}
implies that the function $p_\agot$ have a quasi polynomial behavior on the lattice
$\tilde{\Lambda}\times\Lambda$.

We can now state the main result of this section. Note that $\Delta(\T^*K)=\bigcup_{\agot} \overline{\agot}$.

\begin{theo}\label{theo:quasipolynomial-K-Ktilde}
If $(\tilde{\lambda},\lambda)\in \overline{\agot}$, we have
$\mm(\tilde{\lambda},\lambda)=p_\agot(\tilde{\lambda},\lambda)$.
\end{theo}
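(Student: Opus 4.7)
The plan is to adapt the proof of Theorem~\ref{theo:quasipolynomial} to the present branching setting, by viewing the coadjoint orbit $M_{\tilde{\lambda}}:=\tilde K\tilde{\lambda}^*$ as a compact connected $K$-Hamiltonian manifold with $K$-moment map $\pi|_{M_{\tilde{\lambda}}}$ and prequantum $\pi$-moment line bundle $L(\tilde{\lambda}^*)=[\C_{\tilde{\lambda}^*}]$. By the Borel-Weil realization of $\tilde V_{\tilde{\lambda}}^*$, $\mm(\tilde{\lambda},\lambda)$ is the multiplicity of $V_\lambda^K$ in $RR_K(M_{\tilde{\lambda}},L(\tilde{\lambda}^*))$, so the Meinrenken-Sjamaar Theorem~\ref{theo:MS} yields
\[
\mm(\tilde{\lambda},\lambda)=RR\bigl((M_{\tilde{\lambda}})_a,\,L(\tilde{\lambda}^*)_{a,\lambda}\bigr)
\]
for every quasi-regular value $a$ of $\pi|_{M_{\tilde{\lambda}}}$ sufficiently close to $\lambda$.

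I would next identify the right-hand side with $p_\agot(\tilde{\lambda},\lambda)=RR(N_\agot,L_\agot^{\tilde{\lambda},\lambda})$. Lemma~\ref{lem:N-lambda-tilde} identifies the $K$-reduction $(M_{\tilde{\lambda}})_a$ canonically with the $(\tilde K\times K)$-reduction $N_{\tilde{\lambda},a}$ of $T^*\tilde K$, which in the regular case presents as the $(\tilde T\times T)$-reduction $\Phi_Y^{-1}(\tilde{\lambda},a)/(\tilde T\times T)$ of the slice $Y$. Since $(\tilde{\lambda},\lambda)\in\overline{\agot}$, I would argue that $a$ can be chosen arbitrarily close to $\lambda$ so that $(\tilde{\lambda},a)\in\agot$; for such $a$, the standard deformation argument within the chamber $\agot$ produces a canonical symplectic diffeomorphism of the reduced space with the model $N_\agot$. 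The matching of the line bundles $L(\tilde{\lambda}^*)_{a,\lambda}\simeq L_\agot^{\tilde{\lambda},\lambda}$ is then a Kostant-type computation: the trivial line bundle on $T^*\tilde K$ with connection $d+i\lambda_{\mathrm{Liouville}}$ (Example~\ref{exam:line-cotangent-bundle}) descends through the $\tilde T$-principal fibration $\Phi_{\tilde K}^{-1}(\tilde{\lambda})\to M_{\tilde{\lambda}}$ to $L(\tilde{\lambda}^*)$, and the $(\tilde T\times T)$-character twist by $\C_{-\tilde{\lambda},-\lambda}$ occurring in the definition of $L_\agot^{\tilde{\lambda},\lambda}$ accounts precisely for the additional twist by $\C_{-\lambda}$ in the definition of $L(\tilde{\lambda}^*)_{a,\lambda}$.

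The main obstacle is verifying the existence of a quasi-regular $a$ arbitrarily close to $\lambda$ with $(\tilde{\lambda},a)\in\agot$. Since $\agot$ is open but $(\tilde{\lambda},\lambda)$ may lie on its boundary, one must analyze the wall structure of $\Delta(T^*\tilde K)$ near $(\tilde{\lambda},\lambda)$. Concretely, the walls of $\Delta^0$ arise as images under $\Phi$ of the strata $M_{(\kgot_i)}\subset T^*\tilde K$ (cf.~the decomposition~(\ref{eq:Z-Phi})); these images are affine subspaces of $\tilde\kgot^*\times\kgot^*$, and their intersections with the slice $\{\tilde{\lambda}\}\times\tgot^*$ are exactly the walls of the regular-value set of $\pi|_{M_{\tilde{\lambda}}}$. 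This gives a bijection between the connected components of regular values of $\pi|_{M_{\tilde{\lambda}}}$ near $\lambda$ and the local components of $\agot\cap(\{\tilde{\lambda}\}\times\tgot^*)$ near $(\tilde{\lambda},\lambda)$, from which the required existence of $a$ follows using $(\tilde{\lambda},\lambda)\in\overline{\agot}$.
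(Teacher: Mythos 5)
There is a genuine gap at the step you yourself flag as the main obstacle: you keep $\tilde{\lambda}$ frozen and only perturb the second coordinate, i.e.\ you need a quasi-regular value $a$ close to $\lambda$ with $(\tilde{\lambda},a)\in\agot$. This can simply fail. First, if $\tilde{\lambda}$ is a singular dominant weight (on a wall of $\tilde{\tgot}^*_{\geq 0}$, e.g.\ $\tilde{\lambda}=0$), the whole slice $\{\tilde{\lambda}\}\times\tgot^*$ lies outside $\tilde{\tgot}^*_{>0}\times\tgot^*_{>0}$, hence outside $\Delta^0$ and outside every chamber $\agot$, yet such $(\tilde{\lambda},\lambda)$ do lie in closures $\overline{\agot}$ and are covered by the theorem. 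Second, even for regular $\tilde{\lambda}$, the statement must hold for \emph{every} chamber $\agot$ with $(\tilde{\lambda},\lambda)\in\overline{\agot}$, and an open chamber adjacent to the point need not meet the affine slice $\{\tilde{\lambda}\}\times\tgot^*$ at all: two walls through $(\tilde{\lambda},\lambda)$ with normals $(\tilde{w}_1X,X)$ and $(-\tilde{w}_2X,-X)$ (both allowed by Lemma \ref{lem:point-fixe-cotangent}, which only forbids normals of the form $(\tilde{X},0)$) cut out a local chamber whose trace on the slice is empty. Your claimed ``bijection'' between local components of $\agot\cap(\{\tilde{\lambda}\}\times\tgot^*)$ and components of regular values of $\pi\vert_{M_{\tilde{\lambda}}}$ near $\lambda$ is therefore unjustified, and without it your argument proves the equality only for those chambers that happen to be visible from the slice — which is strictly weaker than the theorem (whose content is precisely that all quasi-polynomials $p_\agot$ attached to chambers adjacent to $(\tilde{\lambda},\lambda)$ take the same value $\mm(\tilde{\lambda},\lambda)$ there).

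The missing mechanism is the one the paper uses to move \emph{both} coordinates: one takes a generic $b=(\tilde{\xi},\xi)\in\agot$ close to $(\tilde{\lambda},\lambda)$, rewrites $p_\agot(\tilde{\lambda},\lambda)$ via reduction in stages (Proposition \ref{prop:localisation-stage}) and the shifting formula (\ref{eq:reduction-slice}) as the invariant part of a localized index on the compact manifold $\tilde{K}\tilde{\xi}^*\times (K\xi)^-$ with the \emph{fixed} line bundle $[\C_{-\tilde{\lambda}}]\boxtimes[\C_{-\lambda}]$, and then invokes weak $\Phi_b$-positivity of that bundle for $b$ close to $(\tilde{\lambda},\lambda)$ (Lemma \ref{lem:close-enough}) together with Theorem \ref{th:QR-J-hamiltonian} (using $\Phi_b^{-1}(0)\neq\emptyset$) to identify the localized invariant index with the full one, which is $\left[(\tilde{V}_{\tilde{\lambda}})^*\vert_K\otimes V^*_{\lambda}\right]^K=\mm(\tilde{\lambda},\lambda)$. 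In other words, the orbit is deformed from $\tilde{K}\tilde{\lambda}^*$ to $\tilde{K}\tilde{\xi}^*$ while the (no longer prequantum, only weakly positive) bundle is kept, and it is this rigidity statement — not Theorem \ref{theo:MS} applied to the fixed prequantized orbit $\tilde{K}\tilde{\lambda}^*$ — that lets one reach an arbitrary chamber $\agot$ adjacent to $(\tilde{\lambda},\lambda)$. Your Borel--Weil reduction to Theorem \ref{theo:MS} is fine as far as it goes, and your line-bundle matching sketch is the right kind of computation, but without a substitute for this two-variable deformation the proof does not cover the general case.
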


\begin{proof}
Let $b:=(\tilde{\xi},\xi)\in\agot$ and $(\tilde{\lambda},\lambda)\in \overline{\agot}$. By (\ref{eq:localisation-Phi-0-hamiltonien}),
the quantity $p_\agot(\tilde{\lambda},\lambda)$ is equal to
$$
\left[RR_{\tilde{T}\times T}(Y, \C, \Phi_Y^{-1}(b),\Phi_Y-b)\otimes \C_{-\tilde{\lambda},-\lambda}\right]^{\tilde{T}\times T}.
$$
We write $\Phi_Y=\Phi_{\tilde{T}}\oplus\Phi_T$. An element $(g,\eta)$ belongs to $Y$ if and only if
$-\eta\in \tilde{\tgot}^*_{>0}$  and $\pi(g^{-1}\eta)\in\tgot^*_{>0}$.
The group $\tilde{T}$ acts freely on $Y$, and the smooth reduced space
$Y_{{\tilde{\xi}}}=\Phi_{\tilde{T}}^{-1}(\tilde{\xi})/\tilde{T}$ is symplectomorphic to the slice
$$
\{\eta\in \tilde{K}\tilde{\xi}^*\ |\ \pi(\eta)\in \tgot^*_{>0}\}\subset \tilde{K}\tilde{\xi}^*
$$
through the map $(g,\eta)\to g^{-1}\eta$. We have
\begin{eqnarray*}
\lefteqn{\left[RR_{\tilde{T}\times T}(Y, \C, \Phi_Y^{-1}(b),\Phi_Y-b)\otimes \C_{-\tilde{\lambda},-\lambda}\right]^{\tilde{T}\times T}}&&\\
&=&
\left[RR_{T}(Y_{{\tilde{\xi}}}, [\C_{-\tilde{\lambda}}], \Phi_T^{-1}(\xi),\Phi_T-\xi)\otimes \C_{-\lambda}\right]^{T}\qquad (1)\\
&=&\left[RR_{K}(\tilde{K}\tilde{\xi}^*\times (K\xi)^-, [\C_{-\tilde{\lambda}}]\boxtimes [\C_{-\lambda}], \Phi_b^{-1}(0),\Phi_b)\right]^{K}\qquad (2)
\end{eqnarray*}
where $\Phi_b :\tilde{K}\tilde{\xi}^*\times \overline{K\xi}\to \kgot^*$ is the moment map defined by $\Phi_b(\tilde{\eta},\eta)=\pi(\tilde{\eta})- \eta$. Equality $(1)$ follows from the reduction in stage property (see Proposition \ref{prop:localisation-stage}),
and $(2)$ is a particular case of (\ref{eq:reduction-slice}).

If $b=(\tilde{\xi},\xi)\in\agot$ is close enough to $(\tilde{\lambda},\lambda)$, the line bundle $[\C_{-\tilde{\lambda}}]\boxtimes [\C_{-\lambda}]$ is weakly $\Phi_b$-positive (see Lemma \ref{lem:close-enough}). Since $\Phi_b^{-1}(0)\neq \emptyset$ we
can conclude that
\begin{eqnarray*}
\lefteqn{\left[RR_{K}(\tilde{K}\tilde{\xi}^*\times (K\xi)^-,
[\C_{-\tilde{\lambda}}]\boxtimes [\C_{-\lambda}], \Phi_b^{-1}(0),\Phi_b)\right]^{K}}\\
&=&\left[RR_{K}(\tilde{K}\tilde{\xi}^*\times (K\xi)^-, [\C_{-\tilde{\lambda}}]\boxtimes [\C_{-\lambda}])\right]^{K}\\
&=& \left[(\tilde{V}_{\tilde{\lambda}})^*\vert_{K}\otimes V_\lambda^*\right]^K\\
&=& \mm(\tilde{\lambda},\lambda).
\end{eqnarray*}

\end{proof}

\begin{rem}\label{rem:prop-ressayre}
In the proof above, we obtain the following identity
$$
\mm(\tilde{\lambda},\lambda)=
\left[RR_{\tilde{T}\times T}(Y, \C, \Phi_Y^{-1}(b),\Phi_Y-b)\otimes \C_{-\tilde{\lambda},-\lambda}\right]^{\tilde{T}\times T},
$$
that holds for any $b=(\tilde{\xi},\xi)\in \tilde{\tgot}^*_{>0}\times \tgot^*_{>0}$ that is close enough to
$(\tilde{\lambda},\lambda)$. The identity $\mm(\tilde{\lambda},\lambda)=p_\agot(\tilde{\lambda},\lambda)$
follows by taking $(\tilde{\xi},\xi)$ a regular value of $\Phi$. But we could have taken a quasi-regular value of $\Phi$
(see Section \ref{multiplicity-face-K-Ktilde}).
\end{rem}

\medskip

We can also view this theorem as a consequence of  $[Q,R]=0$ for  the non compact manifold $N=\T^*\tilde{K}$.
Philosophically, we have used here reduction in stages to reduce to the $\tilde K\times K$ reduced space of $N$
 to the reduced space with respect to $K$ of the reduced space with respect to $\tilde K$ of $N$.

 We could however, at the price of proving $[Q,R]=0$ directly for the non compact manifold $N$, prove directly our theorem.
 Let us sketch the proof. If we use the result of Section \ref{subsec:cotangent}, knowing that the critical set $Z_\Phi$ is compact,
 we see  that the localized index $RR_{\tilde{K}\times K}(N,[\C],Z_\Phi)$ is equal to $\sum_{\tilde{\eta}}\tilde{V}_{\tilde{\eta}}^*\otimes
 \tilde{V}_{\tilde{\eta}}\vert_K$ in $\hat{R}(\tilde{K}\times K)$.

 The quantity $\mm(\tilde{\lambda},\lambda)$ is then the multiplicity of $\tilde{V}_{\tilde{\lambda}}\otimes
 V_{\lambda}$ in the character $RR_{\tilde{K}\times K}(N,[\C],Z_\Phi)$. Note that here the trivial line bundle $[\C]$ is a
 $\Phi$-moment line bundle on $N$. By an adaptation of the shifting trick to this non-compact setting one can prove
 directly that $\mm(\tilde{\lambda},\lambda)=RR(N_{{\tilde{\lambda}},{\lambda}},[\C]_{\tilde{\lambda},\lambda})$. This
 is the content of Theorem \ref{theo:quasipolynomial-K-Ktilde} since the term $RR(N_{{\tilde{\lambda}},\lambda},[\C]_{\tilde{\lambda},\lambda})$ is equal to $p_\agot(\tilde{\lambda},\lambda)$ for any chamber $\agot$ containing $(\tilde{\lambda},\lambda)$ in its closure.

%%%%%%%%%%%%%%%%%%%%%%%%%%%%%%%%%%%%%%%%%%%%%%%%%%%%%%%%
\subsection{Multiplicities on a face}\label{multiplicity-face-K-Ktilde}
%%%%%%%%%%%%%%%%%%%%%%%%%%%%%%%%%%%%%%%%%%%%%%%%%%%%%%%%

Let us give the  structure of the general faces of the cone $\Delta(\T^*\tilde{K})\subset \tilde{\tgot}_{\geq 0}^*\times \tgot^*_{\geq 0}$.
Let $F$ be a closed face of $\Delta(\T^*\tilde{K})$ that intersects $\tilde{\tgot}_{> 0}^*\times \tgot^*_{> 0}$. We say that $F$ is a general face.
We denote $\langle F\rangle$ the vector subspace generated by $F$, and $\ggot_F$ its orthogonal in $\tilde{\tgot}\times \tgot$. Note that
$F=\langle F\rangle\cap \Delta(\T^*\tilde{K})$.

We denote $\tilde{W}$ the Weyl group of $\tilde{K}$ relatively to the maximal torus $\tilde{T}$.

\begin{lem}
$\bullet$ There exists a vector subspace $\tgot_F\subset \tgot$ and
$\tilde{w}\in\tilde{W}$ such that $\ggot_F=\{(\tilde{w}X,X),\ X\in \tgot_F\}$.

$\bullet$ For any weights $(\tilde{\lambda},\lambda)$ contained in $F$, the weight $\tilde{w}^{-1}\tilde{\lambda}+\lambda$ vanishes on
$\tgot_F$.

$\bullet$ The connected component of $(\T^*\tilde{K})^{\ggot_F}$ that contains
$\Phi^{-1}(F\cap\tilde{\tgot}_{> 0}^*\times \tgot^*_{> 0} )$ is $\tilde{w}\cdot\T^* \tilde{K}_F$. Here
$\tilde{K}_F$ is the subgroup of $\tilde{K}$  that centralizes
$\tgot_F$.

\end{lem}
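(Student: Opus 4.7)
My plan is to analyze the infinitesimal stabilizer of a chosen point $(g_0,\xi_0)\in \Phi^{-1}(\mathrm{relint}(F)\cap\tilde{\tgot}^*_{>0}\times\tgot^*_{>0})$. Applying the argument of Lemma~\ref{lem:phi-F-sigma} to the toric slice $Y=\Phi^{-1}(\tilde{\tgot}^*_{>0}\times\tgot^*_{>0})$, one gets that $(g_0,\xi_0)$ is fixed by every element of $\ggot_F$. The explicit form of the $\tilde{K}\times K$-action on $\T^*\tilde{K}$ translates this into $\Ad(g_0)X=\tilde{X}$ and $[\tilde{X},\xi_0]=0$ for every $(\tilde{X},X)\in\ggot_F$. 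Because $\xi_0$ is regular in $\tilde{\tgot}^*$, the second condition forces $\tilde{X}\in\tilde{\tgot}$; consequently the projection $(\tilde{X},X)\mapsto X$ is injective on $\ggot_F$ with image some subspace $\tgot_F\subset\tgot$, and $\ggot_F$ is the graph of the linear map $X\mapsto\Ad(g_0)X:\tgot_F\to\tilde{\tgot}$.

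To complete point $(1)$ I would show that $\Ad(g_0)$ restricted to $\tgot_F$ coincides with the action of some $\tilde{w}\in\tilde{W}$. Since $\Ad(g_0)\tgot_F\subset\tilde{\tgot}$, both $\tilde{\tgot}$ and $\Ad(g_0^{-1})\tilde{\tgot}$ are maximal tori in the connected reductive subgroup $\tilde{K}_F=Z_{\tilde{K}}(\tgot_F)$. Conjugacy of maximal tori in $\tilde{K}_F$ then yields $h\in\tilde{K}_F$ with $h^{-1}\tilde{T}h=g_0^{-1}\tilde{T}g_0$, so $g_0h\in N_{\tilde{K}}(\tilde{T})$ projects to an element $\tilde{w}\in\tilde{W}$; because $h$ centralizes $\tgot_F$, $\Ad(g_0h)=\Ad(g_0)$ on $\tgot_F$. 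Hence $\ggot_F=\{(\tilde{w}X,X):X\in\tgot_F\}$. I expect this step to be the main obstacle: a naive argument via a generic element $X_0\in\tgot_F$ that is regular in $\tilde{\kgot}$ fails whenever $\tgot_F$ is contained in a root hyperplane, and the conjugacy-of-tori argument is what circumvents this.

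Point $(2)$ is then immediate: for $(\tilde{\lambda},\lambda)\in\langle F\rangle$, pairing against $(\tilde{w}X,X)\in\ggot_F$ and using the $\tilde{W}$-invariance of the scalar products gives
\[
0=\langle\tilde{\lambda},\tilde{w}X\rangle+\langle\lambda,X\rangle=\langle\tilde{w}^{-1}\tilde{\lambda}+\lambda,X\rangle
\]
for every $X\in\tgot_F$. For point $(3)$, applying Lemma~\ref{lem:point-fixe-cotangent} to each $(\tilde{w}X,X)$ with $X\in\tgot_F$ gives $N^{(\tilde{w}X,X)}=\tilde{w}\cdot\T^*\tilde{K}_X$. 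Intersecting over a basis of $\tgot_F$ then yields $\tilde{w}\cdot\T^*\tilde{K}_F$, which is connected ($\tilde{K}_F$ being the centralizer of a torus in a compact connected Lie group) and contains the chosen point $(g_0,\xi_0)$, so it is precisely the connected component of $(\T^*\tilde{K})^{\ggot_F}$ meeting $\Phi^{-1}(F\cap\tilde{\tgot}^*_{>0}\times\tgot^*_{>0})$.
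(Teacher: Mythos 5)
Your argument is correct and takes essentially the same route as the paper: fixedness of $\Phi^{-1}(F\cap\tilde{\tgot}^*_{>0}\times\tgot^*_{>0})$ under $\ggot_F$ via Lemma~\ref{lem:phi-F-sigma}, then Lemma~\ref{lem:point-fixe-cotangent}, with your common-fixed-point and conjugacy-of-maximal-tori step merely making explicit the uniformity of $\tilde{w}$ that the paper's one-line proof leaves implicit. One cosmetic slip: from $h^{-1}\tilde{T}h=g_0^{-1}\tilde{T}g_0$ it is $g_0h^{-1}$ (not $g_0h$) that lies in $N_{\tilde{K}}(\tilde{T})$, which changes nothing since $h^{-1}$ also centralizes $\tgot_F$.
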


\begin{proof}We know that any element $(\tilde{X},X)\in\ggot_F$ fixes the points of
$\Phi^{-1}(F\cap\tilde{\tgot}_{> 0}^*\times \tgot^*_{> 0} )$ (see Lemma \ref{lem:phi-F-sigma}), and that forces to have
$\tilde{X}=\tau X$ for some $\tau\in \tilde{W}$ (see Lemma \ref{lem:point-fixe-cotangent}). The first point follows.
The second point is obvious and the third follows directly from Lemma \ref{lem:point-fixe-cotangent}.
\end{proof}

Berenstein-Sjamaar \cite{Berenstein-Sjamaar} and Ressayre \cite{Ressayre-inventiones,Ressayre-AIF}
have determined, in terms of Schubert calculus,
what are the data $(\tilde{w},\tgot_F)$ that comes from a face $F$.  It is usually  difficult to
list them explicitly.

\medskip

We denote by $K_F$ the subgroup of $K$ that centralizes
$\tgot_F$.
The element $\tilde w$ is defined modulo the Weyl group of $K_F$.
We thus can choose   $\tilde{w}$ such that for any root $\alpha$ of $\tilde{K}_F$, we have
$\alpha >0 \Leftrightarrow \tilde{w}\alpha >0$.

 The subgroup of $\tilde{K}\times K$ that centralizes $\ggot_F$ is  of the form
$\tilde{w}\tilde{K}_F\tilde{w}^{-1}\times K_F$.
So for weights $(\tilde{\lambda},\lambda)$ contained in $F$, the weight
$$
\widehat{\lambda}:=\tilde{w}^{-1}\tilde{\lambda}
$$
 is dominant for $\tilde{K}_F$: we denote $\tilde{V}^F_{\widehat{\lambda}}$ the corresponding irreducible
 representation of $\tilde{K}_F$. Similarly the weight $\lambda$ is dominant for $K_F$ and we denote
$V^F_{\lambda}$ the corresponding irreducible representation of $K_F$.

\medskip

We end this section with the following result obtained by Ressayre in \cite{Ressayre-reduction}, which is
a non-compact analogue of Proposition \ref{QR-mu-face}.

\begin{prop}
 For any dominant weights $(\tilde{\lambda},\lambda)$ contained in $F$,  the dimension of
 $\left[\tilde{V}_{\tilde{\lambda}}\vert_{K}\otimes V_{\lambda}\right]^K$ is equal to the dimension of
 $\left[\tilde{V}^F_{\widehat{\lambda}}\vert_{K_F}\otimes V^F_{\lambda}\right]^{K_F}$.
\end{prop}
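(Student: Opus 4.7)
The plan is to mimic the strategy of Proposition~\ref{QR-mu-face} in the non-compact Hamiltonian $\tilde{K}\times K$-manifold $N=\T^*\tilde{K}$, exploiting the explicit multiplicity formula of Remark~\ref{rem:prop-ressayre}. By that remark, for any regular value $b=(\tilde{\xi},\xi)\in \tilde{\tgot}^*_{>0}\times \tgot^*_{>0}$ of $\Phi_Y$ close enough to $(\tilde{\lambda},\lambda)$, one has
\[
\mm(\tilde{\lambda},\lambda)=\left[RR_{\tilde{T}\times T}(Y,\C,\Phi_Y^{-1}(b),\Phi_Y-b)\otimes \C_{-\tilde{\lambda},-\lambda}\right]^{\tilde{T}\times T}.
\]
Because $F$ has non-empty relative interior inside $\tilde{\tgot}^*_{>0}\times \tgot^*_{>0}$ (it contains $(\tilde{\lambda},\lambda)$ in its closure and intersects the product of open chambers), I would choose $b$ in the relative interior of $F$, arbitrarily close to $(\tilde{\lambda},\lambda)$, and also a regular value of $\Phi_Y$.

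With $b\in F$, the third point of the structural lemma preceding the statement gives $\Phi_Y^{-1}(b)\subset N_F:=\tilde{w}\cdot \T^*\tilde{K}_F$. Left-translation by $\tilde{w}^{-1}$ identifies $N_F$ with $\T^*\tilde{K}_F$, intertwining the action of $\tilde{w}\tilde{K}_F\tilde{w}^{-1}\times K_F$ on $N_F$ with the natural action of $\tilde{K}_F\times K_F$ on $\T^*\tilde{K}_F$. Note that $\tilde{T}\subset \tilde{K}$ and $T\subset K$ remain maximal tori in $\tilde{K}_F$ and $K_F$ respectively, and that $b$ corresponds to $\widehat{b}:=(\tilde{w}^{-1}\tilde{\xi},\xi)$ in a product of open chambers for $(\tilde{K}_F,K_F)$. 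Moreover, the pull-back of the $\tilde{T}\times T$-action differs from the natural one by conjugation by $\tilde{w}$, so the character $\C_{-\tilde{\lambda},-\lambda}$ transforms into $\C_{-\widehat{\lambda},-\lambda}$.

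The heart of the argument is abelian localization along the normal bundle $\Ncal$ of $N_F$ in $\T^*\tilde{K}$, in the spirit of Proposition~\ref{prop:variation}: the localized character near $\Phi_Y^{-1}(b)$ is expressed as the corresponding localized character on the slice $Y_F:=Y\cap N_F$, twisted by the symmetric algebra $\Sym(\Ncal_{J_{\ggot_F}})$ of the polarized normal bundle. After tensoring with $\C_{-\tilde{\lambda},-\lambda}$ and taking $\tilde{T}\times T$-invariants, only the degree-zero piece of $\Sym(\Ncal_{J_{\ggot_F}})$ survives. Indeed, the weights of this symmetric algebra under the infinitesimal action of $\tgot_F$ (embedded diagonally as $\ggot_F=\{(\tilde{w}X,X): X\in\tgot_F\}$) lie on one side of the origin by the polarization convention, while the character being extracted on $\tgot_F$ is $-(\widehat{\lambda}+\lambda)|_{\tgot_F}$, which vanishes by the second point of the structural lemma. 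Thus only the constant term contributes, yielding the factor $1$.

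Combining the reductions gives
\[
\mm(\tilde{\lambda},\lambda)=\left[RR_{\tilde{T}\times T}(Y_F,\C,\Phi_{Y_F}^{-1}(\widehat{b}),\Phi_{Y_F}-\widehat{b})\otimes \C_{-\widehat{\lambda},-\lambda}\right]^{\tilde{T}\times T},
\]
and applying Remark~\ref{rem:prop-ressayre} in reverse to the pair $(\tilde{K}_F,K_F)$ acting on $\T^*\tilde{K}_F$ identifies the right-hand side with $\dim[\tilde{V}^F_{\widehat{\lambda}}|_{K_F}\otimes V^F_{\lambda}]^{K_F}$, as desired. The main obstacle is the precise bookkeeping in the abelian-localization step: one must align the polarization of $\Ncal$ by a generic element of $\ggot_F$ with the choice of positive roots for $\tilde{K}_F$ that makes $\widehat{\lambda}$ dominant, so that the weight half-space and sign conventions cooperate to eliminate all but the degree-zero contribution of $\Sym(\Ncal_{J_{\ggot_F}})$. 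Once this alignment is in place, the argument runs parallel to the compact case of Proposition~\ref{QR-mu-face}.
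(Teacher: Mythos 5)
Your overall strategy (start from the identity of Remark \ref{rem:prop-ressayre}, move the point $b$ into the face $F$, and use the structural lemma to pass to $\tilde{w}\cdot\T^*\tilde{K}_F$) begins like the paper's proof, but the central step of your argument — an Atiyah--Segal--Singer--type localization of the $b$-shifted, Kirwan-deformed index onto $N_F=\tilde{w}\cdot\T^*\tilde{K}_F$ with a $\Sym(\Ncal_{J_{\ggot_F}})$ twist, followed by a weight count — has a genuine gap. First, a small but symptomatic point: no $b$ in a proper face $F$ can be a regular value of $\Phi_Y$, since every point of $\Phi^{-1}(F\cap\sigma)$ is fixed by $\ggot_F$; the correct notion (and the one the paper uses) is a {\em quasi-regular} value, generic in $F$. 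More seriously, the localization you invoke is not covered by Proposition \ref{prop:variation} or Theorem \ref{theo:localization-Z-beta}: those results require the localizing central element to be the value of the moment map on the component in question, whereas here the deformed map $\Phi_Y-b$ takes the value $0$ on the fiber while your localizing element $\beta_F\in\ggot_F$ is nonzero. The homotopy joining $\kappa_{\Phi_Y-b}$ to $\kappa_{\Phi_Y-b}+\beta_F\cdot(\,\cdot\,)$ can create new zeroes away from $N_F$, and no estimate of the type used in the paper (where $\langle\kappa_\Phi,\beta\cdot m\rangle$ is controlled because $\beta=\Phi(m)$ on the component) is available. Finally, even granting such a localization, dividing the Clifford bundle $\bigwedge_J\T Y$ by the spinor bundle of the normal directions produces, as in (\ref{eq:E-beta-M-beta}), the extra line bundle $\det(\Ncal_{J_{\beta_F},+})$, whose $\ggot_F$-weight is strictly positive whenever $\Ncal_{J_{\beta_F},+}\neq 0$ (which is the case here, since the real weights of $\beta_F$ on the normal bundle of $\T^*\tilde{K}_F$ in $\T^*\tilde{K}$ occur in $\pm$ pairs). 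Your weight count ignores this factor; once it is included, the same reasoning that kills $\Sym^{k}$ for $k>0$ would kill the degree-zero term as well and force the invariant part to vanish — absurd, since $\mm(\tilde{\lambda},\lambda)$ is in general nonzero on $F$. So the ``bookkeeping'' you flag is not a matter of aligning conventions: the localization identity in the form you state it cannot hold.

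The paper sidesteps all of this. It takes $b$ generic (quasi-regular) in $F\cap(\tilde{\tgot}^*_{>0}\times\tgot^*_{>0})$, applies Theorem \ref{theo:RR-M-a-E-alpha} to rewrite the right-hand side of Remark \ref{rem:prop-ressayre} as a Riemann--Roch number $RR(N_{\tilde{\xi},\xi},\Lbb_{\tilde{\lambda},\lambda})$ on the reduced orbifold $\Phi^{-1}(\tilde{\xi},\xi)/((\tilde{T}\times T)/G_F)$, does the same for the pair $(\tilde{K}_F,K_F)$ acting on $\T^*\tilde{K}_F$ to express $\dim[\tilde{V}^F_{\widehat{\lambda}}\vert_{K_F}\otimes V^F_{\lambda}]^{K_F}$ as $RR((N_F)_{\widehat{\xi},\xi},\Lbb_{\widehat{\lambda},\lambda})$, and then simply observes that translation by $\tilde{w}^{-1}$ identifies the two reduced spaces and the two line orbibundles (the weight condition $\tilde{w}^{-1}\tilde{\lambda}+\lambda=0$ on $\tgot_F$ guaranteeing that the bundles are well defined). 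No localization over a fixed-point set and no symmetric-algebra or determinant-line bookkeeping enters. If you want to salvage a fixed-point-set argument along your lines, you would need the finer analysis of Theorem \ref{th:QR-Phi-J} (where only components with $\mathrm{rank}\,\Ncal_{J_\beta,+}=\dim\qgot$ survive, and the determinant factors cancel against $\det(\qgot_{J_\beta})^{-1}$), but that machinery is tied to localizing at $\beta=\Phi(m)$ and does not transpose directly to your $\beta_F$.
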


\begin{proof} We denote $G_F$ the subtorus of $\tilde{T}\times T$ with Lie algebra $\ggot_F$.

We start with the identity
\begin{equation}\label{eq:intermediaire}
\left[\tilde{V}_{\tilde{\lambda}}\vert_{K}\otimes V_{\lambda}\right]^K=
\left[RR_{\tilde{T}\times T}(Y, \C, \Phi_Y^{-1}(b),\Phi_Y-b)\otimes \C_{-\tilde{\lambda},-\lambda}\right]^{\tilde{T}\times T},
\end{equation}
that holds for any $b=(\tilde{\xi},\xi)\in \tilde{\tgot}^*_{>0}\times \tgot^*_{>0}$ that is close enough to
$(\tilde{\lambda},\lambda)$. See Remark \ref{rem:prop-ressayre}. Let us use it with
$(\tilde{\xi},\xi)$ a generic element contained in $F\cap (\tilde{\tgot}^*_{>0}\times \tgot^*_{>0})$ and
close enough to $(\tilde{\lambda},\lambda)$ : it is a quasi-regular value of $\Phi$ such that the fiber
$\Phi^{-1}(\tilde{\xi},\xi)\subset \tilde{w}\cdot\T^* \tilde{K}_F$
is a smooth submanifold with a locally free action of the group $(\tilde{T}\times T)/G_F$,

Thanks to Theorem \ref{theo:RR-M-a-E-alpha}, we know that the right hand side of
(\ref{eq:intermediaire}) is equal to $RR(N_{\tilde{\xi},\xi},\Lbb_{\tilde{\lambda},\lambda})$ where
$$
\Lbb_{\tilde{\lambda},\lambda}=\Phi^{-1}(\tilde{\xi},\xi)\times_{(\tilde{T}\times T)/G_F} \C_{-\tilde{\lambda},-\lambda}
$$
is an line orbibundle on the symplectic orbifold $N_{\tilde{\xi},\xi}=\Phi^{-1}(\tilde{\xi},\xi)/((\tilde{T}\times T)/G_F)$

We consider now the Hamiltonian action of $\tilde{K}_F\times K_F$ on $N_F:=\T^*\tilde{K}_F$. Let us denote
$\Phi_F:N_F\to \tilde{\kgot}_F^*\times \kgot^*_F$ the corresponding moment map. The torus $T_F$ with Lie algebra
$\tgot_F$ is embedded diagonaly in $\tilde{T}\times T$. For a generic element $(\alpha,\beta)\in \Delta(N_F)$, the fiber
$\Phi_F^{-1}(\alpha,\beta)$ is a smooth submanifold with a locally free action of $ (\tilde{T}\times T)/T_F$.

As in the previous setting, we know that the multiplicity
$\left[\tilde{V}^F_{\widehat{\lambda}}\vert_{K_F}\otimes V^F_{\lambda}\right]^{K_F}$ is equal to
$RR((N_F)_{\widehat{\xi},\xi},\Lbb_{\widehat{\lambda},\lambda})$ when $(\widehat{\xi},\xi)\in\Delta(N_F)$ is a generic element
close enough to $(\widehat{\lambda},\lambda)$. Here
$$
\Lbb_{\widehat{\lambda},\lambda}=\Phi^{-1}_F(\widehat{\xi},\xi)\times_{(\tilde{T}\times T)/T_F} \C_{-\widehat{\lambda},-\lambda}
$$
is an line orbibundle on the symplectic orbifold $(N_F)_{\widehat{\xi},\xi}=\Phi^{-1}_F(\widehat{\xi},\xi)/((\tilde{T}\times T)/T_F)$.

Now our proposition follows from the fact that the Riemann-Roch numbers $RR(N_{\tilde{\xi},\xi},\Lbb_{\tilde{\lambda},\lambda})$ and
$RR((N_F)_{\widehat{\xi},\xi},\Lbb_{\widehat{\lambda},\lambda})$ are equal. It is due to the fact that the multiplication by
$\tilde{w}^{-1}$ induces a symplectomorphism $N_{\tilde{\xi},\xi}\simeq (N_F)_{\widehat{\xi},\xi}$ and an isomorphism of
line bundles $\Lbb_{\tilde{\lambda},\lambda}\simeq \Lbb_{\widehat{\lambda},\lambda}$.

\end{proof}

%%%%%%%%%%%%%%%%%%%%%%%%%%%%%%%%%%%%%%%%%%%%%%%%%%%%%%%%%
%%%%%%%%%%%%%%%%%%%%%%%%%%%%%%%%%%%%%%%%%%%%%%%%%%%%%%%%%
%%%%%%%%%%%%%%%%%%%%%%%%%%%%%%%%%%%%%%%%%%%%%%%%%%%%%%%%%

{\small

}


\begin{thebibliography}{99}

\bibitem{Atiyah74} {\sc M.F. Atiyah}, {\em Elliptic operators and
compact groups}, Lecture Notes in Mathematics {\bf 401}, Springer-Verlag, Berlin, 1974.

%\bibitem{Atiyah-Bott-YM} {\sc M.F. Atiyah} and {\sc R. Bott}, {\em The Yang-Mills equations over riemann surfaces},
%Philosophical Transactions of the Royal Society of London A (1983),  523--615.

\bibitem{Atiyah82} {\sc M.F. Atiyah}, {\em Convexity and commuting
 Hamiltonians}, Bull. London Math. Soc. {\bf 14} (1982), 1--15.

\bibitem{Atiyah-Segal68} {\sc M.F. Atiyah} and {\sc G.B. Segal},
{\em The index of elliptic operators II}, Ann. of Math. {\bf 87} (1968), 531--545.

\bibitem{Atiyah-Singer-1} {\sc M.F. Atiyah} and {\sc I.M. Singer},
{\em The index of elliptic operators I}, Ann. of Math. {\bf 87} (1968), 484--530.

\bibitem{Atiyah-Singer-2} {\sc M.F. Atiyah} and {\sc I.M. Singer},
{\em The index of elliptic operators III}, Ann. of Math. {\bf 87} (1968), 546--604.

\bibitem{Atiyah-Singer-3} {\sc M.F. Atiyah} and {\sc I.M. Singer},
{\em The index of elliptic operators IV}, Ann. of Math. {\bf 93} (1971), 139--141.

%\bibitem{Atiyah-Hirzebruch70} {\sc M.F. Atiyah} and
%{\sc F. Hirzebruch}, {\em Spin Manifold and group actions}, Essays
%on Topology and Related Topics (Geneva 1969), Springer-Verlag, Berlin-Heidelberg-New York,
%1970.

\bibitem{Berenstein-Sjamaar} {\sc A. Berenstein} and {\sc R. Sjamaar},  {\em Coadjoint orbits, moment polytopes,
and the Hilbert-Mumford criterion},  Journal of the A. M. S. {\bf 13} (2000), 433--466.

\bibitem{B-G-V} {\sc N. Berline}, {\sc E. Getzler} and {\sc M. Vergne}, {\em Heat kernels and
Dirac operators}, Grundlehren {\bf 298}, Springer, Berlin, 1991.

\bibitem{B-V.inventiones.96.1} {\sc N. Berline} and {\sc M. Vergne},
{\em The Chern character of a transversally elliptic symbol and the
equivariant index}, Invent. Math. {\bf 124} (1996), 11--49.

\bibitem{B-V.inventiones.96.2} {\sc N. Berline} and {\sc M. Vergne},
{\em L'indice \'equivariant des op\'erateurs transversalement
elliptiques},  Invent. Math. {\bf 124} (1996),  51--101.

\bibitem{Braverman-02} {\sc M. Braverman}, {\em Index theorem for equivariant Dirac operators on noncompact manifolds}, K-Theory {\bf 27} (2002), 61--101.

\bibitem{Braverman-14}  {\sc M. Braverman}, {\em The index theory on non-compact manifolds with proper group action},
preprint arXiv:1403.7587 (2014).

\bibitem{Brion-bourbaki} {\sc M. Brion}, {\em Restriction de repr\'esentations et projections d'orbites coadjointes (d'apr\`es Belkale, Kumar et Ressayre)},
S\'eminaire Bourbaki, vol. 2011/2012, Ast\'erisque {\bf 352} (2013),  1--33.

%\bibitem{CdS-K-T}{\sc A. Cannas da Silva}, {\sc Y. Karshon} and
%{\sc S. Tolman}, {\em Quantization of presymplectic manifolds and circle
%actions}, Trans. A. M. S.  {\bf 352}  (2000), 525--552.

%\bibitem{du} {\sc M. Duflo}, {\em Construction de repr\'esentations unitaires d'un groupe de Lie}, CIME, Cortona (1980).

\bibitem{Derksen-Weyman} {\sc H. Derksen} and {\sc J. Weyman},   {\em On the Littlewood-Richarson polynomials},
J. Algebra {\bf 255} (2002), 247--257.



%\bibitem{Duistermaat96} {\sc J. J. Duistermaat},
%{\em The heat equation and the Lefschetz fixed point formula for the
%Spin\textsuperscript{c}-Dirac operator}, Progress in Nonlinear
%Differential Equation and Their Applications, vol. 18, Birkhauser,
%Boston, 1996.




%\bibitem{Grossberg-Karshon94} {\sc M. Grossberg} and  {\sc Y. Karshon}, {\em Bott towers, complete integrability, and the extended character of representations}, Duke Math. Journal {\bf 76} (1994), 23--58.
%
%\bibitem{Grossberg-Karshon98} {\sc M. Grossberg} and  {\sc Y. Karshon}, {\em Equivariant index and the moment map for completely integrable torus actions},  Advances in Math. {\bf 133} (1998),  185--223.
%
%
%



%\bibitem{Guillemin95} {\sc V. Guillemin}, {\em Reduced phase spaces and Riemann-Roch in Proceedings: Lie
%Groups and Geometry in honor of B. Kostant}, Progress in Mathematics {\bf 123}, Birkh\"{a}user Boston (1995), 305--334.

\bibitem{Guillemin-Sternberg82.bis} {\sc V. Guillemin} and {\sc S. Sternberg}, {\em Convexity
properties of the moment mapping}, Invent. Math. {\bf 67} (1982), 491--513.

\bibitem{Guillemin-Sternberg82} {\sc V. Guillemin} and {\sc S. Sternberg},
{\em Geometric quantization and multiplicities of group representations}, Invent. Math. {\bf 67} (1982), 515--538.

\bibitem{Guillemin-Sternberg84} {\sc V. Guillemin} and {\sc S. Sternberg}, {\em A normal form for the moment map},
Differential geometric methods in mathematical physics {\bf 6} (1984), 161--175.


%\bibitem{Hattori78} {\sc A. Hattori}, {\em $\spinc$-structures and $S^1$-actions},
%Invent. Math. {\bf 48} (1978), 7--31.



%
%\bibitem{Hochs-Mathai2014} {\sc P. Hochs}, and {\sc V. Mathai}, {\em Quantising proper actions on $\spinc$-manifolds},
%preprint arXiv:1408.0085 (2014).
%
%\bibitem{Hochs-Song2015} {\sc P. Hochs}, and {\sc Y. Song}, {\em Equivariant indices of Spin$^c$-Dirac operators
%for proper moment maps}, preprint arXiv:1503.00801 (2015).
%
%\bibitem{Jeffrey-Kirwan1997} {\sc L.C. Jeffrey}, and {\sc F.C. Kirwan}, {\em  Localization and quantization conjecture},
%Topology {\bf 36} (1997), 647--693.




\bibitem{Kahle-Michalek} {\sc T. Kahle} and {\sc M. Michalek},
{\em Plethysm and lattice point counting},   arXiv preprint arXiv 14008.5708 (2014).


%\bibitem{Karshon.98} {\sc Y. Karshon}, Moment map and non-compact cobordism,
%{\em J. Diff. Geometry}, {\bf 49}, 1998, p. 183-201.

\bibitem{Kawasaki81} {\sc T. Kawasaki}, {\em The index of elliptic operators
over V-manifolds}, Nagoya Math. Journal {\bf 84} (1981), 135--157.

%\bibitem{Kempf-Ness} {\sc G. Kempf} and {\sc L. Ness},
%The length of vectors in representation spaces. Algebraic geometry
%(Proc. Summer Meeting, Univ. Copenhagen, Copenhagen, 1978), p. 233-243,
%Lecture Notes in Math., 732, Springer, Berlin, 1979.

\bibitem{King-Tollu-Toumazet} {\sc R.C. King}, {\sc C. Tollu} and {\sc F. Toumazet},
{\em Factorization of Littlewood-Richardson coefficients},  J. of Combinatorial Theory {\bf 116} (2009),
314--333.



%\bibitem{Kirwan.84} {\sc F. Kirwan}, {\em Cohomology of quotients in
%symplectic and algebraic geometry}, Princeton Univ. Press,
%Princeton, 1984.

\bibitem{Kirwan.84.bis} {\sc F.C. Kirwan}, {\em Convexity properties of the
moment mapping III}, Invent. Math. {\bf 77} (1984), 547--552.

%%\bibitem{Klyachko} {\sc A. Klyachko}, Stable bundles, representation theory and Hermitian operators,
%%{\em Selecta Math.}, {\bf 4},
%%1998, p. 419-445.

\bibitem{Kumar-Prasad} {\sc S. Kumar} and {\sc D. Prasad}, {\em Dimension of zero weight space:
An algebro-geometric approach},  Journal of Algebra {\bf 403} (2014),  324--344.





%\bibitem{Kirwan84}  {\sc F.C. Kirwan}, {\em Cohomology of quotients in symplectic and algebraic geometry}, Princeton University Press, {\bf 31} (1984).


\bibitem{L-M-T-W} {\sc E. Lerman}, {\sc E. Meinrenken}, {\sc S. Tolman} and {\sc C. Woodward},
{\em Non-Abelian convexity by symplectic cuts}, Topology {\bf 37} (1998),  245--259.

\bibitem{Ma-Zhang14} {\sc X. Ma} and {\sc W. Zhang}, {\em Geometric quantization for proper moment maps: the Vergne conjecture},  Acta Mathematica {\bf 212} (2014),  11--57.

\bibitem{Mathai-Hochs14}{\sc P. Hochs} and {\sc V. Mathai}, {\em Quantising proper actions on Spin$^c$-manifolds},
preprint arXiv:1408.0085 (2014).

\bibitem{Mathai-Zhang}{\sc V. Mathai} and {\sc W. Zhang},  {\em Geometric quantization for proper actions},
Advances in Mathematics  {\bf 225} (2010),  1224--1247.

\bibitem{Meinrenken96} {\sc E. Meinrenken},
{\em On Riemann-Roch formulas for multiplicities},  J. Amer. Math. Soc. {\bf 9} (1996),  373--389.


\bibitem{Meinrenken98} {\sc E. Meinrenken}, {\em Symplectic surgery and the Spin\textsuperscript{c}-Dirac operator},
Advances in Math. {\bf 134} (1998),  240--277.

\bibitem{Meinrenken-Sjamaar} {\sc E. Meinrenken} and {\sc R. Sjamaar},
{\em Singular reduction and quantization}, Topology {\bf 38} (1999), 699--762.

\bibitem{Mulmuley} {\sc  K. Mulmuley}, {\em Geometric Complexity Theory VI: the flip  via saturated
and positive integer programming in representation theory and algebraic geometry}, preprint
arXiv:0704.0229 (2007).

\bibitem{pep-1999} {\sc P-E. Paradan}, {\em Formules de localisation en cohomologie \'equivariante},
Compositio Mathematica {\bf 117} (1999), 243--293.

\bibitem{pep-RR} {\sc P-E. Paradan}, {\em Localization of the Riemann-Roch
character},   J. Functional Analysis  {\bf 187} (2001),  442--509.

\bibitem{pep-ENS} {\sc P-E. Paradan}, {\em Spin$^c$ quantization and the $K$-multiplicities of
the discrete series}, Annales Scientifiques de l'E. N. S.  {\bf 36} (2003), 805--845.

\bibitem{pep-formal} {\sc P-E. Paradan}, {\em Formal geometric quantization}, Ann. Inst. Fourier {\bf 59} (2009), 199--238.

\bibitem{pep-formal-2} {\sc P.-E. Paradan}, {\em Formal geometric quantization II}, Pacific J. Math. {\bf 253} (2011), 169--211.


\bibitem{pep-JSG} {\sc P-E. Paradan}, {\em Spin quantization commutes with reduction},   J. of Symplectic Geometry  {\bf 10} (2012),  389--422.


\bibitem{pep-stability} {\sc P-E. Paradan}, {\em Stability property of multiplicities of group representations},   preprint arXiv:1510.05080  (2015).

\bibitem{pep-vergne:bismut}{\sc P-E. Paradan}  and {\sc M. Vergne}, {\em Index of transversally elliptic operators},
Ast\'erique {\bf 328} (2009),  297--338.

\bibitem{pep-vergne:cras}{\sc P-E. Paradan}  and {\sc M. Vergne}, {\em The multiplicities of the equivariant index of twisted Dirac operators},
 C. R. A. S., Volume 352, Issue 9, (2014), 673-677.

\bibitem{pep-vergne:spinc}{\sc P-E. Paradan} and  {\sc M. Vergne}, {\em Equivariant Dirac operators and differentiable geometric invariant theory},
preprint arXiv:1411.7772 (2014).


\bibitem{pep-vergne:magic}{\sc P-E. Paradan}  and {\sc M. Vergne}, {\em Admissible coadjoint orbits for compact Lie groups},
preprint 	arXiv:1512.02367 (2015).


\bibitem{Ressayre-inventiones} {\sc N. Ressayre},
 {\em Geometric invariant theory and the generalized eigenvalue problem},  Invent. math.
{\bf 180} (2010),  389--441.

\bibitem{Ressayre-AIF} {\sc N. Ressayre},
 {\em Geometric invariant theory and the generalized eigenvalue problem II},  Annales Inst. Fourier
{\bf 61}  (2011),  1467--1491.


\bibitem{Ressayre-reduction} {\sc N. Ressayre}, {\em Reductions for branching coefficients}, preprint arXiv:1102.0196 (2011).






%\bibitem{Sjamaar95} {\sc R. Sjamaar}, {\em Holomorphic Slices, Symplectic Reduction and Multiplicities of Representations},
%Annals of Mathematics {\bf 141} (1995), 87--129.

%%\bibitem{Sjamaar96} {\sc R. Sjamaar}, {\em Symplectic reduction and Riemann-Roch formulas for multiplicities},
%%Bulletin of the A.M.S. {\bf 33} (1996), 327--338.

\bibitem{Song} {\sc Y. Song}, {\em A K-homological approach to the quantization commutes with reduction problem},
preprint arXiv:1404.4840 (2014).


%\bibitem{Szenes-Vergne} {\sc A. Szenes} and {\sc M. Vergne},
%{\em $[Q,R]=0$ and Kostant partition functions}, ArXiv:1006-4149.


\bibitem{Teleman-Annals00} {\sc C. Teleman}, {\em The quantization conjecture revisited},  Annals of Math.
{\bf 152} (2000), 1--43.


\bibitem{Tian-Zhang98} {\sc Y. Tian} and {\sc W. Zhang}, {\em An analytic proof of the geometric
quantization conjecture of Guillemin-Sternberg},  Invent. Math. {\bf 132} (1998),  229--259.



\bibitem{Vergne96} {\sc M. Vergne}, {\em Multiplicity formula for geometric
quantization, Part I, Part II, and Part III}, Duke Math. Journal {\bf 82} (1996), 143--179, 181--194, 637--652.

\bibitem{Vergne-Bourbaki} {\sc M. Vergne}, {\em Quantification g\'eom\'etrique et
r\'eduction symplectique},  S\'eminaire Bourbaki, vol. 2000/2001. Ast\'erisque {\bf 282} (2002), Exp. No 888, 249--278.


\bibitem{Witten} {\sc E. Witten}, {\em Two dimensional gauge theories
revisited}, J. Geom. Phys. {\bf 9} (1992), 303--368.










\end{thebibliography}
\end{document}